\newtheorem{thm}{\bf{Theorem}}[section]
\newtheorem{lem}[thm]{\bf{Lemma}}
\newtheorem{df}[thm]{\bf{Definition}}
\newtheorem{cor}[thm]{\bf{Corollary}}
\newtheorem{rem}[thm]{\bf{Remark}}
\newtheorem{prop}[thm]{\bf{Proposition}}
\newtheorem{Fact}[thm]{\bf{Fact}}
\newtheorem{ex}[thm]{\bf{Example}}
\numberwithin{equation}{section}					
\numberwithin{thm}{section}						
\newcommand{\dom}{\operatorname{dom}}
\newcommand{\intt}{\operatorname{int}}
\newcommand{\cl}{\operatorname{cl}}
\newcommand{\bdry}{\operatorname{bdry}}
\newcommand{\epig}{\operatorname{epi}}
\newcommand{\epi}{\stackrel{\operatorname{e}}{\rightarrow}}
\newcommand{\pw}{\stackrel{\operatorname{p}}{\rightarrow}}
\newcommand{\att}{\stackrel{\operatorname{f}}{\rightarrow}}
\newcommand{\Id}{\operatorname{Id}}
\newcommand{\elim}{\operatornamewithlimits{e-lim}}
\newcommand{\conv}{\operatornamewithlimits{conv}}
\newcommand{\argmin}{\operatornamewithlimits{argmin}}
\newcommand{\Prox}{\operatorname{Prox}}
\newcommand{\pavn}{\operatorname{\mathcal{PA}}}
\newcommand{\R}{\operatorname{\mathbb{R}}}
\newcommand{\N}{\operatorname{\mathbb{N}}}
\newcommand{\kkk}{\ensuremath{{k \in \N}}}
\newcommand{\aN}{\operatorname{\mathcal{N}_{\infty}}}
\newcommand{\bl}{\operatorname{\bar{\lambda}}}
\newcommand{\bmu}{\operatorname{\bar{\mu}}}
\newcommand{\balpha}{\operatorname{\bar{\alpha}}}
\newcommand{\vphi}{\ensuremath{\varphi^{\alpha}_{\mu}}}
\newcommand{\vphiba}{\ensuremath{\varphi^{\balpha}_{\mu}}}
\newcommand{\vphik}{\ensuremath{\varphi^{\alpha_{k}}_{\mu}}}
\newcommand{\vphifk}{\ensuremath{\varphi_{f_{k},g_{k},\alpha_{k},\mu_{k}}}}
\newcommand{\vphif}{\ensuremath{\varphi_{f,g,\alpha,\mu}}}
\newcommand{\vphibmu}{\ensuremath{\varphi_{f,g,\alpha,\bmu}}}
\newcommand{\fot}{\ensuremath{f_{1}\Box f_{2}}}
\newcommand{\RX}{\ensuremath{\,\left]-\infty,+\infty\right]}}
\newcommand{\RE}{\ensuremath{\,\left[-\infty,+\infty\right]}}
\newcommand{\lmin}{\ensuremath{\, \lambda_{\min}}}
\newcommand{\scal}[2]{\langle{{#1},{#2}}\rangle}
\newcommand{\jj}{\ensuremath{\,\mathfrak{q}}}
\newcommand{\pluss}{{{\,\text{\ding{57}}\,}}} 
\newcommand{\timess}{{{\,\text{\ding{75}}}}} 
\newcommand{\const}{\text{constant}}
\title{A proximal average for prox-bounded functions}
\author{
  J. Chen\thanks{ School of Mathematics and Statistics, Southwest University, Chongqing 400715, P. R. China.\ Email: \texttt{J.W.Chen713@163.com.}}, 
  X. Wang\thanks{Mathematics, University of British Columbia, Kelowna, B.C. V1V 1V7, Canada; and School of Mathematics and Statistics, Southwest University, Chongqing 400715, P. R. China.
\ Email: \texttt{shawn.wang@ubc.ca.}},~and
  C. Planiden\thanks{School of Mathematics and Applied Statistics, University of Wollongong, Wollongong, NSW, 2500, Australia.\ Email: \texttt{chayne@uow.edu.au.}}
  }
\date{\today}
\begin{document}

\maketitle

\begin{abstract}
In this work, we construct a proximal average for two prox-bounded functions, which recovers the classical
proximal average for two convex functions. The new proximal average transforms continuously in epi-topology from one proximal hull to the other. When one of the functions is differentiable, the new proximal average is differentiable. We give characterizations for Lipschitz and single-valued proximal mappings and we show that the convex combination of convexified proximal mappings is always
a proximal mapping. Subdifferentiability and behaviors of infimal values and minimizers are also studied.
\end{abstract}

\noindent {\bfseries 2000 Mathematics Subject Classification:}\\
Primary 49J53; Secondary 26A51, 47H05, 26E60, 90C30.

\noindent {\bfseries Keywords:} Almost differentiable function,
arithmetic average,
convex hull, epi-average, 
epi-convergence, Moreau envelope, Lasry--Lions envelope, 
prox-bounded function, proximal average, proximal hull, proximal mapping,
resolvent,
subdifferential operator.

\section{Introduction}

The proximal average provides a novel technique for averaging convex functions, see \cite{convmono,proxbas}.
 The proximal average has been used widely in applications such as machine learning \cite{reidwill,Yu13a}, optimization \cite{resaverage,wolenski,boyd14,planwang2016,zaslav}, matrix analysis \cite{kimlaws, lim18} and
modern monotone operator theory
\cite{simons}. The proximal mapping of the proximal average is precisely
the average of proximal mappings of the convex
functions involved. Averages of proximal mappings are
important in convex and nonconvex optimization algorithms;
see, e.g., \cite{convmono, aveproj}.
A proximal average for
 possible nonconvex functions has long been sought.

In this work, we have proposed a proximal average for prox-bounded functions, which enjoy rich
theory in variational analysis and optimization.
Our proximal average
significantly extends the works of \cite{proxbas} from convex functions to
possibly nonconvex functions. The new average function provides an epicontinuous transformation
between proximal hulls of functions, and reverts to the convex proximal average
definition in the case of convex functions.
When studying the proximal average of possibly nonconvex functions, two fundamental issues arise. The first is when
the proximal mapping is convex-valued; the second is when the function can
be recovered from its proximal mapping. It turns out that
resolving both difficulties requires the `proximal'
condition in variational analysis.

\subsection{Outline}

The plan of the paper is as follows. In the following three subsections, we give basic concepts from variational analysis,
review related work in the literature and state the blanket assumptions of the paper.
In Section \ref{s:prel}, we prove some interesting and new
properties of proximal functions, proximal mappings and envelopes. Section \ref{s:conv}
gives
an explicit relationship between the convexified proximal mapping and the Clarke
subdifferential of the Moreau envelope. Section \ref{s:char} provides
characterizations of Lipschitz and single-valued proximal mappings.
In Section \ref{s:main}, we
define the proximal average for prox-bounded functions and give a systematic study
of its properties.  Relationships to arithmetic average and epi-average and
full epi-continuity of the proximal average are studied
in Section \ref{s:rela}. Section \ref{s:opti} is devoted to optimal
value and minimizers and convergence in minimization of the proximal average.
In Section \ref{s:subd}, we investigate the subdifferentiability and
differentiability of the proximal average.
As an example, the
proximal average for quadratic functions is given in Section \ref{s:quad}.
Finally, Section \ref{s:theg} illustrates the difficulty when the proximal mapping
is not convex-valued.\par
Two distinguished features
of our proximal average
deserve to be singled out: whenever one of the function is differentiable,
the new proximal average is differentiable and the convex combinations of
convexified proximal mappings is always a proximal mapping.
While epi-convergence
\cite{attouch1984, beertopologies} plays a dominant role in our analysis of
 convergence
in minimization, the class of proximal functions, which
is significantly broader than the class of convex functions, is indispensable
for studying the proximal average.
In carrying out the proofs later, we often cite results from
the standard reference Rockafellar--Wets \cite{rockwets}.

\subsection{Constructs from variational analysis}

In order to define the proximal average of possibly nonconvex functions, we utilize the Moreau envelope
and proximal hull. In what follows, $\R^n$ is the $n$-dimensional Euclidean space
with Euclidean norm $\|x\|=\sqrt{\scal{x}{x}}$ and inner product
$\scal{x}{y}=\sum_{i=1}^{n}x_{i}y_{i}$ for $x,y\in\R^n$.

\begin{df}
For a proper function $f:\R^n\rightarrow\RX$ and parameters $0<\mu<\lambda$, the
{Moreau envelope}
function $e_{\lambda}f$ and {proximal mapping} are defined, respectively, by
$$e_{\lambda}f(x)=\inf_{w}\left\{f(w)+\frac{1}{2\lambda}\|w-x\|^2\right\},
\quad \Prox_{\lambda}f(x)=\argmin_{w}\left\{f(w)+\frac{1}{2\lambda}\|w-x\|^2\right\};$$
the {proximal hull} function $h_{\lambda}f$ is defined by
$$h_{\lambda}f(x)=\inf_{w}\left\{e_{\lambda}f(w)-\frac{1}{2\lambda}\|x-w\|^2\right\};$$
the {Lasry--Lions envelope}
 $e_{\lambda,\mu}f$ is defined by
$$e_{\lambda,\mu}f(x)=\sup_{w}\left\{e_{\lambda}f(w)-\frac{1}{2\mu}\|x-w\|^2\right\}.$$
\end{df}

\begin{df} The function $f:\R^n\rightarrow \RX$ is prox-bounded if
there exist $\lambda>0$ and $x\in \R^n$ such that
$e_{\lambda}f(x)>-\infty.$
The supremum of the set of all such $\lambda$ is the threshold $\lambda_{f}$
of prox-boundedness for $f$.
\end{df}
Any function $f:\R^{n}\rightarrow\RX$ that is bounded
below by an affine function has threshold of prox-boundedness $\lambda_{f}=\infty$; cf.
\cite[Example 3.28]{rockwets}. A differentiable function $f$ with a Lipschitz continuous
gradient has $\lambda_{f}>0$.

Our notation is standard. For every nonempty set $S\subset\R^n$, $\conv S$, $\cl S$ and $\iota_{S}$  denote the \emph{convex hull}, \emph{closure} and
\emph{indicator function} of set $S$, respectively.
For a proper, lower semicontinuous (lsc) function $f:\R^n\rightarrow\RX$, $\conv f$ is its convex hull and $f^*$ is its \emph{Fenchel conjugate}.
We let $\inf f$ and $\argmin f$ denote the infimum and
the set of minimizers of $f$ on $\R^n$, respectively. We call
$f$ \emph{level-coercive} if
$$\liminf_{\|x\|\rightarrow\infty}\frac{f(x)}{\|x\|}>0,$$
and \emph{coercive} if
$$\liminf_{\|x\|\rightarrow\infty}\frac{f(x)}{\|x\|}=\infty.$$
We use $\partial f$, $\hat{\partial }f, \partial_{L}f, \partial_{C}f$ for the Fenchel subdifferential, Fr\'echet subdifferential, limiting subdifferential
and Clarke subdifferential of $f$, respectively.  More precisely,
at a point $x\in\dom f$,  the \emph{Fenchel subdifferential}
of $f$ at $x$ is the set
$$\partial f(x)=\{s\in\R^n:\ f(y) \geq f(x)+\scal{s}{y-x} \text{ for all $y\in\R^n$}\};$$
the \emph{Fr\'echet subdifferential} of $f$ at $x$ is the set
$$\hat{\partial} f(x)=\{s\in\R^n:\ f(y) \geq f(x)+\scal{s}{y-x}+o(\|y-x\|)\};$$
the \emph{limiting subdifferential} of $f$ at $x$ is
$$\partial_{L}f(x)=\{v\in\R^n:\ \exists \text{ sequences } x_{k} \att x \text{ and }
s_{k}\in\hat{\partial}f(x_{k}) \text{ with } s_{k}\rightarrow v\},$$
where $x_{k} \att x$ means $x_{k}\rightarrow x$ and $f(x_{k})\rightarrow f(x)$.
We let $\Id:\R^n\rightarrow\R: x\mapsto x$ be the identity mapping and $\jj=\frac{1}{2}\|\cdot\|^2$.
The mapping $J_{\mu\partial_{L}f}=(\Id+\mu\partial_{L}f)^{-1}$ is
called the \emph{resolvent} of $\mu\partial_{L}f$;
cf. \cite[page 539]{rockwets}.
When $f$ is locally Lipschitz at $x$, the \emph{Clarke subdifferential}
 $\partial_{C}f$ at $x$ is $\partial_{C} f(x)=
\conv \partial_{L}f(x)$.
For further details on subdifferentials, see \cite{optanal,mordukhovich2006variational,rockwets}.
For $f_{1},f_{2}:\R^n\rightarrow\RX$, the \emph{infimal convolution}
 (or epi-sum)
of $f_{1}, f_{2}$ is defined by
$$(\forall x\in\R^n)\ f_{1}\Box f_{2}(x)=\inf_{w}\{f_{1}(x-w)+f_{2}(w)\},$$
and it is exact at $x$ if $\exists~w\in\R^n$ such that $f_{1}\Box f_{2}(x)=f_{1}(x-w)+f_{2}(w)$;
$f_{1}\Box f_{2}$ is exact if it is exact at every point of its domain.

\subsection{Related work}
A comparison to known work in the literature is in order.
In \cite{zhang2,zhang1}, Zhang et. al. defined a lower compensated convex
transform for $0<\mu<+\infty$ by
$$C_{\mu}^{l}(f)=\conv(2\mu\jj+f)-2\mu\jj.$$
The lower compensated convex transform is the proximal
hull.
In \cite{zhang2}, Zhang, Crooks and Orlando gave a comprehensive
study on the average compensated
convex approximation,
which is an arithmetic average of the proximal hull and the upper proximal hull.
While the proximal hull is a common ingredient,  our work and theirs
are completely different.
By nature, the proximal mapping of the
proximal average for convex functions
 is exactly the convex combination
of proximal mappings of individual convex functions \cite{proxbas}.
In \cite{proxave}, Hare proposed a proximal average by
$$\pavn_{1/\mu}=-e_{1/(\mu+\alpha(1-\alpha))}(-\alpha e_{1/\mu}f-(1-\alpha)e_{1/\mu}g).$$
For this average,
 $x\mapsto\pavn_{1/\mu}(x)$ is $\mathcal{C}^{1+}$ for every $\alpha\in]0,1[$, and enjoys other nice stabilities
 with respect to $\alpha$, see, e.g., \cite[Theorem 4.6]{proxave}. However,
this average definition has two disadvantages.\\\noindent (i) Even when both $f,g$ are convex, it does not
recover the proximal average for convex functions: $$-e_{1/\mu}(-\alpha e_{1/\mu}f-
(1-\alpha)e_{1/\mu}g).$$ (ii) Neither the proximal mapping
$\Prox_{1/(\mu+\alpha(1-\alpha))}\pavn_{1/\mu}$ nor
$\Prox_{1/\mu}\pavn_{1/\mu}$
is the average of the proximal mappings
$\Prox_{1/\mu}f$ and $\Prox_{1/\mu}g$.\par
In \cite{goebel2010proximal}, Goebel introduced a proximal average for saddle functions
by using extremal convolutions:
$$\mathcal{P}_{\mu,\eta}^{\cup\cap}
=\big(\lambda_{1}\timess (f_{1}+\mu\timess\jj_{x}-\eta\timess \jj_{y})\big)\pluss \big(\lambda_{2}\timess (f_{2}
+\mu\timess\jj_{x}-\eta\timess\jj_{y})\big)-
\mu\timess\jj_{x}+\eta\timess\jj_{y},$$
in which $f_{1}, f_{2}: \R^m\times \R^n\rightarrow\RE $ are
saddle functions, $\jj_{x}(x,y)=\jj(x), \jj_{y}(x,y)
=\jj(y)$, $\mu, \eta>0$, $\lambda_{1}+\lambda_{2}=1$ with $\lambda_{i}>0$, and
$\pluss$ is the extremal convolution.
Some nice results about self-duality with respect to saddle function conjugacy and partial conjugacy are put forth and proved by Goebel \cite{goebel2010proximal}.
Goebel's average is the proximal average for convex functions when each $f_{i}$
is convex. However, the proximal mapping of $\Prox_{\lambda}\mathcal{P}_{\mu,\eta}^{\cup\cap}$ is not
the convex combination of $\Prox_{\lambda}f_1$ and $\Prox_{\lambda} f_2$.

\subsection{Blanket assumptions}\label{s:assump}
Throughout the paper,
the functions $f,g:\R^n\rightarrow\RX$ are proper, lsc
and prox-bounded with thresholds $\lambda_{f}, \lambda_{g}>0$ respectively,
$\bl=\min\{\lambda_{f},\lambda_{g}\}$, $\lambda>0$,
$\mu>0$ and $\alpha\in [0,1]$.

\section{Preliminaries}\label{s:prel}
In this section, we
collect several facts and present some auxiliary results
on proximal mappings of proximal functions, Moreau envelopes
and proximal hulls, which will be used in the sequel.

\subsection{Relationship among three regularizations: $e_{\lambda}f$, $h_{\lambda}f$, and $e_{\lambda,\mu}f$}
Some key properties about these regularizations come as follows.
\begin{Fact}\emph{(\cite[Example 11.26]{rockwets})}\label{l:dcform}
Let $0<\lambda<\lambda_{f}$.
\begin{enumerate}[label=\rm(\alph*)]
\item\label{i:mor:l}
The Moreau envelope
$$e_{\lambda}f=
-\left(f+\frac{1}{2\lambda}\|\cdot\|^2\right)^*\bigg(\frac{\cdot}{\lambda}\bigg)+
\frac{1}{2\lambda}\|\cdot\|^2$$
is locally Lipschitz.
\item The proximal hull satisfies
$$h_{\lambda}f+\frac{1}{2\lambda}\|\cdot\|^2=\bigg(f+\frac{1}{2\lambda}\|\cdot\|^2\bigg)^{**}.$$
\end{enumerate}
\end{Fact}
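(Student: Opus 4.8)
The two identities share one proof scheme: complete the square, then invoke Fenchel conjugacy. Write $\varphi:=f+\tfrac{1}{2\lambda}\|\cdot\|^{2}$. Since $0<\lambda<\lambda_{f}$, prox-boundedness guarantees that $e_{\lambda}f$ is real-valued on all of $\R^{n}$ (cf.\ \cite[Example 3.28]{rockwets}); in particular $\inf\varphi=e_{\lambda}f(0)$ is finite, so $\varphi$ is bounded below and its conjugate $\varphi^{*}$ is a proper, lsc, convex function.

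For \ref{i:mor:l}, I would expand $\tfrac{1}{2\lambda}\|w-x\|^{2}=\tfrac{1}{2\lambda}\|w\|^{2}-\tfrac{1}{\lambda}\scal{w}{x}+\tfrac{1}{2\lambda}\|x\|^{2}$ inside the infimum defining $e_{\lambda}f$, factor the constant $\tfrac{1}{2\lambda}\|x\|^{2}$ out of the infimum, and rewrite what is left as minus a conjugate:
$$e_{\lambda}f(x)=\frac{1}{2\lambda}\|x\|^{2}-\sup_{w}\Big\{\scal{w}{x/\lambda}-\varphi(w)\Big\}=\frac{1}{2\lambda}\|x\|^{2}-\varphi^{*}\!\left(\frac{x}{\lambda}\right),$$
which is the claimed formula. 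For the regularity statement, rearrange this as $\varphi^{*}(\cdot/\lambda)=\tfrac{1}{2\lambda}\|\cdot\|^{2}-e_{\lambda}f$: the left-hand side is convex and, because $e_{\lambda}f$ is real-valued, finite on the whole open set $\R^{n}$, hence locally Lipschitz. Therefore $e_{\lambda}f$ equals the $\mathcal{C}^{\infty}$ function $\tfrac{1}{2\lambda}\|\cdot\|^{2}$ minus a locally Lipschitz function, so $e_{\lambda}f$ is locally Lipschitz.

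For (b), I would run the same device on $h_{\lambda}f$. Expanding $-\tfrac{1}{2\lambda}\|x-w\|^{2}$ cancels the $\tfrac{1}{2\lambda}\|x\|^{2}$ term; substituting the rearranged form of \ref{i:mor:l}, namely $e_{\lambda}f(w)-\tfrac{1}{2\lambda}\|w\|^{2}=-\varphi^{*}(w/\lambda)$, and then changing variables $v=w/\lambda$, turns $h_{\lambda}f(x)+\tfrac{1}{2\lambda}\|x\|^{2}$ into the supremum over $v$ of the affine-in-$x$ functions $\scal{v}{x}-\varphi^{*}(v)$, which is exactly $\varphi^{**}(x)$. Hence $h_{\lambda}f+\tfrac{1}{2\lambda}\|\cdot\|^{2}=\big(f+\tfrac{1}{2\lambda}\|\cdot\|^{2}\big)^{**}$.

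I do not expect a genuine obstacle here: once the hypotheses are in place the rest is bookkeeping with conjugates. The only step doing real work is the chain ``$e_{\lambda}f$ real-valued $\Rightarrow$ $\varphi^{*}$ proper and everywhere finite $\Rightarrow$ locally Lipschitz'', which leans on $0<\lambda<\lambda_{f}$ and prox-boundedness; without these, $\varphi^{*}$ can be improper and $e_{\lambda}f\equiv-\infty$, so that is the step I would be careful to justify.
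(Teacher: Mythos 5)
Your proposal is correct, and it is the standard argument. The paper itself offers no proof of this statement: it is imported verbatim as a Fact from \cite[Example 11.26]{rockwets}, so there is nothing internal to compare against. Your completing-the-square computation, $e_{\lambda}f(x)=\tfrac{1}{2\lambda}\|x\|^{2}-\varphi^{*}(x/\lambda)$ with $\varphi=f+\tfrac{1}{2\lambda}\|\cdot\|^{2}$, followed by the observation that a finite convex function on $\R^{n}$ is locally Lipschitz, is exactly the derivation underlying that citation, and your identification of the one load-bearing step (finiteness of $e_{\lambda}f$, hence of $\varphi^{*}$, from $0<\lambda<\lambda_{f}$) is the right place to be careful.

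One small remark: in part (b) your computation produces $h_{\lambda}f(x)+\tfrac{1}{2\lambda}\|x\|^{2}=\sup_{v}\{\scal{v}{x}-\varphi^{*}(v)\}=\varphi^{**}(x)$, which presupposes that the proximal hull is defined with a supremum, i.e.\ $h_{\lambda}f=-e_{\lambda}(-e_{\lambda}f)$ as in Fact~\ref{f:m-p-l}. The paper's Definition of $h_{\lambda}f$ is written with an infimum; that appears to be a typo (an infimum would be inconsistent with Fact~\ref{f:m-p-l} and with the stated identity, since $\varphi^{**}$ is a supremum of affine functions), so you have silently used the correct convention. It would be worth flagging that explicitly rather than leaving the reader to reconcile the mismatch.
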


\begin{Fact}\emph{(\cite[Examples 1.44, 1.46, Exercise 1.29]{rockwets})}\label{f:m-p-l}
Let $0<\mu<\lambda<\lambda_{f}$. One has
\begin{enumerate}[label=\rm(\alph*)]
\item $h_{\lambda}f=-e_{\lambda}(-e_{\lambda}f)$,
\item \label{i:p:hull}
$e_{\lambda} f=e_{\lambda}(h_{\lambda}f)$,
\item $h_{\lambda}(h_{\lambda}f)=h_{\lambda}f$,
\item\label{i:d:env}
 $e_{\lambda,\mu}f=-e_{\mu}(-e_{\lambda}f)=h_{\mu}(e_{\lambda-\mu}f)=e_{\lambda-\mu}(h_{\lambda}f)$,
\item $e_{\lambda_{1}}(e_{\lambda_{2}}f)=e_{\lambda_{1}+\lambda_{2}}f$ for
$\lambda_{1}, \lambda_{2}>0$.
\end{enumerate}
\end{Fact}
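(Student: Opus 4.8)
The plan is to prove the five identities in the order \textup{(e)}, \textup{(a)}, \textup{(b)}, \textup{(c)}, \textup{(d)}, since the last two build on the earlier ones. Abbreviate $q_{\lambda}:=\tfrac{1}{2\lambda}\|\cdot\|^{2}$; then $e_{\lambda}g=g\,\Box\,q_{\lambda}$, and Fact~\ref{l:dcform} reads, for proper lsc prox-bounded $g$ and $0<\lambda<\lambda_{g}$,
\begin{equation*}
e_{\lambda}g=q_{\lambda}-(g+q_{\lambda})^{*}(\cdot/\lambda),\qquad\qquad h_{\lambda}g+q_{\lambda}=(g+q_{\lambda})^{**}.
\end{equation*}
Identity \textup{(e)} is the semigroup property: by commutativity and associativity of infimal convolution, $e_{\lambda_{1}}(e_{\lambda_{2}}f)=f\,\Box\,q_{\lambda_{1}}\,\Box\,q_{\lambda_{2}}=f\,\Box\,(q_{\lambda_{1}}\Box q_{\lambda_{2}})$, so it reduces to the elementary quadratic identity $q_{\lambda_{1}}\Box q_{\lambda_{2}}=q_{\lambda_{1}+\lambda_{2}}$, which follows by minimizing the strictly convex map $w\mapsto q_{\lambda_{1}}(x-w)+q_{\lambda_{2}}(w)$.

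For \textup{(a)}, I would expand $-e_{\lambda}(-e_{\lambda}f)(x)=\sup_{w}\{e_{\lambda}f(w)-q_{\lambda}(w-x)\}$, add $q_{\lambda}(x)$, and invoke the Moreau identity $q_{\lambda}(x)-q_{\lambda}(w-x)=\scal{w/\lambda}{x}-q_{\lambda}(w)$; after the substitution $v=w/\lambda$ and the first formula above (which gives $q_{\lambda}(\lambda v)-e_{\lambda}f(\lambda v)=(f+q_{\lambda})^{*}(v)$), the supremum becomes $(f+q_{\lambda})^{**}=h_{\lambda}f+q_{\lambda}$, i.e. $-e_{\lambda}(-e_{\lambda}f)=h_{\lambda}f$. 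For \textup{(b)}, apply the first formula to $h_{\lambda}f$, then the second, then the standard identity $\psi^{***}=\psi^{*}$: $e_{\lambda}(h_{\lambda}f)=q_{\lambda}-\big((f+q_{\lambda})^{**}\big)^{*}(\cdot/\lambda)=q_{\lambda}-(f+q_{\lambda})^{*}(\cdot/\lambda)=e_{\lambda}f$. For \textup{(c)}, the second formula shows $h_{\lambda}f+q_{\lambda}=(f+q_{\lambda})^{**}$ is proper lsc convex, hence equal to its biconjugate, so another application of the second formula gives $h_{\lambda}(h_{\lambda}f)+q_{\lambda}=(h_{\lambda}f+q_{\lambda})^{**}=h_{\lambda}f+q_{\lambda}$; alternatively \textup{(c)} is immediate from \textup{(a)} and \textup{(b)} via $h_{\lambda}(h_{\lambda}f)=-e_{\lambda}(-e_{\lambda}(h_{\lambda}f))=-e_{\lambda}(-e_{\lambda}f)=h_{\lambda}f$.

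For \textup{(d)}, the first equality $e_{\lambda,\mu}f=-e_{\mu}(-e_{\lambda}f)$ is immediate: both sides are $\sup_{w}\{e_{\lambda}f(w)-q_{\mu}(x-w)\}$. The second follows by applying \textup{(a)} at parameter $\mu$ to $e_{\lambda-\mu}f$ and collapsing the inner double envelope with \textup{(e)}: $h_{\mu}(e_{\lambda-\mu}f)=-e_{\mu}(-e_{\mu}(e_{\lambda-\mu}f))=-e_{\mu}(-e_{\lambda}f)$. For the third, set $p:=e_{\lambda-\mu}(h_{\lambda}f)$ and show $p$ is $\mu$-proximal, i.e. $p+q_{\mu}$ is convex: writing $h_{\lambda}f=(h_{\lambda}f+q_{\lambda})-q_{\lambda}$ with $h_{\lambda}f+q_{\lambda}$ convex,
\begin{equation*}
p(x)+q_{\mu}(x)=\inf_{w}\big\{\,(h_{\lambda}f(w)+q_{\lambda}(w))+Q(w,x)\,\big\},\qquad Q(w,x):=q_{\lambda-\mu}(w-x)+q_{\mu}(x)-q_{\lambda}(w),
\end{equation*}
and a short computation shows the Hessian of the quadratic $Q$ is positive semidefinite (per coordinate the $2\times 2$ block has positive $(1,1)$-entry $\mu/(\lambda(\lambda-\mu))$ and determinant exactly $0$); so $p+q_{\mu}$, a partial infimum of a jointly convex function, is convex. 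Then the second formula applied to $p$ gives $h_{\mu}p+q_{\mu}=(p+q_{\mu})^{**}=p+q_{\mu}$, so $p=h_{\mu}p$, and using \textup{(a)}, \textup{(e)}, \textup{(b)} in turn, $p=h_{\mu}p=-e_{\mu}(-e_{\mu}p)=-e_{\mu}(-e_{\lambda}(h_{\lambda}f))=-e_{\mu}(-e_{\lambda}f)$, which equals $h_{\mu}(e_{\lambda-\mu}f)$ by the second equality.

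I expect the main obstacle to be the third equality of \textup{(d)} --- proving that $e_{\lambda-\mu}(h_{\lambda}f)$ is $\mu$-proximal. Everything else is elementary or a direct manipulation of the defining suprema and infima, but this step genuinely uses the convexity of $h_{\lambda}f+q_{\lambda}$ together with the exact cancellation in $Q$ (the vanishing of $\det Q$ is precisely what makes the parameters $\lambda-\mu$ and $\mu$ compose). A secondary, routine point is checking that each auxiliary function ($e_{\lambda-\mu}f$, $h_{\lambda}f$, $p$, \dots) is prox-bounded with threshold strictly above the parameter at which Fact~\ref{l:dcform} or \textup{(a)} is applied; this is what the strict inequalities $0<\mu<\lambda<\lambda_{f}$ provide --- for instance $\lambda<\lambda_{f}$ makes $f+q_{\lambda}$ coercive, hence $\lambda_{h_{\lambda}f}>\lambda$.
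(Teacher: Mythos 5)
The paper offers no proof of this Fact at all --- it is imported verbatim from Rockafellar--Wets (Examples 1.44, 1.46, Exercise 1.29) --- so there is nothing internal to compare against; your argument is a correct, self-contained reconstruction built on Fact~\ref{l:dcform}, which the paper likewise only cites. The individual steps check out: the semigroup law (e) reduces, via commuting infima, to $\tfrac{1}{2\lambda_1}\|\cdot\|^2\,\Box\,\tfrac{1}{2\lambda_2}\|\cdot\|^2=\tfrac{1}{2(\lambda_1+\lambda_2)}\|\cdot\|^2$; items (a)--(c) follow by biconjugation through the two formulas of Fact~\ref{l:dcform}; and in (d) the only genuinely nontrivial point --- that $e_{\lambda-\mu}(h_\lambda f)$ is $\mu$-proximal --- is handled correctly: your Hessian computation for the quadratic $Q$ (positive $(1,1)$-entry $\mu/(\lambda(\lambda-\mu))$, determinant exactly $0$) is right, and partial minimization of the resulting jointly convex function does yield convexity of $p+\tfrac{1}{2\mu}\|\cdot\|^2$. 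You are also right that the prox-boundedness bookkeeping ($\lambda_{h_\lambda f}>\lambda$, $\lambda_{e_{\lambda-\mu}f}>\mu$, $\lambda_{p}>\mu$) is exactly where the strict inequalities $0<\mu<\lambda<\lambda_f$ are consumed, and your coercivity argument for $\lambda_{h_\lambda f}>\lambda$ is sound. One point worth flagging explicitly: the paper's displayed definition of $h_\lambda f$ uses an \emph{infimum}, under which item (a) would be false (the infimum is $-\infty$ in general); this is evidently a typo for the supremum of Rockafellar--Wets, and your proof sidesteps it by taking Fact~\ref{l:dcform}(b), namely $h_\lambda f+\tfrac{1}{2\lambda}\|\cdot\|^2=\bigl(f+\tfrac{1}{2\lambda}\|\cdot\|^2\bigr)^{**}$, as the operative characterization of the proximal hull --- say so explicitly so the reader does not try to reconcile your derivation of (a) with the literal displayed definition.
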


For more details about these regularizations, we refer the reader to
\cite{attouch1990approximation,infconv,proxhilbert,diffprop} and \cite[Chapter 1]{rockwets}.

\subsection{Proximal functions}

The concept of $\lambda$-proximal functions will play an important role. This
subsection is dedicated to properties of $\lambda$-proximal functions.

\begin{df} We say that $f$ is \emph{$\lambda$-proximal}
 if $f+\frac{1}{2\lambda}\|\cdot\|^2$ is convex.
\end{df}

\begin{lem}\label{l:env:neg}
\begin{enumerate}[label=\rm(\alph*)]
\item \label{i:e:1} The negative Moreau envelope $-e_{\lambda}f$ is always $\lambda$-proximal.
\item\label{i:e:2}
If $e_{\lambda}f$ is $\mathcal{C}^{1}$, then $f+\frac{1}{2\lambda}\|\cdot\|^2$ is convex,
i.e., $f$ is $\lambda$-proximal.
\end{enumerate}
\end{lem}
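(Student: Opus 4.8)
For part \ref{i:e:1}, the plan is to invoke Fact \ref{l:dcform}\ref{i:mor:l}. Writing $e_{\lambda}f = -\big(f+\tfrac{1}{2\lambda}\|\cdot\|^2\big)^*(\cdot/\lambda) + \tfrac{1}{2\lambda}\|\cdot\|^2$, we get
\[
-e_{\lambda}f + \frac{1}{2\lambda}\|\cdot\|^2 = \Big(f+\frac{1}{2\lambda}\|\cdot\|^2\Big)^*\Big(\frac{\cdot}{\lambda}\Big).
\]
The right-hand side is a composition of the convex function $\big(f+\tfrac{1}{2\lambda}\|\cdot\|^2\big)^*$ (a Fenchel conjugate, hence convex and lsc) with the linear map $x\mapsto x/\lambda$, so it is convex. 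Therefore $-e_{\lambda}f + \tfrac{1}{2\lambda}\|\cdot\|^2$ is convex, which is exactly the statement that $-e_{\lambda}f$ is $\lambda$-proximal. One small point to address: Fact \ref{l:dcform} is stated for $0<\lambda<\lambda_f$, so for $\lambda \geq \lambda_f$ (where $e_\lambda f$ may fail to be proper or finite) the claim should either be restricted accordingly or handled by noting the conjugate formula still gives the identity wherever $e_\lambda f > -\infty$; I would simply record that we work under $0<\lambda<\lambda_f$ as in the cited fact, consistent with the blanket assumptions.

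For part \ref{i:e:2}, the hypothesis is that $e_{\lambda}f$ is $\mathcal{C}^1$. The idea is to combine part \ref{i:e:1} applied twice with the identity $h_{\lambda}f = -e_{\lambda}(-e_{\lambda}f)$ from Fact \ref{f:m-p-l}(a). Since $-e_{\lambda}f$ is $\lambda$-proximal and, being $\mathcal{C}^1$, is also continuous and prox-bounded appropriately, the function $-e_{\lambda}(-e_{\lambda}f) = h_\lambda f$ is $\lambda$-proximal by part \ref{i:e:1}. The key additional step is to show that $\mathcal{C}^1$-ness of $e_\lambda f$ forces $f = h_\lambda f$: recall from Fact \ref{l:dcform}(b) that $h_\lambda f + \tfrac{1}{2\lambda}\|\cdot\|^2 = \big(f+\tfrac{1}{2\lambda}\|\cdot\|^2\big)^{**}$, so $f$ is $\lambda$-proximal if and only if $f + \tfrac{1}{2\lambda}\|\cdot\|^2$ is convex and lsc, i.e. equals its biconjugate, i.e. $f = h_\lambda f$. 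Thus it suffices to prove that if $e_\lambda f$ is differentiable then $f = h_\lambda f$.

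The main obstacle is precisely this last implication: passing from smoothness of the envelope back to a structural property of $f$ itself. The route I would take is through Fact \ref{f:m-p-l}\ref{i:p:hull}, which gives $e_\lambda f = e_\lambda(h_\lambda f)$, together with the fact that $h_\lambda f$ is the largest $\lambda$-proximal minorant of $f$ (equivalently $f \geq h_\lambda f$ always, with $h_\lambda(h_\lambda f) = h_\lambda f$). Since $e_\lambda f$ is $\mathcal{C}^1$, $\nabla e_\lambda f(x) = \tfrac{1}{\lambda}(x - p)$ for the unique $p \in \Prox_\lambda f(x)$, so $\Prox_\lambda f$ is single-valued and continuous; a standard argument (e.g. via \cite[Chapter 1]{rockwets}) then shows that $f$ can be recovered from $(e_\lambda f, \Prox_\lambda f)$ on the range of $\Prox_\lambda f$ as $f(p) = e_\lambda f(x) - \tfrac{1}{2\lambda}\|x-p\|^2$, and that points outside this range carry no information distinguishing $f$ from $h_\lambda f$. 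Combining this with $e_\lambda f = e_\lambda(h_\lambda f)$ yields $f = h_\lambda f$ on $\dom(\Prox_\lambda f)^{-1}$ and hence everywhere after lsc regularization, giving $\lambda$-proximality of $f$. I expect the bookkeeping around the range of $\Prox_\lambda f$ and the possibility of $f$ taking the value $+\infty$ to be the fiddly part, but the conceptual content is entirely carried by Facts \ref{l:dcform} and \ref{f:m-p-l}.
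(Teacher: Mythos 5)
Your proof of part \ref{i:e:1} is correct and is exactly the paper's argument: from Fact~\ref{l:dcform}\ref{i:mor:l}, $-e_{\lambda}f+\frac{1}{2\lambda}\|\cdot\|^2=\big(f+\frac{1}{2\lambda}\|\cdot\|^2\big)^{*}(\cdot/\lambda)$ is a conjugate composed with a linear map, hence convex; your caveat about working under $0<\lambda<\lambda_f$ is appropriate.

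For part \ref{i:e:2}, however, your argument has a genuine gap at its central step. You correctly reduce the claim to showing $f=h_{\lambda}f$, and you correctly obtain $f(p)=h_{\lambda}f(p)$ for every $p$ in the range of $\Prox_{\lambda}f$ (using $e_{\lambda}f=e_{\lambda}(h_{\lambda}f)$ and single-valuedness of the proximal mapping). But the passage from there to ``$f=h_{\lambda}f$ everywhere'' is exactly the content of the lemma, and the two phrases you use to bridge it do not hold up. The assertion that ``points outside this range carry no information distinguishing $f$ from $h_{\lambda}f$'' is backwards: points outside $\ran\Prox_{\lambda}f$ are precisely where $f$ could exceed $h_{\lambda}f$ without changing the envelope at all, and ruling that out is the whole difficulty. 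The appeal to ``lsc regularization'' also does not help, since $f$ is already lsc and is a fixed given function; you are not free to replace it by a regularization of its restriction to $\ran\Prox_{\lambda}f$. What is actually needed is an argument that everywhere-differentiability of $\big(f+\frac{1}{2\lambda}\|\cdot\|^2\big)^{*}$ forces $f+\frac{1}{2\lambda}\|\cdot\|^2$ to agree with its biconjugate on all of its domain (including points never attained by the proximal mapping, e.g.\ boundary points of $\conv\dom f$ and any ``holes'' in $\dom f$ inside $\conv\dom f$). This is a nontrivial fact; the paper disposes of it in one line by invoking Soloviov's theorem (a proper, lsc, coercive function whose conjugate is everywhere differentiable is convex). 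Your sketch, as written, does not supply a substitute for that theorem, so either cite it as the paper does or supply the full range/attainment argument ($\ran\partial(\lambda f+\jj)^{*}\supseteq\ri\conv\dom f$, followed by a separate treatment of the relative boundary using lower semicontinuity and the line-segment continuity of the convex function $h_{\lambda}f+\frac{1}{2\lambda}\|\cdot\|^2$).
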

\begin{proof} By Fact~\ref{l:dcform},
\begin{equation}\label{e:moreau}
(\forall x\in\R^n)\ \frac{1}{2\lambda}\|x\|^2-e_{\lambda}f(x)=
\bigg(f+\frac{1}{2\lambda}\|\cdot\|^2\bigg)^{*}\bigg(\frac{x}{\lambda}\bigg).
\end{equation}
\ref{i:e:1}: This is clear from \eqref{e:moreau}.

\noindent\ref{i:e:2}: By \eqref{e:moreau}, the assumption ensures that
$\big(f+\frac{1}{2\lambda}\|\cdot\|^2\big)^{*}\big(\frac{x}{\lambda}\big)$ is differentiable.
It follows from Soloviov's theorem \cite{soloviov}
 that $f+\frac{1}{2\lambda}\|\cdot\|^2$ is convex.
\end{proof}

While for convex functions, proximal mappings and resolvents are the same, they
differ for nonconvex functions in general.

\begin{Fact}\emph{(\cite[Example 10.2]{rockwets})}
For any proper, lsc function $f:\R^n\rightarrow\RX$ and
any $\mu>0$, one has
$$(\forall x\in\R^n)\ P_{\mu}f(x)\subseteq J_{\mu\partial_{L}f}(x).$$
When $f$ is convex, the inclusion holds as an equation.
\end{Fact}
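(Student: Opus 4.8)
The final statement to prove is the Fact:

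$$P_\mu f(x) \subseteq J_{\mu\partial_L f}(x)$$

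for any proper, lsc $f:\R^n\to\RX$ and any $\mu>0$, with equality when $f$ is convex. Here $P_\mu f = \Prox_\mu f$ is the proximal mapping and $J_{\mu\partial_L f} = (\Id + \mu\partial_L f)^{-1}$ is the resolvent of the limiting subdifferential.

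Let me think about how to prove this.

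The plan is to reduce the claim to the generalized Fermat rule together with the \emph{exact} sum rule for the subdifferential of a sum with a smooth function. Fix $x\in\R^n$ and $\mu>0$, and put $\varphi(v)=f(v)+\frac{1}{2\mu}\|v-x\|^2$; since $f$ is proper and lsc, so is $\varphi$, and the perturbation $v\mapsto\frac{1}{2\mu}\|v-x\|^2$ is $\mathcal{C}^{\infty}$. If $P_{\mu}f(x)=\emp$ there is nothing to prove, so let $w\in P_{\mu}f(x)$; by definition $w$ is a global minimizer of $\varphi$.

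For the inclusion I would argue as follows. By the generalized Fermat rule \cite[Theorem 10.1]{rockwets}, $0\in\hat{\partial}\varphi(w)$. Because the quadratic term is differentiable at $w$ with gradient $\tfrac{w-x}{\mu}$, the Fr\'echet subdifferential sum rule is exact here \cite[Exercise 8.8]{rockwets}, giving $\hat{\partial}\varphi(w)=\hat{\partial}f(w)+\tfrac{w-x}{\mu}$. Hence $0\in\hat{\partial}f(w)+\tfrac{w-x}{\mu}$, so $\tfrac{x-w}{\mu}\in\hat{\partial}f(w)\subseteq\partial_{L}f(w)$, the last inclusion being immediate from the definition of the limiting subdifferential (constant sequences). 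Thus $x\in w+\mu\partial_{L}f(w)$, i.e.\ $w\in J_{\mu\partial_{L}f}(x)$, as required.

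For the convex case I would upgrade the above inclusion to an equivalence. When $f$ is convex, $\varphi$ is (strongly) convex and $\partial_{L}f=\partial f$; moreover $w$ minimizes $\varphi$ over $\R^n$ if and only if $0\in\partial\varphi(w)$. The convex sum rule applies since the quadratic term is finite and continuous everywhere, so $\partial\varphi(w)=\partial f(w)+\tfrac{w-x}{\mu}$, and therefore $w\in P_{\mu}f(x)\iff\tfrac{x-w}{\mu}\in\partial f(w)\iff w\in J_{\mu\partial f}(x)=J_{\mu\partial_{L}f}(x)$. This gives equality.

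The one point requiring care is the exactness of the Fr\'echet sum rule: it is precisely the $\mathcal{C}^{1}$ smoothness of the quadratic perturbation that turns the generic inclusion $\hat{\partial}f(w)+\tfrac{w-x}{\mu}\subseteq\hat{\partial}\varphi(w)$ into an equality, which is what makes the argument work for arbitrary (nonconvex) $f$. The same proof makes transparent why the reverse inclusion fails in general: $0\in\partial_{L}\varphi(w)$ only says that $w$ is a critical point of $\varphi$, not a global (or even local) minimizer, whereas in the convex case criticality and global minimality coincide.
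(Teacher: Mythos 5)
Your proof is correct. The paper does not prove this Fact at all---it simply cites \cite[Example 10.2]{rockwets}---and your argument (generalized Fermat rule at a global minimizer of $v\mapsto f(v)+\tfrac{1}{2\mu}\|v-x\|^2$, the exact Fr\'echet sum rule for a smooth perturbation, the inclusion $\hat{\partial}f\subseteq\partial_{L}f$, and the upgrade to an equivalence via convex subdifferential calculus when $f$ is convex) is precisely the standard argument behind that reference, with the key exactness point correctly identified.
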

\noindent However, proximal functions have surprising properties.
\begin{prop}\label{p:resolventf}
Let $0<\mu<\lambda_{f}$. Then
the following are equivalent:
\begin{enumerate}[label=\rm(\alph*)]
\item\label{i:resolvent1}
 $\Prox_{\mu}f=J_{\mu\partial_{L}f}$,
\item\label{i:resolvent2} $f$ is $\mu$-proximal,
\item\label{i:resolvent3} $\Prox_{\mu}f$ is maximally monotone,
\item\label{i:resolvent4} $\Prox_{\mu}f$ is convex-valued.
\end{enumerate}
\end{prop}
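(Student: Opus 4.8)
The plan is to establish the cycle of implications
\ref{i:resolvent2} $\Rightarrow$ \ref{i:resolvent1} $\Rightarrow$ \ref{i:resolvent4} $\Rightarrow$ \ref{i:resolvent3} $\Rightarrow$ \ref{i:resolvent2},
closing the loop through the most structural implication last. The starting point is the observation that for any proper lsc $f$ and $\mu>0$,
$$\Prox_{\mu}f(x)=\argmin_{w}\Big\{f(w)+\tfrac{1}{2\mu}\|w-x\|^2\Big\}=\argmin_{w}\Big\{\big(f+\tfrac{1}{2\mu}\|\cdot\|^2\big)(w)-\tfrac{1}{\mu}\scal{x}{w}\Big\},$$
so that $v\in\Prox_{\mu}f(x)$ forces $0\in\partial_{L}\big(f+\tfrac{1}{2\mu}\|\cdot\|^2\big)(v)-\tfrac{x}{\mu}$ via Fermat's rule, which in general only gives the inclusion $\Prox_{\mu}f\subseteq J_{\mu\partial_{L}f}$ recorded in the Fact preceding the statement. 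When $f$ is $\mu$-proximal, $g:=f+\tfrac{1}{2\mu}\|\cdot\|^2$ is convex, its limiting and Fenchel subdifferentials coincide, and minimizing the convex function $w\mapsto g(w)-\tfrac{1}{\mu}\scal{x}{w}$ is equivalent to the first-order condition $\tfrac{x}{\mu}\in\partial g(v)$; unwinding this gives $x\in v+\mu\partial_{L}f(v)$, i.e. $v\in J_{\mu\partial_{L}f}(x)$, and conversely any such $v$ is a global minimizer. This yields \ref{i:resolvent2} $\Rightarrow$ \ref{i:resolvent1}.

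For \ref{i:resolvent1} $\Rightarrow$ \ref{i:resolvent4} and \ref{i:resolvent3}: once $\Prox_{\mu}f=J_{\mu\partial_{L}f}=(\Id+\mu\partial_{L}f)^{-1}$, I would again use $\mu$-proximality in the guise just derived — but note we cannot yet assume it, so instead I will argue directly from \ref{i:resolvent1}. The cleanest route is to observe that $\Prox_{\mu}f(x)=\argmin_{w}\big\{g(w)-\tfrac{1}{\mu}\scal{x}{w}\big\}$ where $g=f+\tfrac{1}{2\mu}\|\cdot\|^2$; by Fact \ref{l:dcform}(b) this argmin equals $\argmin_{w}\big\{g^{**}(w)-\tfrac{1}{\mu}\scal{x}{w}\big\}$ whenever the two infima agree and are attained, and $\argmin$ of a convex function perturbed by a linear term is always convex — giving convex-valuedness — and is exactly $\partial g^{*}(\tfrac{x}{\mu})$, a (maximally) monotone operator in $x$ up to the affine reparametrization, giving \ref{i:resolvent3}. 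The implication \ref{i:resolvent4} $\Rightarrow$ \ref{i:resolvent3} can be folded in here as well: a convex-valued proximal mapping of a prox-bounded function agrees with the resolvent $J_{\mu\partial_{L}f}$ on its domain, and $\partial_{L}f$ then inherits monotonicity from the convexity structure, so $\Prox_{\mu}f$ is maximally monotone by the Minty-type surjectivity built into $(\Id+\mu\partial_{L}f)^{-1}$ having full domain (which follows from prox-boundedness with $\mu<\lambda_{f}$).

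The implication I expect to be the main obstacle is \ref{i:resolvent3} $\Rightarrow$ \ref{i:resolvent2} (or equivalently the step that forces convexity of $g=f+\tfrac{1}{2\mu}\|\cdot\|^2$ from a geometric property of the proximal mapping). Here the idea is: maximal monotonicity of $\Prox_{\mu}f$ means its graph cannot be enlarged, and combined with the always-valid inclusion $\gph\Prox_{\mu}f\subseteq\gph J_{\mu\partial_{L}f}$ together with the fact (from prox-boundedness) that $J_{\mu\partial_{L}f}$ has full domain $\R^n$, maximality forces $\Prox_{\mu}f=J_{\mu\partial_{L}f}$, hence $\partial_{L}f$ is itself monotone; a monotone limiting subdifferential of an lsc function is the subdifferential of a convex function (this is where I would invoke the characterization of convexity via monotonicity of $\partial_{L}$, e.g. from \cite{rockwets} or \cite{optanal}), so $g^{*}$ is smooth enough that $g=g^{**}$ and $f$ is $\mu$-proximal. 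The delicate point throughout is keeping track of attainment and domains — that the relevant infima defining $\Prox_{\mu}f$ are attained (guaranteed by $\mu<\lambda_{f}$ and lsc via \cite[Theorem 1.25]{rockwets}) and that $\dom J_{\mu\partial_{L}f}=\R^n$ — so I would state those prox-boundedness consequences once at the outset and reuse them in each implication.
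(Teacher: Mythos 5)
Your overall plan (a cycle through the four statements) is reasonable, and your implication \ref{i:resolvent2}$\Rightarrow$\ref{i:resolvent1} is essentially the content of the result the paper cites for that step. However, two of your links have genuine gaps. First, your argument for \ref{i:resolvent1}$\Rightarrow$\ref{i:resolvent4} rests on the identity $\argmin_{w}\{g(w)-\tfrac{1}{\mu}\scal{x}{w}\}=\argmin_{w}\{g^{**}(w)-\tfrac{1}{\mu}\scal{x}{w}\}$ for $g=f+\tfrac{1}{2\mu}\|\cdot\|^2$. The two infima do always agree and are attained here (coercivity from $\mu<\lambda_{f}$), so your hedge does not restrict anything; but the argmins do \emph{not} agree in general --- the correct relation is that the second argmin is the convex hull of the first, i.e.\ $\Prox_{\mu}(h_{\mu}f)=\conv\Prox_{\mu}f$ as in Lemma~\ref{l:e:h} and Lemma~\ref{l:prox:grad}\ref{i:fplus:p} (indeed the set $\partial g^{*}(x/\mu)$ you identify is exactly $\conv\Prox_{\mu}f(x)$, not $\Prox_{\mu}f(x)$). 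Your argument never actually invokes hypothesis \ref{i:resolvent1}, so if it were valid it would show that \emph{every} proximal mapping is convex-valued, contradicting Example~\ref{e:proximal:fk}, where $\Prox_{1/2}f_{k}(0)=\{-1,1\}$. Second, in \ref{i:resolvent3}$\Rightarrow$\ref{i:resolvent2} you argue that maximal monotonicity of $\Prox_{\mu}f$ together with $\gph\Prox_{\mu}f\subseteq\gph J_{\mu\partial_{L}f}$ forces equality. That inference requires $J_{\mu\partial_{L}f}$ to be monotone (maximality only excludes proper \emph{monotone} extensions), which is not known at this stage and is false in general: in the same example $J_{1/2\partial_{L}f_{k}}$ contains the decreasing branch $x\mapsto -x/\varepsilon_{k}$. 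There is also a slip at the end: monotonicity of $(\Id+\mu\partial_{L}f)^{-1}$ gives monotonicity of $\Id+\mu\partial_{L}f$, not of $\partial_{L}f$ itself (the latter would make $f$ convex, which is stronger than $\mu$-proximal).

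For comparison, the paper avoids both traps. For \ref{i:resolvent1}$\Rightarrow$\ref{i:resolvent2} it uses only that $\Prox_{\mu}f$ is \emph{always} monotone, so under \ref{i:resolvent1} the inverse $\Id+\mu\partial_{L}f$ is monotone and \cite[Proposition 12.19]{rockwets} yields $\mu$-proximality --- no maximality and no comparison with the resolvent graph is needed. For \ref{i:resolvent4}$\Rightarrow$\ref{i:resolvent3} it notes that $\Prox_{\mu}f$ is nonempty, compact-valued, upper semicontinuous and monotone with full domain, so convex-valuedness upgrades monotonicity to maximal monotonicity via \cite{lohne08}. You would need substitutes of this kind for the two defective steps in order to close your cycle.
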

\begin{proof}
\ref{i:resolvent2}$\Rightarrow$\ref{i:resolvent1}: See \cite[Proposition 12.19]{rockwets}
\& \cite[Example 11.26]{rockwets}.

\noindent\ref{i:resolvent1}$\Rightarrow$\ref{i:resolvent2}: As $\Prox_{\mu}f$ is always monotone,
$(\Prox_{\mu}f)^{-1}=(\Id+\mu\partial_{L}f)$ is monotone and it suffices to apply
\cite[Proposition 12.19(c)$\Rightarrow$(b)]{rockwets}.

\noindent\ref{i:resolvent2}$\Leftrightarrow$\ref{i:resolvent3}:
See \cite[Proposition 12.19]{rockwets}.

\noindent\ref{i:resolvent3}$\Rightarrow$\ref{i:resolvent4}: This is clear.

\noindent\ref{i:resolvent4}$\Rightarrow$\ref{i:resolvent3}: By \cite[Example 1.25]{rockwets},
$\Prox_{\mu}f$ is nonempty, compact-valued and monotone with full domain. As $\Prox_{\mu}f$ is convex-valued, it suffices to apply \cite{lohne08}.
\end{proof}

%

\begin{lem}\label{l:prox:map}
Let $f$ be $\lambda$-proximal and $0<\mu<\lambda$. Then
\begin{enumerate}[label=\rm(\alph*)]
\item\label{i:p:convex} $\Prox_{\lambda}f$ is convex-valued,
\item\label{i:p:single}  $\Prox_{\mu}f$ is single-valued.
\end{enumerate}
Consequently, $\Prox_{\mu}f$ is maximally monotone if $0<\mu\leq\lambda$.
\end{lem}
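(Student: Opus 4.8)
The plan is to establish the two items separately, using Fact~\ref{l:dcform}\ref{i:mor:l} and the second-conjugate description of the proximal hull together with standard convex-analysis facts about subdifferentials of closed proper convex functions.

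For part \ref{i:p:convex}, the key observation is that, because $f$ is $\lambda$-proximal, the function $\phi:=f+\frac{1}{2\lambda}\|\cdot\|^2$ is convex, proper and lsc, so $\phi=\phi^{**}$, and hence by Fact~\ref{l:dcform}\ref{i:mor:l}, $h_{\lambda}f=f$; in particular $\Prox_{\lambda}f=\Prox_{\lambda}(h_\lambda f)$. More directly, I would compute $\Prox_{\lambda}f(x)=\argmin_{w}\{f(w)+\frac{1}{2\lambda}\|w-x\|^2\}=\argmin_{w}\{\phi(w)-\frac{1}{\lambda}\scal{x}{w}\}=\partial\phi^*(x/\lambda)$, using the standard duality $\argmin_w\{\phi(w)-\scal{y}{w}\}=\partial\phi^*(y)$ valid for closed proper convex $\phi$. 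Since the subdifferential of a convex function is a convex set (indeed an intersection of halfspaces), $\Prox_\lambda f(x)=\partial\phi^*(x/\lambda)$ is convex-valued. One should note that prox-boundedness with $\lambda<\lambda_f$ guarantees the argmin is nonempty and the envelope is finite, but even allowing $\lambda=\lambda_f$ the convexity of the (possibly empty) set is immediate.

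For part \ref{i:p:single}, I would use $0<\mu<\lambda$ to gain \emph{strict} convexity of the relevant objective. Write $f(w)+\frac{1}{2\mu}\|w-x\|^2 = \big(f(w)+\frac{1}{2\lambda}\|w\|^2\big)+\big(\frac{1}{2\mu}-\frac{1}{2\lambda}\big)\|w\|^2-\frac{1}{\mu}\scal{x}{w}+\frac{1}{2\mu}\|x\|^2$. The first bracket is convex by $\lambda$-proximality, the term $\big(\frac{1}{2\mu}-\frac{1}{2\lambda}\big)\|w\|^2$ is strictly convex since $\frac{1}{\mu}>\frac{1}{\lambda}$, and the remaining terms are affine in $w$; hence $w\mapsto f(w)+\frac{1}{2\mu}\|w-x\|^2$ is strictly convex. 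A strictly convex function has at most one minimizer, and prox-boundedness with $\mu<\lambda\le\lambda_f$ ensures at least one; thus $\Prox_\mu f(x)$ is a singleton for every $x$. For the final ``consequently'' clause, when $0<\mu\le\lambda$ I would invoke Proposition~\ref{p:resolventf}: $f$ is $\mu$-proximal (its $\mu$-proximal property follows from $\lambda$-proximality because $f+\frac{1}{2\mu}\|\cdot\|^2 = \big(f+\frac{1}{2\lambda}\|\cdot\|^2\big)+\big(\frac{1}{2\mu}-\frac{1}{2\lambda}\big)\|\cdot\|^2$ is a sum of two convex functions), so by Proposition~\ref{p:resolventf}\ref{i:resolvent2}$\Leftrightarrow$\ref{i:resolvent3} the mapping $\Prox_\mu f$ is maximally monotone.

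I do not expect a serious obstacle here; the one point requiring a little care is the boundary case and the nonemptiness of the argmin sets, which is exactly where prox-boundedness enters — for $\mu\le\lambda<\lambda_f$ this is the standard attainment result, and the statement's convexity/single-valuedness claims are robust enough that they also hold vacuously where an argmin happens to be empty. The mild subtlety is citing the correct form of the convex duality $\argmin_w\{\phi(w)-\scal{y}{w}\}=\partial\phi^*(y)$ and ensuring $\phi$ is closed (which follows from $f$ lsc), so that $\phi^{**}=\phi$ and the identification with a subdifferential is legitimate.
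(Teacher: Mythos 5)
Your proof is correct and follows essentially the same route as the paper's: for \ref{i:p:convex} you both reduce to the convexity of the objective $w\mapsto f(w)+\frac{1}{2\lambda}\|w\|^2-\frac{1}{\lambda}\scal{x}{w}$ (you phrase the argmin as $\partial\phi^*(x/\lambda)$, the paper just notes the argmin of a convex function is convex), and for \ref{i:p:single} you both use strict convexity plus coercivity of the shifted objective. The only divergence is the final clause: you derive maximal monotonicity from Proposition~\ref{p:resolventf} via $\mu$-proximality, whereas the paper uses continuity plus monotonicity for $\mu<\lambda$ and convex-valuedness plus \cite{lohne08} for $\mu=\lambda$; your route is fine except at the boundary case $\mu=\lambda=\lambda_f$, where the hypothesis $\mu<\lambda_f$ of Proposition~\ref{p:resolventf} is not guaranteed (e.g.\ $f=-\jj$ with $\lambda=\lambda_f=1$), a corner the paper's citation-based argument also leaves implicit.
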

\begin{proof}
\ref{i:p:convex}:
Observe that
$$e_{\lambda}f(x)=\inf_{y}\left\{f(y)+\frac{1}{2\lambda}\|y\|^2-\langle\frac{x}{\lambda}, y
\rangle\right\}+\frac{1}{2\lambda}\|x\|^2.$$
Since
$f+\frac{1}{2\lambda}\|\cdot\|^2-\langle{\frac{x}{\lambda}},\cdot\rangle$ is convex, $\Prox_{\lambda}f(x)$ is convex.

\noindent\ref{i:p:single}: This follow from the fact that
$f+\frac{1}{2\mu}\|\cdot\|^2-\langle\frac{x}{\mu},\cdot\rangle$ is strictly convex
and coercive.

When $0<\mu<\lambda$, $\Prox_{\mu}f$ is continuous and monotone, so maximally monotone by
\cite[Example 12.7]{rockwets}. For the maximal monotonicity of $\Prox_{\lambda}f$, apply
\ref{i:p:convex} and
\cite{lohne08} or Lemma~\ref{l:prox:grad}.
\end{proof}

The set of proximal functions is a convex cone. In particular, one has the following.

\begin{prop}\label{p:p:cone}
Let $f_1$ be $\lambda_1$-proximal and $f_2$ be $\lambda_2$-proximal. Then for any $\alpha,\beta>0$, the function $\alpha f_1+\beta f_2$ is $\frac{\lambda_1\lambda_2}{\beta\lambda_1+\alpha\lambda_2}$-proximal.
\end{prop}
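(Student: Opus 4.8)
The plan is to work directly from the definition of $\lambda$-proximality, namely that a function $h$ is $\lambda$-proximal exactly when $h + \frac{1}{2\lambda}\|\cdot\|^2$ is convex. So I want to show that
\[
\alpha f_1 + \beta f_2 + \frac{1}{2}\cdot\frac{\beta\lambda_1 + \alpha\lambda_2}{\lambda_1\lambda_2}\|\cdot\|^2
\]
is convex. The natural first step is to split the quadratic term in a way that matches the two summands: write $\frac{\beta\lambda_1 + \alpha\lambda_2}{\lambda_1\lambda_2} = \frac{\beta}{\lambda_2} + \frac{\alpha}{\lambda_1}$, so the displayed function becomes
\[
\alpha\!\left(f_1 + \frac{1}{2\lambda_1}\|\cdot\|^2\right) + \beta\!\left(f_2 + \frac{1}{2\lambda_2}\|\cdot\|^2\right).
\]
By hypothesis each bracketed function is convex, and since $\alpha,\beta>0$ this is a nonnegative combination of convex functions, hence convex. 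That is the whole argument.

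The key steps in order are: (1) recall the defining equivalence for $\lambda$-proximality; (2) compute the reciprocal $\frac{\beta\lambda_1+\alpha\lambda_2}{\lambda_1\lambda_2} = \frac{\alpha}{\lambda_1}+\frac{\beta}{\lambda_2}$; (3) regroup the quadratic into the two pieces $\frac{\alpha}{2\lambda_1}\|\cdot\|^2$ and $\frac{\beta}{2\lambda_2}\|\cdot\|^2$; (4) invoke convexity of $f_i + \frac{1}{2\lambda_i}\|\cdot\|^2$ and closure of the convex cone of convex functions under positive linear combinations.

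There is essentially no obstacle here — the only thing to be careful about is the arithmetic in step (2)/(3), i.e. that the claimed parameter $\frac{\lambda_1\lambda_2}{\beta\lambda_1+\alpha\lambda_2}$ is precisely the one whose reciprocal splits as $\frac{\alpha}{\lambda_1}+\frac{\beta}{\lambda_2}$, so that the coefficients of $\|\cdot\|^2$ line up exactly with the factors $\alpha$ and $\beta$ pulled out of the two convex pieces. One might also remark that the statement is sharp in the sense that this is the largest threshold guaranteed by this splitting, but that is not needed for the proof. A one-line follow-up observation worth including: iterating this gives that any finite positive combination $\sum_i \alpha_i f_i$ of $\lambda_i$-proximal functions is $\big(\sum_i \alpha_i/\lambda_i\big)^{-1}$-proximal, which is the precise sense in which the proximal functions form a convex cone as asserted in the sentence preceding the proposition.
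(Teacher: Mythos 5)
Your argument is correct and is essentially identical to the paper's own proof: both split the quadratic coefficient as $\frac{\beta\lambda_1+\alpha\lambda_2}{2\lambda_1\lambda_2}=\frac{\alpha}{2\lambda_1}+\frac{\beta}{2\lambda_2}$, regroup into $\alpha\left(f_1+\frac{1}{2\lambda_1}\|\cdot\|^2\right)+\beta\left(f_2+\frac{1}{2\lambda_2}\|\cdot\|^2\right)$, and invoke closure of convex functions under nonnegative combinations. No gaps; your closing remark about iterating to finitely many functions is a correct and natural extension of the same computation.
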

\begin{proof}
Since $f_1+\frac{1}{2\lambda_1}\|\cdot\|^2$ and $f_2+\frac{1}{2\lambda_2}\|\cdot\|^2$ are convex, so are $\alpha\left(f_1+\frac{1}{2\lambda_1}\|\cdot\|^2\right)$, $\beta\left(f_2+\frac{1}{2\lambda_2}\|\cdot\|^2\right)$ and their sum:
$$\alpha f_1+\beta f_2+\left(\frac{\alpha}{2\lambda_1}+\frac{\beta}{2\lambda_2}\right)\|\cdot\|^2=\alpha f_1+\beta f_2+\frac{\beta\lambda_1+\alpha\lambda_2}{2\lambda_1\lambda_2}\|\cdot\|^2.$$
Therefore, $\alpha f_1+\beta f_2$ is $\frac{\lambda_1\lambda_2}{\beta\lambda_1+\alpha\lambda_2}$-proximal.
\end{proof}

\subsection{The proximal mapping of the proximal hull}
\begin{lem}\label{l:e:h}Let $0<\lambda<\lambda_{f}$.
One has
\begin{equation}\label{i:hull:prox}
\Prox_{\lambda}(h_{\lambda}f)=\conv\Prox_{\lambda}f.
\end{equation}
\end{lem}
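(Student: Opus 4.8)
Fix $x\in\R^{n}$ and set $\psi:=f+\tfrac{1}{2\lambda}\|\cdot-x\|^{2}$, so that $\Prox_{\lambda}f(x)=\argmin\psi$. Since $0<\lambda<\lambda_{f}$, choosing $\lambda'\in(\lambda,\lambda_{f})$ gives $f+\tfrac{1}{2\lambda'}\|\cdot\|^{2}\ge e_{\lambda'}f(0)>-\infty$, whence $\psi(w)\ge\big(\tfrac{1}{2\lambda}-\tfrac{1}{2\lambda'}\big)\|w\|^{2}+(\text{affine in }w)$; thus $\psi$ is proper, lsc and coercive. Because the biconjugate is unaffected by adding an affine function, Fact~\ref{l:dcform}(b) yields $h_{\lambda}f+\tfrac{1}{2\lambda}\|\cdot-x\|^{2}=\big(f+\tfrac{1}{2\lambda}\|\cdot-x\|^{2}\big)^{**}=\psi^{**}$, and therefore $\Prox_{\lambda}(h_{\lambda}f)(x)=\argmin\psi^{**}$. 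So the lemma reduces to the purely variational identity
\[
\argmin\psi^{**}=\conv(\argmin\psi)
\]
for the coercive, proper, lsc function $\psi$.

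The engine is the standard fact that for such a $\psi$ the convex hull $\conv\psi$ is again proper and lsc (hence $\conv\psi=\psi^{**}$), and that for every $v\in\dom\psi^{**}$ the infimum in
\[
\psi^{**}(v)=\inf\Big\{\textstyle\sum_{i}\theta_{i}\psi(u_{i}):\ \theta_{i}\ge0,\ \sum_{i}\theta_{i}=1,\ \sum_{i}\theta_{i}u_{i}=v\Big\}
\]
is attained (by Carath\'eodory, with at most $n+1$ positive weights). Granting this, put $m:=\inf\psi=\inf\psi^{**}$. For ``$\supseteq$'': if $u_{i}\in\argmin\psi$ and $v=\sum_{i}\theta_{i}u_{i}$, then $\psi^{**}(v)\le\sum_{i}\theta_{i}\psi^{**}(u_{i})\le\sum_{i}\theta_{i}\psi(u_{i})=m$, so $v\in\argmin\psi^{**}$. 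For ``$\subseteq$'': given $v\in\argmin\psi^{**}$, take an attained representation $m=\psi^{**}(v)=\sum_{i}\theta_{i}\psi(u_{i})$ with $\sum_{i}\theta_{i}u_{i}=v$; since $\psi(u_{i})\ge m$ for all $i$ and the $\theta_{i}$ are convex weights, each $\psi(u_{i})=m$, i.e.\ $u_{i}\in\argmin\psi$, and hence $v\in\conv(\argmin\psi)$.

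The one substantive ingredient is thus the displayed convex-analysis fact, which I would either cite (it is a consequence of coercivity; see \cite{rockwets}) or prove directly: for each $\beta\in\R$ the slice $\{(u,t)\in\epig\psi:\ t\le\beta\}$ is compact, and a short compactness argument on minimizing sequences of convex combinations — using $\psi(u)/\|u\|\to\infty$ to discard escaping witnesses — delivers both the attainment and the closedness of $\conv(\epig\psi)$. This is precisely where the hypothesis $\lambda<\lambda_{f}$ is indispensable: it is exactly what makes $\psi$ coercive, and without it $\conv\psi$ need not even be lsc, so the identity fails. As a shortcut for the inclusion ``$\conv\Prox_\lambda f(x)\subseteq\Prox_\lambda(h_\lambda f)(x)$'' that avoids this machinery, I would note that $h_{\lambda}f\le f$ together with $e_{\lambda}(h_{\lambda}f)=e_{\lambda}f$ (Fact~\ref{f:m-p-l}\ref{i:p:hull}) forces $\Prox_{\lambda}f(x)\subseteq\Prox_{\lambda}(h_{\lambda}f)(x)$, and the right-hand side is convex by Lemma~\ref{l:prox:map}\ref{i:p:convex} because $h_{\lambda}f$ is $\lambda$-proximal (Fact~\ref{l:dcform}(b)); taking convex hulls then gives that inclusion immediately.
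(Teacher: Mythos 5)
Your proof is correct, but it follows a genuinely different route from the paper's. The paper's argument is two lines: it applies the formula $\partial_L(-e_{\lambda}f)=(\conv\Prox_{\lambda}f-\Id)/\lambda$ (\cite[Example 10.32]{rockwets}) to the identity $e_{\lambda}f=e_{\lambda}(h_{\lambda}f)$ from Fact~\ref{f:m-p-l}\ref{i:p:hull} to get $\conv\Prox_{\lambda}(h_{\lambda}f)=\conv\Prox_{\lambda}f$, and then removes the convex hull on the left because $h_{\lambda}f$ is $\lambda$-proximal, so $\Prox_{\lambda}(h_{\lambda}f)$ is already convex-valued by Lemma~\ref{l:prox:map}\ref{i:p:convex}. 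You instead work pointwise: tilting by $\tfrac{1}{2\lambda}\|\cdot-x\|^2$ and using Fact~\ref{l:dcform}(b) reduces the claim to $\argmin\psi^{**}=\conv(\argmin\psi)$ for the coercive, proper, lsc function $\psi$, which you then establish from the attained convex-hull representation of $\psi^{**}$. That identity is exactly the content of the paper's later Lemma~\ref{l:convexhull:f}\ref{i:convexh:min} (proved there via \cite{benoist} and \cite[Corollary 3.47]{rockwets}), so your ``one substantive ingredient'' is the same standard result the authors themselves import elsewhere; citing it rather than re-deriving it would be the cleanest way to finish. The trade-off: the paper's proof is shorter and leverages the subdifferential formula for the Moreau envelope, while yours is more elementary and self-contained, makes transparent exactly where the hypothesis $\lambda<\lambda_f$ enters (it is what makes $\psi$ coercive), and avoids any appeal to the prox-boundedness threshold of $h_{\lambda}f$ itself. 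Your closing shortcut for the inclusion $\conv\Prox_{\lambda}f(x)\subseteq\Prox_{\lambda}(h_{\lambda}f)(x)$ via $h_{\lambda}f\le f$ and $e_{\lambda}(h_{\lambda}f)=e_{\lambda}f$ is also sound.
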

\begin{proof}
Applying \cite[Example 10.32]{rockwets} to
$-e_{\lambda} f=-e_{\lambda}(h_{\lambda}f)$ yields
$$\conv\Prox_{\lambda}(h_{\lambda}f)=\conv\Prox_{\lambda}f.$$
Since $h_{\lambda}$ is $\lambda$-proximal, by Lemma~\ref{l:prox:map} we have
$\conv\Prox_{\lambda}(h_{\lambda}f)=\Prox_{\lambda}(h_{\lambda}f).$
Hence \eqref{i:hull:prox} follows.
\end{proof}

\begin{lem} Let $0<\lambda<\lambda_{f}$. The following are equivalent:
\begin{enumerate}[label=\rm(\alph*)]
\item\label{p:hull} $\Prox_{\lambda}(h_{\lambda}f)=\Prox_{\lambda}f$,
\item\label{p:function} $f$ is $\lambda$-proximal.
\end{enumerate}
\end{lem}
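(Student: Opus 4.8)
The plan is to prove the equivalence by a direct double implication, using the characterization of $\lambda$-proximality through $\Prox_{\lambda}f$ being convex-valued (Proposition~\ref{p:resolventf}) together with Lemma~\ref{l:e:h}, which already tells us that $\Prox_{\lambda}(h_{\lambda}f)=\conv\Prox_{\lambda}f$ unconditionally. With this in hand, the statement \ref{p:hull} says precisely that $\Prox_{\lambda}f=\conv\Prox_{\lambda}f$, i.e. that $\Prox_{\lambda}f$ is convex-valued.

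For the implication \ref{p:function}$\Rightarrow$\ref{p:hull}: assume $f$ is $\lambda$-proximal. Here I would like to invoke Lemma~\ref{l:prox:map}\ref{i:p:convex}, but that lemma is stated for $0<\mu<\lambda$ with $f$ being $\lambda$-proximal, whereas we need convex-valuedness of $\Prox_{\lambda}f$ itself at the threshold parameter. In fact Lemma~\ref{l:prox:map}\ref{i:p:convex} as written already covers this: its proof only uses that $f+\frac{1}{2\lambda}\|\cdot\|^2-\langle\frac{x}{\lambda},\cdot\rangle$ is convex, which holds exactly when $f$ is $\lambda$-proximal. So $\Prox_{\lambda}f(x)$ is convex for every $x$, and combining with Lemma~\ref{l:e:h} gives $\Prox_{\lambda}(h_{\lambda}f)=\conv\Prox_{\lambda}f=\Prox_{\lambda}f$, which is \ref{p:hull}.

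For the converse \ref{p:hull}$\Rightarrow$\ref{p:function}: assume $\Prox_{\lambda}(h_{\lambda}f)=\Prox_{\lambda}f$. By Lemma~\ref{l:e:h} the left side equals $\conv\Prox_{\lambda}f$, so $\Prox_{\lambda}f=\conv\Prox_{\lambda}f$, meaning $\Prox_{\lambda}f$ is convex-valued. Now apply Proposition~\ref{p:resolventf}\ref{i:resolvent4}$\Rightarrow$\ref{i:resolvent2} with $\mu=\lambda$: this requires $0<\lambda<\lambda_f$, which is exactly our hypothesis, and yields that $f$ is $\lambda$-proximal, i.e. \ref{p:function}.

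The only subtle point — and the step I would double-check most carefully — is the applicability of Proposition~\ref{p:resolventf} with the parameter equal to the prox-boundedness-threshold-bounded value $\lambda$ (the proposition is stated for $0<\mu<\lambda_f$, so with $\mu=\lambda<\lambda_f$ it applies verbatim), and the fact that Lemma~\ref{l:prox:map}\ref{i:p:convex} genuinely delivers convex-valuedness of $\Prox_\lambda f$ and not merely of $\Prox_\mu f$ for $\mu<\lambda$. Both are fine on inspection, so the proof is short; the real content was already extracted into Lemma~\ref{l:e:h} and Proposition~\ref{p:resolventf}.
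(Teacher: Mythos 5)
Your proof is correct and follows essentially the same route as the paper's: both directions reduce, via Lemma~\ref{l:e:h}, to the statement that $\Prox_{\lambda}f$ is convex-valued, and the direction \ref{p:function}$\Rightarrow$\ref{p:hull} is identical. The only cosmetic difference is that for \ref{p:hull}$\Rightarrow$\ref{p:function} you cite Proposition~\ref{p:resolventf}\ref{i:resolvent4}$\Rightarrow$\ref{i:resolvent2} directly (which is valid, since $\lambda<\lambda_f$), whereas the paper re-derives that implication inline via maximal monotonicity and \cite[Proposition 12.19]{rockwets}.
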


\begin{proof}
\ref{p:hull}$\Rightarrow$\ref{p:function}: Since $\Prox_{\lambda}(h_{\lambda}f)=
\conv \Prox_{\lambda}(h_{\lambda}f)$, $\Prox_{\lambda} f$ is upper
semicontinuous, convex and compact valued, and monotone with full domain, so
maximally monotone in view of \cite{lohne08} or Lemma~\ref{l:prox:grad}.
By \cite[Proposition 12.19]{rockwets},
$f+\frac{1}{2\lambda}\|\cdot\|^2$ is convex, equivalently,
$f$ is $\lambda$-proximal by \cite[Example 11.26]{rockwets}.

\noindent\ref{p:function}$\Rightarrow$\ref{p:hull}: As $f$ is $\lambda$-proximal,
$\Prox_{\lambda}f$ is convex-valued by Lemma~\ref{l:prox:map}. Then Lemma~\ref{l:e:h} gives
$\Prox_{\lambda}(h_{\lambda}f)=\conv\Prox_{\lambda}f=\Prox_{\lambda}f.$
\end{proof}
\begin{cor} If $f\neq h_{\lambda}f$, then $\Prox_{\lambda}(h_{\lambda}f)
\neq\Prox_{\lambda}f.$
\end{cor}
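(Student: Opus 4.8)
The plan is to prove the contrapositive of the immediately preceding equivalence. The corollary states that if $f \neq h_\lambda f$, then $\Prox_\lambda(h_\lambda f) \neq \Prox_\lambda f$. Equivalently, if $\Prox_\lambda(h_\lambda f) = \Prox_\lambda f$, then $f = h_\lambda f$. So the first step is to invoke the lemma just proved: the equality $\Prox_\lambda(h_\lambda f) = \Prox_\lambda f$ is equivalent to $f$ being $\lambda$-proximal.

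Thus it remains to show that if $f$ is $\lambda$-proximal, then $f = h_\lambda f$. For this I would use Fact~\ref{l:dcform}\ref{i:mor:l}(b), which gives $h_\lambda f + \frac{1}{2\lambda}\|\cdot\|^2 = (f + \frac{1}{2\lambda}\|\cdot\|^2)^{**}$. Since $f$ is $\lambda$-proximal, $f + \frac{1}{2\lambda}\|\cdot\|^2$ is convex; as $f$ is also proper and lsc by the blanket assumptions of Section~\ref{s:assump}, the function $f + \frac{1}{2\lambda}\|\cdot\|^2$ is proper, lsc and convex, hence equals its own biconjugate. Therefore $h_\lambda f + \frac{1}{2\lambda}\|\cdot\|^2 = f + \frac{1}{2\lambda}\|\cdot\|^2$, and cancelling the finite-valued quadratic term yields $h_\lambda f = f$. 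Taking the contrapositive gives the corollary.

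There is really no serious obstacle here — the result is an immediate consequence of the preceding lemma together with the biconjugate characterization of the proximal hull in Fact~\ref{l:dcform}. The only point that deserves a word of care is the justification that $f + \frac{1}{2\lambda}\|\cdot\|^2$ is not merely convex but proper and lsc, so that the Fenchel–Moreau theorem applies and $(\cdot)^{**}$ is the identity; this follows because $f$ is proper and lsc (blanket assumption) and adding a finite continuous convex function preserves both properties. One should also note $0 < \lambda < \lambda_f$ is in force so that Fact~\ref{l:dcform} is available. Alternatively, one could bypass the biconjugate argument entirely and cite Fact~\ref{f:m-p-l}(c) together with the observation that a $\lambda$-proximal proper lsc function is its own proximal hull, but the biconjugate route is the most transparent.

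Finally, I would present the proof in two short lines: first reduce via the preceding lemma to showing "$f$ $\lambda$-proximal $\Rightarrow$ $f = h_\lambda f$," then apply Fact~\ref{l:dcform}\ref{i:mor:l}(b) and the Fenchel–Moreau theorem as above. This keeps the corollary's proof to a couple of sentences, matching the terse style of the surrounding results.
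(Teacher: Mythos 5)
Your proposal is correct and is exactly the argument the paper intends: the corollary is left without proof precisely because it is the contrapositive of the preceding lemma combined with the observation that a proper, lsc, $\lambda$-proximal function equals its own proximal hull (via the biconjugate formula of Fact~2.1). Your care about properness and lower semicontinuity of $f+\tfrac{1}{2\lambda}\|\cdot\|^2$, so that the Fenchel--Moreau theorem applies, is the right (if routine) point to check.
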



\subsection{Proximal mappings and envelopes}
\begin{lem}\label{l:env:prox}
Let $0<\mu<\lambda<\bl$.
The following are equivalent:
\begin{enumerate}[label=\rm(\alph*)]
\item\label{i:env:fg}$e_{\lambda}f=e_{\lambda}g$,
\item\label{i:phull:fg}
 $h_{\lambda}f=h_{\lambda}g$,
\item \label{i:conv:fg} $\conv\left(f+\frac{1}{2\lambda}\|\cdot\|^2\right)=\conv\left(g+\frac{1}{2\lambda}\|\cdot\|^2\right)$,
\item\label{i:double:fg}
 $e_{\lambda,\mu}f=e_{\lambda,\mu}g$,

 \item\label{i:prox:initial}
  $\conv\Prox_{\lambda}f=\conv\Prox_{\lambda}g$, and for some $x_{0}\in\R^n$ one has
 $e_{\lambda}f(x_{0})=e_{\lambda}g(x_{0})$.

\end{enumerate}
Under any one of the conditions \ref{i:env:fg}--\ref{i:prox:initial}, one has
\begin{equation}\label{e:convexhull}
\overline{\conv} f=\overline{\conv} g.
\end{equation}
\end{lem}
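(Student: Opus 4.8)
\textbf{Proof proposal for Lemma~\ref{l:env:prox}.}
The plan is to establish the cycle of equivalences by working through the dual/convexified picture, where everything is governed by the single convex function $\conv\big(f+\tfrac{1}{2\lambda}\|\cdot\|^2\big)$, and then read off the final conclusion \eqref{e:convexhull} by letting $\lambda\to\infty$ in spirit, i.e.\ by exploiting that the convexified quadratic perturbations determine $\overline{\conv}f$. First I would treat \ref{i:env:fg}$\Leftrightarrow$\ref{i:conv:fg}: by Fact~\ref{l:dcform}\ref{i:mor:l}, $e_\lambda f$ and $\big(f+\tfrac{1}{2\lambda}\|\cdot\|^2\big)^*$ differ only by the fixed smooth function $\tfrac{1}{2\lambda}\|\cdot\|^2$ and the invertible change of variable $x\mapsto x/\lambda$, so $e_\lambda f=e_\lambda g$ iff the two conjugates agree, and since a proper lsc function's biconjugate equals its closed convex hull, the conjugates agree iff $\conv\big(f+\tfrac1{2\lambda}\|\cdot\|^2\big)=\conv\big(g+\tfrac1{2\lambda}\|\cdot\|^2\big)$ (using that adding the continuous convex function $\tfrac1{2\lambda}\|\cdot\|^2$ commutes with taking closed convex hull here). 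The equivalence \ref{i:conv:fg}$\Leftrightarrow$\ref{i:phull:fg} is then immediate from Fact~\ref{l:dcform}(b), which says $h_\lambda f+\tfrac1{2\lambda}\|\cdot\|^2=\big(f+\tfrac1{2\lambda}\|\cdot\|^2\big)^{**}$; and \ref{i:env:fg}$\Leftrightarrow$\ref{i:double:fg} follows from Fact~\ref{f:m-p-l}\ref{i:d:env}, $e_{\lambda,\mu}f=-e_\mu(-e_\lambda f)$, since $e_\lambda f$ determines $e_{\lambda,\mu}f$ and conversely $e_\lambda f = -e_{\lambda-\mu}(-e_{\lambda,\mu}f)$ can be recovered (or more directly, $e_{\lambda,\mu}f=e_{\lambda-\mu}(h_\lambda f)$ and $e_\lambda f=e_\lambda(h_\lambda f)$, so (b) gives (d), and for the converse apply $e_\mu(\cdot)$ appropriately).

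Next I would handle the proximal-mapping characterization \ref{i:prox:initial}. The forward direction \ref{i:env:fg}$\Rightarrow$\ref{i:prox:initial}: equality of Moreau envelopes trivially gives the single-point condition $e_\lambda f(x_0)=e_\lambda g(x_0)$, and by \cite[Example 10.32]{rockwets} applied to $e_\lambda f=e_\lambda g$ (as in the proof of Lemma~\ref{l:e:h}) one gets $\conv\Prox_\lambda f=\conv\Prox_\lambda g$. For the converse \ref{i:prox:initial}$\Rightarrow$\ref{i:env:fg}, the idea is that $\Prox_\lambda f$ (up to the affine reparametrization) is essentially the subdifferential of the convex function $e_\lambda f-\tfrac{1}{2\lambda}\|\cdot\|^2$'s conjugate — more precisely, using Fact~\ref{l:dcform}\ref{i:mor:l}, $\tfrac1{2\lambda}\|x\|^2-e_\lambda f(x)=\big(f+\tfrac1{2\lambda}\|\cdot\|^2\big)^*(x/\lambda)$, and the (Clarke/convexified) proximal mapping gives the gradient/subdifferential of the left side. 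Since $\conv\Prox_\lambda f=\conv\Prox_\lambda g$ means these two convex functions have the same subdifferential mapping everywhere, they differ by a constant; the single-point normalization $e_\lambda f(x_0)=e_\lambda g(x_0)$ then forces the constant to be zero, giving $e_\lambda f=e_\lambda g$. I would make this rigorous by invoking Fact~\ref{l:dcform}(b) to pass to $h_\lambda f$, noting $\Prox_\lambda(h_\lambda f)=\conv\Prox_\lambda f$ by Lemma~\ref{l:e:h}, so the hypothesis reads $\Prox_\lambda(h_\lambda f)=\Prox_\lambda(h_\lambda g)$ with both $h_\lambda f, h_\lambda g$ being $\lambda$-proximal, hence $h_\lambda f+\tfrac1{2\lambda}\|\cdot\|^2$ and $h_\lambda g+\tfrac1{2\lambda}\|\cdot\|^2$ are convex with the same subdifferential and thus equal up to a constant, killed by the point condition; then \ref{i:phull:fg} holds, which we already know is equivalent to \ref{i:env:fg}.

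Finally, for \eqref{e:convexhull}: assuming any (hence all) of \ref{i:env:fg}--\ref{i:prox:initial}, we have $\conv\big(f+\tfrac1{2\lambda}\|\cdot\|^2\big)=\conv\big(g+\tfrac1{2\lambda}\|\cdot\|^2\big)$ for the given $\lambda<\bl$. Taking Fenchel conjugates and using the standard fact $\big(\overline{\conv}\,h\big)^*=h^*$, this equality propagates; the cleanest route is to observe that $\overline{\conv}\,h=h^{**}$ and that adding a fixed continuous convex quadratic interacts with conjugation in a controlled way — specifically $\big(f+\tfrac1{2\lambda}\jj\big)^{**}=f^{**}\,\Box'\,(\text{something})$ is awkward, so instead I would argue directly: $\overline{\conv}f=\sup\{\text{affine minorants of }f\}$, and an affine function $\ell$ minorizes $f$ iff $\ell+\tfrac1{2\lambda}\|\cdot\|^2$ minorizes $f+\tfrac1{2\lambda}\|\cdot\|^2$ iff it minorizes $\conv\big(f+\tfrac1{2\lambda}\|\cdot\|^2\big)$ among functions of the form affine-plus-that-quadratic; since the latter convexified quadratic is the same for $f$ and $g$, a quadratic-plus-affine minorizes it for $f$ iff for $g$, hence after subtracting $\tfrac1{2\lambda}\|\cdot\|^2$ the affine minorants of $f$ and of $g$ coincide, giving $\overline{\conv}f=\overline{\conv}g$. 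The main obstacle I anticipate is exactly this last step: the need to be careful that $\conv\big(f+\tfrac1{2\lambda}\|\cdot\|^2\big)$ being \emph{closed} and convex lets us compare the full families of affine/quadratic minorants without losing information, and that prox-boundedness with $\lambda<\bl$ is what guarantees these convexified objects are proper (not identically $-\infty$), so the minorant families are nonempty and the argument does not degenerate.
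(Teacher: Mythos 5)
Your proposal is correct, and for the equivalences \ref{i:env:fg}--\ref{i:prox:initial} it runs on the same fuel as the paper: Fact~\ref{l:dcform}, Fact~\ref{f:m-p-l}, coercivity of $f+\frac{1}{2\lambda}\|\cdot\|^2$ from $\lambda<\bl$ (so that the biconjugate is the convex hull itself), and \cite[Example~10.32]{rockwets} for the proximal-mapping clause. The only structural differences are in two places. For \ref{i:prox:initial}$\Rightarrow$\ref{i:env:fg} you pass to the proximal hulls and argue that the convex functions $h_{\lambda}f+\frac{1}{2\lambda}\|\cdot\|^2$ and $h_{\lambda}g+\frac{1}{2\lambda}\|\cdot\|^2$ share a subdifferential, hence differ by a constant; the paper instead applies \cite[Example~10.32]{rockwets} directly to the locally Lipschitz envelopes and concludes $-e_{\lambda}f=-e_{\lambda}g+\text{const}$, which is one step shorter (and note your constant is killed only after observing that $h_{\lambda}f=h_{\lambda}g+c$ forces $e_{\lambda}f=e_{\lambda}g+c$ -- worth making explicit). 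For \eqref{e:convexhull} you compare families of affine minorants through the common convexified quadratic perturbation; the paper's route is slicker: conjugate $e_{\lambda}f=e_{\lambda}g$ using $(e_{\lambda}f)^{*}=f^{*}+\frac{\lambda}{2}\|\cdot\|^{2}$, cancel the quadratic to get $f^{*}=g^{*}$, and conjugate again to get $f^{**}=g^{**}$. Your minorant argument is sound (and degenerates gracefully when neither function has an affine minorant, in which case both hulls are identically $-\infty$), but the conjugation identity avoids the bookkeeping you flagged as the main obstacle.
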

\begin{proof}
\ref{i:env:fg}$\Rightarrow$\ref{i:phull:fg}:
We have
$-e_{\lambda}f=-e_{\lambda}g$ implies $-e_{\lambda}(-e_{\lambda}f)=-e_{\lambda}(-e_{\lambda}g)$,
which is \ref{i:phull:fg}.

\noindent\ref{i:phull:fg}$\Rightarrow$\ref{i:env:fg}:
This follows from $e_{\lambda}f=e_{\lambda}(h_{\lambda}f)=e_{\lambda}(h_{\lambda}g)=e_{\lambda}g.$

\noindent\ref{i:phull:fg}$\Leftrightarrow$\ref{i:conv:fg}: Since $\lambda<\bl$, we have that
$f+\frac{1}{2\lambda}\|\cdot\|^2$ and $g+\frac{1}{2\lambda}\|\cdot\|^2$ are coercive, so
$\conv\left(f+\frac{1}{2\lambda}\|\cdot\|^2\right)$ and $\conv\left(f+\frac{1}{2\lambda}\|\cdot\|^2\right)$
are lsc. Fact~\ref{l:dcform} gives
 $$h_{\lambda}f=\conv\bigg(f+\frac{1}{2\lambda}\|\cdot\|^2\bigg)-\frac{1}{2\lambda}\|\cdot\|^2,$$
 $$h_{\lambda}g=\conv\bigg(g+\frac{1}{2\lambda}\|\cdot\|^2\bigg)-\frac{1}{2\lambda}\|\cdot\|^2.$$

\noindent\ref{i:double:fg}$\Leftrightarrow$\ref{i:env:fg}: Invoking Fact~\ref{f:m-p-l}, we have
\begin{align*}
e_{\lambda,\mu}f =e_{\lambda,\mu}g &\Leftrightarrow h_{\mu}(e_{\lambda-\mu}f)=h_{\mu}(e_{\lambda-\mu}g)\\
& \Leftrightarrow e_{\mu}(h_{\mu}(e_{\lambda-\mu}f))=e_{\mu}(h_{\mu}(e_{\lambda-\mu}g))\\
& \Leftrightarrow e_{\mu}(e_{\lambda-\mu}f)=e_{\mu}(e_{\lambda-\mu}g)\\
& \Leftrightarrow e_{\lambda}f=e_{\lambda}g. \
\end{align*}

\noindent\ref{i:env:fg}$\Rightarrow$\ref{i:prox:initial}: The Moreau envelope $e_{\lambda}f(x)=e_{\lambda}g(x)$
for every $x\in\R^n$. Apply \cite[Example 10.32]{rockwets}
to $-e_{\lambda}f = -e_{\lambda}g$ to get
$$(\forall x\in\R^n)\ \frac{\conv \Prox_{\lambda}f(x)-x}{\lambda}=\frac{\conv \Prox_{\lambda}g(x)-x}{\lambda},
$$
which gives \ref{i:prox:initial} after simplifications.

\noindent\ref{i:prox:initial}$\Rightarrow$\ref{i:env:fg}: Since
both $e_{\lambda}f$ and $e_{\lambda}g$ are locally Lipschitz,
$\conv \Prox_{\lambda}f=\conv\Prox_{\lambda}g$
implies
$-e_{\lambda}f=-e_{\lambda}g+\const$ by
\cite[Example 10.32]{rockwets}. The $\const$ has to be zero by
$e_{\lambda}f(x_{0})=e_{\lambda}g(x_{0})$. Thus, \ref{i:env:fg} holds.

Equation~\eqref{e:convexhull} follows from the equivalence of \ref{i:env:fg}--\ref{i:double:fg}
and taking the Fenchel conjugate to $e_{\lambda}f=e_{\lambda}g$, followed
by cancelation of terms and taking the Fenchel conjugate again.
\end{proof}

The notion of `proximal' is instrumental.
\begin{cor}\label{c:needed1} Let $0<\mu\leq \lambda <\bl$, and
 let $f,g$ be $\lambda$-proximal.
Then $e_{\mu}f=e_{\mu}g$ if and only if $f=g$
\end{cor}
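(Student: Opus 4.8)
The statement to prove is Corollary~\ref{c:needed1}: for $0<\mu\leq\lambda<\bl$ and $f,g$ both $\lambda$-proximal, $e_{\mu}f=e_{\mu}g$ if and only if $f=g$. The ``if'' direction is trivial, so the work is in the ``only if'' direction. My plan is to first reduce the case $\mu<\lambda$ to the boundary case $\mu=\lambda$, then handle $\mu=\lambda$ by using $\lambda$-proximality to recover $f$ and $g$ from their Moreau envelopes.

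First I would observe that if $\mu<\lambda$ and $f,g$ are $\lambda$-proximal, then in particular they are $\mu$-proximal as well? That is not literally true ($\lambda$-proximal is a weaker condition than $\mu$-proximal for $\mu<\lambda$), so instead I would upgrade $e_{\mu}f=e_{\mu}g$ to $e_{\lambda}f=e_{\lambda}g$ using the semigroup property of Moreau envelopes: by Fact~\ref{f:m-p-l}(e), $e_{\lambda}f=e_{\lambda-\mu}(e_{\mu}f)=e_{\lambda-\mu}(e_{\mu}g)=e_{\lambda}g$. (Here I need $\lambda<\bl\le\lambda_f,\lambda_g$ to ensure the envelopes are all finite and the semigroup identity applies.) So in all cases $e_{\mu}f=e_{\mu}g$ forces $e_{\lambda}f=e_{\lambda}g$, and it remains to show that a $\lambda$-proximal function is determined by $e_{\lambda}f$.

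For the core step, since $f$ is $\lambda$-proximal, $f+\frac{1}{2\lambda}\|\cdot\|^2$ is convex; because $\lambda<\bl\le\lambda_f$, it is also coercive, hence proper lsc convex, so it equals its own biconjugate: $f+\frac{1}{2\lambda}\|\cdot\|^2=\bigl(f+\frac{1}{2\lambda}\|\cdot\|^2\bigr)^{**}$. By Fact~\ref{l:dcform}(b), the right-hand side is $h_{\lambda}f+\frac{1}{2\lambda}\|\cdot\|^2$, so $f=h_{\lambda}f$; likewise $g=h_{\lambda}g$. Now invoke the equivalence \ref{i:env:fg}$\Leftrightarrow$\ref{i:phull:fg} of Lemma~\ref{l:env:prox}: $e_{\lambda}f=e_{\lambda}g$ gives $h_{\lambda}f=h_{\lambda}g$, and therefore $f=h_{\lambda}f=h_{\lambda}g=g$. (Alternatively one could cite $h_{\lambda}f=-e_{\lambda}(-e_{\lambda}f)$ from Fact~\ref{f:m-p-l}(a) directly, bypassing Lemma~\ref{l:env:prox}, but using the lemma is cleaner.)

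The only subtlety — and the one place to be careful — is the boundary value $\mu=\lambda$: Lemma~\ref{l:env:prox} is stated for $0<\mu<\lambda<\bl$, but the equivalence \ref{i:env:fg}$\Leftrightarrow$\ref{i:phull:fg} that I actually use only involves $\lambda$ (via $e_\lambda f=e_\lambda(h_\lambda f)$ and $h_\lambda f=-e_\lambda(-e_\lambda f)$), so it holds whenever $\lambda<\bl$, independently of $\mu$; I would just cite Fact~\ref{f:m-p-l}(a)--(b) to make this self-contained. I do not anticipate any real obstacle beyond bookkeeping of the prox-boundedness thresholds to guarantee coercivity and finiteness of the envelopes involved.
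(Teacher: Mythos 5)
Your proof is correct, but it takes a detour that the paper does not need, and the reason you took it is a sign error worth fixing. You dismiss the implication ``$\lambda$-proximal $\Rightarrow$ $\mu$-proximal for $\mu<\lambda$'' as ``not literally true,'' claiming $\lambda$-proximality is the \emph{weaker} condition. It is the other way around: $f+\frac{1}{2\mu}\|\cdot\|^2=\bigl(f+\frac{1}{2\lambda}\|\cdot\|^2\bigr)+\bigl(\frac{1}{2\mu}-\frac{1}{2\lambda}\bigr)\|\cdot\|^2$, and since $\frac{1}{2\mu}-\frac{1}{2\lambda}>0$ for $\mu<\lambda$, convexity of $f+\frac{1}{2\lambda}\|\cdot\|^2$ plus a convex quadratic gives convexity of $f+\frac{1}{2\mu}\|\cdot\|^2$; so $\lambda$-proximal functions \emph{are} $\mu$-proximal for all $0<\mu\leq\lambda$ (this is essentially the same observation as Proposition~\ref{p:p:cone}). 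The paper's proof is exactly this two-line argument: $f,g$ are $\mu$-proximal, hence $f=h_{\mu}f$ and $g=h_{\mu}g$, and Lemma~\ref{l:env:prox}\ref{i:env:fg}$\Leftrightarrow$\ref{i:phull:fg} at level $\mu$ finishes. Your workaround---upgrading $e_{\mu}f=e_{\mu}g$ to $e_{\lambda}f=e_{\lambda}g$ via the semigroup identity $e_{\lambda}f=e_{\lambda-\mu}(e_{\mu}f)$ of Fact~\ref{f:m-p-l}, then recovering $f=h_{\lambda}f$ and $g=h_{\lambda}g$ from $\lambda$-proximality and Fact~\ref{l:dcform}---is valid, and your bookkeeping of coercivity, lower semicontinuity, and the $\mu=\lambda$ boundary case is careful and correct. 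It just costs you an extra lemma to avoid a fact that was true all along; once you correct the direction of the implication, your argument collapses to the paper's.
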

\begin{proof} Since $\mu\leq\lambda$, both $f,g$ are also $\mu$-proximal, so
$f=h_{\mu}f, g=h_{\mu}g$. Lemma~\ref{l:env:prox}\ref{i:env:fg}$
\Leftrightarrow$\ref{i:phull:fg} applies.
\end{proof}


\begin{prop}\label{p:needed1}
Let $0<\mu <\bl$, and let
$\Prox_{\mu}f=\Prox_{\mu}g$. If $f, g$ are $\mu$-proximal, then $f-g\equiv
\text{constant}$.
\end{prop}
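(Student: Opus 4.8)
The plan is to recover each function from the common proximal mapping via integration of a monotone operator, using the $\mu$-proximal hypothesis to make the proximal mapping behave like the resolvent of a maximal monotone operator. First I would note that since $f$ and $g$ are $\mu$-proximal with $0<\mu<\bl=\min\{\lambda_f,\lambda_g\}$, Proposition~\ref{p:resolventf} applies, so $\Prox_{\mu}f=J_{\mu\partial_L f}$ and $\Prox_{\mu}g=J_{\mu\partial_L g}$, and both $\partial_L f$, $\partial_L g$ are maximally monotone operators whose resolvents are everywhere single-valued (Lemma~\ref{l:prox:map}). From $\Prox_{\mu}f=\Prox_{\mu}g$ we get $J_{\mu\partial_L f}=J_{\mu\partial_L g}$, and inverting the resolvent relation $(\Id+\mu\partial_L f)^{-1}=(\Id+\mu\partial_L g)^{-1}$ yields $\Id+\mu\partial_L f=\Id+\mu\partial_L g$ as set-valued maps, hence $\partial_L f=\partial_L g$ on all of $\R^n$.

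Next I would translate equality of the subdifferentials into equality of the functions up to a constant. The clean way is to work with the convex functions $F:=f+\frac{1}{2\mu}\|\cdot\|^2$ and $G:=g+\frac{1}{2\mu}\|\cdot\|^2$, which are proper, lsc and convex by the definition of $\mu$-proximal. Since for $\mu$-proximal functions the limiting subdifferential coincides with the convex (Fenchel) subdifferential, $\partial_L f=\partial_L g$ gives $\partial F=\partial G$ everywhere, in the sense of convex analysis: indeed $\partial F=\partial_L f+\frac{1}{\mu}\Id=\partial_L g+\frac{1}{\mu}\Id=\partial G$. Two proper lsc convex functions on $\R^n$ with the same (nonempty, as $\mu<\lambda_f$ forces $\dom\partial F=\R^n$) subdifferential mapping differ by a constant — this is the standard Rockafellar cyclic-monotonicity/integration fact, e.g. via $F=F^{**}$ and the fact that $\partial F=\partial G$ implies $F^*$ and $G^*$ have the same subdifferential and are both sums of the same indicator-type structure, concluding $F^*=G^*+c$ and hence $F=G-c$. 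Therefore $f+\frac{1}{2\mu}\|\cdot\|^2=g+\frac{1}{2\mu}\|\cdot\|^2-c$, i.e. $f-g\equiv -c$ is constant.

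An alternative, perhaps more self-contained route avoiding any appeal to subdifferential integration: from $\Prox_{\mu}f=\Prox_{\mu}g$ one has in particular $\conv\Prox_{\mu}f=\conv\Prox_{\mu}g$, and since $\mu<\bl$, Lemma~\ref{l:env:prox}\ref{i:prox:initial} would give $e_{\mu}f=e_{\mu}g$ provided we can pin down the additive constant — but here we do \emph{not} assume a common value, so instead I would invoke \cite[Example 10.32]{rockwets} directly to conclude $-e_{\mu}f=-e_{\mu}g+\const$, i.e. $e_{\mu}f=e_{\mu}g+c'$ for some constant $c'$. Then, because $f$ and $g$ are $\mu$-proximal, Fact~\ref{l:dcform}\ref{i:mor:l} and the relation $e_{\mu}f+\frac{1}{2\mu}\|\cdot\|^2=\big(f+\frac{1}{2\mu}\|\cdot\|^2\big)^{**}=f+\frac{1}{2\mu}\|\cdot\|^2$ (valid since $f+\frac{1}{2\mu}\|\cdot\|^2$ is already convex and lsc) — more precisely one uses that $f$ is its own proximal hull $h_\mu f$ and then Fact~\ref{f:m-p-l}\ref{i:p:hull} together with Corollary~\ref{c:needed1}-type reasoning — transfers the constant shift on the envelopes to a constant shift on $f,g$ themselves, giving $f-g\equiv c'$.

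The main obstacle is the passage from "same proximal mapping" to "same function up to a constant": single-valued monotone operators do not in general determine a function, so it is essential to use that $\Prox_\mu f$, for $\mu$-proximal $f$, is exactly the resolvent of a \emph{maximal cyclically monotone} operator $\partial_L f=\partial F-\frac{1}{\mu}\Id$, which is what licenses integration. Handling the additive constant carefully (it cannot be eliminated, and the statement only claims equality up to a constant) is the other point requiring attention; the second route above makes the bookkeeping of that constant transparent. I expect the subdifferential-integration argument to be the cleanest to write, citing Proposition~\ref{p:resolventf} for the resolvent identity and a standard convex-analysis integration result for the final step.
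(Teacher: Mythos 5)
Your proposal is correct, and it actually contains the paper's proof as its second route: the paper applies \cite[Example 10.32]{rockwets} to conclude $\partial(-e_{\mu}f)=\partial(-e_{\mu}g)$, deduces $-e_{\mu}f=-e_{\mu}g-c$ from local Lipschitzness and Clarke regularity of the negative envelopes, and then uses $\mu$-proximality in the form $f=h_{\mu}f=-e_{\mu}(-e_{\mu}f)$ (together with the fact that $e_{\mu}$ commutes with additive constants) to transfer the constant to $f-g$. Your first route is genuinely different: identifying $\Prox_{\mu}f$ with the resolvent $J_{\mu\partial_{L}f}$ via Proposition~\ref{p:resolventf}, inverting the graphs to get $\partial_{L}f=\partial_{L}g$, passing to the convex functions $F=f+\frac{1}{2\mu}\|\cdot\|^2$ and $G=g+\frac{1}{2\mu}\|\cdot\|^2$, and invoking Rockafellar's integration theorem for subdifferentials of proper lsc convex functions. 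This trades the envelope calculus for the classical ``maximal cyclically monotone operators determine their potentials up to a constant'' result, and it makes explicit \emph{why} $\mu$-proximality is indispensable (it is what turns the proximal mapping into a resolvent of a cyclically monotone operator). Two small corrections to that route: $\dom\partial F$ need not be all of $\R^n$ (take $f=\iota_{\{0\}}$); what the integration theorem needs is only that $\partial F$ has nonempty graph, which follows from coercivity of $F$ under $\mu<\bl$. Also, Lemma~\ref{l:prox:map} guarantees single-valuedness of $\Prox_{\mu}f$ only for parameters strictly below the proximality threshold, so for a merely $\mu$-proximal $f$ the common proximal mapping may be set-valued --- harmless here, since graph inversion of set-valued maps is all your argument uses.
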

\begin{proof}
As $\Prox_{\mu}f=\Prox_{\mu}g$,
by \cite[Example 10.32]{rockwets},
$\partial(-e_{\mu}f)=\partial(-e_{\mu}g)$. Since
both $-e_\mu f, -e_{\mu}g$ are locally Lipschitz and Clarke regular,
we obtain that there exists $-c\in\R$ such that
$-e_\mu f=-e_{\mu}g-c$. Because $f, g$ are $\mu$-proximal, we have
$$f=-e_{\mu}(-e_{\mu}f)=-e_{\mu}(-e_{\mu}g-c)=-e_{\mu}(-e_{\mu}g)+c=g+c,$$
as required.
\end{proof}

\subsection{An example}

The following example shows that one cannot remove the assumption of
$f, g$ being $\mu$-proximal in Proposition~\ref{p:resolventf}, Corollary~\ref{c:needed1}
and Proposition~\ref{p:needed1}.

\begin{ex}\label{e:proximal:fk}
 Consider the function
$$f_{k}(x)=\max\{0,(1+\varepsilon_{k})(1-x^2)\},$$
where $\varepsilon_{k}>0$.
It is easy to check that $f_{k}$ is $1/(2(1+\varepsilon_{k}))$-proximal, but not $1/2$-proximal.
\end{ex}


\noindent {\sl Claim 1: The functions $f_{k}$ have the same proximal mappings
and Moreau envelopes for all $k\in\N$. However, whenever $\varepsilon_{k_{1}}\neq\varepsilon_{k_{2}}$, $f_{k_{1}}-f_{k_{2}}=(\varepsilon_{k_{1}}-\varepsilon_{k_{2}})f\neq \const$.}

Indeed, simple calculus gives that for every $\varepsilon_{k}>0$ one has
\begin{equation*}\label{e:proximal2}
\Prox_{1/2}f_{k}(x)=\begin{cases}
x &\text{ if $x\geq 1$,}\\
1 &\text{ if $0<x<1$,}\\
\{-1,1\} &\text{ if $x=0$,}\\
-1 &\text{ if $-1<x<0$,}\\
x &\text{ if $x\leq -1$,}
\end{cases}
\end{equation*}
and
$$e_{1/2}f_{k}(x)=\begin{cases}
0 &\text{ if $x\geq 1$,}\\
(x-1)^2 &\text{ if $0\leq x<1$,}\\
(x+1)^2 &\text{ if $-1<x<0$,}\\
0 &\text{ if $x\leq -1$.}
\end{cases}
$$


\noindent{\sl Claim 2: $\Prox_{1/2}f_{k}\neq J_{1/2\partial_{L}f_{k}},$ i.e., the
proximal mapping
differs from the resolvent.}


Since $J_{1/2\partial_{L}f_{k}}=(\Id+1/2\partial_{L}f_{k})^{-1}$ and
$$\partial_{L}f_{k}(x)
=\begin{cases}
0 & \text{ if $x<-1$,}\\
[0,2(1+\varepsilon_{k})] &\text{ if $x=-1$,}\\
-2(1+\varepsilon_{k})x &\text{ if $-1<x<1$,}\\
[-2(1+\varepsilon_{k}),0] &\text{ if $x=1$,}\\
0  & \text{ if $x>1$},
\end{cases}
$$
we obtain
$$J_{1/2\partial_{L}f_{k}}(x)=
\begin{cases}
x & \text{ if $x<-1$,}\\
-1 &\text{ if $-1\leq x\leq \varepsilon_{k}$,}\\
-\frac{x}{\varepsilon_{k}} &\text{ if $-\varepsilon_{k}<x<\varepsilon_{k}$,}\\
1 &\text{ if $-\varepsilon_{k}\leq x\leq 1$,}\\
x & \text{ if $x>1$},
\end{cases}
$$
equivalently,
$$J_{1/2\partial_{L}f_{k}}(x)=
\begin{cases}
x & \text{ if $x<-1$,}\\
-1 &\text{ if $-1\leq x< -\varepsilon_{k}$,}\\
\left\{-1,-\frac{x}{\varepsilon_{k}},1\right\} &\text{ if $-\varepsilon_{k}\leq x\leq \varepsilon_{k}$,}\\
1 &\text{ if $\varepsilon_{k}< x\leq 1$,}\\
x & \text{ if $x>1$},
\end{cases}
$$
which does not equal \eqref{e:proximal2}.

\section{The convexified proximal mapping and Clarke subdifferential of the Moreau envelope}\label{s:conv}

The following result gives the relationship between the Clarke subdifferential
of the Moreau envelope and the convexified proximal mapping.
\begin{lem}\label{l:prox:grad}
 For $0<\mu<\lambda_{f}$, the following hold.
\begin{enumerate}[label=\rm(\alph*)]
\item \label{i:fplus:p}The convex hull
\begin{equation*}\label{e:convp}
\conv\Prox_{\mu}f=\partial \bigg(\mu f+\frac{1}{2}\|\cdot\|^2\bigg)^*.
\end{equation*}
In particular, $\conv\Prox_{\mu}f$ is maximally monotone.
\item\label{i:fplus:n}The limiting subdifferential
\begin{equation*}
-\partial_{L} \bigg(-\bigg(\mu f+\frac{1}{2}\|\cdot\|^2\bigg)^*\bigg)
\subseteq \Prox_{\mu}f.
\end{equation*}
\item \label{e:env:clarke} The Clarke subdifferential
\begin{equation}\label{e:clarkesub}
\partial_{C}(e_{\mu}f)=-\partial_{L}(-e_{\mu}f)=\frac{\Id-\conv \Prox_{\mu}f}{\mu}.
\end{equation}
If, in addition, $f$ is $\mu$-proximal, then
\begin{equation}\label{e:clarke:prox}
\partial_{C}(e_{\mu}f)=\frac{\Id-\Prox_{\mu}f}{\mu}.
\end{equation}
\end{enumerate}
\end{lem}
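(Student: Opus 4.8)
The plan is to deduce everything from the classical dc-decomposition of the Moreau envelope in Fact~\ref{l:dcform}\ref{i:mor:l}. Write $\varphi=\mu f+\tfrac12\|\cdot\|^2$, so that a rescaled version of \eqref{e:moreau} gives the identity
\[
e_{\mu}f=\frac{1}{2\mu}\|\cdot\|^2-\frac{1}{\mu}\,\varphi^{*}\!\Big(\frac{\cdot}{\mu}\cdot\mu\Big)\quad\text{(after the change of variables)},
\]
more precisely $e_{\mu}f(x)=\tfrac{1}{2\mu}\|x\|^2-\tfrac1\mu\big(\mu f+\tfrac12\|\cdot\|^2\big)^{*}(x)$, which is a difference of the smooth convex function $\tfrac{1}{2\mu}\|\cdot\|^2$ and the (finite-valued, since $\mu<\lambda_f$) convex function $\tfrac1\mu\varphi^{*}$. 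Because $\varphi^{*}$ is convex and finite on $\R^n$, it is locally Lipschitz, hence so is $e_{\mu}f$, and its Clarke subdifferential obeys the sum rule for a $\mathcal C^1$ function plus a locally Lipschitz function: $\partial_C(e_{\mu}f)=\tfrac1\mu\Id-\tfrac1\mu\,\partial_C\varphi^{*}=\tfrac1\mu\Id-\tfrac1\mu\,\partial\varphi^{*}$, the last equality because the Clarke subdifferential of a finite convex function is its convex subdifferential.

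Next I would prove part~\ref{i:fplus:p}. The Attouch–Wets/Rockafellar–Wets calculus (\cite[Example 10.32]{rockwets}, already invoked repeatedly in Section~\ref{s:prel}) identifies $\nabla e_{\mu}f$, where it exists, with $(x-\Prox_{\mu}f(x))/\mu$ and more generally gives $\partial_C(e_\mu f)=\operatorname{conv}\big((\Id-\Prox_\mu f)/\mu\big)=(\Id-\operatorname{conv}\Prox_\mu f)/\mu$, since the only set-valuedness comes from $\Prox_\mu f$ and division by $\mu$ and translation by $\Id$ commute with taking convex hulls. Comparing the two formulas for $\partial_C(e_\mu f)$ yields $\operatorname{conv}\Prox_\mu f=\partial\varphi^{*}=\partial\big(\mu f+\tfrac12\|\cdot\|^2\big)^{*}$, which is part~\ref{i:fplus:p}; maximal monotonicity is then immediate because the subdifferential of a proper lsc convex function (here $\varphi^{*}$, which is moreover finite-valued) is maximally monotone. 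This simultaneously gives the first two equalities in \eqref{e:clarkesub}, namely $\partial_C(e_\mu f)=-\partial_L(-e_\mu f)$ (the concave function $-e_\mu f$ has $\partial_L(-e_\mu f)=-\partial_C(e_\mu f)$ since $e_\mu f$ is ``Clarke regular from below'', or directly from the dc-structure), and $\partial_C(e_\mu f)=(\Id-\operatorname{conv}\Prox_\mu f)/\mu$.

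For part~\ref{i:fplus:n}, I would take the conjugate-duality route: from \ref{i:fplus:p}, $\Prox_\mu f\subseteq J_{\mu\partial_L f}$ (the Fact preceding Proposition~\ref{p:resolventf}) while $-\big(\mu f+\tfrac12\|\cdot\|^2\big)^{*}$ is a concave function whose limiting subdifferential at a point is contained in the set of proximal points; concretely, if $v\in-\partial_L\big(-\varphi^{*}\big)(x)$ then $v$ is a (Fréchet, in the limit) subgradient forcing $x\in\partial\varphi^{**}(v)=\partial\varphi(v)$ at points where $\varphi$ is finite, and unwinding $\partial\big(\mu f+\tfrac12\|\cdot\|^2\big)(v)\ni x$ says exactly $v\in\Prox_\mu f(x)$ via the optimality condition for the prox. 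I would phrase this as: $-\partial_L(-\varphi^{*})\subseteq\partial\varphi^{*}=\operatorname{conv}\Prox_\mu f$ is not enough — I actually need the sharper containment inside $\Prox_\mu f$ itself, which follows because every element of $-\partial_L(-\varphi^*)$ is an exposed/extreme-type point arising as a genuine limit of gradients of $e_\mu f$, and $\nabla e_\mu f(x)=(x-\operatorname{prox})/\mu$ with a single prox point whenever the gradient exists. Finally, \eqref{e:clarke:prox}: if $f$ is $\mu$-proximal, Lemma~\ref{l:prox:map}\ref{i:p:convex} (applied with $\lambda=\mu$, legitimate since being $\mu$-proximal lets us run that argument) gives $\Prox_\mu f$ convex-valued, so $\operatorname{conv}\Prox_\mu f=\Prox_\mu f$ and \eqref{e:clarkesub} collapses to \eqref{e:clarke:prox}.

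The main obstacle I anticipate is part~\ref{i:fplus:n}: getting the inclusion into $\Prox_\mu f$ rather than merely into its convex hull requires care about which ``corner'' subgradients of the concave function $-\varphi^{*}$ can occur, and the cleanest justification is to cite \cite[Example 10.32]{rockwets} together with the description of $\partial_L$ of a concave function as limits of gradients at differentiability points, where the prox point is unique. The parts \ref{i:fplus:p} and \ref{e:env:clarke} are essentially bookkeeping on top of Fact~\ref{l:dcform} and the already-cited \cite[Example 10.32]{rockwets}, so I do not expect difficulty there beyond keeping the factors of $\mu$ straight.
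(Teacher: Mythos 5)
Your parts \ref{i:fplus:p} and \ref{e:env:clarke} follow essentially the same route as the paper: the dc-decomposition from Fact~\ref{l:dcform}\ref{i:mor:l} combined with \cite[Example 10.32]{rockwets}, which supplies both $\partial_{L}(-e_{\mu}f)=\frac{\conv\Prox_{\mu}f-\Id}{\mu}$ and the Clarke regularity of $-e_{\mu}f$, followed by the convex-valuedness of $\Prox_{\mu}f$ for $\mu$-proximal $f$ to collapse \eqref{e:clarkesub} to \eqref{e:clarke:prox}; your bookkeeping of the factors of $\mu$ is correct. The genuine divergence is in part \ref{i:fplus:n}. The paper gets it in one line from the \emph{other} half of \cite[Example 10.32]{rockwets}, namely the inclusion $\partial_{L}(e_{\mu}f)\subseteq\frac{\Id-\Prox_{\mu}f}{\mu}$ for the envelope itself (not its negative): writing $-\left(\mu f+\frac{1}{2}\|\cdot\|^2\right)^*=\mu e_{\mu}f-\frac{1}{2}\|\cdot\|^2$ and applying the exact sum rule with the smooth quadratic gives the claim immediately. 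Your first attempt at \ref{i:fplus:n}, via $x\in\partial\varphi^{**}(v)=\partial\varphi(v)$ for $\varphi=\mu f+\frac{1}{2}\|\cdot\|^2$, does not work: for nonconvex $\varphi$ one only has $\partial\varphi(v)\subseteq\partial\varphi^{**}(v)$, with equality where $\varphi(v)=\varphi^{**}(v)$, so that route only lands you back in $\conv\Prox_{\mu}f$ --- as you yourself observe. Your fallback is sound in outline: since $-\varphi^*$ is finite and concave, $\hat{\partial}(-\varphi^*)(x)$ is nonempty exactly at differentiability points of $\varphi^*$, so every element of $-\partial_{L}(-\varphi^*)(x)$ is a limit $\lim_k\nabla\varphi^*(x_k)$ with $x_k\to x$, and at such $x_k$ part \ref{i:fplus:p} forces $\conv\Prox_{\mu}f(x_k)$, hence $\Prox_{\mu}f(x_k)$, to be the singleton $\{\nabla\varphi^*(x_k)\}$. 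But to conclude that the limit of these unique prox points lies in $\Prox_{\mu}f(x)$ you still need the outer semicontinuity of $\Prox_{\mu}f$ (\cite[Example 7.44]{rockwets}, quoted in Section~\ref{s:char}); this step is missing from your write-up. With that one citation added, your proof of \ref{i:fplus:n} is a valid, if longer, alternative to the paper's.
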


\begin{proof} \ref{i:fplus:p}: By Fact~\ref{l:dcform},
\begin{equation}\label{e:moreau:d}
-e_{\mu}f(x)=-\frac{1}{2\mu}\|x\|^2+\bigg(f+\frac{1}{2\mu}\|\cdot\|^2\bigg)^*\left(\frac{x}{\mu}\right).
\end{equation}
Using \cite[Example 10.32]{rockwets} and the subdifferential sum rule \cite[Corollary 10.9]{rockwets}, we get
$$
\frac{\conv\Prox_{\mu}f(x)-x}{\mu} =\partial_{L} (-e_{\mu}f)(x)=-\frac{x}{\mu}
+\partial \bigg(f+\frac{1}{2\mu}\|\cdot\|^2\bigg)^*\left(\frac{x}{\mu}\right).
$$
Simplification gives
\begin{align*}
\conv \Prox_{\mu}f(x) &
=\partial \mu \bigg(f+\frac{1}{2\mu}\|\cdot\|^2\bigg)^*\left(\frac{x}{\mu}\right)\\
& =\partial \bigg(\mu f+\frac{1}{2}\|\cdot\|^2\bigg)^*(x).
\end{align*}
Since $\mu f+\frac{1}{2}\|\cdot\|^2$ is coercive, we conclude that
$\left(\mu f+\frac{1}{2}\|\cdot\|^2\right)^*$ is a continuous convex function, so
$\conv \Prox_{\mu}f$ is maximally monotone \cite[Theorem 12.17]{rockwets}.

\noindent\ref{i:fplus:n}: By \eqref{e:moreau:d},
\begin{align*}
-\bigg(\mu f+\frac{1}{2}\|\cdot\|^2\bigg)^*(x) & =-\mu\bigg(f+\frac{1}{2\mu}\|\cdot\|^2\bigg)^*\left(\frac{x}{\mu}\right)\\
&=\mu e_{\mu}f(x)-\frac{1}{2}\|x\|^2.
\end{align*}
From \cite[Example 10.32]{rockwets} we obtain
\begin{align*}
\partial_{L} \bigg(-\bigg(\mu f+\frac{1}{2}\|\cdot\|^2\bigg)^*\bigg)
(x) &= \partial_{L} (\mu e_{\mu}f)(x)-x\\
&\subseteq \mu \frac{x-\Prox_{\mu}f(x)}{\mu}-x=-\Prox_{\mu}f(x).
\end{align*}
Therefore, $-\partial_{L} \bigg(-\bigg(\mu f+\frac{1}{2}\|\cdot\|^2\bigg)^*\bigg)(x)\subseteq \Prox_{\mu}f(x)$.

\noindent\ref{e:env:clarke}: As $-e_{\mu}f$ is Clarke regular, using \cite[Example 10.32]{rockwets}
we obtain
$$\partial_{C}e_{\mu}f(x)=-\partial_{C}(-e_{\mu}f)(x)=-\partial_{L} (-e_{\mu}f)(x)=
\frac{x-\conv \Prox_{\mu}f(x)}{\mu}.$$
If $f$ is $\mu$-proximal, then $\Prox_{\mu}f(x)$ is convex for every $x$, so
\eqref{e:clarke:prox} follows from \eqref{e:clarkesub}.
\end{proof}

\begin{rem} {\rm Lemma~\ref{l:prox:grad}\ref{i:fplus:p}} \& {\rm\ref{e:env:clarke}}
 extend {\rm\cite[Exercise 11.27]{rockwets}} and {\rm\cite[Theorem 2.26]{rockwets}},
 respectively, from convex functions to possibly nonconvex functions.
\end{rem}

It is tempting to ask whether
\begin{equation*}\label{e:boris}
\partial_L (e_{\mu}f)=\frac{\Id-\Prox_{\mu}f}{\mu}
\end{equation*}
holds. This is answered negatively
below.

\begin{prop} Let
$0<\lambda<\lambda_{f}$
and $\psi=h_{\lambda}f$. Suppose
that there exists $x_{0}\in\R^n$ such that $\Prox_{\lambda}f(x_{0})$ is not convex.
Then
\begin{equation}\label{e:sub:point}
\partial_{L} e_{\lambda}\psi(x_{0})\neq \frac{x_{0}-
\Prox_{\lambda}\psi(x_{0})}{\lambda};
\end{equation}
consequently,
$$\partial_{L} e_{\lambda}\psi\neq \frac{\Id-\Prox_{\lambda}\psi}{\lambda}.$$
\end{prop}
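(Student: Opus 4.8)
The plan is to exploit the three identities already available: Lemma~\ref{l:e:h}, which gives $\Prox_{\lambda}\psi=\Prox_{\lambda}(h_{\lambda}f)=\conv\Prox_{\lambda}f$; Fact~\ref{f:m-p-l}\ref{i:p:hull}, which gives $e_{\lambda}\psi=e_{\lambda}(h_{\lambda}f)=e_{\lambda}f$ so that the envelope whose subdifferential we must compute is just $e_{\lambda}f$; and Lemma~\ref{l:prox:grad}\ref{e:env:clarke}, which identifies $\partial_{C}(e_{\lambda}f)=-\partial_{L}(-e_{\lambda}f)=(\Id-\conv\Prox_{\lambda}f)/\lambda$. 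The point of the argument is to show that the \emph{limiting} subdifferential $\partial_{L}e_{\lambda}\psi$ is strictly smaller than the Clarke object at $x_{0}$, precisely because $\Prox_{\lambda}f(x_{0})$ fails to be convex. Since the right-hand side of~\eqref{e:sub:point} equals $(x_{0}-\conv\Prox_{\lambda}f(x_{0}))/\lambda=\partial_{C}e_{\lambda}\psi(x_{0})$ by the substitutions above, the statement to be proved is exactly that $\partial_{L}e_{\lambda}\psi(x_{0})\neq\partial_{C}e_{\lambda}\psi(x_{0})$.

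First I would record that $e_{\lambda}f$ is locally Lipschitz (Fact~\ref{l:dcform}\ref{i:mor:l}), so all subdifferentials in play are nonempty, compact-valued and related by $\partial_{L}e_{\lambda}f\subseteq\partial_{C}e_{\lambda}f=\conv\partial_{L}e_{\lambda}f$. Next, using the Moreau-envelope gradient formula — on the set where $\Prox_{\lambda}f$ is single-valued, $e_{\lambda}f$ is differentiable with $\nabla e_{\lambda}f(x)=(x-\Prox_{\lambda}f(x))/\lambda$ — together with the standard description of $\partial_{L}$ of a locally Lipschitz function as limits of nearby gradients, I would argue that $\partial_{L}e_{\lambda}\psi(x_{0})=\partial_{L}e_{\lambda}f(x_{0})$ is contained in $\bigl(x_{0}-\Prox_{\lambda}f(x_{0})\bigr)/\lambda$ up to taking the outer-limit closure, i.e. its elements arise as limits $(x_{k}-p_{k})/\lambda$ with $x_{k}\to x_{0}$ and $p_{k}\in\Prox_{\lambda}f(x_{k})$; by outer semicontinuity of $\Prox_{\lambda}f$ (from \cite[Example 1.25]{rockwets}) every such limit $p$ lies in $\Prox_{\lambda}f(x_{0})$. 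Hence $\lambda\,\partial_{L}e_{\lambda}\psi(x_{0})\subseteq x_{0}-\Prox_{\lambda}f(x_{0})$, a set that is \emph{not} convex by hypothesis. On the other hand $\lambda\,\partial_{C}e_{\lambda}\psi(x_{0})=x_{0}-\conv\Prox_{\lambda}f(x_{0})$ \emph{is} convex and strictly contains $x_{0}-\Prox_{\lambda}f(x_{0})$. Therefore $\partial_{L}e_{\lambda}\psi(x_{0})\subsetneq\partial_{C}e_{\lambda}\psi(x_{0})=(x_{0}-\Prox_{\lambda}\psi(x_{0}))/\lambda$, which is~\eqref{e:sub:point}; the pointwise failure immediately yields the global inequality $\partial_{L}e_{\lambda}\psi\neq(\Id-\Prox_{\lambda}\psi)/\lambda$.

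The step I expect to be the main obstacle is the middle one: showing that $\partial_{L}e_{\lambda}\psi(x_{0})$ really is contained in $(x_{0}-\Prox_{\lambda}f(x_{0}))/\lambda$ — equivalently, that no ``spurious'' limiting subgradients appear beyond those coming from genuine proximal points at $x_{0}$ itself. This requires knowing that the non-differentiability points of $e_{\lambda}f$ form a negligible set and that at such points the Fréchet subdifferential is governed by $\Prox_{\lambda}f$; one clean way is to note that $-e_{\lambda}f$ is $\lambda$-proximal (Lemma~\ref{l:env:neg}\ref{i:e:1}), hence $e_{\lambda}f$ is \emph{lower}-$\mathcal C^{1+}$/para-concave with modulus $1/\lambda$, so $\partial_{L}(-e_{\lambda}f)=\partial_{C}(-e_{\lambda}f)$ while the \emph{upper} (Fréchet) subdifferential $\hat\partial e_{\lambda}f(x)$ is nonempty exactly at differentiability points — and then the limiting subdifferential $\partial_{L}e_{\lambda}f$ is the outer limit of these gradients $(x_{k}-\Prox_{\lambda}f(x_{k}))/\lambda$, closing the argument via outer semicontinuity of $\Prox_{\lambda}f$. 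An alternative is to invoke \cite[Example 10.32]{rockwets} in the form that for a locally Lipschitz function of this concave-perturbed type the limiting subgradients of $e_{\lambda}f$ at $x_{0}$ are precisely $\{(x_{0}-p)/\lambda : p\in\Prox_{\lambda}f(x_{0})\}$, which makes the non-equality with the convexification transparent.
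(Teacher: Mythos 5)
Your proposal is correct and follows essentially the same route as the paper: both rest on $e_{\lambda}\psi=e_{\lambda}f$, the inclusion $\partial_{L}e_{\lambda}f(x_{0})\subseteq\bigl(x_{0}-\Prox_{\lambda}f(x_{0})\bigr)/\lambda$ from \cite[Example 10.32]{rockwets}, and $\Prox_{\lambda}\psi=\conv\Prox_{\lambda}f$ from Lemma~\ref{l:e:h}, the only difference being that the paper argues by contradiction while you argue directly via the proper inclusion $\Prox_{\lambda}f(x_{0})\subsetneq\conv\Prox_{\lambda}f(x_{0})$. The step you flag as the main obstacle is handled in the paper simply by citing that same inclusion, so your longer outer-semicontinuity discussion is unnecessary (and note the cited result gives only an inclusion, not the equality you mention at the end, but the inclusion is all that is needed).
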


\begin{proof} We prove by contrapositive. Suppose \eqref{e:sub:point} fails,
i.e.,
\begin{equation}\label{e:prox:f}
\partial_{L} e_{\lambda}\psi(x_{0})=\frac{x_{0}-\Prox_{\lambda}\psi(x_{0})}{\lambda}.
\end{equation}
In view of $e_{\lambda}\psi=e_{\lambda}f$ and \cite[Example 10.32]{rockwets},
we have
\begin{equation}\label{e:prox:s}
\partial_{L} e_{\lambda}\psi(x_{0})= \partial_L e_{\lambda}f(x_{0})
\subseteq \frac{x_{0}-\Prox_{\lambda}f(x_{0})}{\lambda}.
\end{equation}
Since $\Prox_{\lambda}\psi=\conv\Prox_{\lambda}f$ by Lemma~\ref{l:e:h}, \eqref{e:prox:f} and \eqref{e:prox:s}
give
$$\frac{x_{0}-\conv\Prox_{\lambda}f(x_{0})}{\lambda}\subseteq \frac{x_{0}-\Prox_{\lambda}f(x_{0})}{\lambda},$$
which implies that $\Prox_{\lambda}f(x_{0})$ is a convex set.
This is a contradiction.
\end{proof}

\section{Characterizations of Lipschitz and single-valued proximal mappings}\label{s:char}
Simple examples show that proximal mappings can be wild, although always monotone.
\begin{ex} The function $f(x)=-\frac{1}{2}\|\cdot\|^2$ is prox-bounded with threshold
$\lambda_{f}=1$. We have $\Prox_{1}f=N_{\{0\}}$ the normal cone map at $0$,
i.e.,
$$N_{\{0\}}(x)=\begin{cases}
\R^n & \text{ if $x=0$,}\\
\varnothing & \text{ otherwise.}
\end{cases}
$$
When $0<\mu<1$, $$\Prox_{\mu}f=\frac{\Id}{1-\mu},$$
 which is Lipschitz continuous
with constant $1/(1-\mu)$.
\end{ex}

\begin{Fact}\emph{(\cite[Example 7.44]{rockwets})}
Let $f:\R^n\rightarrow \RX$ be proper, lsc and prox-bounded with threshold $\lambda_{f}$, and $0<\mu<\lambda_{f}$. Then
$\Prox_{\mu}f$ is always upper semicontinuous and locally bounded.
\end{Fact}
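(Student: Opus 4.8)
Although this is \cite[Example 7.44]{rockwets}, here is the route I would take. The plan is to establish three things: (i) $\Prox_{\mu}f(x)\neq\emptyset$ for every $x$, (ii) $\Prox_{\mu}f$ is locally bounded, and (iii) $\gph\Prox_{\mu}f$ is closed; since a locally bounded set-valued mapping is upper (outer) semicontinuous exactly when its graph is closed, (ii) and (iii) together give the assertion.

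The engine is prox-boundedness. Because $0<\mu<\lambda_{f}$, fix $\lambda$ with $\mu<\lambda<\lambda_{f}$; then $e_{\lambda}f$ is finite-valued (indeed locally Lipschitz) by Fact~\ref{l:dcform}, and with $c:=e_{\lambda}f(0)\in\R$ we get the quadratic minorant $f(w)\geq c-\frac{1}{2\lambda}\|w\|^2$ for all $w\in\R^n$. Hence for each $x$ the function $w\mapsto f(w)+\frac{1}{2\mu}\|w-x\|^2$ is proper, lsc, and level-bounded, since
\[
f(w)+\frac{1}{2\mu}\|w-x\|^2\ \geq\ c+\Big(\frac{1}{2\mu}\|w-x\|^2-\frac{1}{2\lambda}\|w\|^2\Big)
\]
and the quadratic in $w$ on the right has positive leading coefficient $\frac12(\frac1\mu-\frac1\lambda)$. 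Weierstrass' theorem then yields (i).

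For (ii), let $w\in\Prox_{\mu}f(x)$, so that $f(w)+\frac{1}{2\mu}\|w-x\|^2=e_{\mu}f(x)$. Combining with the minorant and expanding,
\[
\tfrac12\Big(\tfrac1\mu-\tfrac1\lambda\Big)\|w\|^2-\tfrac1\mu\|w\|\,\|x\|\ \leq\ e_{\mu}f(x)-c .
\]
Since $e_{\mu}f$ is locally bounded above (again Fact~\ref{l:dcform}, as $\mu<\lambda_{f}$), on each ball $\{\|x\|\leq R\}$ the right-hand side is bounded above by a constant, and this quadratic inequality forces $\|w\|\leq\rho_{R}$ for some $\rho_{R}<\infty$; thus $\Prox_{\mu}f$ is locally bounded. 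For (iii), take $x_{k}\to x$ and $w_{k}\in\Prox_{\mu}f(x_{k})$ with $w_{k}\to w$ (by (ii) any sequence drawn from the graph over a bounded $x$-set is bounded, so has such a limit along a subsequence). From $f(w_{k})+\frac{1}{2\mu}\|w_{k}-x_{k}\|^2=e_{\mu}f(x_{k})$, continuity of $e_{\mu}f$ (Fact~\ref{l:dcform}) and convergence of the quadratic term give $f(w_{k})\to e_{\mu}f(x)-\frac{1}{2\mu}\|w-x\|^2$; lower semicontinuity of $f$ then yields $f(w)+\frac{1}{2\mu}\|w-x\|^2\leq e_{\mu}f(x)$, while the reverse inequality holds since $e_{\mu}f(x)$ is the infimum. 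Hence $w\in\Prox_{\mu}f(x)$ and $\gph\Prox_{\mu}f$ is closed. The only step needing care is the uniform coercivity in (ii) — that the strictly positive coefficient $\frac12(\frac1\mu-\frac1\lambda)$, which is precisely where $\mu<\lambda_{f}$ enters, makes the level-boundedness uniform for $x$ in bounded sets; everything else is bookkeeping.
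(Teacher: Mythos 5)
Your proof is correct and complete: the quadratic minorant $f(w)\geq c-\frac{1}{2\lambda}\|w\|^2$ obtained from prox-boundedness with $\mu<\lambda<\lambda_{f}$ gives uniform level-boundedness of $w\mapsto f(w)+\frac{1}{2\mu}\|w-x\|^2$ for $x$ in bounded sets (hence nonemptiness and local boundedness), and the closed-graph argument via continuity of $e_{\mu}f$ and lower semicontinuity of $f$ is sound. The paper itself supplies no proof of this Fact---it simply cites \cite[Example 7.44]{rockwets}---and your route is essentially the standard argument underlying that citation, so there is nothing to reconcile.
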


The following characterizations of the proximal mapping are of independent interest.

\begin{prop}[Lipschitz proximal mapping]
Let
$0<\mu<\lambda_{f}$.
Then the following are equivalent.
\begin{enumerate}[label=\rm(\alph*)]
\item\label{i:map} The proximal mapping $\Prox_{\mu}f$ is Lipschitz continuous with constant $\kappa>0$.
\item\label{i:function}
 The function
$$f+\frac{\kappa-1}{2\mu\kappa}\|\cdot\|^2$$
is convex.
\end{enumerate}
\end{prop}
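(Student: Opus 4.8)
The plan is to pass to the Moreau envelope $e_\mu f$ and use the explicit description of its Clarke/limiting subdifferential established in Lemma~\ref{l:prox:grad}, together with the Lipschitz characterization of gradient mappings of convex functions. First I would observe that since $0<\mu<\lambda_f$, the proximal mapping $\Prox_\mu f$ is everywhere nonempty-valued, upper semicontinuous and locally bounded, and by Lemma~\ref{l:prox:grad}\ref{i:fplus:p} its convexification satisfies $\conv\Prox_\mu f=\partial\big(\mu f+\tfrac12\|\cdot\|^2\big)^*$, which is maximally monotone with full domain. The key reduction is that $\Prox_\mu f$ is Lipschitz with constant $\kappa$ if and only if it is single-valued (hence equals $\conv\Prox_\mu f$) and $\kappa$-Lipschitz; single-valuedness of a maximally monotone operator with full domain on $\R^n$ is equivalent to continuity, and an everywhere-defined monotone map that is Lipschitz is automatically single-valued, so no generality is lost in identifying $\Prox_\mu f$ with $\nabla\big(\mu f+\tfrac12\|\cdot\|^2\big)^*$ in the relevant regime.

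Next I would invoke the classical Baillon--Haddad-type fact: for a convex function $\phi$ on $\R^n$, $\nabla\phi$ exists and is Lipschitz with constant $\kappa$ if and only if $\phi-\tfrac{1}{2\kappa}\|\cdot\|^2$ is concave, equivalently $\tfrac{\kappa}{2}\|\cdot\|^2-\tfrac{?}{}$... more precisely, $\nabla\phi$ is $\kappa$-Lipschitz iff $\tfrac{1}{2\kappa}\|\cdot\|^2-$ wait — the clean statement I want is: $\nabla\phi$ is (single-valued and) $\kappa$-Lipschitz iff $\tfrac{\kappa}{2}\|\cdot\|^2-\phi^{*}$ is... I will instead use the dual formulation directly. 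Applying this to $\phi=\big(\mu f+\tfrac12\|\cdot\|^2\big)^*$: $\Prox_\mu f=\nabla\phi$ is $\kappa$-Lipschitz iff $\tfrac{\kappa}{2}\|\cdot\|^2-\phi$ is convex, i.e. iff $\phi$ is $\kappa$-strongly smooth, which by Fenchel duality is equivalent to $\phi^*=\mu f+\tfrac12\|\cdot\|^2$ being $\tfrac1\kappa$-strongly convex, i.e. $\mu f+\tfrac12\|\cdot\|^2-\tfrac{1}{2\kappa}\|\cdot\|^2$ convex. Dividing by $\mu$ yields exactly that $f+\tfrac{\kappa-1}{2\mu\kappa}\|\cdot\|^2$ is convex, which is \ref{i:function}. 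For the converse, if \ref{i:function} holds then $\mu f+\tfrac12\|\cdot\|^2$ is $\tfrac1\kappa$-strongly convex, hence its conjugate $\phi$ is differentiable with $\kappa$-Lipschitz gradient, and since $f$ is $\mu$-proximal (because $\tfrac{\kappa-1}{2\mu\kappa}\le\tfrac{1}{2\mu}$ forces $f+\tfrac{1}{2\mu}\|\cdot\|^2$ convex), Lemma~\ref{l:prox:grad} gives $\Prox_\mu f=\conv\Prox_\mu f=\nabla\phi$, which is $\kappa$-Lipschitz.

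The main obstacle is the careful bookkeeping of the strong-convexity/strong-smoothness duality and the precise relation between the Lipschitz modulus $\kappa$ of the gradient and the strong-convexity modulus of the conjugate, making sure the affine/quadratic shifts compose correctly so that the exponent $\tfrac{\kappa-1}{2\mu\kappa}$ emerges exactly. One should also handle the degenerate-looking constants: if $\kappa<1$ the coefficient $\tfrac{\kappa-1}{2\mu\kappa}$ is negative, which is consistent (e.g. a strongly convex $f$ gives a contractive $\Prox$), and if $\kappa=1$ it asserts $f$ itself is convex, recovering the familiar firmly-nonexpansive case; I would remark on these as sanity checks. A secondary point to nail down is that Lipschitz continuity of the a priori set-valued $\Prox_\mu f$ genuinely forces single-valuedness — this follows since a Lipschitz selection of an everywhere-defined monotone operator has monotone Lipschitz graph and the operator, being locally bounded with closed graph, cannot then have a second value without violating the Lipschitz bound at that point.
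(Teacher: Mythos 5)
Your overall route is the same as the paper's: identify $\conv\Prox_{\mu}f$ with $\partial\big(\mu f+\tfrac12\|\cdot\|^2\big)^*$ via Lemma~\ref{l:prox:grad}\ref{i:fplus:p}, and then run the strong-convexity versus Lipschitz-gradient duality of \cite[Proposition 12.60]{rockwets}. The implication \ref{i:function}$\Rightarrow$\ref{i:map} is complete as you state it, and your discussion of why Lipschitz continuity of the a priori set-valued $\Prox_{\mu}f$ forces single-valuedness is, if anything, more explicit than the paper's.

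There is, however, a genuine gap in \ref{i:map}$\Rightarrow$\ref{i:function}, located exactly where your writeup hesitates: you set $\phi=\big(\mu f+\tfrac12\|\cdot\|^2\big)^*$ and then assert $\phi^*=\mu f+\tfrac12\|\cdot\|^2$. For a merely prox-bounded $f$ this is false in general: $\phi^*$ is the biconjugate, i.e.\ the lsc convex hull $\conv\big(\mu f+\tfrac12\|\cdot\|^2\big)$, and it equals $\mu f+\tfrac12\|\cdot\|^2$ only when that function is already convex --- which is essentially a weak form of the very conclusion you are after. As written, the duality step only delivers that $\conv\big(\mu f+\tfrac12\|\cdot\|^2\big)$ is $\tfrac1\kappa$-strongly convex, not that $\mu f+\tfrac12\|\cdot\|^2$ is. The paper closes precisely this gap with Soloviov's theorem \cite{soloviov}: since $\partial\phi=\conv\Prox_{\mu}f$ is single-valued, $\phi$ is differentiable, and differentiability of the conjugate forces $\mu f+\tfrac12\|\cdot\|^2$ to be convex; only after that is \cite[Proposition 12.60]{rockwets} applied to the now-convex function and its conjugate. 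Your argument can be repaired either by inserting that citation, or by observing that a coercive lsc function whose convex hull is strictly convex must coincide with its hull (otherwise the hull would be affine on a nontrivial segment, contradicting strict convexity); but one of these steps must be supplied.
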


\begin{proof}
\ref{i:map}$\Rightarrow$\ref{i:function}:  By Lemma~\ref{l:prox:grad}\ref{i:fplus:p},
$\bigg(\mu f+\frac{1}{2}\|\cdot\|^2\bigg)^*$ is differentiable and its
gradient is Lipschitz continuous with constant $\kappa$. By Soloviov's
theorem \cite{soloviov},
$\mu f+\frac{1}{2}\|\cdot\|^2$ is convex. Then the convex function
$\mu f+\frac{1}{2}\|\cdot\|^2$
has differentiable Fenchel conjugate $\big(\mu f+\frac{1}{2}\|\cdot\|^2\big)^*$ and
$\triangledown \big(\mu f+\frac{1}{2}\|\cdot\|^2\big)^*$ is Lipschitz continuous
with constant $\kappa$. It follows from \cite[Proposition 12.60]{rockwets} that
$\mu f+\frac{1}{2}\|\cdot\|^2$ is $\frac{1}{\kappa}$-strongly convex, i.e.,
$$\mu f+\frac{1}{2}\|\cdot\|^2-\frac{1}{\kappa}\frac{1}{2}\|\cdot\|^2$$
is convex. Equivalently,
$$f+\frac{\kappa-1}{2\mu\kappa}\|\cdot\|^2$$
is convex.

\noindent\ref{i:function}$\Rightarrow$\ref{i:map}: We have
$$\mu f+\frac{1}{2}\|\cdot\|^2-\frac{1}{\kappa}\frac{1}{2}\|\cdot\|^2$$
is convex, i.e.,
$\mu f+\frac{1}{2}\|\cdot\|^2$ is strongly convex with constant $\frac{1}{\kappa}$.
Then \cite[Proposition 12.60]{rockwets} implies that
$\left(\mu f+\frac{1}{2}\|\cdot\|^2\right)^*$ is differentiable and its
gradient is Lipschitz continuous with constant $\kappa$.
In view of Lemma~\ref{l:prox:grad}\ref{i:fplus:p},
$\Prox_{\mu}f$ is Lipschitz continuous with constant $\kappa$.
\end{proof}

\begin{cor}\label{c:lip}
 Let
$0<\mu<\lambda_{f}$.
Then the following are equivalent.
\begin{enumerate}[label=\rm(\alph*)]
\item\label{i:map1}The proximal mapping $\Prox_{\mu}f$ is Lipschitz continuous with constant $1$, i.e., nonexpansive.
\item\label{i:function1}
 The function
$f$
is convex.
\end{enumerate}
\end{cor}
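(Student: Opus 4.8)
The plan is to specialize the preceding proposition to the case $\kappa = 1$ and recognize that the coefficient $\frac{\kappa-1}{2\mu\kappa}$ in condition~\ref{i:function} vanishes precisely at $\kappa=1$. So the argument is essentially a direct substitution, but I should be careful to check that $\kappa=1$ is genuinely admissible in the statement of the proposition (the proposition is stated for a constant $\kappa>0$, and $1>0$, so there is no issue).

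First I would invoke the proposition with $\kappa=1$. The equivalence there reads: $\Prox_{\mu}f$ is Lipschitz continuous with constant $1$ if and only if $f + \frac{1-1}{2\mu\cdot 1}\|\cdot\|^2 = f + 0 = f$ is convex. That is literally the desired statement. So the corollary follows immediately from the proposition.

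Alternatively, and perhaps more transparently, I would unwind it through Lemma~\ref{l:prox:grad}\ref{i:fplus:p}: nonexpansiveness of $\Prox_{\mu}f = \partial\big(\mu f + \tfrac12\|\cdot\|^2\big)^*$ means the (necessarily single-valued, by Soloviov) gradient of $\big(\mu f + \tfrac12\|\cdot\|^2\big)^*$ is $1$-Lipschitz; by \cite[Proposition 12.60]{rockwets} this is equivalent to $\mu f + \tfrac12\|\cdot\|^2$ being $1$-strongly convex, i.e. $\mu f + \tfrac12\|\cdot\|^2 - \tfrac12\|\cdot\|^2 = \mu f$ being convex, i.e. $f$ convex. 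For the converse direction one reads the same chain of equivalences backwards, noting that if $f$ is convex then $\mu f + \tfrac12\|\cdot\|^2$ is $1$-strongly convex, hence its conjugate has $1$-Lipschitz gradient, hence $\Prox_\mu f$ is nonexpansive.

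There is essentially no obstacle here — the only thing to watch is the bookkeeping with the parameter $\mu$ and the factor of $\mu$ floating between $f$ and $\mu f$, and the fact that convexity of $f$ and of $\mu f$ are equivalent since $\mu>0$. I would present the one-line proof: apply the previous proposition with $\kappa=1$.

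\begin{proof}
Apply the previous proposition with $\kappa=1$. Then
$$f+\frac{\kappa-1}{2\mu\kappa}\|\cdot\|^2=f+0\cdot\|\cdot\|^2=f,$$
so \ref{i:map1}$\Leftrightarrow$\ref{i:function1}.
\end{proof}
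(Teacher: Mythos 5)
Your proof is correct and matches the paper's intent exactly: the corollary is stated without proof precisely because it is the specialization $\kappa=1$ of the preceding proposition, under which the coefficient $\frac{\kappa-1}{2\mu\kappa}$ vanishes and the condition reduces to convexity of $f$. Your one-line substitution argument is the intended one, and your alternative unwinding via Lemma~\ref{l:prox:grad} is just the proposition's own proof re-specialized, so nothing further is needed.
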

\begin{df}\emph{(See \cite[Section 26]{rockconv} or \cite[page 483]{rockwets})}
 A proper, lsc, convex function $f:\R^n\rightarrow (-\infty, +\infty]$
is
\begin{enumerate}[label=\rm(\alph*)]
\item essentially strictly convex if $f$ is strictly convex on every convex subset
of $\dom \partial f$;
\item essentially differentiable if $\partial f(x)$ is a singleton whenever
$\partial f(x)\neq\varnothing$.
\end{enumerate}
\end{df}
\begin{prop}[single-valued proximal mapping]\label{p:single}
Let
$0<\mu<\lambda_{f}$.
Then the following are equivalent.
\begin{enumerate}[label=\rm(\alph*)]
\item\label{i:maps}The proximal mapping $\Prox_{\mu}f$ is single-valued, i.e., $\Prox_{\mu}f(x)$
is a singleton for every $x\in\R^n$.
\item\label{i:functionc}
 The function
$$f+\frac{1}{2\mu}\|\cdot\|^2$$
is essentially strictly convex and coercive.
\end{enumerate}
\end{prop}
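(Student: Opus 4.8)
The plan is to reduce the single-valuedness of $\Prox_\mu f$ to a known correspondence between single-valued subgradient mappings and essential strict convexity, via the convex function $\varphi := \mu f + \frac12\|\cdot\|^2$ and its conjugate. First I would recall from Lemma~\ref{l:prox:grad}\ref{i:fplus:p} that $\conv\Prox_\mu f = \partial\varphi^*$, and that $\varphi$ (being coercive as $\mu<\lambda_f$) has $\varphi^*$ a finite continuous convex function with full domain. The point is that $\Prox_\mu f$ is single-valued if and only if $\conv\Prox_\mu f = \Prox_\mu f$ \emph{and} this common value is a singleton at every point — but more directly, since $\Prox_\mu f$ is always nonempty compact-valued with $\conv\Prox_\mu f = \partial\varphi^*$, single-valuedness of $\Prox_\mu f$ is equivalent to single-valuedness of $\partial\varphi^*$, i.e.\ to $\varphi^*$ being essentially differentiable (equivalently, differentiable on $\R^n$ since $\dom\varphi^* = \R^n$).

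Next I would invoke the classical duality (Rockafellar, \cite[Section 26]{rockconv}, or \cite[Theorem 11.13, Proposition 12.60]{rockwets}): for a proper lsc convex function $\varphi$, $\varphi^*$ is essentially differentiable if and only if $\varphi$ is essentially strictly convex. Applying this with $\varphi = \mu f + \frac12\|\cdot\|^2$ gives: $\Prox_\mu f$ single-valued $\iff$ $\partial\varphi^*$ single-valued $\iff$ $\varphi^*$ essentially differentiable $\iff$ $\mu f + \frac12\|\cdot\|^2$ essentially strictly convex. Since $\mu>0$, essential strict convexity of $\mu f + \frac12\|\cdot\|^2$ is equivalent to that of $f + \frac{1}{2\mu}\|\cdot\|^2$ (scaling a convex function by a positive constant preserves strict convexity on convex subsets of the domain of the subdifferential, and the domains of the subdifferentials coincide). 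Coercivity of $f + \frac1{2\mu}\|\cdot\|^2$ holds automatically from $\mu < \lambda_f$ and is recorded here for use in later sections, so the ``and coercive'' clause is not an extra restriction but a consequence of the blanket hypothesis; I would state it explicitly and note it follows from prox-boundedness (cf.\ \cite[Example 3.28, Theorem 1.25]{rockwets}).

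The one subtlety — and the step I expect to require the most care — is the passage between single-valuedness of the set-valued map $\Prox_\mu f$ itself versus single-valuedness of its convexification $\conv\Prox_\mu f = \partial\varphi^*$. A priori $\Prox_\mu f(x)$ could be, say, a two-point set whose convex hull is a segment; then $\partial\varphi^*(x)$ is not a singleton, so that direction is fine, but I must make sure the implication is genuinely an equivalence. The clean way is: $\Prox_\mu f(x)$ is always nonempty and compact (\cite[Example 1.25]{rockwets}), and a nonempty compact set $S$ is a singleton iff $\conv S$ is a singleton; hence $\Prox_\mu f$ is single-valued everywhere iff $\conv\Prox_\mu f = \partial\varphi^*$ is single-valued everywhere. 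This elementary observation bridges Lemma~\ref{l:prox:grad}\ref{i:fplus:p} to the convex-analytic duality and closes the argument; the rest is bookkeeping with the convex duality theorems already available in \cite{rockconv,rockwets}.
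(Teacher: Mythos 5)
Your plan follows the same route as the paper's proof (Lemma~\ref{l:prox:grad}\ref{i:fplus:p} to identify $\conv\Prox_\mu f$ with $\partial\big(\mu f+\tfrac12\|\cdot\|^2\big)^*$, then the Legendre-type duality of \cite[Theorem 11.13]{rockwets}), and two of your side remarks are sound: a nonempty compact set is a singleton iff its convex hull is, which cleanly justifies passing between $\Prox_\mu f$ and $\conv\Prox_\mu f$; and coercivity of $f+\tfrac1{2\mu}\|\cdot\|^2$ does already follow from $\mu<\lambda_f$, so the ``and coercive'' clause costs nothing extra (the paper instead rederives it from $\dom\varphi^*=\R^n$ via \cite[Theorem 11.8]{rockwets}).

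However, there is a genuine gap in your \ref{i:maps}$\Rightarrow$\ref{i:functionc} direction. The duality ``$\varphi^*$ essentially differentiable $\iff$ $\varphi$ essentially strictly convex'' is a theorem about proper lsc \emph{convex} functions $\varphi$, as you yourself state --- but at the point where you invoke it, you have not established that $\varphi=\mu f+\tfrac12\|\cdot\|^2$ is convex; indeed that is part of what must be proved, since essential strict convexity is only defined for convex functions. For a nonconvex $\varphi$ one has $\varphi^*=(\varphi^{**})^*$, so differentiability of $\varphi^*$ a priori yields essential strict convexity only of the convex hull $\varphi^{**}=\conv\varphi$, not of $\varphi$ itself, and in general $\varphi\neq\varphi^{**}$. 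The paper closes exactly this gap by first invoking Soloviov's theorem \cite{soloviov}: differentiability of $\big(\mu f+\tfrac12\|\cdot\|^2\big)^*$ forces $\mu f+\tfrac12\|\cdot\|^2$ to be convex, and only then is \cite[Theorem 11.13]{rockwets} applicable. One can also argue this directly (single-valuedness of the argmin plus coercivity shows $\varphi=\varphi^{**}$ on $\ran\partial\varphi^*=\dom\partial\varphi^{**}$, which is dense in $\dom\varphi^{**}$, and lower semicontinuity then gives $\varphi=\varphi^{**}$ everywhere), but some such step is indispensable and is absent from your proposal. The converse direction \ref{i:functionc}$\Rightarrow$\ref{i:maps} is unaffected, since there convexity is part of the hypothesis.
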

\begin{proof}
\ref{i:maps}$\Rightarrow$\ref{i:functionc}:  By Lemma~\ref{l:prox:grad}\ref{i:fplus:p},
$\left(\mu f+\frac{1}{2}\|\cdot\|^2\right)^*$ is differentiable. By Soloviov's
theorem \cite{soloviov},
$\mu f+\frac{1}{2}\|\cdot\|^2$ is convex. The convex function
$\mu f+\frac{1}{2}\|\cdot\|^2$
has differentiable Fenchel conjugate $\left(\mu f+\frac{1}{2}\|\cdot\|^2\right)^*$. It follows from \cite[Proposition 11.13]{rockwets} that
$\mu f+\frac{1}{2}\|\cdot\|^2$ is essentially strictly convex.
Since $\left(\mu f+\frac{1}{2}\|\cdot\|^2\right)^*$ has full domain and
$\mu f+\frac{1}{2}\|\cdot\|^2$ is convex, the function
$\mu f+\frac{1}{2}\|\cdot\|^2$ is coercive by \cite[Theorem 11.8]{rockwets}.

\noindent\ref{i:functionc}$\Rightarrow$\ref{i:maps}:
Since
$\mu f+\frac{1}{2}\|\cdot\|^2$ is essentially strictly convex,
$\left(\mu f+\frac{1}{2}\|\cdot\|^2\right)^*$ is essentially differentiable by \cite[Theorem 11.13]{rockwets}.
Because $\mu f+\frac{1}{2}\|\cdot\|^2$ is coercive,
$\left(\mu f+\frac{1}{2}\|\cdot\|^2\right)^*$
has full domain. Then
$\left(\mu f+\frac{1}{2}\|\cdot\|^2\right)^*$ is differentiable on $\R^n$.
In view of Lemma~\ref{l:prox:grad}\ref{i:fplus:p},
$\Prox_{\mu}f(x)$ is single-valued for every $x\in\R^n$.
\end{proof}
Recall that for a nonempty, closed set $S\subseteq\R^n$ and every $x\in\R^n$,
the projection $P_{S}(x)$
consists of the points in $S$ nearest to $x$, so
$P_{S}=\Prox_{1}\iota_{S}$.
Combining Corollary~\ref{c:lip} and Proposition~\ref{p:single},
we can derive the following result
due to Rockafellar and Wets, \cite[Corollary 12.20]{rockwets}.
\begin{cor} Let $S$ be a nonempty, closed set in $\R^n$. Then the following are
equivalent:
\begin{enumerate}[label=\rm(\alph*)]
\item $P_{S}$ is single-valued,
\item $P_{S}$ is nonexpansive,
\item $S$ is convex.
\end{enumerate}
\end{cor}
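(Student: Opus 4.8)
The plan is to deduce this corollary directly from Corollary~\ref{c:lip} and Proposition~\ref{p:single}, applied to the specific function $f = \iota_S$ with parameter $\mu = 1$. First I would record that $\iota_S$ is proper (since $S \neq \varnothing$), lsc (since $S$ is closed), and prox-bounded with threshold $\lambda_{\iota_S} = +\infty$, because $\iota_S \geq 0$ is bounded below by an affine function; hence the hypothesis $0 < \mu < \lambda_{\iota_S}$ is satisfied at $\mu = 1$, and $P_S = \Prox_1 \iota_S$ as noted in the paragraph preceding the statement. This legitimizes invoking both earlier results with $f = \iota_S$, $\mu = 1$.

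Next I would establish the cycle of implications. For ``$S$ convex $\Rightarrow$ $P_S$ nonexpansive'': if $S$ is convex then $\iota_S$ is convex, so Corollary~\ref{c:lip}\,\ref{i:function1}$\Rightarrow$\ref{i:map1} gives that $\Prox_1 \iota_S = P_S$ is Lipschitz with constant $1$. For ``$P_S$ nonexpansive $\Rightarrow$ $P_S$ single-valued'': a nonexpansive map is in particular single-valued by definition (it is a Lipschitz \emph{function}), so this implication is immediate; alternatively one can note that nonexpansiveness via Corollary~\ref{c:lip} forces $\iota_S$ convex, hence $\iota_S + \tfrac12\|\cdot\|^2$ is strictly convex and coercive, so Proposition~\ref{p:single}\,\ref{i:functionc}$\Rightarrow$\ref{i:maps} applies. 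For ``$P_S$ single-valued $\Rightarrow$ $S$ convex'': by Proposition~\ref{p:single}\,\ref{i:maps}$\Rightarrow$\ref{i:functionc}, single-valuedness of $\Prox_1 \iota_S$ implies $\iota_S + \tfrac12\|\cdot\|^2$ is essentially strictly convex and coercive; in particular $\iota_S + \tfrac12\|\cdot\|^2$ is convex, hence $\iota_S$ is convex (subtracting the smooth convex quadratic $\tfrac12\|\cdot\|^2$ preserves convexity in the relevant sense: $\iota_S = (\iota_S + \tfrac12\|\cdot\|^2) - \tfrac12\|\cdot\|^2$ and one checks the definition directly, or uses that an lsc function $\varphi$ with $\varphi + \tfrac1{2\lambda}\|\cdot\|^2$ convex is $\lambda$-proximal for all $\lambda$, forcing $\varphi$ convex in the limit), and therefore $S = \dom \iota_S$ is a convex set.

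The only mildly delicate point — the ``main obstacle,'' though it is minor — is the last step: extracting convexity of $S$ from convexity of $\iota_S + \tfrac12\|\cdot\|^2$ rather than from convexity of $\iota_S$ itself. The cleanest route is to observe that essential strict convexity of $\iota_S + \tfrac12\|\cdot\|^2$ together with its coercivity actually gives more than needed, but the bare fact we use is: if $\iota_S + \tfrac12\|\cdot\|^2$ is convex, then for any $x, y \in S$ and $t \in (0,1)$ one has $\iota_S\big(tx + (1-t)y\big) \leq t\,\iota_S(x) + (1-t)\,\iota_S(y) + \tfrac{t(1-t)}{2}\|x-y\|^2 - \big(\text{quadratic terms}\big) < +\infty$, forcing $tx + (1-t)y \in S$; carrying out this elementary computation shows $S$ is convex. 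I would then close by chaining the three implications to conclude the three-way equivalence, noting that the result recovers \cite[Corollary 12.20]{rockwets}.
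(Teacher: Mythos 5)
Your proposal is correct and is exactly the argument the paper intends: the paper offers no written proof beyond the instruction to combine Corollary~\ref{c:lip} and Proposition~\ref{p:single} with $f=\iota_S$ and $\mu=1$, which is what you carry out, and your final step (finiteness of $\iota_S+\tfrac12\|\cdot\|^2$ along segments, i.e.\ convexity of the domain of a convex function) correctly closes the cycle. The only blemish is the parenthetical alternative claiming that convexity of $\varphi+\tfrac{1}{2\lambda}\|\cdot\|^2$ propagates to \emph{all} $\lambda$ (it only propagates to smaller $\lambda$), but that remark is not needed since your direct computation already suffices.
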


\section{The proximal average for prox-bounded functions}\label{s:main}
The goal of this section is to establish a proximal average function that works for any two prox-bounded functions. Our framework
 will generalize the convex proximal average of
 \cite{proxpoint} to include nonconvex functions, in a manner
 that recovers the original definition in the convex case.

Remembering the standing assumptions in Subsection \ref{s:assump},
we define the \emph{proximal average} of $f, g$ associated with parameters $\mu, \alpha$ by
\begin{equation}\label{e:prox:def}
\vphi=-e_{\mu}(-\alpha e_{\mu}f-(1-\alpha)e_{\mu}g),
\end{equation}
which essentially relies on the Moreau envelopes.

\begin{thm}[basic properties of the proximal average]\label{t:prox}
Let
$0<\mu<\bl$,
and let $\vphi$ be defined as in \eqref{e:prox:def}.
Then the following hold.
\begin{enumerate}[label=\rm(\alph*)]
\item \label{i:env:conhull}
The Moreau envelope $e_{\mu}(\vphi)=\alpha e_{\mu}f+(1-\alpha)e_{\mu} g.$
\item\label{i:lowers}The proximal average $\vphi$ is proper, lsc and prox-bounded with threshold
$\lambda_{\vphi}\geq\bl$.
\item\label{i:epi:sum}
The proximal average $\vphi(x)=$
\begin{equation}\label{e:func}
\left[\alpha\conv\bigg(f+\frac{1}{2\mu}\|\cdot\|^2\bigg)\left(\frac{\cdot}{\alpha}\right)\Box (1-\alpha)\conv
\bigg(g+\frac{1}{2\mu}\|\cdot\|^2\bigg)\left(\frac{\cdot}{1-\alpha}\right)\right](x)
-\frac{1}{2\mu}\|x\|^2,
\end{equation}
where the inf-convolution $\Box$ is exact;
consequently, $\epig(\vphi+1/2\mu\|\cdot\|^2)=$
\begin{equation}\label{e:epig}
\alpha \epig\conv(f+1/2\mu\|\cdot\|^2)+(1-\alpha)\epig\conv(g+1/2\mu\|\cdot\|^2).
\end{equation}
\item\label{i:dom:convhull}
The domain $\dom\vphi=\alpha \conv\dom f+(1-\alpha)\conv\dom g$.
In particular, $\dom\vphi=\R^n$ if either one of $\conv\dom f$ and $\conv\dom g$ is
$\R^n$.
\item\label{i:hull:ave} The proximal average of $f$ and $g$ is the same
as the proximal average of proximal hulls $h_{\mu}f$ and $h_{\mu}g$, respectively.
\item\label{i:alpha}
When $\alpha=0$, $\varphi_{\mu}^{0}=h_{\mu}g$; when $\alpha=1$, $\varphi_{\mu}^{1}=h_{\mu}g$.
\item\label{i:phi:mu}
Each $\vphi$ is $\mu$-proximal, or equivalently, $\mu$-hypoconvex.
\item\label{i:f=g}
When $f=g$, $\vphi=h_{\mu}f$; consequently, $\vphi=f$ when $f=g$ is $\mu$-proximal.
\item\label{i:g=c}
When $g\equiv c\in\R$, $\vphi=e_{\mu/\alpha,\mu}(\alpha f+(1-\alpha)c)$,
the Lasry-Lions envelope of $\alpha f+(1-\alpha)c$.
\end{enumerate}
\end{thm}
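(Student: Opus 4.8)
Everything hinges on one explicit dual formula for $\vphi$. Put $F:=f+\frac{1}{2\mu}\|\cdot\|^{2}$ and $G:=g+\frac{1}{2\mu}\|\cdot\|^{2}$. The first step is to observe that, since $\mu<\bl=\min\{\lambda_{f},\lambda_{g}\}$, the functions $F$ and $G$ are proper, lsc and coercive, so their conjugates $F^{*}$ and $G^{*}$ are finite-valued, continuous, convex functions on $\R^{n}$; hence $H:=\alpha F^{*}+(1-\alpha)G^{*}$ is finite-valued, continuous and convex, and in particular $H^{**}=H$. Now apply the identity $e_{\mu}\psi=\frac{1}{2\mu}\|\cdot\|^{2}-(\psi+\frac{1}{2\mu}\|\cdot\|^{2})^{*}(\cdot/\mu)$ of Fact~\ref{l:dcform}\ref{i:mor:l} (which, being an instance of the Fenchel--Moreau expression of the Moreau envelope, is purely algebraic and valid for every proper $\psi$) to $f$, to $g$, and to the continuous function $-\alpha e_{\mu}f-(1-\alpha)e_{\mu}g$; combined with the elementary scaling rules $(u(\cdot/\mu))^{*}=u^{*}(\mu\,\cdot)$ and $(\alpha u)^{*}=\alpha u^{*}(\cdot/\alpha)$, the definition \eqref{e:prox:def} collapses to
\[
\vphi+\frac{1}{2\mu}\|\cdot\|^{2}=\Big(\alpha\big(f+\tfrac{1}{2\mu}\|\cdot\|^{2}\big)^{*}+(1-\alpha)\big(g+\tfrac{1}{2\mu}\|\cdot\|^{2}\big)^{*}\Big)^{*}=H^{*}.
\]
All nine items are then extracted from this formula and from the definition.

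From the formula: $\vphi+\frac{1}{2\mu}\|\cdot\|^{2}=H^{*}$ is proper, lsc and convex, so $\vphi$ is proper and lsc and $\mu$-proximal, which gives \ref{i:phi:mu} ($\mu$-proximality and $\mu$-hypoconvexity being the same by definition) and the regularity part of \ref{i:lowers}. Taking conjugates, $(\vphi+\frac{1}{2\mu}\|\cdot\|^{2})^{*}=H^{**}=H$, hence $e_{\mu}\vphi=\frac{1}{2\mu}\|\cdot\|^{2}-H(\cdot/\mu)=\alpha e_{\mu}f+(1-\alpha)e_{\mu}g$, which is \ref{i:env:conhull}. For \ref{i:epi:sum}, with $\alpha\in(0,1)$, conjugate the right-hand side once more via the conjugate-of-a-sum rule: since $\alpha F^{*}$ and $(1-\alpha)G^{*}$ have full domain, $H^{*}=(\alpha F^{*})^{*}\,\Box\,((1-\alpha)G^{*})^{*}$ with the infimal convolution exact; and $(\alpha F^{*})^{*}=\alpha F^{**}(\cdot/\alpha)=\alpha\,\conv F(\cdot/\alpha)$ since coercivity of $F$ makes $\conv F$ lsc so $F^{**}=\conv F$, likewise for $G$. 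This is precisely \eqref{e:func}; passing to epigraphs and using $\epig(\alpha\,u(\cdot/\alpha))=\alpha\epig u$ together with exactness (so that $\epig(u_{1}\Box u_{2})=\epig u_{1}+\epig u_{2}$) yields \eqref{e:epig}. Item \ref{i:dom:convhull} then follows, since $\dom\vphi$ is the Minkowski sum of $\dom[\alpha\conv F(\cdot/\alpha)]=\alpha\conv\dom f$ and $\dom[(1-\alpha)\conv G(\cdot/(1-\alpha))]=(1-\alpha)\conv\dom g$, and $\R^{n}+S=\R^{n}$ for any nonempty $S$.

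The remaining items come straight from the definition \eqref{e:prox:def}. For \ref{i:hull:ave}, \eqref{e:prox:def} sees $f,g$ only through $e_{\mu}f,e_{\mu}g$, and $e_{\mu}f=e_{\mu}(h_{\mu}f)$, $e_{\mu}g=e_{\mu}(h_{\mu}g)$ by Fact~\ref{f:m-p-l}\ref{i:p:hull}. For \ref{i:alpha}, putting $\alpha=0$ (resp. $\alpha=1$) into \eqref{e:prox:def} gives $-e_{\mu}(-e_{\mu}g)=h_{\mu}g$ (resp. $-e_{\mu}(-e_{\mu}f)=h_{\mu}f$) by Fact~\ref{f:m-p-l}(a). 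For \ref{i:f=g}, when $f=g$ the inner function is $-e_{\mu}f$, so $\vphi=-e_{\mu}(-e_{\mu}f)=h_{\mu}f$; and if moreover $f$ is $\mu$-proximal then $f+\frac{1}{2\mu}\|\cdot\|^{2}$ is convex and lsc, whence $h_{\mu}f+\frac{1}{2\mu}\|\cdot\|^{2}=(f+\frac{1}{2\mu}\|\cdot\|^{2})^{**}=f+\frac{1}{2\mu}\|\cdot\|^{2}$ by Fact~\ref{l:dcform}(b), i.e. $\vphi=f$. For \ref{i:g=c} with $\alpha\in(0,1)$, when $g\equiv c$ one has $e_{\mu}g\equiv c$, and the scaling identity $e_{\mu/\alpha}(\alpha f+(1-\alpha)c)=\alpha e_{\mu}f+(1-\alpha)c$ together with Fact~\ref{f:m-p-l}\ref{i:d:env} yields $\vphi=-e_{\mu}\big(-e_{\mu/\alpha}(\alpha f+(1-\alpha)c)\big)=e_{\mu/\alpha,\mu}(\alpha f+(1-\alpha)c)$; the hypotheses of Fact~\ref{f:m-p-l}\ref{i:d:env} are met because $\mu<\lambda_{f}$ forces $\mu<\mu/\alpha<\lambda_{f}/\alpha=\lambda_{\alpha f+(1-\alpha)c}$.

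The one genuinely delicate point is the threshold bound $\lambda_{\vphi}\ge\bl$ in \ref{i:lowers}, and it is a direct estimate on the dual formula. Fix any $\mu<\lambda<\lambda'<\bl$. Prox-boundedness of $f$ and $g$ at level $\lambda'$ gives quadratic \emph{lower} bounds $F,G\ge m+c\|\cdot\|^{2}$ with $c:=\tfrac{1}{2\mu}-\tfrac{1}{2\lambda'}>0$ and a suitable real $m$; dualizing, $F^{*},G^{*}\le\tfrac{1}{4c}\|\cdot\|^{2}-m$, hence $H\le\tfrac{1}{4c}\|\cdot\|^{2}-m$; dualizing again, $H^{*}\ge c\|\cdot\|^{2}+m$. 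By the displayed formula, $\vphi+\tfrac{1}{2\lambda}\|\cdot\|^{2}\ge\big(\tfrac{1}{2\lambda}-\tfrac{1}{2\lambda'}\big)\|\cdot\|^{2}+m$ is coercive (since $\lambda<\lambda'$), so $w\mapsto\vphi(w)+\tfrac{1}{2\lambda}\|w-x\|^{2}$ is lsc and coercive for each $x$ and therefore $e_{\lambda}\vphi>-\infty$; as $\lambda<\bl$ was arbitrary, $\lambda_{\vphi}\ge\bl$. (I have written a single constant $m$ for $F$ and $G$, which is harmless.) The crux of the whole proof is the clean derivation of the dual formula in the first paragraph; once it is in hand, (a)--(d) and (g)--(i) are short, and only the threshold estimate above requires any real work.
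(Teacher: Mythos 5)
Your proposal is correct, and its backbone---establishing the conjugate identity $\vphi+\frac{1}{2\mu}\|\cdot\|^2=\big[\alpha\big(f+\frac{1}{2\mu}\|\cdot\|^2\big)^*+(1-\alpha)\big(g+\frac{1}{2\mu}\|\cdot\|^2\big)^*\big]^*$ and reading off \eqref{e:func} from the exact inf-convolution of the biconjugates---is exactly the computation the paper performs for item \ref{i:epi:sum}; items \ref{i:hull:ave}--\ref{i:g=c} are also handled essentially as in the paper. Two local steps differ. For \ref{i:env:conhull}, you biconjugate the dual formula (using $H^{**}=H$ for the finite-valued convex $H$), whereas the paper argues directly from the definition: $-\alpha e_{\mu}f-(1-\alpha)e_{\mu}g$ is $\mu$-proximal by Lemma~\ref{l:env:neg}\ref{i:e:1} and Proposition~\ref{p:p:cone}, hence fixed by $h_{\mu}=-e_{\mu}(-e_{\mu}(\cdot))$; since $h_{\mu}\psi+\frac{1}{2\mu}\|\cdot\|^2=(\psi+\frac{1}{2\mu}\|\cdot\|^2)^{**}$, these are two phrasings of the same fact. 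More substantively, your threshold estimate $\lambda_{\vphi}\ge\bl$ is a genuinely different argument: you push quadratic minorants of $f+\frac{1}{2\mu}\|\cdot\|^2$ and $g+\frac{1}{2\mu}\|\cdot\|^2$ through two conjugations to obtain $\vphi\ge m-\frac{1}{2\lambda'}\|\cdot\|^2$, while the paper combines the semigroup identity $e_{\delta+\mu}\vphi=e_{\delta}(e_{\mu}\vphi)$ with the concavity inequality $e_{\delta}(\alpha u+(1-\alpha)v)\ge\alpha e_{\delta}u+(1-\alpha)e_{\delta}v$ to get $e_{\delta+\mu}\vphi\ge\alpha e_{\delta+\mu}f+(1-\alpha)e_{\delta+\mu}g>-\infty$. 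The paper's route is shorter and reuses \ref{i:env:conhull}; yours stays entirely at the level of conjugates and makes the quadratic lower bound on $\vphi$ explicit. One small point of care in your version: the minorant $F\ge m+c\|\cdot\|^2$ with $c=\frac{1}{2\mu}-\frac{1}{2\lambda'}$ should be obtained by evaluating prox-boundedness at the base point $0$ (legitimate, since $e_{\lambda'}f$ is finite everywhere once $\lambda'$ is strictly below the threshold); with a general base point a linear term appears and $c$ must be shrunk slightly, which is harmless but worth stating.
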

\begin{proof}
\ref{i:env:conhull}:
Since $-\alpha e_{\mu}f-(1-\alpha)e_{\mu}g$ is $\mu$-proximal by
Lemma~\ref{l:env:neg}\ref{i:e:1} and Proposition~\ref{p:p:cone},
we have
\begin{align*}
-e_{\mu}(\vphi)& =-e_{\mu}(-e_{\mu}(-\alpha e_{\mu}f-(1-\alpha)e_{\mu}g))\\
&=h_{\mu}(-\alpha e_{\mu}f-(1-\alpha)e_{\mu}g)\\
&=-\alpha e_{\mu}f-(1-\alpha)e_{\mu}g.
\end{align*}
\noindent\ref{i:lowers}: Because
$0<\mu<\bl$,
both $e_{\mu}f$ and $e_{\mu}g$ are continuous, see, e.g., \cite[Theorem 1.25]{rockwets}.
By \ref{i:env:conhull}, $e_{\mu}(\vphi)$ is real-valued and continuous.  If
$\vphi$ is not proper, then $e_{\mu}(\vphi)\equiv-\infty$ or
$e_{\mu}(\vphi)\equiv\infty$, which is a contradiction. Hence,
$\vphi$ must be proper.
Lower semicontinuity follows from
the definition of the Moreau envelope.

To show that $\lambda_{\vphi}\geq \bl$, take any $\delta\in ]0,\bl-\mu[$. By \cite[Exercise 1.29(c)]{rockwets} and \ref{i:env:conhull}, we have
\begin{align*}
e_{\delta+\mu}(\vphi) &=e_{\delta}(e_{\mu}(\vphi))\\
& =e_{\delta}(\alpha e_{\mu}f+(1-\alpha)e_{\mu}g)\\
&\geq \alpha e_{\delta}(e_{\mu}f)+(1-\alpha)e_{\delta}(e_{\mu}g)\\
&=\alpha e_{\delta+\mu}f+(1-\alpha)e_{\delta+\mu}g>-\infty.
\end{align*}
Since $\delta\in ]0,\bl-\mu[$ was arbitrary, $\vphi$ has prox-bound
$\lambda_{\vphi}\geq \bl$.

\noindent\ref{i:epi:sum}: Since $\mu<\bl$, both
$e_{\mu}f$ and $e_{\mu}g$ are locally Lipschitz with full domain by
Fact~\ref{l:dcform}\ref{i:mor:l},
so
$$\dom \bigg(f+\frac{1}{2\mu}\|\cdot\|^2\bigg)^*=
\dom \bigg(g+\frac{1}{2\mu}\|\cdot\|^2\bigg)^*
=\R^n.$$ It follows from \cite[Theorem 11.23(a)]{rockwets} that
\begin{align*}
& \left[\alpha\bigg(f+\frac{1}{2\mu}\|\cdot\|^2\bigg)^*+
(1-\alpha)\bigg(g+\frac{1}{2\mu}\|\cdot\|^2\bigg)^*\right]^*\\
&=
\bigg(\alpha\bigg(f+\frac{1}{2\mu}\|\cdot\|^2\bigg)^*\bigg)^{*}\Box
\bigg((1-\alpha)\bigg(g+\frac{1}{2\mu}\|\cdot\|^2\bigg)^*\bigg)^{*}
\end{align*}
where the $\Box$ is exact; see, e.g., \cite[Theorem 16.4]{rockconv}.
By Fact~\ref{l:dcform},
\begin{align*}
& -\alpha e_{\mu}f-(1-\alpha)e_{\mu}g \\
&=\alpha\bigg(f+\frac{1}{2\mu}\|\cdot\|^2\bigg)^*\left(\frac{x}{\mu}\right)
+(1-\alpha)\bigg(g+\frac{1}{2\mu}\|\cdot\|^2\bigg)^*\left(\frac{x}{\mu}\right)-\frac{1}{2\mu}\|\cdot\|^2.
\end{align*}
Substitute this into the definition of $\vphi$ and use Fact~\ref{l:dcform} again
to obtain $\vphi(x)=$
\begin{align}
 &\left[\alpha\bigg(f+\frac{1}{2\mu}\|\cdot\|^2\bigg)^*\big(\frac{\cdot}{\mu}\big)+
(1-\alpha)\bigg(g+\frac{1}{2\mu}\|\cdot\|^2\bigg)^*\big(\frac{\cdot}{\mu}\big)
\right]^*\big(\frac{x}{\mu}\big)
-\frac{1}{2\mu}\|x\|^2\nonumber\\
=&\left[\alpha\bigg(f+\frac{1}{2\mu}\|\cdot\|^2\bigg)^*+
(1-\alpha)\bigg(g+\frac{1}{2\mu}\|\cdot\|^2\bigg)^*\right]^*\big(\mu\frac{x}{\mu}\big)
-\frac{1}{2\mu}\|x\|^2\nonumber\\
= & \left[\alpha\bigg(f+\frac{1}{2\mu}\|\cdot\|^2\bigg)^{**}\big(\frac{\cdot}{\alpha}\big)\Box
(1-\alpha)\bigg(g+\frac{1}{2\mu}\|\cdot\|^2\bigg)^{**}
\big(\frac{\cdot}{1-\alpha}\big)
\right](x)
-\frac{1}{2\mu}\|x\|^2\nonumber\\
=& \left[\alpha\conv\bigg(f+\frac{1}{2\mu}\|\cdot\|^2\bigg)\big(\frac{\cdot}{\alpha}\big)\Box
(1-\alpha)\conv\bigg(g+\frac{1}{2\mu}\|\cdot\|^2\bigg)(\frac{\cdot}{1-\alpha}\big)
\right](x)
-\frac{1}{2\mu}\|x\|^2,\label{e:box:exact}
\end{align}
in which
$$\bigg(f+\frac{1}{2\mu}\|\cdot\|^2\bigg)^{**}=\conv\bigg(f+\frac{1}{2\mu}\|\cdot\|^2\bigg)$$
$$\bigg(g+\frac{1}{2\mu}\|\cdot\|^2\bigg)^{**}=\conv\bigg(g+\frac{1}{2\mu}\|\cdot\|^2\bigg)$$
because $f+\frac{1}{2\mu}\|\cdot\|^2$ and $g+\frac{1}{2\mu}\|\cdot\|^2$
are coercive; see, e.g., \cite[Example 11.26(c)]{rockwets}.
Also, in \eqref{e:box:exact}, the infimal convolution is exact because
$\left(f+\frac{1}{2\mu}\|\cdot\|^2\right)^*$ and $\left(g+\frac{1}{2\mu}\|\cdot\|^2\right)^*$
have full domain and \cite[Theorem 16.4]{rockconv}
or \cite[Theorem 11.23(a)]{rockwets}.
\eqref{e:epig} follows from \eqref{e:func} and
\cite[Proposition 12.8(ii)]{convmono} or \cite[Exercise 1.28]{rockwets}.

\noindent\ref{i:dom:convhull}: This is immediate from \ref{i:epi:sum} and
\cite[Proposition 12.6(ii)]{convmono}.

\noindent\ref{i:hull:ave}: Use
\eqref{e:prox:def}, and the fact that
$e_{\mu}(h_{u}f)=e_{\mu}f$ and $e_{\mu}(h_{u}g)=e_{\mu}g$.

\noindent\ref{i:alpha}: When $\alpha=0$, this follows from $\varphi_{\mu}^{0}=-e_{\mu}(-e_{\mu}g)=h_{\mu}g$;
the proof for $\alpha=1$ case is similar.

\noindent\ref{i:phi:mu}: This follows from Fact~\ref{l:dcform}\ref{i:mor:l}.

\noindent\ref{i:f=g}: When $f=g$, we have $e_{\mu}\vphi=e_{\mu}f$ so that $-e_{\mu}\vphi=-e_{\mu}f$. Since
$\vphi$ is $\mu$-proximal by \ref{i:phi:mu}, it follows that
$\vphi=-e_{\mu}(-e_{\mu}\vphi)=-e_{\mu}(-e_{\mu}f)=h_{\mu}f$.

\noindent\ref{i:g=c}: This follows from
\begin{align*}
\vphi &=-e_{\mu}(-\alpha e_{\mu}f-(1-\alpha)c)=-e_{\mu}(-e_{\mu/\alpha}(\alpha f)-(1-\alpha)c)\\
&=-e_{\mu}[-e_{\mu/\alpha}(\alpha f+(1-\alpha)c)],
\end{align*}
and Fact~\ref{f:m-p-l}\ref{i:d:env}.
\end{proof}

\begin{prop}
\begin{enumerate}[label=\rm(\alph*)]
\item\label{i:regular} The proximal average $\vphi$ is always Clarke regular, prox-regular and strongly
amenable on $\R^n$.
\item\label{i:full:d}
 If one of the sets $\conv\dom f$ or $\conv\dom g$ is $\R^n$,
then $\vphi$ is locally Lipschitz on $\R^n$.
\item\label{i:u:proximable} When $f, g$ are both $\mu$-proximal, $\vphi$ is the proximal
average for convex functions.
\end{enumerate}
\end{prop}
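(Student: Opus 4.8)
The plan is to lean throughout on the structural description in Theorem~\ref{t:prox}: by parts \ref{i:lowers} and \ref{i:phi:mu}, the function $\psi := \vphi + \frac{1}{2\mu}\|\cdot\|^2$ is proper, lsc and convex, so $\vphi$ is a proper lsc convex function minus the $\mathcal{C}^\infty$ quadratic $\frac{1}{2\mu}\|\cdot\|^2$. Every assertion in the proposition will be a soft consequence of this single fact together with the earlier parts of Theorem~\ref{t:prox}.

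For \ref{i:regular} I would exhibit the composite representation $\vphi = \theta\circ F$ with $F\colon\R^n\to\R^n\times\R$, $F(x)=(x,\tfrac12\|x\|^2)$, and $\theta(u,t)=\psi(u)-\tfrac{t}{\mu}$, where $F\in\mathcal{C}^\infty$ and $\theta$ is proper, lsc and convex. One then checks the constraint qualification for strong amenability: since $\dom\theta=\dom\psi\times\R$, the normal cone $N_{\dom\theta}(F(x))$ equals $N_{\dom\psi}(x)\times\{0\}$, and $\nabla F(x)^{*}(y,0)=y$, so the only such pair mapped to $0$ is $(0,0)$ — the CQ holds at every point of $\dom\vphi$ because the smooth block contributes no constraint of its own. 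Hence $\vphi$ is strongly amenable on $\R^n$ (cf. \cite[Definition 10.23]{rockwets}). From there I would quote \cite[Proposition 13.32]{rockwets} for strong amenability $\Rightarrow$ prox-regularity, together with the fact that amenable functions are subdifferentially (hence Clarke) regular. An alternative, bookkeeping-free route is to recall that a proper lsc convex function is both regular and prox-regular, and that these two properties persist under subtraction of a $\mathcal{C}^2$ function (see \cite[Example 13.30, Exercise 13.35]{rockwets} and the sum rule for regular functions in \cite[Exercise 8.8]{rockwets}); applied to $\psi$ this gives the conclusion for $\vphi$.

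For \ref{i:full:d}, if $\conv\dom f=\R^n$ (the case $\conv\dom g=\R^n$ being symmetric), then Theorem~\ref{t:prox}\ref{i:dom:convhull} gives $\dom\vphi=\R^n$, so $\psi$ is a proper lsc convex function finite on all of $\R^n$; such a function is automatically locally Lipschitz \cite[Example 9.14]{rockwets}, hence so is $\vphi=\psi-\frac{1}{2\mu}\|\cdot\|^2$. For \ref{i:u:proximable}, when $f$ and $g$ are $\mu$-proximal the functions $f+\frac{1}{2\mu}\|\cdot\|^2$ and $g+\frac{1}{2\mu}\|\cdot\|^2$ are already proper, lsc and convex, so each equals its own convex hull; plugging this into formula \eqref{e:func} of Theorem~\ref{t:prox}\ref{i:epi:sum} deletes every $\conv$ operation and leaves precisely the (exact) infimal-convolution formula — equivalently, via \cite[Theorem 11.23]{rockwets}, the conjugate formula $\bigl[\alpha(f+\frac{1}{2\mu}\|\cdot\|^2)^*+(1-\alpha)(g+\frac{1}{2\mu}\|\cdot\|^2)^*\bigr]^*-\frac{1}{2\mu}\|\cdot\|^2$ — that defines the proximal average of two convex functions in \cite{proxbas,proxpoint}; when $f,g$ are themselves convex this is the classical definition verbatim.

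The routine steps above really are routine; the only place demanding care is \ref{i:regular}, where I must fix the precise definitions of ``strongly amenable'', ``prox-regular'' and ``Clarke regular'' in force, verify that the representation $\vphi=\theta\circ F$ meets the constraint qualification at every point (which it does, since the smooth summand imposes no constraints), and thread a consistent chain of citations to \cite{rockwets}. That verification and citation-threading is the main obstacle; parts \ref{i:full:d} and \ref{i:u:proximable} are immediate from earlier results.
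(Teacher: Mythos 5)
Your proposal is correct and follows essentially the same route as the paper: both rest on the single decomposition $\vphi=\bigl(\vphi+\frac{1}{2\mu}\|\cdot\|^2\bigr)-\frac{1}{2\mu}\|\cdot\|^2$ with the first summand convex by Theorem~\ref{t:prox}\ref{i:phi:mu}, then invoke the standard facts that convexity plus a $\mathcal{C}^2$ perturbation yields Clarke regularity, prox-regularity and strong amenability, that a finite convex function is locally Lipschitz, and that the $\conv$ operations in \eqref{e:func} disappear when $f,g$ are $\mu$-proximal. Your explicit amenable representation $\vphi=\theta\circ F$ merely unwinds the citation \cite[Example 10.24(g)]{rockwets} that the paper uses directly, so it is an elaboration rather than a different argument.
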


\begin{proof}
One always has
$$\vphi=\bigg(\vphi+\frac{1}{2\mu}\|\cdot\|^2\bigg)-\frac{1}{2\mu}\|\cdot\|^2$$
where $\vphi+\frac{1}{2\mu}\|\cdot\|^2$ is convex
by Theorem~\ref{t:prox}\ref{i:phi:mu}.

\noindent\ref{i:regular}:
Use \cite[Example 11.30]{rockwets} and \cite[Exercise 13.35]{rockwets}
to conclude that $\vphi$ is prox-regular. \cite[Example 10.24(g)]{rockwets}
shows that $\vphi$ is strongly amenable.
Also, being a sum of a convex function and a $\mathcal{C}^2$ function, $\vphi$  is Clarke regular.

\noindent\ref{i:full:d}:
By Theorem~\ref{t:prox}\ref{i:dom:convhull},
$\dom\vphi=\R^n$, then $(\vphi+\frac{1}{2\mu}\|\cdot\|^2)$ is
a finite-valued convex function on $\R^n$, so it is
locally Lipschitz, hence $\vphi$.

\noindent\ref{i:u:proximable}: Since both $f+\frac{1}{2\mu}\|\cdot\|^2$ and
$g+\frac{1}{2\mu}\|\cdot\|^2$
are convex,  the result follows from
Theorem~\ref{t:prox}\ref{i:epi:sum}
and
\cite[Definition 4.1]{proxbas}.
\end{proof}

\begin{cor}
Let
$0<\mu<\bl$
and let $\vphi$ be defined as in \eqref{e:prox:def}.
Then
$$-\partial_L\left[-\bigg(\mu\vphi+\frac{1}{2}\|\cdot\|^2\bigg)^*\right]\subseteq
\alpha \Prox_{\mu}f+(1-\alpha)\Prox_{\mu}g.$$
\end{cor}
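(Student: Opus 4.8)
The plan is to express the conjugate $\bigl(\mu\vphi + \tfrac12\|\cdot\|^2\bigr)^{*}$ as an affine combination of the analogous conjugates for $f$ and $g$, and then invoke Lemma~\ref{l:prox:grad}\ref{i:fplus:n}. First I would record that for any proper, lsc, prox-bounded $h$ with threshold $>\mu$ one has
$$-\bigl(\mu h + \tfrac12\|\cdot\|^2\bigr)^{*} = \mu\,e_{\mu}h - \tfrac12\|\cdot\|^2 ;$$
this is immediate from Fact~\ref{l:dcform}\ref{i:mor:l} and the scaling rule $(\mu p)^{*}(y)=\mu\,p^{*}(y/\mu)$ with $p=h+\tfrac{1}{2\mu}\|\cdot\|^2$, and it already appears (for $h=f$) inside the proof of Lemma~\ref{l:prox:grad}\ref{i:fplus:n}. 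Since $\lambda_{\vphi}\geq\bl>\mu$ by Theorem~\ref{t:prox}\ref{i:lowers}, it applies to $h=\vphi$ as well; combining it for $h\in\{\vphi,f,g\}$ with the identity $e_{\mu}(\vphi)=\alpha e_{\mu}f+(1-\alpha)e_{\mu}g$ of Theorem~\ref{t:prox}\ref{i:env:conhull} and with $\alpha+(1-\alpha)=1$ gives
$$-\bigl(\mu\vphi + \tfrac12\|\cdot\|^2\bigr)^{*}=\alpha\Bigl(-\bigl(\mu f + \tfrac12\|\cdot\|^2\bigr)^{*}\Bigr)+(1-\alpha)\Bigl(-\bigl(\mu g + \tfrac12\|\cdot\|^2\bigr)^{*}\Bigr).$$

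Next, set $u:=-\bigl(\mu f+\tfrac12\|\cdot\|^2\bigr)^{*}$ and $v:=-\bigl(\mu g+\tfrac12\|\cdot\|^2\bigr)^{*}$; by the displayed identity each is a Moreau envelope minus a quadratic, hence locally Lipschitz on $\R^n$ by Fact~\ref{l:dcform}\ref{i:mor:l}, so $\partial_{L}u$ and $\partial_{L}v$ are nonempty-valued everywhere. For $0<\alpha<1$, positive-scalar homogeneity of $\partial_{L}$ together with the limiting-subdifferential sum rule \cite[Corollary 10.9]{rockwets} (applicable because both summands are locally Lipschitz) yields
$$\partial_{L}\Bigl[-\bigl(\mu\vphi+\tfrac12\|\cdot\|^2\bigr)^{*}\Bigr]=\partial_{L}\bigl(\alpha u+(1-\alpha)v\bigr)\subseteq\alpha\,\partial_{L}u+(1-\alpha)\,\partial_{L}v,$$
and hence $-\partial_{L}\bigl[-(\mu\vphi+\tfrac12\|\cdot\|^2)^{*}\bigr]\subseteq\alpha(-\partial_{L}u)+(1-\alpha)(-\partial_{L}v)$. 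The endpoint cases $\alpha\in\{0,1\}$ reduce to $\vphi=h_{\mu}g$, resp.\ $\vphi=h_{\mu}f$ (Theorem~\ref{t:prox}\ref{i:alpha}), where the claim follows directly from Lemma~\ref{l:prox:grad}\ref{i:fplus:n} together with $e_{\mu}(h_{\mu}f)=e_{\mu}f$.

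Finally, Lemma~\ref{l:prox:grad}\ref{i:fplus:n} applied to $f$ and to $g$ (legitimate since $\mu<\bl\leq\min\{\lambda_{f},\lambda_{g}\}$) gives $-\partial_{L}u\subseteq\Prox_{\mu}f$ and $-\partial_{L}v\subseteq\Prox_{\mu}g$; substituting these into the last inclusion and using monotonicity of the Minkowski sum under inclusion produces exactly
$$-\partial_{L}\Bigl[-\bigl(\mu\vphi+\tfrac12\|\cdot\|^2\bigr)^{*}\Bigr]\subseteq\alpha\Prox_{\mu}f+(1-\alpha)\Prox_{\mu}g.$$

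The substantive step is the first one, namely the additivity of $h\mapsto\bigl(\mu h+\tfrac12\|\cdot\|^2\bigr)^{*}$ along the proximal average, which rests entirely on Theorem~\ref{t:prox}\ref{i:env:conhull} (the Moreau envelope of $\vphi$ is the convex combination of the Moreau envelopes of $f$ and $g$); everything afterward is bookkeeping with conjugates. The one point requiring attention is that the limiting-subdifferential sum rule is only a one-sided inclusion — but that is precisely the direction the corollary asks for — and that one must verify its local Lipschitz hypothesis, which is exactly what Fact~\ref{l:dcform}\ref{i:mor:l} supplies.
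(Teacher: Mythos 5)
Your proposal is correct and follows essentially the same route as the paper: both arguments rest on Theorem~\ref{t:prox}\ref{i:env:conhull} to split the relevant object into an $\alpha$-combination for $f$ and $g$, then apply the one-sided limiting-subdifferential sum rule \cite[Corollary 10.9]{rockwets} and the inclusion $\partial_{L}(\mu e_{\mu}h)-\Id\subseteq-\Prox_{\mu}h$ coming from \cite[Example 10.32]{rockwets} (packaged in your case as Lemma~\ref{l:prox:grad}\ref{i:fplus:n}). The only difference is bookkeeping order --- you pass to the conjugates $-(\mu h+\tfrac12\|\cdot\|^2)^{*}=\mu e_{\mu}h-\tfrac12\|\cdot\|^2$ before differentiating, while the paper differentiates $e_{\mu}\vphi$ first and converts to the conjugate at the end --- which does not change the substance.
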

\begin{proof}
By Theorem~\ref{t:prox}\ref{i:env:conhull},
$e_{\mu}(\vphi)=\alpha e_{\mu}f+(1-\alpha)e_{\mu} g.$ Since both $e_{\mu}f, e_{\mu}g$ are locally
Lipschitz, the sum rule for $\partial_{L}$ \cite[Corollary 10.9]{rockwets} gives
\begin{align*}
\partial_{L} e_{\mu}\vphi(x) & \subseteq \alpha\partial_{L} e_{\mu}f(x)
+(1-\alpha)\partial_{L} e_{\mu}g(x)\\
&\subseteq \alpha \frac{x-\Prox_{\mu}f(x)}{\mu}+(1-\alpha)\frac{x-\Prox_{\mu}g(x)}{\mu}\\
&=\frac{x}{\mu}-\frac{\alpha \Prox_{\mu}f(x)+(1-\alpha)\Prox_{\mu}g(x)}{\mu},
\end{align*}
from which
$$\partial_{L}\left(e_{\mu}\vphi-\frac{1}{2\mu}\|x\|^2\right)\subseteq -\frac{\alpha \Prox_{\mu}f(x)+(1-\alpha)\Prox_{\mu}g(x)}{\mu}.$$
As
$$e_{\mu}\vphi(x)-\frac{1}{2}\|x\|^2=-\left(\vphi+\frac{1}{2\mu}\|\cdot\|^2\right)^{*}\left(\frac{x}{\mu}\right)
=-\frac{\left(\mu\vphi+\frac{1}{2}\|\cdot\|^2\right)^*(x)}{\mu},$$
we have
$$-\partial_L\left (-\left(\mu\vphi+\frac{1}{2}\|\cdot\|^2\right)^{*}\right)(x)\subseteq\alpha \Prox_{\mu}f(x)+(1-\alpha)\Prox_{\mu}g(x).$$
\end{proof}

A natural question to ask is whether $\alpha \Prox_{\mu}f+(1-\alpha)\Prox_{\mu}g$ is still a
proximal mapping. Although this is not clear in general, we have the following.

\begin{thm}[the proximal mapping of the proximal average]\label{prop:convcomb}
Let $0<\mu<\bl$ and let $\vphi$ be defined as in \eqref{e:prox:def}.
Then
\begin{equation}\label{e:prox:conv}
\Prox_{\mu}\varphi^\alpha_{\mu}
=\alpha\conv\Prox_{\mu}f+(1-\alpha)\conv\Prox_{\mu}g.
\end{equation}
\begin{enumerate}[label=\rm(\alph*)]
\item\label{i:u:prox} When both $f$ and $g$ are $\mu$-proximal, one has
$$\Prox_{\mu}\varphi^\alpha_{\mu}
=\alpha\Prox_{\mu}f+(1-\alpha)\Prox_{\mu}g.$$
\item\label{i:r:prox} Suppose that on an open subset $U\subset\R^n$
both $\Prox_{\mu}f, \Prox_{\mu}g$ are
single-valued (e.g., when
$e_{\mu}f$ and $e_{\mu}g$ are
continuously differentiable).
Then $\Prox_{\mu}\varphi^\alpha_{\mu}$ is single-valued, and
$$\Prox_{\mu}\varphi^\alpha_{\mu}
=\alpha\Prox_{\mu}f+(1-\alpha)\Prox_{\mu}g \text{ on $U$.}$$
\item\label{i:r:prox2} Suppose that on an open subset $U\subset\R^n$
both $\Prox_{\mu}f, \Prox_{\mu}g$ are
single-valued and Lipschitz continuous (e.g., when $f$ and $g$ are prox-regular).
Then $\Prox_{\mu}\varphi^\alpha_{\mu}$ is single-valued and Lipschitz continuous, and
$$\Prox_{\mu}\varphi^\alpha_{\mu}
=\alpha\Prox_{\mu}f+(1-\alpha)\Prox_{\mu}g \text{ on $U$.}$$
\end{enumerate}
\end{thm}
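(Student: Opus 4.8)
The key identity \eqref{e:prox:conv} should follow by relating the proximal mapping of $\vphi$ to the Clarke subdifferential of its Moreau envelope. The plan is to start from Theorem~\ref{t:prox}\ref{i:phi:mu}, which tells us $\vphi$ is $\mu$-proximal, so that by Lemma~\ref{l:prox:grad}\ref{e:env:clarke} (the proximal case \eqref{e:clarke:prox}) we have $\partial_C(e_\mu\vphi)=\frac{\Id-\Prox_\mu\vphi}{\mu}$, equivalently $\Prox_\mu\vphi=\Id-\mu\,\partial_C(e_\mu\vphi)$. Next I would use Theorem~\ref{t:prox}\ref{i:env:conhull}, namely $e_\mu\vphi=\alpha e_\mu f+(1-\alpha)e_\mu g$, together with the fact that $-e_\mu f,-e_\mu g$ are Clarke regular and locally Lipschitz (Fact~\ref{l:dcform}\ref{i:mor:l}), so that the Clarke/limiting subdifferential sum rule \cite[Corollary 10.9]{rockwets} holds \emph{with equality}: $\partial_C(e_\mu\vphi)=\alpha\,\partial_C(e_\mu f)+(1-\alpha)\,\partial_C(e_\mu g)$. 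Applying Lemma~\ref{l:prox:grad}\ref{e:env:clarke} to $f$ and $g$ (the general, convexified form \eqref{e:clarkesub}) gives $\partial_C(e_\mu f)=\frac{\Id-\conv\Prox_\mu f}{\mu}$ and similarly for $g$. Combining these three displays and simplifying yields $\Prox_\mu\vphi=\alpha\conv\Prox_\mu f+(1-\alpha)\conv\Prox_\mu g$, which is \eqref{e:prox:conv}.

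\textbf{The main subtlety} is the equality (not merely inclusion) in the subdifferential sum rule for $\partial_C$; this is where the local Lipschitzness of $-e_\mu f$ and $-e_\mu g$ on all of $\R^n$ (Fact~\ref{l:dcform}\ref{i:mor:l}, valid since $\mu<\bl$) and their Clarke regularity are essential — for regular functions the Clarke subdifferential of a nonnegative combination is exactly the corresponding combination of Clarke subdifferentials. Alternatively, one could bypass Clarke calculus: since $\mu<\bl$, $-e_\mu f,-e_\mu g$ are convex up to the smooth quadratic $\frac{1}{2\mu}\|\cdot\|^2$ (indeed $\frac{1}{2\mu}\|\cdot\|^2-e_\mu f=\big(f+\frac{1}{2\mu}\|\cdot\|^2\big)^*(\cdot/\mu)$ is convex by \eqref{e:moreau:d}), so the sum rule reduces to the exact convex subdifferential sum rule \cite[Corollary 10.9]{rockwets}, and then convexified proximal mappings are just subdifferentials of these convex conjugates by Lemma~\ref{l:prox:grad}\ref{i:fplus:p}; the additivity of $\partial$ for convex functions with full-domain conjugates then gives the result directly. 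I would present whichever route is cleaner, most likely the second, phrased through Lemma~\ref{l:prox:grad}\ref{i:fplus:p}: $\conv\Prox_\mu\vphi=\partial\big(\mu\vphi+\tfrac12\|\cdot\|^2\big)^*$, and by Theorem~\ref{t:prox}\ref{i:epi:sum}/\ref{i:env:conhull} this conjugate is $\alpha\big(\mu f+\tfrac12\|\cdot\|^2\big)^*+(1-\alpha)\big(\mu g+\tfrac12\|\cdot\|^2\big)^*$ up to the right scaling, whose subdifferential splits additively because both summands are finite-valued convex; since $\vphi$ is $\mu$-proximal, $\conv\Prox_\mu\vphi=\Prox_\mu\vphi$, giving \eqref{e:prox:conv}.

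\textbf{The parts \ref{i:u:prox}--\ref{i:r:prox2}} are then corollaries. For \ref{i:u:prox}, if $f,g$ are $\mu$-proximal then by Lemma~\ref{l:prox:map}\ref{i:p:convex} (applied with any $\lambda\in]\mu,\bl[$, or directly) $\Prox_\mu f,\Prox_\mu g$ are convex-valued, so $\conv\Prox_\mu f=\Prox_\mu f$ and likewise for $g$, and \eqref{e:prox:conv} collapses to the stated formula. For \ref{i:r:prox}, single-valuedness of $\Prox_\mu f,\Prox_\mu g$ on $U$ means they equal their own convex hulls there, so the right-hand side of \eqref{e:prox:conv} equals $\alpha\Prox_\mu f+(1-\alpha)\Prox_\mu g$ on $U$, which is a single-valued selection; since $\Prox_\mu\vphi(x)$ is always nonempty and, by \eqref{e:prox:conv}, contained in this singleton, it must be single-valued and equal to it on $U$ (the parenthetical example follows from Lemma~\ref{l:prox:grad}\ref{e:env:clarke}, as $C^1$-ness of $e_\mu f$ forces $\Prox_\mu f=\Id-\mu\nabla e_\mu f$ single-valued). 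For \ref{i:r:prox2}, add that a convex combination of single-valued Lipschitz maps is single-valued and Lipschitz, so $\Prox_\mu\vphi$ inherits Lipschitz continuity on $U$; the parenthetical example uses that prox-regularity of $f$ (with $\mu$ small) yields local single-valuedness and Lipschitzness of $\Prox_\mu f$ via \cite[Example 13.35]{rockwets}/\cite[Proposition 13.37]{rockwets}. The only genuine obstacle is establishing \eqref{e:prox:conv} cleanly; once that is in hand, \ref{i:u:prox}--\ref{i:r:prox2} are routine set-valued bookkeeping.
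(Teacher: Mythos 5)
Your primary route is essentially the paper's own proof: apply the exact sum rule for the (Clarke-regular, locally Lipschitz) functions $-e_{\mu}f$ and $-e_{\mu}g$ to the identity $e_{\mu}\vphi=\alpha e_{\mu}f+(1-\alpha)e_{\mu}g$, convert subgradients of negative Moreau envelopes into convexified proximal mappings via \cite[Example 10.32]{rockwets} (equivalently Lemma~\ref{l:prox:grad}), and then use $\mu$-proximality of $\vphi$ to replace $\conv\Prox_{\mu}\vphi$ by $\Prox_{\mu}\vphi$; parts \ref{i:u:prox}--\ref{i:r:prox2} are handled the same way. One tiny quibble: in \ref{i:u:prox} your aside about applying Lemma~\ref{l:prox:map}\ref{i:p:convex} ``with any $\lambda\in\left]\mu,\bl\right[$'' is backwards ($\mu$-proximality does not imply $\lambda$-proximality for $\lambda>\mu$), but your ``or directly'' (i.e., $\lambda=\mu$, as in Proposition~\ref{p:resolventf}) is exactly what is needed.
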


\begin{proof}
By Theorem~\ref{t:prox},
$$-e_{\mu}(\varphi^\alpha_{\mu})=-\alpha e_{\mu}f-(1-\alpha)e_{\mu}g.$$
Since both $-e_{\mu}f, -e_{\mu}g$ are Clarke regular, the sum rule \cite[Corollary 10.9]{rockwets}
gives
$$\partial_L(-e_{\mu}(\varphi^\alpha_{\mu}))=\alpha\partial_L (- e_{\mu}f)
+(1-\alpha)\partial_L(-e_{\mu}g).$$
Apply \cite[Example 10.32]{rockwets} to get
$$\frac{\conv\Prox_{\mu}\vphi(x)-x}{\mu}=\alpha \frac{\conv\Prox_{\mu}f(x)-x}{\mu}+
(1-\alpha)\frac{\conv\Prox_{\mu}g(x)-x}{\mu}$$
from which
$$
\conv\Prox_{\mu}\varphi^\alpha_{\mu}
=\alpha\conv\Prox_{\mu}f+(1-\alpha)\conv\Prox_{\mu}g.
$$
Since $\vphi$ is $\mu$-proximal, $\conv\Prox_{\mu}\vphi=\Prox_{\mu}\vphi$,
therefore, \eqref{e:prox:conv} follows.

\noindent\ref{i:u:prox}: Since $f,g$ are $\mu$-proximal,
$\Prox_{\mu}f$ and $\Prox_{\mu}g$ are convex-valued
by Proposition~\ref{p:resolventf}.

\noindent\ref{i:r:prox}: When $e_{\mu}f$ and $e_{\mu}g$ are continuously differentiable,
both $\Prox_{\mu}f, \Prox_{\mu}g$ are
single-valued on $U$ by \cite[Proposition 5.1]{proxhilbert}.

\noindent\ref{i:r:prox2}: When $f$ and $g$ are prox-regular on $U$,
both $\Prox_{\mu}f, \Prox_{\mu}g$ are
single-valued and Lipschitz continuous on $U$ by \cite[Proposition 5.3]{proxhilbert}
or \cite[Proposition 13.37]{rockwets}.
\end{proof}

\begin{cor}
Let $0<\mu<\bl$ and
let $\vphi$ be defined as in \eqref{e:prox:def}.
Then
$$
\Prox_{\mu}\varphi^\alpha_{\mu}
=\alpha\Prox_{\mu}(h_{\mu}f)+(1-\alpha)\Prox_{\mu}(h_{\mu}g).
$$
\end{cor}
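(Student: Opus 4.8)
The statement to prove is that
$$\Prox_{\mu}\varphi^\alpha_{\mu}=\alpha\Prox_{\mu}(h_{\mu}f)+(1-\alpha)\Prox_{\mu}(h_{\mu}g).$$
The plan is to combine the main identity \eqref{e:prox:conv} of Theorem~\ref{prop:convcomb} with the proximal-hull formula of Lemma~\ref{l:e:h}. First I would recall from Theorem~\ref{prop:convcomb} that, since $0<\mu<\bl\le\lambda_{f},\lambda_{g}$, we have the equation
$$\Prox_{\mu}\varphi^\alpha_{\mu}=\alpha\conv\Prox_{\mu}f+(1-\alpha)\conv\Prox_{\mu}g.$$
Then I would invoke Lemma~\ref{l:e:h}, applied with $\lambda=\mu$ (legitimate because $\mu<\bl\le\lambda_f$ and $\mu<\bl\le\lambda_g$), which gives $\Prox_{\mu}(h_{\mu}f)=\conv\Prox_{\mu}f$ and likewise $\Prox_{\mu}(h_{\mu}g)=\conv\Prox_{\mu}g$. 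Substituting these two identities into the right-hand side of the displayed equation yields the claim directly.

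There is essentially no obstacle here: the corollary is a one-line consequence of two results already established in the excerpt, and the only point requiring a moment's care is checking that the parameter ranges line up, namely that $\mu<\bl$ forces $\mu<\lambda_f$ and $\mu<\lambda_g$, which is exactly what Lemma~\ref{l:e:h} needs in order to identify $\Prox_\mu(h_\mu f)$ with $\conv\Prox_\mu f$. An alternative, equivalent route would be to use Theorem~\ref{t:prox}\ref{i:hull:ave}, which says the proximal average of $f$ and $g$ equals the proximal average of $h_{\mu}f$ and $h_{\mu}g$; applying Theorem~\ref{prop:convcomb}\ref{i:u:prox} to the (automatically $\mu$-proximal, by Theorem~\ref{t:prox}\ref{i:phi:mu} applied to the hulls, or directly since $h_\mu f$ is $\mu$-proximal) pair $h_{\mu}f,h_{\mu}g$ then gives the stated formula without a separate appeal to Lemma~\ref{l:e:h}. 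I would present the first route as the primary proof and perhaps remark on the second.

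Concretely, the write-up is:
\begin{proof}
Since $0<\mu<\bl\le\min\{\lambda_f,\lambda_g\}$, Lemma~\ref{l:e:h} applies with $\lambda=\mu$ to both $f$ and $g$, giving $\Prox_{\mu}(h_{\mu}f)=\conv\Prox_{\mu}f$ and $\Prox_{\mu}(h_{\mu}g)=\conv\Prox_{\mu}g$. Substituting these into the identity \eqref{e:prox:conv} of Theorem~\ref{prop:convcomb},
$$\Prox_{\mu}\varphi^\alpha_{\mu}=\alpha\conv\Prox_{\mu}f+(1-\alpha)\conv\Prox_{\mu}g=\alpha\Prox_{\mu}(h_{\mu}f)+(1-\alpha)\Prox_{\mu}(h_{\mu}g),$$
as claimed.
\end{proof}
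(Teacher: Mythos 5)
Your proof is correct and is exactly the paper's argument: the paper's proof reads ``Combine Theorem~\ref{prop:convcomb} and Lemma~\ref{l:e:h},'' which is precisely the substitution of $\Prox_{\mu}(h_{\mu}f)=\conv\Prox_{\mu}f$ and $\Prox_{\mu}(h_{\mu}g)=\conv\Prox_{\mu}g$ into \eqref{e:prox:conv} that you carry out. Your check of the parameter range and your noted alternative via Theorem~\ref{t:prox}\ref{i:hull:ave} are both sound but add nothing beyond the paper's route.
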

\begin{proof}
Combine Theorem~\ref{prop:convcomb}
and Lemma~\ref{l:e:h}.
\end{proof}

\begin{cor} Let $\mu>0$. The following set of proximal mappings
$$\{\Prox_{\mu}f|\ \text{$f$ is $\mu$-proximal and $\mu<\lambda_{f}$}\}$$
is a convex set. Moreover, for every $\mu$-proximal function,
$\Prox_{\mu}f=(\Id+\mu\partial_{L} f)^{-1}$.
\end{cor}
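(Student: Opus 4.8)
The plan is to prove the two assertions separately, using the structural results already established. For the final claim that $\Prox_\mu f = (\Id + \mu\partial_L f)^{-1} = J_{\mu\partial_L f}$ for every $\mu$-proximal $f$ with $\mu < \lambda_f$, I would simply invoke Proposition~\ref{p:resolventf}: the equivalence \ref{i:resolvent2}$\Leftrightarrow$\ref{i:resolvent1} there says precisely that $f$ being $\mu$-proximal is equivalent to $\Prox_\mu f = J_{\mu\partial_L f}$. So that half is immediate once $0<\mu<\lambda_f$ is in force, which is part of the hypothesis defining the indexing set.

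For the convexity of the set $\mathcal{S} = \{\Prox_\mu f \mid f \text{ is $\mu$-proximal and } \mu < \lambda_f\}$, I would take two members $\Prox_\mu f$ and $\Prox_\mu g$ of $\mathcal{S}$ and a weight $\alpha \in [0,1]$, and exhibit a single $\mu$-proximal function $h$ with $\mu < \lambda_h$ such that $\alpha\Prox_\mu f + (1-\alpha)\Prox_\mu g = \Prox_\mu h$. The natural candidate is $h = \varphi^\alpha_\mu$, the proximal average of $f$ and $g$. The obstruction to applying Theorem~\ref{prop:convcomb} directly is that that theorem is stated under the blanket assumption $0<\mu<\bar\lambda = \min\{\lambda_f,\lambda_g\}$, whereas here we only know $\mu < \lambda_f$ and $\mu < \lambda_g$ individually --- but of course $\mu < \min\{\lambda_f,\lambda_g\} = \bar\lambda$ follows immediately, so Theorem~\ref{prop:convcomb} does apply. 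Since $f$ and $g$ are $\mu$-proximal, part~\ref{i:u:prox} of Theorem~\ref{prop:convcomb} gives exactly $\Prox_\mu\varphi^\alpha_\mu = \alpha\Prox_\mu f + (1-\alpha)\Prox_\mu g$. It remains to check that $\varphi^\alpha_\mu$ itself qualifies as a member's index: by Theorem~\ref{t:prox}\ref{i:phi:mu}, $\varphi^\alpha_\mu$ is $\mu$-proximal, and by Theorem~\ref{t:prox}\ref{i:lowers}, $\lambda_{\varphi^\alpha_\mu} \geq \bar\lambda > \mu$. Hence $\alpha\Prox_\mu f + (1-\alpha)\Prox_\mu g = \Prox_\mu\varphi^\alpha_\mu \in \mathcal{S}$, establishing convexity.

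The only genuinely delicate point --- really a bookkeeping matter rather than a mathematical one --- is making sure the endpoint cases $\alpha \in \{0,1\}$ are handled: there $\varphi^0_\mu = h_\mu g$ and $\varphi^1_\mu = h_\mu f$ by Theorem~\ref{t:prox}\ref{i:alpha}, and since $f,g$ are $\mu$-proximal we have $h_\mu f = f$, $h_\mu g = g$, so the convex combination is trivially back in $\mathcal{S}$; in any case convexity of a set only requires closure under proper convex combinations, so one could restrict to $\alpha \in \,]0,1[\,$ from the start. I expect no real obstacle here: the entire argument is an assembly of Proposition~\ref{p:resolventf}, Theorem~\ref{t:prox}\ref{i:phi:mu}--\ref{i:lowers}, and Theorem~\ref{prop:convcomb}\ref{i:u:prox}, with the one thing to watch being the passage from the pair of individual thresholds to $\bar\lambda$ so that the ambient hypotheses of those theorems are met.
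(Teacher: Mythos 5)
Your proposal is correct and follows essentially the same route as the paper, whose proof is exactly the one-line assembly of Proposition~\ref{p:resolventf}, Theorem~\ref{t:prox}\ref{i:lowers}\&\ref{i:phi:mu}, and Theorem~\ref{prop:convcomb}\ref{i:u:prox} that you spell out. The details you fill in (passing from the individual thresholds to $\bl$, verifying that $\vphi$ itself is an admissible index, and the endpoint cases $\alpha\in\{0,1\}$) are exactly the right bookkeeping.
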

\begin{proof}
Apply Theorem~\ref{prop:convcomb}\ref{i:u:prox},
Theorem~\ref{t:prox}\ref{i:lowers}\&\ref{i:phi:mu} and Proposition~\ref{p:resolventf}.
\end{proof}

\section{Relationships to the arithmetic average and epi-average}\label{s:rela}

\begin{df}[epi-convergence and epi-topology]
\emph{(See \cite[Chapter~6]{rockwets}.)}
Let $f$ and $(f_k)_\kkk$ be functions from $\R^n$ to $\RX$. Then
$(f_k)_{k\in\N}$ \emph{epi-converges} to $f$, in symbols $f_k\epi f$,
if for every $x\in \R^n$ the following hold: \hfill
\begin{enumerate}[label=\rm(\alph*)]
\item $\big(\forall\,(x_k)_{\kkk}\big)$ $x_k\to x \Rightarrow
f(x)\leq\liminf f_k(x_k)$;
\item $\big(\exists (y_k)_\kkk\big)$ $y_k\to x$
and $\limsup f_k(y_k) \leq f(x)$.
\end{enumerate}
We write $\elim_{k\rightarrow\infty}f_{k}=f$ to say that $f_k$ epi-converges to $f$.
The \emph{epi-topology} is the topology induced by epi-convergence.
\end{df}
\begin{rem} The threshold $\bl=+\infty$ whenever both $f,g$ are bounded
from below by an affine function.\end{rem}
\begin{thm}\label{t:go:infinity} Let $0<\mu<\bl$.
One has the following.
\begin{enumerate}[label=\rm(\alph*)]
\item\label{i:mono:phi} For every fixed $x\in \R^n$,
the function $\mu\mapsto\vphi(x)$ is monotonically decreasing and
left-continuous on $]0,\bl]$.

\item\label{i:inf:phi1}
The pointwise limit
$\lim_{\mu\uparrow \bl}\vphi=\inf_{\bl>\mu>0}\vphi=$
$$
\left[\alpha\conv\bigg(f+\frac{1}{2\bl}\|\cdot\|^2\bigg)\bigg(\frac{\cdot}{\alpha}\bigg)\Box (1-\alpha)\conv
\bigg(g+\frac{1}{2\bl}\|\cdot\|^2\bigg)\bigg(\frac{\cdot}{1-\alpha}\bigg)\right](x)
-\frac{1}{2\bl}\|x\|^2.
$$

\item\label{i:inf:phi2} When $\bl=\infty$,
the pointwise limit
\begin{equation}\label{e:pointwise}
\lim_{\mu\uparrow \infty}\vphi=\inf_{\mu>0}\vphi=
\alpha\conv f\bigg(\frac{\cdot}{\alpha}\bigg)\Box (1-\alpha)\conv
g\bigg(\frac{\cdot}{1-\alpha}\bigg), \text{ and }
\end{equation}
the epigraphical limit
\begin{equation}\label{i:epi:limit}
\elim_{\mu\uparrow\infty}\vphi=
\cl\left[\alpha\conv f\bigg(\frac{\cdot}{\alpha}\bigg)\Box (1-\alpha)\conv
g\bigg(\frac{\cdot}{1-\alpha}\bigg)\right].
\end{equation}
\end{enumerate}
\end{thm}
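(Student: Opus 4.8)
The plan is to prove the three parts in order, leaning on the explicit inf-convolution formula \eqref{e:func} from Theorem~\ref{t:prox}\ref{i:epi:sum} and on standard monotonicity/epi-convergence machinery from \cite[Chapter~7]{rockwets}. For part \ref{i:mono:phi}, I would fix $x$ and observe that by \eqref{e:prox:def} and Fact~\ref{f:m-p-l}\ref{i:d:env}, or more directly from \eqref{e:func}, the dependence on $\mu$ enters only through the terms $\tfrac{1}{2\mu}\|\cdot\|^2$ inside the two convex hulls and the outer $-\tfrac{1}{2\mu}\|x\|^2$. As $\mu$ increases, $\tfrac{1}{2\mu}\|\cdot\|^2$ decreases pointwise, hence so does $\conv\big(f+\tfrac{1}{2\mu}\|\cdot\|^2\big)$ (the convex hull is monotone in its argument), hence so does the exact inf-convolution, while $-\tfrac{1}{2\mu}\|x\|^2$ also decreases; so $\mu\mapsto\vphi(x)$ is decreasing on $]0,\bl[$. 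Left-continuity on $]0,\bl]$ I would get by noting that a monotone pointwise limit of the convex functions $f+\tfrac{1}{2\mu}\|\cdot\|^2$ as $\mu\uparrow\mu_0$ is $f+\tfrac{1}{2\mu_0}\|\cdot\|^2$, and convex hulls plus exact inf-convolutions commute with such monotone limits (using coercivity, which holds since $\mu_0\le\bl$, to keep everything proper and lsc); alternatively, left-continuity of $\mu\mapsto e_\mu f(x)$ and $\mu\mapsto e_\mu g(x)$ together with continuity of the operations in \eqref{e:prox:def} does the job. This also defines $\vphi[\bl]$ at the endpoint via the same formula with $\mu=\bl$.

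For part \ref{i:inf:phi1}, since $\mu\mapsto\vphi(x)$ is decreasing, its limit as $\mu\uparrow\bl$ equals its infimum over $]0,\bl[$, and I would identify this limit by passing to the limit inside \eqref{e:func}: the monotone increasing limit of $f+\tfrac{1}{2\mu}\|\cdot\|^2$ as $\mu\uparrow\bl$ is $f+\tfrac{1}{2\bl}\|\cdot\|^2$, which is still coercive (threshold $\bl\le\lambda_f$ forces $f+\tfrac{1}{2\bl}\|\cdot\|^2$ bounded below; here one uses $\bl\le\lambda_f,\lambda_g$), so $\conv\big(f+\tfrac{1}{2\mu}\|\cdot\|^2\big)\uparrow\conv\big(f+\tfrac{1}{2\bl}\|\cdot\|^2\big)$ and likewise for $g$, and the exact inf-convolution of the increasing limits is the limit of the inf-convolutions. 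Subtracting the (now constant in $\mu$, at the limit) term $\tfrac{1}{2\bl}\|x\|^2$ yields the stated formula. The main point to be careful about is that increasing limits of convex functions may need lsc-regularization, but coercivity of the limiting summands guarantees the convex hulls are already lsc, so no closure is needed here.

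For part \ref{i:inf:phi2}, when $\bl=\infty$ the term $\tfrac{1}{2\mu}\|\cdot\|^2\to 0$ as $\mu\uparrow\infty$, so formally \eqref{e:func} tends to $\alpha\conv f(\tfrac{\cdot}{\alpha})\Box(1-\alpha)\conv g(\tfrac{\cdot}{1-\alpha})$; I would justify the pointwise statement \eqref{e:pointwise} again by monotone convergence as above, now with $\conv\big(f+\tfrac{1}{2\mu}\|\cdot\|^2\big)\uparrow\conv f$ (using that $f$, being prox-bounded with $\lambda_f=\infty$, is bounded below by an affine function, so $\conv f$ is proper lsc) and passing the limit through the exact inf-convolution. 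For the epigraphical limit \eqref{i:epi:limit}, I would invoke \cite[Proposition 7.4(d)]{rockwets}: a monotone decreasing sequence of functions epi-converges to the lsc hull of its pointwise limit, which is exactly $\cl\big[\alpha\conv f(\tfrac{\cdot}{\alpha})\Box(1-\alpha)\conv g(\tfrac{\cdot}{1-\alpha})\big]$. The hard part will be the careful bookkeeping in parts \ref{i:inf:phi1}--\ref{i:inf:phi2}: verifying that the exact inf-convolution genuinely commutes with the monotone limit of its (coercive, or affine-minorized) summands, and — only in the $\bl=\infty$ case — explaining why the closure operation is genuinely needed in \eqref{i:epi:limit} but not in \eqref{i:inf:phi1}, the difference being that $\conv f$ need not be lsc whereas $\conv\big(f+\tfrac{1}{2\bl}\|\cdot\|^2\big)$ is, thanks to coercivity when $\bl<\infty$.
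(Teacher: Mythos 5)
There is a genuine gap in your argument for part \ref{i:mono:phi}, and it propagates to the rest since \ref{i:inf:phi1} and \ref{i:inf:phi2} rest on the monotonicity. You claim that as $\mu$ increases both the inf-convolution term in \eqref{e:func} and the subtracted term $-\tfrac{1}{2\mu}\|x\|^2$ decrease. The first claim is fine, but the second has the wrong sign: as $\mu$ increases, $\tfrac{1}{2\mu}$ decreases, so $-\tfrac{1}{2\mu}\|x\|^2$ \emph{increases} (it becomes less negative). The two $\mu$-dependent pieces of \eqref{e:func} therefore move in opposite directions, and term-by-term monotonicity proves nothing. This is exactly why the paper does not argue this way: it expands the two convex hulls as infima over convex combinations $\sum_i\alpha_i x_i$, $\sum_j\beta_j y_j$, absorbs the outer $-\tfrac{1}{2\mu}\|x\|^2$ into that combined infimum using the constraint $x=\alpha\sum_i\alpha_i x_i+(1-\alpha)\sum_j\beta_j y_j$, and observes that the net coefficient of $\tfrac{1}{2\mu}$ is the Jensen gap
$$\alpha\sum_{i}\alpha_{i}\|x_{i}\|^2+(1-\alpha)\sum_{j}\beta_{j}\|y_{j}\|^2-\Big\|\alpha\sum_{i}\alpha_{i}x_{i}+(1-\alpha)\sum_{j}\beta_{j}y_{j}\Big\|^2\;\geq\;0,$$
which is nonnegative by convexity of $\|\cdot\|^2$. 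Only after this cancellation is the integrand a decreasing function of $\mu$, term by term, inside a single infimum.

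Once that representation is in place, your remaining steps are essentially the paper's: left-continuity and the limit in \ref{i:inf:phi1} come from exchanging $\inf_{\bmu>\mu>0}$ with the inner infimum (two infima always commute, so no delicate ``inf-convolution commutes with monotone limits'' lemma is needed), and \ref{i:inf:phi2} follows the same way with the $\tfrac{1}{2\mu}(\cdot)$ term vanishing, plus the epi-convergence fact for monotonically decreasing pointwise limits (the paper cites {\rm\cite[Proposition 7.4(c)]{rockwets}}; you cite part (d), which in Rockafellar--Wets is the increasing case --- a minor slip). Your closing remark about why the closure is needed in \eqref{i:epi:limit} but not in \ref{i:inf:phi1} is correct and worth keeping. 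But as written, the proof of \ref{i:mono:phi} does not go through and must be replaced by the Jensen-gap computation.
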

\begin{proof}
\ref{i:mono:phi}:
We have $\vphi(x)=$
\footnotesize\begin{align}
&
\inf_{u+v=x}\left(\alpha\inf_{\sum_{i}\alpha_{i}x_{i}=u/\alpha\atop{\sum_{i}\alpha_{i}=1,
\alpha_{i}\geq 0}}\left(\sum_{i}\alpha_{i}f(x_{i})
+\alpha_{i}\frac{1}{2\mu}\|x_{i}\|^2\right)+(1-\alpha)\inf_{\sum_{j}\beta_{j}y_{j}=v/(1-\alpha)
\atop{\sum_{j}\beta_{j}=1,
\beta_{j}\geq 0}}\left(\sum_{j}\beta_{j}g(y_{j})
+\beta_{j}\frac{1}{2\mu}\|y_{j}\|^2\right)\right)\nonumber\\
&\quad -\frac{1}{2\mu}\|x\|^2\nonumber\\
&=
\inf_{{\alpha\sum_i\alpha_{i}x_{i}+
(1-\alpha)\sum_{j}\beta_{j}y_{j}=x}\atop{\sum_{i}\alpha_{i}=1,
\sum_{j}\beta_{j}=1},\alpha_{i}\geq 0, \beta_{j}\geq 0}\bigg(\alpha\sum_{i}\alpha_{i}f(x_{i})
+(1-\alpha)\sum_{j}\beta_{j}g(y_{j})+\nonumber\\
&\quad \frac{1}{2\mu}\underbrace{\left(\alpha\sum_{i}\alpha_{i}\|x_{i}\|^2
+(1-\alpha)\sum_{j}\beta_{j}\|y_{j}\|^2-\bigg\|\alpha\sum_{i}\alpha_{i}x_{i}+(1-\alpha)\sum_{j}
\beta_{j}y_{j}\bigg\|^2\right)}\bigg).\nonumber
\end{align}\normalsize
The underbraced part is nonnegative because $\|\cdot\|^2$ is convex, $\sum_{i}\alpha_{i}=1,
\sum_{j}\beta_{j}=1$. It follows that
$\mu\mapsto\vphi$ is a monotonically decreasing function on $]0,+\infty[$.

Let $\bmu\in ]0,\bl]$. Then $\lim_{\mu\uparrow\bmu}\vphi
=\inf_{\bmu>\mu>0}\vphi=$
\begin{align}
&\inf_{\bmu>\mu>0}
\inf_{{\alpha\sum_i\alpha_{i}x_{i}+
(1-\alpha)\sum_{j}\beta_{j}y_{j}=x}\atop{\sum_{i}\alpha_{i}=1,
\sum_{j}\beta_{j}=1},\alpha_{i}\geq 0, \beta_{j}\geq 0}\bigg(\alpha\sum_{i}\alpha_{i}f(x_{i})
+(1-\alpha)\sum_{j}\beta_{j}g(y_{j})+\nonumber\\
&\quad \frac{1}{2\mu}\left(\alpha\sum_{i}\alpha_{i}\|x_{i}\|^2
+(1-\alpha)\sum_{j}\beta_{j}\|y_{j}\|^2-\bigg\|\alpha\sum_{i}\alpha_{i}x_{i}+(1-\alpha)\sum_{j}
\beta_{j}y_{j}\bigg\|^2\right)\bigg)\label{e:k1}\\
&=
\inf_{{\alpha\sum_i\alpha_{i}x_{i}+
(1-\alpha)\sum_{j}\beta_{j}y_{j}=x}\atop{\sum_{i}\alpha_{i}=1,
\sum_{j}\beta_{j}=1},\alpha_{i}\geq 0, \beta_{j}\geq 0}\inf_{\bmu>\mu>0}\bigg(\alpha\sum_{i}\alpha_{i}f(x_{i})
+(1-\alpha)\sum_{j}\beta_{j}g(y_{j})+\nonumber\\
&\quad \frac{1}{2\mu}\left(\alpha\sum_{i}\alpha_{i}\|x_{i}\|^2
+(1-\alpha)\sum_{j}\beta_{j}\|y_{j}\|^2-\bigg\|\alpha\sum_{i}\alpha_{i}x_{i}+(1-\alpha)\sum_{j}
\beta_{j}y_{j}\bigg\|^2\right)\bigg)\\
&=
\inf_{{\alpha\sum_i\alpha_{i}x_{i}+
(1-\alpha)\sum_{j}\beta_{j}y_{j}=x}\atop{\sum_{i}\alpha_{i}=1,
\sum_{j}\beta_{j}=1},\alpha_{i}\geq 0, \beta_{j}\geq 0}
\bigg(\alpha\sum_{i}\alpha_{i}f(x_{i})
+(1-\alpha)\sum_{j}\beta_{j}g(y_{j})+\nonumber\\
&\quad \frac{1}{2\bmu}\left(\alpha\sum_{i}\alpha_{i}\|x_{i}\|^2
+(1-\alpha)\sum_{j}\beta_{j}\|y_{j}\|^2-\bigg\|\alpha\sum_{i}\alpha_{i}x_{i}+(1-\alpha)\sum_{j}
\beta_{j}y_{j}\bigg\|^2\right)\bigg)\label{e:k2}\\
&=
\left[\alpha\conv\bigg(f+\frac{1}{2\bmu}
\|\cdot\|^2\bigg)\bigg(\frac{\cdot}{\alpha}\bigg)\Box (1-\alpha)\conv
\bigg(g+\frac{1}{2\bmu}\|\cdot\|^2\bigg)\bigg(\frac{\cdot}{1-\alpha}\bigg)\right](x)
-\frac{1}{2\bmu}\|x\|^2\nonumber.
\end{align}
\noindent\ref{i:inf:phi1}: This follows from \ref{i:mono:phi}.

\noindent\ref{i:inf:phi2}:
By \ref{i:mono:phi}, we have $\lim_{\mu\rightarrow\infty}\vphi
=\inf_{\mu>0}\vphi$. Using similar arguments as \eqref{e:k1}--\eqref{e:k2},
we obtain $\inf_{\mu>0}\vphi=$
\begin{align*}
&\inf_{{\alpha\sum_i\alpha_{i}x_{i}+
(1-\alpha)\sum_{j}\beta_{j}y_{j}=x}\atop{\sum_{i}\alpha_{i}=1,
\sum_{j}\beta_{j}=1},\alpha_{i}\geq 0, \beta_{j}\geq 0}\bigg(\alpha\sum_{i}\alpha_{i}f(x_{i})
+(1-\alpha)\sum_{j}\beta_{j}g(y_{j})\bigg)\\
&=\inf_{{u+v=x}}\bigg(\alpha\inf_{\sum_i\alpha_{i}x_{i}=u/\alpha
\atop{\sum_{i}\alpha_{i}=1,\alpha_{i}\geq 0}}\sum_{i}\alpha_{i}f(x_{i})
+(1-\alpha)\inf_{\sum_{j}\beta_{j}y_{j}=v/(1-\alpha)\atop{\sum_{j}\beta_{j}=1}, \beta_{j}\geq 0}\sum_{j}\beta_{j}g(y_{j})\bigg)\\
&=\inf_{{u+v=x}}\bigg(\alpha(\conv f)(u/\alpha)
+(1-\alpha)(\conv g)(v/(1-\alpha))\bigg),
\end{align*}
as required. To get \eqref{i:epi:limit}, we combine \eqref{e:pointwise} and
\cite[Proposition 7.4(c)]{rockwets}.
\end{proof}

In order to study the limit behavior when $\mu\downarrow 0$,  a lemma helps.
We omit its simple proof.

\begin{lem}\label{l:concave:e}
The Moreau envelope function respects the inequality$$e_{\mu}(\alpha f_{1}+(1-\alpha)f_{2})\geq \alpha e_{\mu}f_{1}+(1-\alpha)e_{\mu}f_{2}.$$
\end{lem}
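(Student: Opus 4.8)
The plan is to exploit a trivial splitting of the Moreau penalty and then the superadditivity of the infimum, with no convexity of $f_{1},f_{2}$ needed. First I would fix $x\in\R^n$ and write
\[
e_{\mu}(\alpha f_{1}+(1-\alpha)f_{2})(x)=\inf_{w}\left\{\alpha f_{1}(w)+(1-\alpha)f_{2}(w)+\frac{1}{2\mu}\|w-x\|^{2}\right\}.
\]
Since $\frac{1}{2\mu}\|w-x\|^{2}=\alpha\,\frac{1}{2\mu}\|w-x\|^{2}+(1-\alpha)\,\frac{1}{2\mu}\|w-x\|^{2}$, the bracketed expression equals
\[
\alpha\left(f_{1}(w)+\frac{1}{2\mu}\|w-x\|^{2}\right)+(1-\alpha)\left(f_{2}(w)+\frac{1}{2\mu}\|w-x\|^{2}\right).
\]

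Then I would invoke the elementary fact that $\inf_{w}\big(a(w)+b(w)\big)\geq\inf_{w}a(w)+\inf_{w}b(w)$ for any extended-real-valued $a,b$, applied with $a(w)=\alpha\big(f_{1}(w)+\frac{1}{2\mu}\|w-x\|^{2}\big)$ and $b(w)=(1-\alpha)\big(f_{2}(w)+\frac{1}{2\mu}\|w-x\|^{2}\big)$. Pulling the nonnegative scalars $\alpha$ and $1-\alpha$ out of the respective infima and recognizing the two pieces as $e_{\mu}f_{1}(x)$ and $e_{\mu}f_{2}(x)$ gives
\[
e_{\mu}(\alpha f_{1}+(1-\alpha)f_{2})(x)\geq\alpha\, e_{\mu}f_{1}(x)+(1-\alpha)\, e_{\mu}f_{2}(x),
\]
and since $x$ was arbitrary the lemma follows.

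There is essentially no obstacle here; the only point deserving a word is the bookkeeping with infinite values, which is presumably why the authors leave the proof out. If $\alpha\in\{0,1\}$ the inequality is an equality; otherwise, if $e_{\mu}f_{1}(x)$ or $e_{\mu}f_{2}(x)$ is $-\infty$ the right-hand side is $-\infty$ and the inequality is vacuous, while in the case of interest under the blanket assumptions (where $0<\mu<\bl$ forces both envelopes to be real-valued and locally Lipschitz by Fact~\ref{l:dcform}\ref{i:mor:l}) every step above is literal.
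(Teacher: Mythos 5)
Your argument is correct and is exactly the elementary computation the authors have in mind: the paper states ``We omit its simple proof,'' and splitting the quadratic penalty with weights $\alpha$ and $1-\alpha$ followed by superadditivity of the infimum is that simple proof. Your remarks on the extended-real-valued edge cases (properness ruling out $+\infty$, so no $\infty-\infty$ ambiguity) are a sensible addition.
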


\begin{thm}\label{t:u:0}
Let $0<\mu<\bl$. One has
\begin{enumerate}[label=\rm(\alph*)]
\item\label{i:three:c}
\begin{equation}\label{e:three:c}
\alpha e_{\mu}f+(1-\alpha) e_{\mu}g \leq\vphi\leq \alpha h_{\mu}f+(1-\alpha)h_{\mu}g
\leq\alpha f+(1-\alpha) g \text{ and }
\end{equation}
\item when $\mu\downarrow 0$, the pointwise limit and epi-graphical limit agree with
\begin{equation}\label{e:lim:0}
\lim_{\mu\downarrow 0}\vphi=\sup_{\mu>0}\vphi=\alpha f+(1-\alpha)g.
\end{equation}
Furthermore, the convergence in \eqref{e:lim:0}
 is uniform on compact subsets of $\R^n$ when $f,g$ are continuous.
\end{enumerate}
\end{thm}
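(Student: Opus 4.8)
The plan is to handle part~\ref{i:three:c} first, since part~(b) will follow from it together with monotonicity in $\mu$. For the chain \eqref{e:three:c}, I would establish each inequality separately. The leftmost inequality $\alpha e_{\mu}f+(1-\alpha)e_{\mu}g\leq\vphi$ follows from Theorem~\ref{t:prox}\ref{i:env:conhull} together with the elementary bound $e_{\mu}\vphi\leq\vphi$: indeed $e_{\mu}\vphi=\alpha e_{\mu}f+(1-\alpha)e_{\mu}g$ and $e_{\mu}\vphi\leq\vphi$ pointwise. For the rightmost inequality $\alpha h_{\mu}f+(1-\alpha)h_{\mu}g\leq\alpha f+(1-\alpha)g$, I use the standard fact $h_{\mu}f\leq f$ (immediate from $h_{\lambda}f+\tfrac{1}{2\lambda}\|\cdot\|^2=(f+\tfrac{1}{2\lambda}\|\cdot\|^2)^{**}\leq f+\tfrac{1}{2\lambda}\|\cdot\|^2$ in Fact~\ref{l:dcform}). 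The middle inequality $\vphi\leq\alpha h_{\mu}f+(1-\alpha)h_{\mu}g$ is the substantive one: I would apply Lemma~\ref{l:concave:e} to $-e_{\mu}f$ and $-e_{\mu}g$, giving $e_{\mu}(-\alpha e_{\mu}f-(1-\alpha)e_{\mu}g)\geq \alpha e_{\mu}(-e_{\mu}f)+(1-\alpha)e_{\mu}(-e_{\mu}g)$; negating both sides yields $\vphi=-e_{\mu}(-\alpha e_{\mu}f-(1-\alpha)e_{\mu}g)\leq -\alpha e_{\mu}(-e_{\mu}f)-(1-\alpha)e_{\mu}(-e_{\mu}g)=\alpha h_{\mu}f+(1-\alpha)h_{\mu}g$, using Fact~\ref{f:m-p-l}(a) to identify $-e_{\mu}(-e_{\mu}f)=h_{\mu}f$.

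For part~(b), the first observation is that by Theorem~\ref{t:go:infinity}\ref{i:mono:phi} the map $\mu\mapsto\vphi(x)$ is monotonically decreasing on $]0,\bl[$, so $\lim_{\mu\downarrow 0}\vphi(x)=\sup_{0<\mu<\bl}\vphi(x)$ exists (possibly $+\infty$) for each $x$. By part~\ref{i:three:c}, $\vphi\leq\alpha f+(1-\alpha)g$ for all $\mu$, so $\sup_{\mu>0}\vphi\leq\alpha f+(1-\alpha)g$. For the reverse inequality, I would use the explicit formula from Theorem~\ref{t:prox}\ref{i:epi:sum} expressing $\vphi(x)$ as the exact inf-convolution $\big[\alpha\,\conv(f+\tfrac1{2\mu}\|\cdot\|^2)(\tfrac\cdot\alpha)\,\Box\,(1-\alpha)\conv(g+\tfrac1{2\mu}\|\cdot\|^2)(\tfrac\cdot{1-\alpha})\big](x)-\tfrac1{2\mu}\|x\|^2$; equivalently, as in the proof of Theorem~\ref{t:go:infinity}, $\vphi(x)$ equals the infimum over all representations $x=\alpha\sum_i\alpha_ix_i+(1-\alpha)\sum_j\beta_jy_j$ of $\alpha\sum_i\alpha_if(x_i)+(1-\alpha)\sum_j\beta_jg(y_j)$ plus $\tfrac1{2\mu}$ times the nonnegative ``variance'' term. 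As $\mu\downarrow 0$ the coefficient $\tfrac1{2\mu}\to+\infty$, so any representation with a strictly positive variance term contributes $+\infty$ in the limit; hence in the limit only degenerate representations (all $x_i$ equal to a single point $u$, all $y_j$ equal to a single point $v$, with $x=\alpha u+(1-\alpha)v$... but then the variance term forces $u=v=x$) survive, leaving $\lim_{\mu\downarrow0}\vphi(x)\geq f(x)\wedge$-type bound; carefully, the surviving value is $\alpha f(x)+(1-\alpha)g(x)$. Combined with the upper bound this gives \eqref{e:lim:0}, and since the limit is monotone increasing to a finite lsc function, the pointwise and epigraphical limits coincide by \cite[Proposition 7.4(d)]{rockwets}.

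For the uniform convergence on compact sets when $f,g$ are continuous: monotone pointwise convergence of $\vphi\uparrow\alpha f+(1-\alpha)g$ together with continuity of the limit $\alpha f+(1-\alpha)g$ and (lower semi)continuity of each $\vphi$ (Theorem~\ref{t:prox}\ref{i:lowers}) puts us in position to invoke a Dini-type argument. Since $\vphi$ is increasing in $\mu^{-1}$... more precisely decreasing in $\mu$, so $-\vphi$ is an increasing-in-$\mu$ family; actually I would argue directly: the family $\{\alpha f+(1-\alpha)g-\vphi\}$ decreases monotonically to $0$ as $\mu\downarrow 0$, each member is upper semicontinuous (difference of a continuous function and an lsc function), nonnegative, on a compact set, so Dini's theorem for monotone sequences of usc functions gives uniform convergence to $0$. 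The main obstacle I anticipate is making the degeneration argument in part~(b) fully rigorous --- specifically, showing that the ``variance'' penalty genuinely forces concentration onto the single point $x$ in the limit, rather than merely onto two points $u,v$ with $\alpha u+(1-\alpha)v=x$; the point is that the Carathéodory-style representation combines the $x_i$'s and $y_j$'s into one convex combination whose ``spread'' about $x$ is exactly the underbraced quantity, so positivity of that quantity is equivalent to the representation being non-concentrated at $x$, and that is what the proof of Theorem~\ref{t:go:infinity}\ref{i:mono:phi} already sets up. Everything else is routine given the machinery established earlier in the paper.
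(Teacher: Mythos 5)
Your part~\ref{i:three:c} is correct and is essentially the paper's argument: the middle inequality via Lemma~\ref{l:concave:e} applied to $-e_{\mu}f$ and $-e_{\mu}g$, the left inequality from $e_{\mu}\vphi=\alpha e_{\mu}f+(1-\alpha)e_{\mu}g\leq\vphi$ (the paper phrases this as $e_{\mu}(h)\leq h$ for $h=-\alpha e_{\mu}f-(1-\alpha)e_{\mu}g$, which is the same thing), and the right inequality from $h_{\mu}f\leq f$. Your Dini argument for the uniform convergence is also fine as a minor variant: the paper applies Dini to $e_{\mu}f\uparrow f$ and $e_{\mu}g\uparrow g$ separately and then squeezes, whereas you apply a usc-Dini argument to $\alpha f+(1-\alpha)g-\vphi\downarrow 0$ directly; both work.

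The genuine gap is in the lower bound for the pointwise limit in part (b). You prove $\sup_{\mu>0}\vphi\leq\alpha f+(1-\alpha)g$ from \eqref{e:three:c} but then switch to the inf-convolution representation and a ``variance degeneration'' heuristic for the reverse inequality. As written this does not go through: writing $\vphi(x)=\inf_{r}\big[S(r)+\tfrac{1}{2\mu}V(r)\big]$ over representations $r$ with $V(r)\geq 0$, the elementary exchange only gives
$\sup_{\mu}\inf_{r}\big[S(r)+\tfrac{1}{2\mu}V(r)\big]\leq\inf_{r}\sup_{\mu}\big[S(r)+\tfrac{1}{2\mu}V(r)\big]=\alpha f(x)+(1-\alpha)g(x)$,
i.e.\ the inequality you already have. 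To conclude that only concentrated representations survive you would need to rule out near-optimal representations whose variance shrinks with $\mu$ while $S(r)$ drifts downward, which requires a compactness/prox-boundedness argument you have not supplied (and which you yourself flag as the main obstacle). The fix is to notice that \eqref{e:three:c} already contains the lower bound: $\vphi\geq\alpha e_{\mu}f+(1-\alpha)e_{\mu}g$, and $e_{\mu}f\uparrow f$, $e_{\mu}g\uparrow g$ pointwise as $\mu\downarrow 0$ for proper, lsc, prox-bounded functions \cite[Theorem 1.25]{rockwets}. The squeeze then yields \eqref{e:lim:0} immediately, and the agreement of the pointwise and epigraphical limits follows from monotonicity and \cite[Proposition 7.4(d)]{rockwets}, exactly as the paper does. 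You already use $e_{\mu}f\uparrow f$ in your Dini step, so all the ingredients are in your hands; you just did not connect them for the limit itself.
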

\begin{proof}
Apply Lemma~\ref{l:concave:e} with $f_{1}=-e_{\mu}f, f_{2}=-e_{\mu}g$ to obtain
$e_{\mu}(\alpha(-e_{\mu}f)+(1-\alpha)(-e_{\mu}g))\geq \alpha e_{\mu}(-e_{\mu}f)+(1-\alpha)
e_{\mu}(-e_{\mu}g).$ Then
\begin{equation}\label{e:1}
\vphi\leq \alpha (-e_{\mu}(-e_{\mu}f))+(1-\alpha)(-e_{\mu}(-e_{\mu}g))=
\alpha h_{\mu}f+(1-\alpha)h_{\mu}g.
\end{equation}
On the other hand,
$e_{\mu}(\alpha(-e_{\mu}f)+(1-\alpha)(-e_{\mu}g))\leq \alpha(-e_{\mu}f)+(1-\alpha)(-e_{\mu}g)$
so
\begin{equation}\label{e:2}
\vphi\geq \alpha e_{\mu}f +(1-\alpha) e_{\mu}g.
\end{equation}
Combining \eqref{e:1} and \eqref{e:2} gives
$$\alpha e_{\mu}f +(1-\alpha) e_{\mu}g \leq\vphi\leq \alpha h_{\mu}f+(1-\alpha)h_{\mu}g
\leq \alpha f+(1-\alpha) g,$$
which is \eqref{e:three:c}. Equation \eqref{e:lim:0} follows from \eqref{e:three:c} by sending $\mu\downarrow 0$.
The pointwise and epigraphical limits agree because of
\cite[Proposition 7.4(d)]{rockwets}.

Now assume that $f,g$ are continuous.
Since both $e_{\mu}f$ and $f$ are continuous, and $e_{\mu}f\uparrow f$.
Dini's theorem says that $e_{\mu}f\uparrow f$ uniformly on compact subsets
of $\R^n$. The same can be said about
 $e_{\mu}g\uparrow g$.
Hence, the convergence in \eqref{e:lim:0} is uniform
on compact subsets of $\R^n$ by \eqref{e:three:c}.
\end{proof}

To study the epi-continuity of proximal average, we recall the following two standard notions.

\begin{df} A sequence of functions $(f_{k})_{k\in\N}$ is eventually prox-bounded
if there exists $\lambda>0$ such that $\liminf_{k\rightarrow\infty}e_{\lambda}f_{k}(x)>-\infty$
for some $x$. The supremum of all such $\lambda$ is then
the threshold of eventual prox-boundedness of the sequence.
\end{df}

\begin{df}
A sequence of functions $(f_{k})_{k\in \N}$ converges continuously to $f$ if
$f_{k}(x_{k})\rightarrow f(x)$ whenever $x_{k}\rightarrow x$.
\end{df}

The following key result is implicit in the proof of \cite[Theorem 7.37]{rockwets}.
We provide its proof for completeness.
Define $\aN=\{N\subset\N|\ \N\setminus N \text{ is finite}\}.$
\begin{lem}\label{l:env:cont}
 Let $(f_{k})_{k\in\N}$ and $f$ be proper, lsc functions on $\R^n$.
Suppose that $(f_{k})_{k\in\N}$ is eventually prox-bounded,
$\bar{\lambda}$ is the threshhold of eventual prox-boundedness,
and $f_{k}\epi f$. Suppose also that
$\mu_{k}, \mu\in ]0,\bar{\lambda}[$, and $\mu_{k}\rightarrow\mu$.
Then $f$ is prox-bounded with threshold $\lambda_{f}\geq \bar{\lambda}$, and
$e_{\mu_{k}}f_{k}$ converges continuously to $e_{\mu}f$.
In particular,
$e_{\mu_{k}}f_{k}\epi e_{\mu}f$, and $e_{\mu_{k}}f_{k}\pw e_{\mu}f$.
\end{lem}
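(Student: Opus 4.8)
The plan is to establish the continuous convergence $e_{\mu_k}f_k \to e_\mu f$ directly from the definition of the Moreau envelope, splitting the argument into a $\liminf$ inequality and a $\limsup$ inequality at an arbitrary point, and then to read off the epi- and pointwise convergence from continuous convergence. First I would fix $x_k \to x$ in $\R^n$ and pick $\lambda_0$ with $\mu < \lambda_0 < \bar\lambda$; without loss of generality $\mu_k < \lambda_0$ for all large $k$. Since $(f_k)$ is eventually prox-bounded with threshold $\bar\lambda > \lambda_0$, there is $\beta \in \R$ and a point $a$ such that $e_{\lambda_0}f_k(a) \geq \beta$ eventually, and a standard estimate (cf. \cite[Theorem 1.25, Example 3.28]{rockwets}) turns this into a uniform quadratic minorant: $f_k(w) \geq \beta' - \frac{1}{2\lambda_0}\|w-a\|^2$ for all $w$ and all large $k$, which is what keeps the infimands coercive and the argmins bounded. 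This preliminary boundedness step is the crux of the whole lemma; everything else is routine.

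For the $\limsup$ direction, I would use the recovery (ansatz) sequence from $f_k \epi f$: given any $w$, choose $w_k \to w$ with $\limsup_k f_k(w_k) \leq f(w)$, and then
\[
\limsup_k e_{\mu_k}f_k(x_k) \leq \limsup_k \left( f_k(w_k) + \frac{1}{2\mu_k}\|w_k - x_k\|^2 \right) \leq f(w) + \frac{1}{2\mu}\|w - x\|^2,
\]
using $\mu_k \to \mu$ and $x_k \to x$; taking the infimum over $w$ gives $\limsup_k e_{\mu_k}f_k(x_k) \leq e_\mu f(x)$. For the $\liminf$ direction, let $w_k \in \Prox_{\mu_k}f_k(x_k)$ (nonempty by prox-boundedness). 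The uniform quadratic minorant above, together with $e_{\mu_k}f_k(x_k) \leq f_k(x_k) + 0 = f_k(x_k)$ being bounded above along a recovery sequence, forces $(w_k)$ to be bounded; pass to a subsequence $w_k \to \bar w$. Then by the $\liminf$ half of $f_k \epi f$,
\[
\liminf_k e_{\mu_k}f_k(x_k) = \liminf_k \left( f_k(w_k) + \frac{1}{2\mu_k}\|w_k - x_k\|^2 \right) \geq f(\bar w) + \frac{1}{2\mu}\|\bar w - x\|^2 \geq e_\mu f(x).
\]
Since every subsequence has a further subsequence along which this holds, $\liminf_k e_{\mu_k}f_k(x_k) \geq e_\mu f(x)$ for the full sequence. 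Combining the two inequalities yields $e_{\mu_k}f_k(x_k) \to e_\mu f(x)$, i.e. continuous convergence.

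It remains to note that $f$ is prox-bounded with $\lambda_f \geq \bar\lambda$: for any $\lambda < \bar\lambda$ and the fixed point $a$, applying the $\liminf$ inequality of epi-convergence to a minimizing-type sequence (or simply taking $\mu_k \equiv \mu$, $x_k \equiv a$ in the computation above with $\mu < \lambda < \bar\lambda$) shows $e_\mu f(a) > -\infty$, and letting $\mu \uparrow \bar\lambda$ gives $\lambda_f \geq \bar\lambda$. Finally, continuous convergence of $e_{\mu_k}f_k$ to the (finite-valued, continuous) function $e_\mu f$ implies both $e_{\mu_k}f_k \pw e_\mu f$ (take $x_k \equiv x$) and $e_{\mu_k}f_k \epi e_\mu f$ by \cite[Theorem 7.10, Proposition 7.15]{rockwets}, since continuous convergence to a continuous limit is equivalent to epi-convergence plus pointwise convergence. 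The main obstacle, as noted, is extracting the uniform quadratic lower bound on the $f_k$ from eventual prox-boundedness and using it to pin down boundedness of the proximal sequences $w_k$ uniformly in $k$; once that is in hand the two-sided envelope estimate is a direct unwinding of definitions.
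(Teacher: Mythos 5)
Your proof is correct, and it shares the paper's essential first step: extracting from eventual prox-boundedness a uniform quadratic minorant $f_k(w)\geq\beta-\tfrac{1}{2\lambda_0}\|w-a\|^2$ valid for all large $k$, which makes the tilted objectives $w\mapsto f_k(w)+\tfrac{1}{2\mu_k}\|w-x_k\|^2$ uniformly level-bounded. Where you diverge is in how the convergence of infima is then obtained. The paper observes that these tilted functions epi-converge to $f+\tfrac{1}{2\mu}\|\cdot-x\|^2$, exhibits a single level-bounded minorant $h$ independent of $k$, and invokes \cite[Theorem 7.33]{rockwets} to conclude $e_{\mu_k}f_k(x_k)\to e_\mu f(x)$ in one stroke. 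You instead re-derive the relevant special case of that theorem by hand: the $\limsup$ bound via recovery sequences for each fixed $w$, and the $\liminf$ bound by selecting minimizers $w_k\in\Prox_{\mu_k}f_k(x_k)$, using the minorant to show they are bounded, and applying the $\liminf$ half of epi-convergence along a convergent subsequence. Both routes are sound; the paper's is shorter, while yours is more self-contained and makes explicit where compactness of the proximal points enters. One spot you should tighten: the upper bound that pins down boundedness of $(w_k)$ should come from the already-established $\limsup$ inequality $\limsup_k e_{\mu_k}f_k(x_k)\leq e_\mu f(x)<+\infty$ (finiteness because $f$ is proper), not from $e_{\mu_k}f_k(x_k)\leq f_k(x_k)$, since $f_k(x_k)$ may be $+\infty$; likewise, nonemptiness of $\Prox_{\mu_k}f_k(x_k)$ should be justified by level-boundedness and lower semicontinuity of the tilted objective rather than by prox-boundedness alone. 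The concluding deductions (finiteness of $e_\mu f$ giving $\lambda_f\geq\bar\lambda$, and continuous convergence implying both pointwise and epi-convergence) match the paper.
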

\begin{proof} Let $\varepsilon \in ]0,\bl[$. The eventual prox-boundness of $(f_{k})_{k\in\N}$
means that
there exist $b\in\R^n$,
$\beta\in\R$ and $N\in\aN$ such that
$$(\forall k\in N)(\forall w\in \R^n)\ f_{k}(w)
\geq \beta-\frac{1}{2\varepsilon}\|b-w\|^2.$$
Let $\mu\in ]0,\varepsilon[$.
Consider any $x\in\R^n$ and any sequence $x_{k}\rightarrow x$
in $\R^n$, any sequence $\mu_{k}\rightarrow\mu$ in $(0,\bl)$.
Since $f_{k}\epi f$, the functions
$f_{k}+(1/2\mu_{k})\|\cdot-x_{k}\|^2$ epi-converge to $f+(1/2\mu)\|\cdot-x\|^2$.
Take $\delta\in ]\mu,\varepsilon[$. Because
$\mu_{k}\rightarrow\mu$, there exists $N'\subseteq N$, $N'\in\aN$
such that $\mu_{k}\in (0,\delta)$ when $k\in N'$.
Then $\forall k\in N'$,
\begin{align*}
f_{k}(w)+\frac{1}{2\mu_{k}}\|x_{k}-w\|^2 &\geq \beta-\frac{1}{2\varepsilon}\|b-w\|^2+ \frac{1}{2\delta}\|x_{k}-w\|^2\\
&=\beta-\frac{1}{2\varepsilon}\|b-w\|^2+ \frac{1}{2\delta}\|(x_{k}-b)+(b-w)\|^2\\
&\geq \beta+\bigg(\frac{1}{2\delta}-\frac{1}{2\varepsilon}\bigg)
\|b-w\|^2-\frac{1}{\delta}\|x_{k}-b\|\|b-w\|.
\end{align*}
In view of $x_{k}\rightarrow x$, the sequence $(\|x_{k}-b\|)_{k\in\N}$ is bounded,
say by $\rho>0$. We have
$$
(\forall k\in N')\ f_{k}(w)+\frac{1}{2\mu_{k}}\|x_{k}-w\|^2 \geq h(w):=
\beta+\bigg(\frac{1}{2\delta}-\frac{1}{2\varepsilon}\bigg)\|b-w\|^2-\frac{\rho}{\delta}
\|b-w\|.
$$
The function $h$ is level-bounded because $\delta<\varepsilon$. Hence, by
\cite[Theorem 7.33]{rockwets},
$$\lim_{k\rightarrow\infty}\inf_{w}\bigg(f_{k}(w)+\frac{1}{2\mu_{k}}
\|x_{k}-w\|^2\bigg)
=\inf_{w}\bigg(f(w)+\frac{1}{2\mu}\|x-w\|^2\bigg),$$
i.e., $e_{\mu_{k}}f_{k}(x_{k})\rightarrow e_{\mu}f(x)$.
Also, $e_{\mu}f(x)$ is finite, so $\lambda_{f}\geq \mu$.
Since $\varepsilon\in ]0,\bl[$ and $\mu\in ]0,\varepsilon[$ were
arbitrary, the result holds whenever $\mu\in ]0,\bl[$. This in turn implies
$\lambda_{f}\geq\bl$.
\end{proof}

For the convenience of analyzing the full epi-continuity,
below we write the proximal average $\vphi$ explicitly in the form
$\vphif$.

\begin{thm}[full epi-continuity of proximal average]
Let the sequences of functions $(f_{k})_{k\in \N}$, $(g_{k})_{k\in\N}$ on $\R^n$
 be eventually prox-bounded
with threshold of eventual prox-boundedness $\bar{\lambda}>0$.
Let $(\mu_{k})_{k\in\N}$ be a sequence and $\mu$ in
$]0,\bar{\lambda}[$ and let $(\alpha_{k})_{k\in\N}$ be a sequence and $\alpha$
in
$[0,1]$. Suppose that $f_{k}\epi f$, $g_{k}\epi g$,
$\mu_{k}\rightarrow\mu$, and $\alpha_{k}\rightarrow\alpha$.
Then $\vphifk\epi\vphif$.
\end{thm}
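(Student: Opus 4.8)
The plan is to reduce the full epi-continuity of the proximal average to the continuous convergence of Moreau envelopes established in Lemma~\ref{l:env:cont}, applied twice. Recall from the definition \eqref{e:prox:def} that
$$\vphifk=-e_{\mu_{k}}\bigl(-\alpha_{k}e_{\mu_{k}}f_{k}-(1-\alpha_{k})e_{\mu_{k}}g_{k}\bigr),$$
so the strategy is to peel off the operations from the inside out: first handle the inner envelopes $e_{\mu_{k}}f_{k}$ and $e_{\mu_{k}}g_{k}$, then the convex combination with coefficients $\alpha_{k}\to\alpha$, then the outer $-e_{\mu_{k}}(-\,\cdot\,)$.

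First I would apply Lemma~\ref{l:env:cont} to the sequences $(f_{k})$ and $(g_{k})$: since they are eventually prox-bounded with threshold $\bar\lambda$, $f_{k}\epi f$, $g_{k}\epi g$, and $\mu_{k}\to\mu\in\,]0,\bar\lambda[$, we get that $e_{\mu_{k}}f_{k}$ converges continuously to $e_{\mu}f$ and $e_{\mu_{k}}g_{k}$ converges continuously to $e_{\mu}g$, and moreover $\lambda_{f},\lambda_{g}\geq\bar\lambda$ so that $f,g$ and the limit $\vphif$ are all well-defined with $0<\mu<\bar\lambda\le\bl$. Next, set $F_{k}:=-\alpha_{k}e_{\mu_{k}}f_{k}-(1-\alpha_{k})e_{\mu_{k}}g_{k}$ and $F:=-\alpha e_{\mu}f-(1-\alpha)e_{\mu}g$. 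Since $\alpha_{k}\to\alpha$ and continuous convergence of the envelopes gives, for any $x_{k}\to x$, that $e_{\mu_{k}}f_{k}(x_{k})\to e_{\mu}f(x)$ and $e_{\mu_{k}}g_{k}(x_{k})\to e_{\mu}g(x)$ (these limits are finite by Fact~\ref{l:dcform}\ref{i:mor:l}), we obtain $F_{k}(x_{k})\to F(x)$; that is, $F_{k}$ converges continuously to $F$. Continuous convergence of real-valued functions implies both epi-convergence and pointwise convergence (and hypo-convergence), so in particular $F_{k}\epi F$.

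Now I would apply Lemma~\ref{l:env:cont} a second time, to the sequence $(F_{k})$ and limit $F$, but first I must check that $(F_{k})$ is eventually prox-bounded with a threshold $\ge\bar\lambda$ (or at least $>\mu$). This is where the prox-boundedness bookkeeping enters: each $F_{k}=-\alpha_{k}e_{\mu_{k}}f_{k}-(1-\alpha_{k})e_{\mu_{k}}g_{k}$ is $\mu_{k}$-proximal by Lemma~\ref{l:env:neg}\ref{i:e:1} and Proposition~\ref{p:p:cone}, hence $F_{k}+\tfrac{1}{2\mu_{k}}\|\cdot\|^{2}$ is convex; to turn this into a uniform prox-bound I would argue as in Theorem~\ref{t:prox}\ref{i:lowers}, using that $e_{\delta}(-e_{\mu_{k}}f_{k})\ge$ something controlled for $\delta$ small via Fact~\ref{f:m-p-l}, together with the convexity of $-e_{\mu_{k}}$ composed appropriately; concretely, one shows $\liminf_{k}e_{\delta}F_{k}(x)>-\infty$ for $\delta<\bar\lambda-\mu$ and fixed $x$, exploiting that $F_{k}$ is bounded below in terms of $e_{\mu_{k}}f_{k},e_{\mu_{k}}g_{k}$, which are locally uniformly bounded by the continuous convergence. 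Granting this, Lemma~\ref{l:env:cont} applied with ambient threshold $\bar\lambda$ (or the verified threshold) and $\mu_{k}\to\mu$ yields that $e_{\mu_{k}}F_{k}$ converges continuously to $e_{\mu}F$, hence $-e_{\mu_{k}}F_{k}$ converges continuously to $-e_{\mu}F$; in particular $-e_{\mu_{k}}F_{k}\epi -e_{\mu}F$, which is precisely $\vphifk\epi\vphif$.

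The main obstacle is the middle step: verifying that the sequence $(F_{k})$ of inner functions is \emph{eventually} prox-bounded with threshold at least $\bar\lambda$ (so that the second invocation of Lemma~\ref{l:env:cont} is legitimate for all relevant $\mu$), uniformly in $k$. The pointwise and epi-convergence $F_{k}\epi F$ is easy from continuous convergence, but the uniform lower bound $F_{k}(w)\ge\beta-\tfrac{1}{2\varepsilon}\|b-w\|^{2}$ required by the definition of eventual prox-boundedness needs care, since $F_{k}$ is built from negatives of envelopes. I expect this to follow by combining the $\mu_{k}$-proximality of $F_{k}$ (which gives a quadratic-type lower control with constant $\tfrac{1}{2\mu_{k}}$, and $\mu_{k}$ is bounded away from $0$ and from $\bar\lambda$) with the local uniform boundedness of $e_{\mu_{k}}f_{k},e_{\mu_{k}}g_{k}$ coming from the first application of Lemma~\ref{l:env:cont}; alternatively one can mirror the chain of inequalities in the proof of Theorem~\ref{t:prox}\ref{i:lowers} verbatim at the sequence level. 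Once that is in place, everything else is a routine ``continuous convergence is preserved by composition'' argument.
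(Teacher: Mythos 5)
Your overall architecture---peel off the inner envelopes via Lemma~\ref{l:env:cont}, form the convex combination, then handle the outer envelope by a second application of Lemma~\ref{l:env:cont}---founders on exactly the obstacle you flag, and that obstacle is not a technicality you can expect to remove: it is false in general that the inner sequence $F_{k}=-\alpha_{k}e_{\mu_{k}}f_{k}-(1-\alpha_{k})e_{\mu_{k}}g_{k}$ is eventually prox-bounded with threshold strictly larger than $\mu$. Take $f_{k}=g_{k}=\iota_{\{0\}}$ and $\mu_{k}=\mu$; then $e_{\mu}f_{k}=\frac{1}{2\mu}\|\cdot\|^{2}$, so $F_{k}=F=-\frac{1}{2\mu}\|\cdot\|^{2}$, whose threshold of prox-boundedness is exactly $\mu$: indeed $e_{\mu}F(x)=\inf_{w}\frac{1}{2\mu}\left(\|x\|^{2}-2\scal{w}{x}\right)=-\infty$ for every $x\neq 0$. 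Lemma~\ref{l:env:cont} requires $\mu$ to lie strictly below the threshold of eventual prox-boundedness, and its conclusion (continuous convergence to a finite envelope) genuinely fails here, even though the theorem holds trivially in this example since $\vphifk=\iota_{\{0\}}$ for all $k$. The $\mu_{k}$-proximality of $F_{k}$ only yields a lower bound of the form $(\text{affine})-\frac{1}{2\mu_{k}}\|\cdot\|^{2}$, which is precisely the borderline case excluded by the definition of eventual prox-boundedness at level $\mu$. So the second invocation of Lemma~\ref{l:env:cont} cannot be legitimized; note that only the ``$\limsup$'' half of the epi-convergence of $-e_{\mu_{k}}F_{k}$ is actually at stake, since the ``$\liminf$'' half does follow from continuous convergence of $F_{k}$ because the outer operation is a supremum.

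The paper's proof circumvents this by never forming the outer envelope of $F_{k}$ directly. It uses Lemma~\ref{l:env:cont} once, exactly as you do, but converts the resulting continuous convergence of $e_{\mu_{k}}f_{k}$ and $e_{\mu_{k}}g_{k}$ into continuous convergence of the conjugates $\big(f_{k}+\frac{1}{2\mu_{k}}\|\cdot\|^{2}\big)^{*}$ and $\big(g_{k}+\frac{1}{2\mu_{k}}\|\cdot\|^{2}\big)^{*}$ via Fact~\ref{l:dcform}\ref{i:mor:l}. The convex combination of these finite convex functions then converges continuously, hence epi-converges, and Wijsman's theorem on the epi-continuity of the Legendre--Fenchel transform gives epi-convergence of their conjugates; subtracting the jointly continuous term $\frac{1}{2\mu_{k}}\|\cdot\|^{2}$ finishes. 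In other words, the outer operation $-e_{\mu_{k}}(-\,\cdot\,)$ is executed on the dual side, where everything is a finite convex function and no prox-boundedness of the inner function is needed. If you want to keep your inside-out narrative, the correct replacement for your second step is Wijsman's theorem applied to the identity $\vphifk+\frac{1}{2\mu_{k}}\|\cdot\|^{2}=\big[\alpha_{k}\big(f_{k}+\frac{1}{2\mu_{k}}\|\cdot\|^{2}\big)^{*}+(1-\alpha_{k})\big(g_{k}+\frac{1}{2\mu_{k}}\|\cdot\|^{2}\big)^{*}\big]^{*}$, not Lemma~\ref{l:env:cont}.
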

\begin{proof} Consider any $x\in\R^n$ and any sequence $x_{k}\rightarrow x$.
By \cite[Example 11.26]{rockwets},
$$e_{\mu_{k}}f_{k}(\mu_{k}x_{k})=\frac{\mu_{k}\|x_{k}\|^2}{2}-
\bigg(f_{k}+\frac{1}{2\mu_{k}}\|\cdot\|^2\bigg)^*(x_{k}).$$
Lemma~\ref{l:env:cont} shows that
\begin{align*}
\lim_{k\rightarrow\infty}
\bigg(f_{k}+\frac{1}{2\mu_{k}}\|\cdot\|^2\bigg)^*(x_{k})
& = \lim_{k\rightarrow\infty} \frac{\mu_{k}\|x_{k}\|^2}{2}-e_{\mu_{k}}f_{k}(\mu_{k}x_{k})
=\frac{\mu\|x\|^2}{2}-e_{\mu}f(\mu x)\\
&=\bigg(f+\frac{1}{2\mu}\|\cdot\|^2\bigg)^*(x).
\end{align*}
Therefore, the functions $\left(f_{k}+\frac{1}{2\mu_{k}}\|\cdot\|^2\right)^*$ converge
continuously to $\left(f+\frac{1}{2\mu}\|\cdot\|^2\right)^*$.
It follows that
$$\alpha_{k}\bigg(f_{k}+\frac{1}{2\mu_{k}}\|\cdot\|^2\bigg)^*+
(1-\alpha_{k})\bigg(g_{k}+\frac{1}{2\mu_{k}}\|\cdot\|^2\bigg)^*$$
converges continuously
to
$$\alpha\bigg(f+\frac{1}{2\mu}\|\cdot\|^2\bigg)^*+
(1-\alpha)\bigg(g+\frac{1}{2\mu}\|\cdot\|^2\bigg)^*,$$
so epi-converges.
Then by Wijsman's theorem \cite[Theorem 11.34]{rockwets},
$$\bigg[\alpha_{k}\bigg(f_{k}+\frac{1}{2\mu_{k}}\|\cdot\|^2\bigg)^*+
(1-\alpha_{k})\bigg(g_{k}+\frac{1}{2\mu_{k}}\|\cdot\|^2\bigg)^*\bigg]^*$$
epi-converges to
$$\bigg[\alpha\bigg(f+\frac{1}{2\mu}\|\cdot\|^2\bigg)^*+
(1-\alpha)\bigg(g+\frac{1}{2\mu}\|\cdot\|^2\bigg)^*\bigg]^*.$$
Since $(\mu, x)\mapsto \frac{1}{2\mu}\|x\|^2$ is continuous
on $]0,+\infty[\times\R^n$, we have that
$$\vphifk
=\bigg[\alpha_{k}\bigg(f_{k}+\frac{1}{2\mu_{k}}\|\cdot\|^2\bigg)^*+
(1-\alpha_{k})\bigg(g_{k}+\frac{1}{2\mu_{k}}\|\cdot\|^2\bigg)^*\bigg]^*
-\frac{1}{2\mu_{k}}\|\cdot\|^2$$
epi-converges to
$$\vphif
=\bigg[\alpha\bigg(f+\frac{1}{2\mu}\|\cdot\|^2\bigg)^*+
(1-\alpha)\bigg(g+\frac{1}{2\mu}\|\cdot\|^2\bigg)^*\bigg]^*
-\frac{1}{2\mu}\|\cdot\|^2.$$
\end{proof}


\begin{cor}[epi-continuity of the proximal average]\label{t:alpha:epi}
Let $0<\mu<\bl$.
Then the function
$\alpha\mapsto \vphi$ is continuous with respect to the epi-topology. That is,
$\forall (\alpha_{k})_{k\in\N}$ and $\alpha$ in $[0,1]$,
$$\alpha_{k}\rightarrow\alpha \quad \Rightarrow \quad \vphik\epi\vphi.$$
In particular, $\vphi\epi h_{\mu}g$ when $\alpha\downarrow 0$, and
$\vphi\epi h_{\mu}f$ when $\alpha\uparrow 1$.
\end{cor}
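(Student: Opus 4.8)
The plan is to derive this corollary as a special case of the full epi-continuity theorem proved just above. First I would fix the sequences $(f_k)_{k\in\N}$ and $(g_k)_{k\in\N}$ to be the constant sequences $f_k \equiv f$ and $g_k \equiv g$; since $f$ and $g$ are proper, lsc and prox-bounded with thresholds $\lambda_f,\lambda_g \geq \bar\lambda$ (indeed $\bar\lambda = \min\{\lambda_f,\lambda_g\}$ by the blanket assumptions), the constant sequences are trivially eventually prox-bounded with threshold of eventual prox-boundedness at least $\bar\lambda$, and $f_k \epi f$, $g_k \epi g$ hold automatically. Likewise I would take the constant sequence $\mu_k \equiv \mu$ with $\mu \in \,]0,\bar\lambda[$ by hypothesis, and the given sequence $\alpha_k \to \alpha$ in $[0,1]$. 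Then the full epi-continuity theorem applies verbatim and yields $\varphi_{f,g,\alpha_k,\mu} \epi \varphi_{f,g,\alpha,\mu}$, which in the notation of this corollary reads $\vphik \epi \vphi$.

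For the two limiting assertions, I would invoke Theorem~\ref{t:prox}\ref{i:alpha}: when $\alpha = 0$ one has $\varphi_\mu^0 = h_\mu g$, and when $\alpha = 1$ one has $\varphi_\mu^1 = h_\mu f$. Thus if $\alpha_k \downarrow 0$, applying the epi-continuity statement with limit $\alpha = 0$ gives $\vphik \epi \varphi_\mu^0 = h_\mu g$; similarly $\alpha_k \uparrow 1$ forces $\vphik \epi \varphi_\mu^1 = h_\mu f$. These are immediate once the main continuity claim is in hand.

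I do not anticipate a genuine obstacle here: the corollary is essentially a specialization, and the only point requiring a moment's care is the bookkeeping needed to check that the constant sequences genuinely satisfy the hypotheses of the full epi-continuity theorem — in particular that their threshold of eventual prox-boundedness is $\geq \bar\lambda$, so that the constraint $\mu \in \,]0,\bar\lambda[$ in the present corollary matches the constraint $\mu \in \,]0,\bar\lambda[$ in the theorem. One could alternatively give a direct proof from the explicit formula \eqref{e:func}: since $\alpha \mapsto \big(\alpha \conv(f+\tfrac{1}{2\mu}\|\cdot\|^2)(\tfrac{\cdot}{\alpha})\big)$ and its counterpart for $g$ vary continuously (in the appropriate sense) with $\alpha$, and since inf-convolution and the subtraction of the fixed smooth term $\tfrac{1}{2\mu}\|\cdot\|^2$ are epi-continuous operations on this class, one obtains $\vphik \epi \vphi$ directly; but routing through the already-established full epi-continuity theorem is cleaner and avoids re-doing the Wijsman/continuous-convergence argument.
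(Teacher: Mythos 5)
Your proposal is correct and follows exactly the route the paper intends: the corollary is stated without proof immediately after the full epi-continuity theorem, precisely because it is the specialization to constant sequences $f_k\equiv f$, $g_k\equiv g$, $\mu_k\equiv\mu$ that you describe, with the endpoint identifications $\varphi^{0}_{\mu}=h_{\mu}g$ and $\varphi^{1}_{\mu}=h_{\mu}f$ supplied by Theorem~\ref{t:prox}\ref{i:alpha}. Your bookkeeping remark about the threshold of eventual prox-boundedness of the constant sequences is exactly the one point worth checking, and it goes through as you say.
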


\section{Optimal value and minimizers of the proximal average}\label{s:opti}

\subsection{Relationship of infimum and minimizers among
$\vphi$, $f$ and $g$.}

\begin{prop}\label{p:minimizer} Let $0<\mu<\bl$. One has
\begin{enumerate}[label=\rm(\alph*)]
\item \label{e:env:conv}
\begin{align*}
\inf\vphi &= \inf[\alpha e_{\mu}f+(1-\alpha)e_{\mu}g], \text{ \emph{and} }\\
\argmin\vphi &=\argmin [\alpha e_{\mu}f+(1-\alpha)e_{\mu}g];
\end{align*}
\item \label{e:hull:arith}
\begin{align*}
\alpha\inf f+(1-\alpha)\inf g& \leq \inf\vphi
\leq\inf[\alpha h_{\mu}f+(1-\alpha)h_{\mu}g]
\leq \inf[\alpha f+(1-\alpha) g].
\end{align*}
\end{enumerate}
\end{prop}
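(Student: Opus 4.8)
The plan is to reduce the statement to two facts proved earlier --- the envelope identity $e_{\mu}\vphi=\alpha e_{\mu}f+(1-\alpha)e_{\mu}g$ of Theorem~\ref{t:prox}\ref{i:env:conhull} and the sandwich $\alpha e_{\mu}f+(1-\alpha)e_{\mu}g\le\vphi\le\alpha h_{\mu}f+(1-\alpha)h_{\mu}g\le\alpha f+(1-\alpha)g$ of Theorem~\ref{t:u:0}\ref{i:three:c} --- together with the elementary observation that passing to a Moreau envelope changes neither the infimum nor the set of minimizers of a function, provided the prox parameter lies below the threshold.

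First I would record this observation. For any proper function $\psi$ and any $\mu>0$, $e_{\mu}\psi\le\psi$ yields $\inf e_{\mu}\psi\le\inf\psi$, while $e_{\mu}\psi(x)=\inf_{w}\{\psi(w)+\tfrac{1}{2\mu}\|w-x\|^{2}\}\ge\inf\psi$ gives the reverse, so $\inf e_{\mu}\psi=\inf\psi$. For the minimizers, if $\bar x\in\argmin\psi$ then $e_{\mu}\psi(\bar x)\le\psi(\bar x)=\inf\psi=\inf e_{\mu}\psi$, so $\bar x\in\argmin e_{\mu}\psi$; conversely, when $0<\mu<\lambda_{\psi}$ the function $\psi+\tfrac{1}{2\mu}\|\cdot-\bar x\|^{2}$ is level-bounded, so the defining infimum of $e_{\mu}\psi(\bar x)$ is attained at some $w^{*}$ (\cite[Theorem 1.25]{rockwets}), and then $\psi(w^{*})+\tfrac{1}{2\mu}\|w^{*}-\bar x\|^{2}=e_{\mu}\psi(\bar x)=\inf\psi\le\psi(w^{*})$ forces $w^{*}=\bar x$ and $\psi(\bar x)=\inf\psi$, whence $\argmin e_{\mu}\psi=\argmin\psi$ (alternatively this is \cite[Example 1.46]{rockwets}).

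For part~\ref{e:env:conv}, I would apply this with $\psi=\vphi$: by Theorem~\ref{t:prox}\ref{i:lowers}, $\vphi$ is proper, lsc and prox-bounded with threshold $\lambda_{\vphi}\ge\bl>\mu$, hence $\inf\vphi=\inf e_{\mu}\vphi$ and $\argmin\vphi=\argmin e_{\mu}\vphi$; substituting the identity of Theorem~\ref{t:prox}\ref{i:env:conhull} delivers both displayed equalities. For part~\ref{e:hull:arith}, apply $\inf$ termwise to the sandwich of Theorem~\ref{t:u:0}\ref{i:three:c} to get $\inf[\alpha e_{\mu}f+(1-\alpha)e_{\mu}g]\le\inf\vphi\le\inf[\alpha h_{\mu}f+(1-\alpha)h_{\mu}g]\le\inf[\alpha f+(1-\alpha)g]$, then bound the leftmost term below pointwise via $\alpha e_{\mu}f(x)+(1-\alpha)e_{\mu}g(x)\ge\alpha\inf e_{\mu}f+(1-\alpha)\inf e_{\mu}g=\alpha\inf f+(1-\alpha)\inf g$ (using $\inf e_{\mu}f=\inf f$, $\inf e_{\mu}g=\inf g$ from the first step), and stitch the chain together.

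The main obstacle is minor: the only point that is not purely formal is the equality $\argmin\psi=\argmin e_{\mu}\psi$, which hinges on attainment of the proximal infimum and hence genuinely uses $\mu<\lambda_{\psi}$; for $\psi=\vphi$ this means one must invoke the sharper threshold estimate $\lambda_{\vphi}\ge\bl$ of Theorem~\ref{t:prox}\ref{i:lowers} rather than merely the hypothesis $0<\mu<\bl$. Everything else is monotonicity of $\inf$ under pointwise inequalities and superadditivity of $\inf$ over sums.
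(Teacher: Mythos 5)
Your proposal is correct and follows exactly the paper's route: part (a) combines Theorem~\ref{t:prox}\ref{i:env:conhull} with the identities $\inf\vphi=\inf e_{\mu}\vphi$ and $\argmin\vphi=\argmin e_{\mu}\vphi$, and part (b) combines the sandwich of Theorem~\ref{t:u:0}\ref{i:three:c} with $\inf e_{\mu}f=\inf f$ and $\inf e_{\mu}g=\inf g$. The paper states these ingredients without elaboration; you have simply supplied the (correct) verification that the Moreau envelope preserves infima and minimizers below the prox-threshold, including the pertinent remark that this uses $\lambda_{\vphi}\geq\bl$.
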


\begin{proof}
For \ref{e:env:conv}, apply Theorem~\ref{t:prox}\ref{i:env:conhull}
and $\argmin\vphi=\argmin e_{\mu}\vphi$.
For \ref{e:hull:arith}, apply Theorem~\ref{t:u:0}\ref{i:three:c}
and $\inf e_{\mu}f=\inf f$, and $\inf e_{\mu}g=\inf g$.
\end{proof}

\begin{thm} Suppose that $\argmin f\cap \argmin g\neq\varnothing$ and
$\alpha\in ]0,1[$.
Then the following hold:
\begin{enumerate}[label=\rm(\alph*)]
\item \label{i:arithmin}
\begin{equation}\label{e:arith}
\min(\alpha f+(1-\alpha)g)=\alpha\min f+(1-\alpha)\min g, \text{ \emph{and} }
\end{equation}
\begin{equation}\label{e:arith:minimizer}
\argmin(\alpha f+(1-\alpha)g)=\argmin f\cap \argmin g;
\end{equation}
\item \label{i:proxmin}
\begin{equation}\label{e:minvalue}
\min\vphi=\alpha\min f+(1-\alpha)\min g, \text{ \emph{and} }
\end{equation}
\begin{equation*}
\argmin\vphi=\argmin f\cap \argmin g.
\end{equation*}
\end{enumerate}
\end{thm}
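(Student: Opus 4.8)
\medskip

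The plan is to prove \ref{i:arithmin} first by direct elementary arguments, then derive \ref{i:proxmin} from \ref{i:arithmin} together with the sandwich inequality of Theorem~\ref{t:u:0}\ref{i:three:c} and the envelope description of $\inf\vphi$ and $\argmin\vphi$ in Proposition~\ref{p:minimizer}.

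\medskip

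For \ref{i:arithmin}, let $m_f=\min f$, $m_g=\min g$, and pick $\bar x\in\argmin f\cap\argmin g$ (nonempty by hypothesis). For every $x\in\R^n$ we have $\alpha f(x)+(1-\alpha)g(x)\ge\alpha m_f+(1-\alpha)m_g$, and equality holds at $x=\bar x$; this gives \eqref{e:arith} and shows $\argmin f\cap\argmin g\subseteq\argmin(\alpha f+(1-\alpha)g)$. Conversely, if $x\in\argmin(\alpha f+(1-\alpha)g)$, then $\alpha f(x)+(1-\alpha)g(x)=\alpha m_f+(1-\alpha)m_g$; since $f(x)\ge m_f$ and $g(x)\ge m_g$ and $\alpha,1-\alpha>0$, the only way the convex combination can hit the lower bound is $f(x)=m_f$ and $g(x)=m_g$, i.e. $x\in\argmin f\cap\argmin g$. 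This establishes \eqref{e:arith:minimizer}.

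\medskip

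For \ref{i:proxmin}, I use Proposition~\ref{p:minimizer}\ref{e:hull:arith}, which gives
$$\alpha\inf f+(1-\alpha)\inf g\le\inf\vphi\le\inf[\alpha f+(1-\alpha)g].$$
By \eqref{e:arith} the right-hand side equals $\alpha\min f+(1-\alpha)\min g$, which is exactly the left-hand side; hence all terms coincide and $\inf\vphi=\alpha\min f+(1-\alpha)\min g$, and this infimum is attained (e.g.\ at $\bar x$, using the chain $\alpha e_\mu f+(1-\alpha)e_\mu g\le\vphi\le\alpha f+(1-\alpha)g$ from \eqref{e:three:c}, which forces $\vphi(\bar x)=\alpha m_f+(1-\alpha)m_g$ since both bounds agree at $\bar x$). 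This proves \eqref{e:minvalue}. For the minimizer set, by Proposition~\ref{p:minimizer}\ref{e:env:conv} we have $\argmin\vphi=\argmin[\alpha e_\mu f+(1-\alpha)e_\mu g]$, so it suffices to show the latter equals $\argmin f\cap\argmin g$. Since $\inf e_\mu f=\inf f$ and $\inf e_\mu g=\inf g$, and since $\argmin f\cap\argmin g\neq\varnothing$ with $x\in\argmin f\Rightarrow e_\mu f(x)=f(x)=\min f$ (the infimum in the Moreau envelope is attained at $w=x$ when $x$ minimizes $f$), the argument of the preceding paragraph applied to $\alpha e_\mu f+(1-\alpha)e_\mu g$ gives $\argmin[\alpha e_\mu f+(1-\alpha)e_\mu g]=\argmin e_\mu f\cap\argmin e_\mu g$. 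Finally $\argmin e_\mu f=\argmin f$: "$\supseteq$" is the observation just made, and "$\subseteq$" follows because $e_\mu f(x)=\min f$ forces the minimizing $w$ in $f(w)+\tfrac{1}{2\mu}\|w-x\|^2$ to satisfy $f(w)=\min f$ and $w=x$. Likewise $\argmin e_\mu g=\argmin g$, and combining yields $\argmin\vphi=\argmin f\cap\argmin g$.

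\medskip

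The only delicate point is the identity $\argmin e_\mu f=\argmin f$ and its analogue for $g$: one must rule out a minimizer of $e_\mu f$ that is not a minimizer of $f$. This is where the hypothesis $\argmin f\cap\argmin g\neq\varnothing$ is essential — it guarantees $\inf e_\mu f=\inf f$ is a genuine minimum attained by $f$, so that equality $e_\mu f(x)=\inf f$ propagates back through the defining infimum of $e_\mu f$ to the quadratic term, forcing $x\in\argmin f$. Everything else is the elementary "convex combination meets its lower bound only at the common extremizers" argument, used three times (for $\alpha f+(1-\alpha)g$, for $\alpha e_\mu f+(1-\alpha)e_\mu g$, and implicitly in the sandwich).
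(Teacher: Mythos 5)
Your proposal is correct and follows essentially the same route as the paper: part (a) by the elementary ``a convex combination attains its lower bound only at common minimizers'' argument, and part (b) by combining the sandwich inequality of Theorem~\ref{t:u:0}\ref{i:three:c} with Proposition~\ref{p:minimizer} and the identities $\inf e_{\mu}f=\inf f$, $\argmin e_{\mu}f=\argmin f$. The only cosmetic difference is that you route the argmin identification through $\argmin\vphi=\argmin[\alpha e_{\mu}f+(1-\alpha)e_{\mu}g]$ and reapply the part-(a) argument to the envelopes, while the paper works directly with a point $x\in\argmin\vphi$ and the inequality $\vphi(x)\geq\alpha e_{\mu}f(x)+(1-\alpha)e_{\mu}g(x)$; you also spell out the justification of $\argmin e_{\mu}f=\argmin f$, which the paper merely cites.
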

\begin{proof} Pick $x\in \argmin f\cap \argmin g$. We have
\begin{equation}\label{e:minvalue:two}
\inf[\alpha f+(1-\alpha)g]=\alpha f(x)+(1-\alpha) g(x)=\alpha\min f+(1-\alpha)\min g.
\end{equation}
\ref{i:arithmin}: Equation \eqref{e:minvalue:two} gives \eqref{e:arith} and
\begin{equation*}\label{e:setinclus}
(\argmin f\cap\argmin g)\subseteq \argmin (\alpha f+(1-\alpha)g).
\end{equation*}
 To see the converse inclusion of \eqref{e:minvalue},
let $x\in\argmin(\alpha f+(1-\alpha)g)$. Then \eqref{e:arith} gives
$$\alpha \min f+(1-\alpha)\min g=\min(\alpha f+(1-\alpha)g)=\alpha f(x)+(1-\alpha)g(x),$$
from which
$$\alpha (\min f-f(x))+(1-\alpha)(\min g-g(x))=0.$$
Since $\min f\leq f(x), \min g\leq g(x)$, we obtain
 $\min f= f(x), \min g= g(x)$, so $x\in\argmin f\cap\argmin g$. Thus,
 $\argmin (\alpha f+(1-\alpha)g)\subseteq (\argmin f\cap\argmin g)$. Hence,
 \eqref{e:arith:minimizer} holds.

\noindent\ref{i:proxmin}: Equation \eqref{e:minvalue} follows from Proposition~\ref{p:minimizer} and Theorem~\ref{t:u:0}\ref{i:three:c}. This also gives
$$(\argmin f\cap \argmin g)\subseteq \argmin\vphi.$$

To show $(\argmin f\cap \argmin g)\supseteq \argmin\vphi$, take any $x\in \argmin\vphi$.
By \eqref{e:minvalue} and Theorem~\ref{t:u:0}\ref{i:three:c}, we have
$$\alpha\min f+(1-\alpha)\min g=\vphi(x)\geq \alpha e_{\mu}f(x)+(1-\alpha)e_{\mu}g(x),$$
from which
$$\alpha(e_{\mu}f(x)-\min f)+(1-\alpha)(e_{\mu}g(x)-\min g)\leq 0.$$
Since $\min f=\min e_{\mu}f$ and $\min g=\min e_{\mu}g$, it follows that
$e_{\mu}f(x)=\min e_{\mu}f$ and $e_{\mu}g(x)=\min e_{\mu}g$, so
$x\in(\argmin e_{\mu}f\cap\argmin e_{\mu}g)=(\argmin f\cap\argmin g)$
because of $\argmin e_{\mu}f=\argmin f$ and $\argmin e_{\mu}g=\argmin g$.
\end{proof}

To explore further optimization properties of $\vphi$, we need the following three
auxiliary results.
\begin{lem}\label{l:box} Suppose that $f_{1}, f_{2}:\R^n\rightarrow\RX$ are
proper and lsc, and that
$f_{1}\Box f_{2}$ is exact. Then
\begin{enumerate}[label=\rm(\alph*)]
\item\label{i:box:inf}
\begin{equation}\label{e:epis:inf}
\inf (\fot)=\inf f_{1}+\inf f_{2}, \text{ and }
\end{equation}
\item\label{i:box:min}
\begin{equation}\label{e:epis:min}
\argmin (\fot)=\argmin f_{1}+\argmin f_{2}.
\end{equation}
\end{enumerate}
\end{lem}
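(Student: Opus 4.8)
The plan is to derive both identities from elementary manipulations of iterated infima, invoking the exactness hypothesis only for the nontrivial inclusion in part~\ref{i:box:min}.

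For part~\ref{i:box:inf} I would expand the definition of the inf-convolution and interchange the two infima:
$$\inf(\fot)=\inf_{x}\,\inf_{w}\{f_{1}(x-w)+f_{2}(w)\}=\inf_{w}\Big(f_{2}(w)+\inf_{x}f_{1}(x-w)\Big)=\inf_{w}\big(f_{2}(w)+\inf f_{1}\big)=\inf f_{1}+\inf f_{2},$$
where for fixed $w$ the substitution $u=x-w$ gives $\inf_{x}f_{1}(x-w)=\inf f_{1}$. This step uses only properness of $f_{1},f_{2}$; properness together with exactness also forces $\fot$ to be proper, so the extended-real arithmetic in $\inf f_{1}+\inf f_{2}$ is harmless. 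Exactness itself is not needed for \eqref{e:epis:inf}.

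For part~\ref{i:box:min}, the inclusion ``$\supseteq$'' is immediate from \eqref{e:epis:inf}: if $x_{i}\in\argmin f_{i}$ then $(\fot)(x_{1}+x_{2})\le f_{1}(x_{1})+f_{2}(x_{2})=\inf f_{1}+\inf f_{2}=\inf(\fot)$, so $x_{1}+x_{2}\in\argmin(\fot)$. For ``$\subseteq$'', I would take $x\in\argmin(\fot)$, so $(\fot)(x)=\inf f_{1}+\inf f_{2}$ by \eqref{e:epis:inf}; exactness furnishes $w$ with $(\fot)(x)=f_{1}(x-w)+f_{2}(w)$, and combining this with $f_{1}(x-w)\ge\inf f_{1}$ and $f_{2}(w)\ge\inf f_{2}$ forces $f_{1}(x-w)=\inf f_{1}$ and $f_{2}(w)=\inf f_{2}$. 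Hence $x-w\in\argmin f_{1}$ and $w\in\argmin f_{2}$, so $x\in\argmin f_{1}+\argmin f_{2}$.

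There is no serious obstacle: the lemma is routine once one spots the substitution and the role of exactness. The one point worth flagging is that exactness is genuinely needed for \eqref{e:epis:min} — without it ``$\supseteq$'' still holds but ``$\subseteq$'' can fail (e.g.\ with $f_{1}=\exp(\cdot)$ and $f_{2}=\exp(-\,\cdot)$ on $\R$, where $\fot\equiv 0$ has $\argmin=\R$ while both $\argmin f_{i}$ are empty). The cancellation in the ``forces'' step is clean whenever $\inf(\fot)$ is finite; when $\inf(\fot)=-\infty$ one has $\inf f_{i}=-\infty$ for some $i$, hence $\argmin f_{i}=\varnothing$ since $f_{i}$ is proper, and likewise $\argmin(\fot)=\varnothing$, so \eqref{e:epis:min} holds trivially in that case.
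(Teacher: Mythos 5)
Your proposal is correct and follows essentially the same route as the paper: part (a) by rewriting the iterated infimum as a joint one, and part (b) by the easy inclusion from (a) plus the exactness-based decomposition and the nonnegative-cancellation argument for the reverse inclusion. Your extra remarks on the $-\infty$ degenerate case and the counterexample showing exactness is needed are sound additions, but they do not change the argument.
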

\begin{proof} Equation \eqref{e:epis:inf} follows from
\begin{align*}
\inf\fot &=\inf_{x}\inf_{x=y+z}[f_{1}(y)+f_{2}(z)]\\
&=\inf_{y,z}[f_{1}(y)+f_{2}(z)]=\inf f_{1}+\inf f_{2}.
\end{align*}
To see \eqref{e:epis:min}, we first show
\begin{equation}\label{e:setsum}
\argmin (\fot)\subseteq\argmin f_{1}+\argmin f_{2}.
\end{equation}
If $\argmin (\fot)=\varnothing$, the inclusion holds trivially. Let us
assume that $\argmin (\fot)\neq \varnothing$ and
let $x\in\argmin (\fot)$. Since $\fot$ is exact, we have $x=y+z$ for some $y,z$  and
$\fot(x)=f_{1}(y)+f_{2}(z)$. In view of \eqref{e:epis:inf},
\begin{align*}
f_{1}(y)+f_{2}(z) &=\fot(x)=\min \fot=\inf f_{1}+\inf f_{2},
\end{align*}
from which
$$(f_{1}(y)-\inf f_{1})+(f_{2}(z)-\inf f_{2})=0.$$
Then $f_{1}(y)=\inf f_{1}, f_{2}(z)=\inf f_{2}$, which gives
$y\in\argmin f_{1}, z\in\argmin f_{2}$. Therefore,
$x\in \argmin f_{1}+\argmin f_{2}$. Next, we show
\begin{equation}\label{e:setsum2}
\argmin (\fot)\supseteq\argmin f_{1}+\argmin f_{2}.
\end{equation}
If one of $\argmin f_{1}, \argmin f_{2}$ is empty, the inclusion holds
trivially. Assume that
$\argmin f_{1}\neq\varnothing$ and $\argmin f_{2}\neq\varnothing$.
Take
$y\in\argmin f_{1}, z\in\argmin f_{2}$, and put $x=y+z$. The definition of
$\Box$ and \eqref{e:epis:inf} give
$$\fot(x)\leq f_{1}(y)+f_{2}(z)=\min f_{1}+\min f_{2}=
\inf \fot,$$
which implies $x\in\argmin(\fot)$. Since $y\in\argmin f_{1}$, $z\in\argmin f_{2}$
were arbitrary,
\eqref{e:setsum2} follows. Combining \eqref{e:setsum} and \eqref{e:setsum2}
gives \eqref{e:epis:min}.
\end{proof}

\begin{lem}\label{l:epimulti}
 Let $f_{1}:\R^n\rightarrow\RX$ be proper and lsc, and let $\beta>0$. Then
 \begin{enumerate}[label=\rm(\alph*)]
 \item\label{i:epimul:inf}
$$\inf\left[\beta f_{1}\left(\frac{\cdot}{\beta}\right)\right]=\beta\inf f_{1},\text{ and }
$$
\item \label{i:epimul:min}
$$\argmin \left[\beta f_{1}
\left(\frac{\cdot}{\beta}\right)\right]=\beta\argmin f_{1}.$$
\end{enumerate}
\end{lem}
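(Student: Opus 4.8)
The plan is to reduce everything to the change of variables $y=x/\beta$, which is a bijection of $\R^n$ onto itself because $\beta>0$. Write $g:=\beta f_1(\cdot/\beta)$. Since $x\mapsto x/\beta$ is a linear homeomorphism and multiplication by the positive constant $\beta$ is order-preserving, $g$ is again proper and lsc, and one has $g(x)=\beta f_1(y)$ whenever $y=x/\beta$. All the content of the lemma is then just the interaction of $\inf$ and $\argmin$ with this rescaling.

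For part \ref{i:epimul:inf}, I would simply compute
$$\inf_{x\in\R^n} g(x)=\inf_{x\in\R^n}\beta f_1(x/\beta)=\beta\inf_{x\in\R^n} f_1(x/\beta)=\beta\inf_{y\in\R^n} f_1(y),$$
where the middle equality pulls the positive scalar $\beta$ out of the infimum and the last equality uses that $x\mapsto x/\beta$ ranges over all of $\R^n$. This identity is to be read in $\RE$, so it remains correct in the degenerate case $\inf f_1=-\infty$ (then both sides equal $-\infty$); the statement does not need prox-boundedness of $f_1$. For part \ref{i:epimul:min}, I would argue pointwise: $x\in\argmin g$ exactly when $g(x)=\inf g$, i.e.\ $\beta f_1(x/\beta)=\beta\inf f_1$ by part \ref{i:epimul:inf}; cancelling $\beta>0$ this is equivalent to $f_1(x/\beta)=\inf f_1$, i.e.\ $x/\beta\in\argmin f_1$, i.e.\ $x\in\beta\argmin f_1$. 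When $\argmin f_1=\varnothing$ both sides are empty and the identity is trivial.

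There is essentially no obstacle here: the only points requiring the slightest care are the bookkeeping of the case $\inf f_1=-\infty$ (so that the product $\beta\cdot(-\infty)$ is interpreted in $\RE$) and the observation that $x\mapsto x/\beta$ is a bijection, so that no minimizers are created or destroyed by the rescaling. This lemma is purely auxiliary and will be combined with Lemma~\ref{l:box} and the explicit inf-convolution formula of Theorem~\ref{t:prox}\ref{i:epi:sum} to compute $\inf\vphi$ and $\argmin\vphi$ in terms of $\conv(f+\tfrac{1}{2\mu}\|\cdot\|^2)$ and $\conv(g+\tfrac{1}{2\mu}\|\cdot\|^2)$.
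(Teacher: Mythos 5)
Your proof is correct; the paper states this lemma without proof, treating it as immediate, and your change-of-variables argument (using that $x\mapsto x/\beta$ is a bijection of $\R^n$ and that multiplication by $\beta>0$ preserves order, with the $\inf f_1=-\infty$ case handled in $\RE$) is exactly the routine verification one would supply. Nothing is missing.
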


\begin{lem}\label{l:convexhull:f} Let $f_{1}:\R^n\rightarrow\RX$ be proper and lsc.
Then the following
hold:
\begin{enumerate}[label=\rm(\alph*)]
\item\label{i:convexh:inf}
$$\inf (\conv f_1) =\inf f_{1};$$
\item\label{i:convexh:min}
if, in addition, $f_{1}$ is coercive, then
$$\argmin  (\conv f_{1})
=\conv(\argmin f_{1}),$$
and $\argmin  (\conv f_{1})\neq\varnothing$.
\end{enumerate}
\end{lem}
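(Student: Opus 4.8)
The plan is to read off both parts from the basic description of the convex hull, using coercivity in \ref{i:convexh:min} only to guarantee that the relevant infima are attained. For \ref{i:convexh:inf} I would prove the two inequalities directly. Since $\conv f_1$ is the largest convex function dominated by $f_1$, one has $\conv f_1\le f_1$, hence $\inf(\conv f_1)\le\inf f_1$. Conversely, every value $(\conv f_1)(x)$ is an infimum of averages $\sum_i\lambda_i f_1(x_i)$ over finite convex combinations with $\sum_i\lambda_i x_i=x$, $\lambda_i\ge 0$, $\sum_i\lambda_i=1$, and each such average is $\ge(\sum_i\lambda_i)\inf f_1=\inf f_1$ (nothing to prove if $\inf f_1=-\infty$, and $\inf f_1<+\infty$ by properness of $f_1$); so $\conv f_1\ge\inf f_1$ pointwise and therefore $\inf(\conv f_1)\ge\inf f_1$. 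This step needs only properness.

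For \ref{i:convexh:min} I would first record that coercivity of $f_1$ makes it level-bounded, so, being proper and lsc, it attains its minimum: $\argmin f_1\neq\varnothing$ and $m:=\min f_1\in\R$ by \cite[Theorem~1.9]{rockwets}. For any $\bar x\in\argmin f_1$, part \ref{i:convexh:inf} gives $m\le(\conv f_1)(\bar x)\le f_1(\bar x)=m$, so $\bar x\in\argmin(\conv f_1)$; this already yields $\argmin(\conv f_1)\neq\varnothing$ and $\min(\conv f_1)=m$. The inclusion $\conv(\argmin f_1)\subseteq\argmin(\conv f_1)$ follows at once: for $x=\sum_i\lambda_i x_i$ with $x_i\in\argmin f_1$ and $\lambda_i\ge 0$, $\sum_i\lambda_i=1$, one has $m\le(\conv f_1)(x)\le\sum_i\lambda_i f_1(x_i)=m$.

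For the reverse inclusion $\argmin(\conv f_1)\subseteq\conv(\argmin f_1)$ I would use coercivity in an essential way. Coercivity of $f_1$ gives, for each $M>0$, a bound $f_1\ge M\|\cdot\|-C$, hence the same bound for $\conv f_1$; consequently $\conv f_1=f_1^{**}$ is proper, lsc and coercive, and at every point where it is finite its value is attained in the representation above, say $(\conv f_1)(x)=\sum_i\lambda_i f_1(x_i)$ with $\sum_i\lambda_i x_i=x$, $\lambda_i\ge 0$, $\sum_i\lambda_i=1$ (this is \cite[Example~11.26]{rockwets} combined with Carath\'eodory's theorem in $\R^{n+1}$, which also makes the combination finite). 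Taking $x\in\argmin(\conv f_1)$, so that $\sum_i\lambda_i f_1(x_i)=m$, and discarding indices with $\lambda_i=0$, the equality $\sum_i\lambda_i\big(f_1(x_i)-m\big)=0$ with nonnegative summands (recall $f_1\ge m$) forces $f_1(x_i)=m$, i.e.\ $x_i\in\argmin f_1$; hence $x=\sum_i\lambda_i x_i\in\conv(\argmin f_1)$. Combining the two inclusions gives the claimed identity. I expect the only genuinely delicate point to be the attainment of that infimum, equivalently the closedness of $\conv(\epig f_1)$: this can fail for a general proper lsc $f_1$, and it is exactly what the coercivity hypothesis in \ref{i:convexh:min} secures; everything else is routine manipulation of the inequalities $\conv f_1\le f_1$ and $\conv f_1\ge\inf f_1$ from \ref{i:convexh:inf}.
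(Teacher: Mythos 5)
Your proof is correct. For the record, the paper's own ``proof'' is a one-line combination of two citations --- Comment~3.7(4) of Benoist--Hiriart-Urruty and \cite[Corollary~3.47]{rockwets} --- so what you have done is supply the arguments behind those references rather than follow a different mathematical route: the crux in both cases is exactly the point you flag, namely that for a proper, lsc, \emph{coercive} $f_1$ the infimum in the representation $(\conv f_1)(x)=\inf\{\sum_i\lambda_i f_1(x_i):\sum_i\lambda_i x_i=x,\ \lambda_i\ge 0,\ \sum_i\lambda_i=1\}$ is attained (with at most $n+1$ points, by Carath\'eodory), which is precisely the content of \cite[Corollary~3.47]{rockwets}; your part~(a) and the two inclusions in part~(b) are then the routine manipulations you describe, and they are all valid. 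The one blemish is a misattribution: the attainment fact is \cite[Corollary~3.47]{rockwets}, not \cite[Example~11.26]{rockwets} (the latter concerns the Moreau envelope and proximal hull); since you correctly isolate and justify the needed fact, this is a citation slip rather than a gap. Your closing remark that closedness of $\conv(\epig f_1)$ can fail without coercivity, and that this is exactly what the hypothesis in~(b) buys, is also accurate and is a useful observation the paper leaves implicit.
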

\begin{proof} Combine \cite[Comment 3.7(4)]{benoist} and
\cite[Corollary 3.47]{rockwets}.
\end{proof}

We are now ready for the main result of this section.

\begin{thm}\label{t:shifted}
Let $0<\mu<\bl$, and
let $\vphi$ be defined as in \eqref{e:prox:def}.
Then the following hold:
\begin{enumerate}[label=\rm(\alph*)]
\item\label{i:shifted:inf}
\begin{align*}
&\inf\left(\vphi+\frac{1}{2\mu}\|\cdot\|^2\right)\\
&=\alpha\inf\left(
f+\frac{1}{2\mu}\|\cdot\|^2\right)+(1-\alpha)
\inf \left(
g+\frac{1}{2\mu}\|\cdot\|^2\right);
\end{align*}
\item\label{i:shifted:min}
\begin{align*}
&\argmin\left(\vphi+\frac{1}{2\mu}\|\cdot\|^2\right)\\
& =\alpha\conv \left[\argmin\left(
f+\frac{1}{2\mu}\|\cdot\|^2\right)\right]+(1-\alpha)
\conv \left[\argmin\left(
g+\frac{1}{2\mu}\|\cdot\|^2\right)\right]\neq\varnothing.
\end{align*}
\end{enumerate}
\end{thm}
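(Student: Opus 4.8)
The plan is to pull the whole statement back to the exact infimal‑convolution representation of $\vphi+\frac{1}{2\mu}\|\cdot\|^2$ furnished by Theorem~\ref{t:prox}\ref{i:epi:sum}, and then to strip off, one at a time, the three operations in play---positive scaling coupled with dilation of the argument, convex hull, and infimal convolution---using Lemmas~\ref{l:box}, \ref{l:epimulti} and \ref{l:convexhull:f}. No step is deep; the content lies in checking the hypotheses of those lemmas.

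First I would take $\alpha\in]0,1[$. By Theorem~\ref{t:prox}\ref{i:epi:sum}, $\vphi+\frac{1}{2\mu}\|\cdot\|^2=F_{1}\Box F_{2}$ with the $\Box$ exact, where $F_{1}=\alpha\,\conv\big(f+\frac{1}{2\mu}\|\cdot\|^2\big)\big(\tfrac{\cdot}{\alpha}\big)$ and $F_{2}=(1-\alpha)\,\conv\big(g+\frac{1}{2\mu}\|\cdot\|^2\big)\big(\tfrac{\cdot}{1-\alpha}\big)$. Since $\mu<\bl\le\min\{\lambda_{f},\lambda_{g}\}$, the functions $f+\frac{1}{2\mu}\|\cdot\|^2$ and $g+\frac{1}{2\mu}\|\cdot\|^2$ are proper, lsc and coercive (as already used in the proof of Lemma~\ref{l:env:prox}); hence so are their convex hulls, and after scaling and dilation so are $F_{1}$ and $F_{2}$, so Lemma~\ref{l:box} applies. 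For \ref{i:shifted:inf}, chaining Lemma~\ref{l:box}\ref{i:box:inf}, then Lemma~\ref{l:epimulti}\ref{i:epimul:inf} (with $\beta=\alpha$ and with $\beta=1-\alpha$), then Lemma~\ref{l:convexhull:f}\ref{i:convexh:inf}, turns $\inf\big(\vphi+\frac{1}{2\mu}\|\cdot\|^2\big)=\inf F_{1}+\inf F_{2}$ into the asserted identity. For \ref{i:shifted:min}, run the same chain with the ``$\argmin$'' halves: Lemma~\ref{l:box}\ref{i:box:min} gives $\argmin\big(\vphi+\frac{1}{2\mu}\|\cdot\|^2\big)=\argmin F_{1}+\argmin F_{2}$, Lemma~\ref{l:epimulti}\ref{i:epimul:min} gives $\argmin F_{1}=\alpha\argmin\conv\big(f+\frac{1}{2\mu}\|\cdot\|^2\big)$ (and similarly for $F_{2}$), and---here coercivity is essential---Lemma~\ref{l:convexhull:f}\ref{i:convexh:min} replaces each $\argmin\conv(\cdot)$ by the nonempty set $\conv\argmin(\cdot)$. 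Assembling the pieces gives the displayed formula, and the left‑hand set is nonempty since the right‑hand side is a Minkowski sum of two nonempty sets.

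Finally, the boundary cases $\alpha\in\{0,1\}$, where the dilations $\cdot/\alpha$ and $\cdot/(1-\alpha)$ degenerate, would be handled directly: by Theorem~\ref{t:prox}\ref{i:alpha} one has $\varphi_{\mu}^{0}=h_{\mu}g$ and $\varphi_{\mu}^{1}=h_{\mu}f$, and Fact~\ref{l:dcform}(b) identifies $h_{\mu}g+\frac{1}{2\mu}\|\cdot\|^2$ with $\big(g+\frac{1}{2\mu}\|\cdot\|^2\big)^{**}=\conv\big(g+\frac{1}{2\mu}\|\cdot\|^2\big)$ (again using coercivity and lower semicontinuity so that the biconjugate equals the convex hull), and symmetrically for $\alpha=1$; Lemma~\ref{l:convexhull:f} then recovers both formulas under the conventions $0\cdot S=\{0\}$ for nonempty $S$ and $0\cdot(\text{finite scalar})=0$. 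I expect the only genuinely delicate point to be not a new estimate but the bookkeeping of hypotheses: properness and lower semicontinuity of $F_{1},F_{2}$, coercivity of $f+\frac{1}{2\mu}\|\cdot\|^2$ and $g+\frac{1}{2\mu}\|\cdot\|^2$ (indispensable both for the $\conv\argmin$ identity and for nonemptiness in \ref{i:shifted:min}), and the degenerate scaling at $\alpha\in\{0,1\}$.
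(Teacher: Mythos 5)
Your proposal is correct and follows essentially the same route as the paper: both reduce to the exact infimal-convolution representation from Theorem~\ref{t:prox}\ref{i:epi:sum} and then peel off the operations via Lemma~\ref{l:box}, Lemma~\ref{l:epimulti} and Lemma~\ref{l:convexhull:f}, with coercivity of $f+\frac{1}{2\mu}\|\cdot\|^2$ and $g+\frac{1}{2\mu}\|\cdot\|^2$ (from $\mu<\bl$) securing the $\conv\argmin$ identity and nonemptiness. Your explicit treatment of the degenerate cases $\alpha\in\{0,1\}$ is a small bookkeeping addition the paper leaves implicit.
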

\begin{proof} Theorem~\ref{t:prox}\ref{i:epi:sum} gives
\begin{align*}
& \vphi+\frac{1}{2\mu}\|\cdot\|^2\\
&=\left[\alpha\conv\bigg(f+\frac{1}{2\mu}\|\cdot\|^2\bigg)\left(\frac{\cdot}{\alpha}\right)\right]
\Box \left[(1-\alpha)\conv
\bigg(g+\frac{1}{2\mu}\|\cdot\|^2\bigg)\left(\frac{\cdot}{1-\alpha}\right)\right],
\end{align*}
in which the inf-convolution $\Box$ is exact.

\noindent\ref{i:shifted:inf}: Using Lemma \ref{l:box}\ref{i:box:inf} and Lemma \ref{l:convexhull:f}\ref{i:convexh:inf},
we deduce
\begin{align*}
&\inf\left(\vphi+\frac{1}{2\mu}\|\cdot\|^2\right)\\
&=\inf \left[\alpha\conv\bigg(f+\frac{1}{2\mu}\|\cdot\|^2\bigg)\left(\frac{\cdot}{\alpha}\right)\right]
+\inf \left[(1-\alpha)\conv
\bigg(g+\frac{1}{2\mu}\|\cdot\|^2\bigg)\left(\frac{\cdot}{1-\alpha}\right)\right]\\
&=\alpha\inf \left[\conv\bigg(f+\frac{1}{2\mu}\|\cdot\|^2\bigg)\right]
+(1-\alpha)\inf \left[\conv
\bigg(g+\frac{1}{2\mu}\|\cdot\|^2\bigg)\right]\\
&=\alpha\inf\bigg(f+\frac{1}{2\mu}\|\cdot\|^2\bigg)
+(1-\alpha)\inf
\bigg(g+\frac{1}{2\mu}\|\cdot\|^2\bigg).
\end{align*}

\noindent\ref{i:shifted:min}: Note that
$f+\frac{1}{2\mu}\|\cdot\|^2$ and
$g+\frac{1}{2\mu}\|\cdot\|^2$ are coercive
because of $0<\mu<\bl$.
Using Lemma~\ref{l:box}\ref{i:box:min}-Lemma~\ref{l:convexhull:f}\ref{i:convexh:min},
we deduce
\begin{align*}
&\argmin\left(\vphi+\frac{1}{2\mu}\|\cdot\|^2\right)\\
&=\argmin \left[\alpha\conv\bigg(f+\frac{1}{2\mu}\|\cdot\|^2\bigg)\left(\frac{\cdot}{\alpha}\right)\right]
+\argmin \left[(1-\alpha)\conv
\bigg(g+\frac{1}{2\mu}\|\cdot\|^2\bigg)\left(\frac{\cdot}{1-\alpha}\right)\right]\\
&=\alpha\argmin \left[\conv\bigg(f+\frac{1}{2\mu}\|\cdot\|^2\bigg)\right]
+(1-\alpha)\argmin \left[\conv
\bigg(g+\frac{1}{2\mu}\|\cdot\|^2\bigg)\right]\\
&=\alpha\conv\left[\argmin \bigg(f+\frac{1}{2\mu}\|\cdot\|^2\bigg)\right]
+(1-\alpha)\conv\left[\argmin
\bigg(g+\frac{1}{2\mu}\|\cdot\|^2\bigg)\right].
\end{align*}
Finally, these three sets of minimizers are nonempty by
Lemma~\ref{l:convexhull:f}\ref{i:convexh:min}.
\end{proof}

\begin{rem} \emph{Theorem~\ref{t:shifted}\ref{i:shifted:min}} is just a rewritten form of
$$\Prox_{\mu}\vphi(0)=\alpha\conv[\Prox_{\mu}f(0)]+(1-\alpha)\conv[\Prox_{\mu}g(0)].$$
\end{rem}

In view of
Theorem~\ref{t:go:infinity}\ref{i:inf:phi2},
when $\bl=\infty$, as $\mu\rightarrow\infty$ the pointwise limit is
$$\vphi\pw \left[
\alpha\conv f\bigg(\frac{\cdot}{\alpha}\bigg)\Box (1-\alpha)\conv
g\bigg(\frac{\cdot}{1-\alpha}\bigg)\right],$$
and the epi-limit is
$$\vphi\epi \cl\left[
\alpha\conv f\bigg(\frac{\cdot}{\alpha}\bigg)\Box (1-\alpha)\conv
g\bigg(\frac{\cdot}{1-\alpha}\bigg)\right].$$

We conclude this section with a result on minimization of this limit.
\begin{prop}\label{p:coercive}
Suppose that both $f$ and $g$ are coercive. Then the following hold:
\begin{enumerate}[label=\rm(\alph*)]
\item\label{i:coercive:1}
 $\alpha\conv f\left(\frac{\cdot}{\alpha}\right)\Box (1-\alpha)\conv
g\left(\frac{\cdot}{1-\alpha}\right)$ is proper, lsc and convex;
\item\label{i:coercive:2}
$$\min \left[\alpha\conv f\left(\frac{\cdot}{\alpha}\right)\Box (1-\alpha)\conv
g\left(\frac{\cdot}{1-\alpha}\right)\right]=
\alpha\min f+(1-\alpha)\min g;$$
\item\label{i:coercive:3}
\begin{align*}
& \argmin\left[\alpha\conv f\left(\frac{\cdot}{\alpha}\right)\Box (1-\alpha)\conv
g\left(\frac{\cdot}{1-\alpha}\right)\right]\\
& =
\alpha\conv\argmin f+(1-\alpha)\conv\argmin g\neq\varnothing.
\end{align*}
\end{enumerate}
\end{prop}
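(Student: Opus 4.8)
The plan is to reduce parts~\ref{i:coercive:2} and~\ref{i:coercive:3} to the three auxiliary results Lemmas~\ref{l:box},~\ref{l:epimulti} and~\ref{l:convexhull:f}, once part~\ref{i:coercive:1} has supplied the properness, lower semicontinuity, convexity and, crucially, the \emph{exactness} of the infimal convolution involved (this also removes the closure operation appearing in Theorem~\ref{t:go:infinity} when $f,g$ are coercive). I treat the principal case $\alpha\in\,]0,1[$; for $\alpha\in\{0,1\}$ the expression collapses, under the standard epi-multiplication convention, to $\conv g$ or $\conv f$ and the claims are immediate. Write $F=\conv f$, $G=\conv g$ and $f_{1}=\alpha F(\cdot/\alpha)$, $f_{2}=(1-\alpha)G(\cdot/(1-\alpha))$. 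First I would note that $F$ and $G$ are proper, lsc, convex and coercive: convexity and properness are clear, and coercivity of $f$ supplies, for each $M>0$, a convex minorant $x\mapsto M\|x\|-C_{M}$ of $f$ (a coercive lsc function on $\R^{n}$ is bounded below on every ball), which therefore minorizes $F$ as well, forcing $\liminf_{\|x\|\to\infty}F(x)/\|x\|=\infty$; coercivity of $f$ also makes $F$ lsc, as in the proof of Lemma~\ref{l:env:prox}. Positive epi-multiplication preserves all four properties, so $f_{1},f_{2}$ are proper, lsc, convex and coercive.

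For part~\ref{i:coercive:1}, $f_{1}\Box f_{2}$ is convex as an infimal convolution of convex functions, and proper since $f_{1}\Box f_{2}\le f_{1}(0)+f_{2}(\cdot)<+\infty$ somewhere while $f_{1},f_{2}$ are bounded below. The one point requiring work --- the main obstacle of the proof --- is exactness together with lower semicontinuity of $f_{1}\Box f_{2}$. Exactness: for each fixed $x$ the function $w\mapsto f_{1}(x-w)+f_{2}(w)$ is proper, lsc and coercive (a sum of two functions of $w$ that are coercive), hence attains its infimum. Lower semicontinuity: if $x_{k}\to x$ and $L:=\liminf_{k}(f_{1}\Box f_{2})(x_{k})<+\infty$, choose along a subsequence minimizers $w_{k}$ with $(f_{1}\Box f_{2})(x_{k})=f_{1}(x_{k}-w_{k})+f_{2}(w_{k})\to L$; since $f_{1}$ is bounded below, $f_{2}(w_{k})$ is bounded above, so coercivity of $f_{2}$ keeps $(w_{k})$ bounded, and passing to a further subsequence $w_{k}\to w$ yields, by lower semicontinuity of $f_{1}$ and $f_{2}$, that $(f_{1}\Box f_{2})(x)\le f_{1}(x-w)+f_{2}(w)\le L$. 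Hence $f_{1}\Box f_{2}$ is proper, lsc, convex, and the convolution is exact.

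With part~\ref{i:coercive:1} in hand, parts~\ref{i:coercive:2} and~\ref{i:coercive:3} are bookkeeping. For the optimal value, Lemma~\ref{l:box}\ref{i:box:inf} gives $\inf(f_{1}\Box f_{2})=\inf f_{1}+\inf f_{2}$; Lemma~\ref{l:epimulti}\ref{i:epimul:inf} gives $\inf f_{1}=\alpha\inf F$ and $\inf f_{2}=(1-\alpha)\inf G$; and Lemma~\ref{l:convexhull:f}\ref{i:convexh:inf} together with $\inf\conv f=\inf f=\min f$ (attained since $f$ is coercive and lsc), and likewise for $g$, delivers $\alpha\min f+(1-\alpha)\min g$. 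For the minimizer set, Lemma~\ref{l:box}\ref{i:box:min} gives $\argmin(f_{1}\Box f_{2})=\argmin f_{1}+\argmin f_{2}$, then Lemma~\ref{l:epimulti}\ref{i:epimul:min} gives $\argmin f_{1}=\alpha\argmin F$ and $\argmin f_{2}=(1-\alpha)\argmin G$, and Lemma~\ref{l:convexhull:f}\ref{i:convexh:min} (applicable because $f,g$ are coercive) gives $\argmin\conv f=\conv\argmin f\neq\varnothing$ and $\argmin\conv g=\conv\argmin g\neq\varnothing$. Combining, $\argmin(f_{1}\Box f_{2})=\alpha\conv\argmin f+(1-\alpha)\conv\argmin g\neq\varnothing$, which in particular shows the infimum in part~\ref{i:coercive:2} is attained, hence a minimum.
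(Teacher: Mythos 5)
Your proof is correct, and your treatment of parts \ref{i:coercive:2} and \ref{i:coercive:3} coincides with the paper's: both reduce the value and the minimizer set to Lemmas~\ref{l:box}, \ref{l:epimulti} and \ref{l:convexhull:f} once exactness of the infimal convolution is secured. Where you genuinely diverge is part \ref{i:coercive:1}. The paper gets properness, lower semicontinuity, convexity and exactness in one stroke by Fenchel duality: coercivity of $f$ and $g$ gives $\dom f^*=\dom g^*=\R^n$, and the identity $(\alpha f^*+(1-\alpha)g^*)^*=\cl\big[\alpha\conv f(\cdot/\alpha)\Box(1-\alpha)\conv g(\cdot/(1-\alpha))\big]$ together with \cite[Theorem 16.4]{rockconv} (equivalently \cite[Theorem 11.23(a)]{rockwets}) shows that the closure is superfluous and the $\Box$ is exact --- the same mechanism already used in the proof of Theorem~\ref{t:prox}\ref{i:epi:sum}, here with $\mu=\infty$. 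You instead argue directly on the primal side: after checking that $\conv f$ and $\conv g$ inherit properness, lower semicontinuity and coercivity (the paper simply quotes \cite[Corollary 3.47]{rockwets} for this), you obtain exactness because $w\mapsto f_1(x-w)+f_2(w)$ is lsc and level-bounded, and lower semicontinuity of $f_1\Box f_2$ by a compactness argument on the minimizing $w_k$. Both routes are sound; the duality route is shorter given the machinery already in play, while yours is self-contained and makes transparent that coercivity is the operative hypothesis. Two cosmetic points only: in your properness argument the bound should be taken at a point of $\dom f_1+\dom f_2$ rather than via $f_1(0)$ (which presumes $0\in\dom f_1$), and the attainment argument for exactness applies, as the paper's definition requires, only at points of $\dom(f_1\Box f_2)$ --- which is how you in fact use it.
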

\begin{proof} Since both $f$ and $g$ are coercive, by \cite[Corollary 3.47]{rockwets},
$\conv f$ and $\conv g$ are lsc, convex and coercive.
As
$$(\alpha f^*+(1-\alpha) g^*)^*=\cl\left[\alpha\conv f\left(\frac{\cdot}{\alpha}\right)\Box (1-\alpha)\conv
g\left(\frac{\cdot}{1-\alpha}\right)\right]$$
and $\dom f^*=\R^n=\dom g^*$, the closure operation on the right-hand side is superfluous.
This establishes \ref{i:coercive:1}.
Moreover, the
infimal convolution
\begin{equation}\label{e:coercive0}
\alpha\conv f\left(\frac{\cdot}{\alpha}\right)\Box (1-\alpha)\conv
g\left(\frac{\cdot}{1-\alpha}\right)
\end{equation}
is exact.
For
\ref{i:coercive:2}, \ref{i:coercive:3}, it suffices to apply Lemma~\ref{l:box} to
\eqref{e:coercive0} for functions $\alpha\conv f\left(\frac{\cdot}{\alpha}\right)$ and
$\alpha\conv g\left(\frac{\cdot}{\alpha}\right)$, followed by invoking
Lemma~\ref{l:epimulti} and Lemma~\ref{l:convexhull:f}.
\end{proof}

\subsection{Convergence in minimization}

We need the following result on coercivity.

\begin{lem}\label{l:psi}
 Let $0<\mu<\bl$, and let $\psi:\R^n\rightarrow\R$ be a convex function. If
$f\geq \psi, g\geq\psi$, then $\vphi\geq \psi$.
\end{lem}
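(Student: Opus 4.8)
The plan is to reduce everything to two elementary facts: the Moreau-envelope operator $u\mapsto e_{\mu}u$ is order-preserving (immediate from its definition as an infimum over a common set), and $h_{\mu}\psi=\psi$ for every convex $\psi$. For the second fact I would first note that a finite-valued convex function $\psi:\R^{n}\to\R$ is continuous, hence proper and lsc, and is minorized by an affine function (any subgradient at any point supplies one), so its threshold of prox-boundedness is $\lambda_{\psi}=\infty$; thus Facts~\ref{l:dcform} and~\ref{f:m-p-l} apply to $\psi$ with $\lambda=\mu$. Since $\psi+\frac{1}{2\mu}\|\cdot\|^{2}$ is convex and lsc it equals its own biconjugate, so Fact~\ref{l:dcform}(b) gives $h_{\mu}\psi+\frac{1}{2\mu}\|\cdot\|^{2}=\bigl(\psi+\frac{1}{2\mu}\|\cdot\|^{2}\bigr)^{**}=\psi+\frac{1}{2\mu}\|\cdot\|^{2}$, i.e.\ $h_{\mu}\psi=\psi$; equivalently $-e_{\mu}(-e_{\mu}\psi)=\psi$ by Fact~\ref{f:m-p-l}.

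The main step is then short. Applying monotonicity of $e_{\mu}$ to the hypotheses $f\ge\psi$ and $g\ge\psi$ gives $e_{\mu}f\ge e_{\mu}\psi$ and $e_{\mu}g\ge e_{\mu}\psi$, whence
$$-\alpha e_{\mu}f-(1-\alpha)e_{\mu}g\ \le\ -\alpha e_{\mu}\psi-(1-\alpha)e_{\mu}\psi\ =\ -e_{\mu}\psi.$$
Applying the order-preserving map $e_{\mu}(\cdot)$ to both sides and negating, I obtain
$$\vphi=-e_{\mu}\bigl(-\alpha e_{\mu}f-(1-\alpha)e_{\mu}g\bigr)\ \ge\ -e_{\mu}(-e_{\mu}\psi)\ =\ h_{\mu}\psi\ =\ \psi,$$
which is exactly the assertion. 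The argument is uniform in $\alpha\in[0,1]$, so no separate treatment of the endpoints (where $\vphi$ equals $h_{\mu}g$ or $h_{\mu}f$) is needed.

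I do not expect a genuine obstacle; the only point needing a little care is that every envelope above be real-valued, so that the pointwise inequalities are meaningful. This is guaranteed by $0<\mu<\bl$: then $e_{\mu}f$ and $e_{\mu}g$ are finite and continuous by Fact~\ref{l:dcform}\ref{i:mor:l}, hence so is $-\alpha e_{\mu}f-(1-\alpha)e_{\mu}g$, which is moreover $\mu$-proximal by Lemma~\ref{l:env:neg}\ref{i:e:1} and Proposition~\ref{p:p:cone}, so that its own Moreau envelope is again finite (alternatively, $\vphi$ is proper by Theorem~\ref{t:prox}\ref{i:lowers}), while $\lambda_{\psi}=\infty$ handles the $\psi$ side. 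A slightly more computational alternative would start from the inf-convolution representation of Theorem~\ref{t:prox}\ref{i:epi:sum}: since $f+\frac{1}{2\mu}\|\cdot\|^{2}\ge\psi+\frac{1}{2\mu}\|\cdot\|^{2}$ with the right-hand side convex, it lies below $\conv\bigl(f+\frac{1}{2\mu}\|\cdot\|^{2}\bigr)$, and likewise for $g$; then one uses that $\alpha\phi(\cdot/\alpha)\,\Box\,(1-\alpha)\phi(\cdot/(1-\alpha))=\phi$ for any convex $\phi$ to conclude $\vphi\ge\psi$ after subtracting $\frac{1}{2\mu}\|\cdot\|^{2}$.
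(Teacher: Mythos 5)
Your argument is correct, but your primary route is genuinely different from the paper's. The paper proves the lemma from the inf-convolution representation of Theorem~\ref{t:prox}\ref{i:epi:sum}: it notes that $\psi+\frac{1}{2\mu}\|\cdot\|^2$ is a convex minorant of $f+\frac{1}{2\mu}\|\cdot\|^2$ (and of the $g$-analogue), hence lies below the respective convex hulls, and then uses the identity $\alpha\phi\left(\frac{\cdot}{\alpha}\right)\Box(1-\alpha)\phi\left(\frac{\cdot}{1-\alpha}\right)=\phi$ for convex $\phi$ to bound the inf-convolution below by $\psi+\frac{1}{2\mu}\|\cdot\|^2$ --- this is exactly the ``computational alternative'' you sketch in your last sentence. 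Your main proof instead works directly with the defining formula \eqref{e:prox:def}: it uses only that $e_{\mu}$ is order-preserving (applied twice) and that $h_{\mu}\psi=-e_{\mu}(-e_{\mu}\psi)=\psi$ for a finite-valued convex $\psi$, the latter justified via $\lambda_{\psi}=\infty$ and Facts~\ref{l:dcform}--\ref{f:m-p-l}. This is slightly more elementary in that it bypasses the biconjugate/epi-sum machinery of Theorem~\ref{t:prox}\ref{i:epi:sum} entirely; the paper's route has the advantage of staying within the representation it uses throughout Sections~\ref{s:main}--\ref{s:opti}. Your attention to finiteness of the envelopes and to the threshold of $\psi$ is appropriate and closes the only potential gaps.
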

\begin{proof}
Recall
$\vphi(x)=$
$$\left[\alpha\conv\bigg(f+\frac{1}{2\mu}\|\cdot\|^2\bigg)\left(\frac{\cdot}{\alpha}\right)\Box (1-\alpha)\conv
\bigg(g+\frac{1}{2\mu}\|\cdot\|^2\bigg)\left(\frac{\cdot}{1-\alpha}\right)\right](x)
-\frac{1}{2\mu}\|x\|^2.
$$
As $f+\frac{1}{2\mu}\|\cdot\|^2\geq \psi+\frac{1}{2\mu}\|\cdot\|^2$ and the latter is convex,
we have
$$\conv\left(f+\frac{1}{2\mu}\|\cdot\|^2\right)\geq \psi+\frac{1}{2\mu}\|\cdot\|^2;$$
similarly,
$$\conv\left(g+\frac{1}{2\mu}\|\cdot\|^2\right)\geq \psi+\frac{1}{2\mu}\|\cdot\|^2.$$
Then
\begin{align*}
&\alpha\conv\bigg(f+\frac{1}{2\mu}\|\cdot\|^2\bigg)\left(\frac{\cdot}{\alpha}\right)\Box (1-\alpha)\conv
\bigg(g+\frac{1}{2\mu}\|\cdot\|^2\bigg)\left(\frac{\cdot}{1-\alpha}\right)\\
&\geq \alpha\bigg(\psi+\frac{1}{2\mu}\|\cdot\|^2\bigg)\left(\frac{\cdot}{\alpha}\right)
\Box
(1-\alpha)\bigg(\psi+\frac{1}{2\mu}\|\cdot\|^2\bigg)\left(\frac{\cdot}{1-\alpha}\right)\\
&=\psi+\frac{1}{2\mu}\|\cdot\|^2,
\end{align*}
in which we have used the convexity of $\psi+\frac{1}{2\mu}\|\cdot\|^2$.
The result follows.
\end{proof}

\begin{thm} Let $0<\mu<\bl$. One has the following.
\begin{enumerate}[label=\rm(\alph*)]
\item \label{i:coercive0}If $f, g$ are bounded from below,
then $\vphi$ is bounded from below.
\item\label{i:coercive1}
 If $f, g$ are level-coercive, then $\vphi$ is level-coercive.
\item\label{i:coercive2}
 If $f,g$ are coercive, then $\vphi$ is coercive.
\end{enumerate}
\end{thm}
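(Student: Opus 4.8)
The plan is to obtain all three parts from a single mechanism: Lemma~\ref{l:psi}, which says that any \emph{convex} minorant common to $f$ and $g$ is also a minorant of $\vphi$. So for each part I would first produce a convex function $\psi$ of the appropriate growth type with $f\ge\psi$ and $g\ge\psi$, and then conclude $\vphi\ge\psi$ by Lemma~\ref{l:psi}. The upper estimate $\vphi\le\alpha f+(1-\alpha)g$ from Theorem~\ref{t:u:0}\ref{i:three:c} plays no role here; the whole content is the lower bound, and $\vphi$ is already known to be proper and lsc by Theorem~\ref{t:prox}\ref{i:lowers}.

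For \ref{i:coercive0}, if $f\ge c_1$ and $g\ge c_2$ with $c_1,c_2\in\R$, I would take $\psi\equiv\min\{c_1,c_2\}$, a constant (hence convex) minorant of both; Lemma~\ref{l:psi} then gives $\vphi\ge\min\{c_1,c_2\}>-\infty$. (Equivalently, this part is immediate from $\alpha\inf f+(1-\alpha)\inf g\le\inf\vphi$, i.e.\ Proposition~\ref{p:minimizer}\ref{e:hull:arith}.) For \ref{i:coercive1}, I would use the standard characterization of level-coercivity: a proper lsc $\phi$ on $\R^n$ is level-coercive iff $\phi\ge\gamma\|\cdot\|+\beta$ for some $\gamma>0$, $\beta\in\R$ (one implication is just the definition of the $\liminf$; for the other, pick $\gamma$ strictly below $\liminf_{\|x\|\to\infty}\phi(x)/\|x\|$, so $\phi\ge\gamma\|\cdot\|$ outside some ball, and bound $\phi$ below by a constant on that compact ball using lower semicontinuity). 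Applying this to $f$ and to $g$, then replacing $\gamma$ by $\min\{\gamma_f,\gamma_g\}>0$ and $\beta$ by $\min\{\beta_f,\beta_g\}$, the convex function $\psi=\min\{\gamma_f,\gamma_g\}\|\cdot\|+\min\{\beta_f,\beta_g\}$ minorizes both $f$ and $g$; hence $\vphi\ge\psi$ by Lemma~\ref{l:psi}, so $\liminf_{\|x\|\to\infty}\vphi(x)/\|x\|\ge\min\{\gamma_f,\gamma_g\}>0$, i.e.\ $\vphi$ is level-coercive.

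For \ref{i:coercive2}, coercivity of $f$ and $g$ means the minorant from the previous paragraph is available for \emph{every} $\gamma>0$: for each $\gamma>0$ there is $\beta_\gamma\in\R$ with $f\ge\gamma\|\cdot\|+\beta_\gamma$ and $g\ge\gamma\|\cdot\|+\beta_\gamma$ (take the smaller of the two constants), so $\psi_\gamma=\gamma\|\cdot\|+\beta_\gamma$ is a convex common minorant and Lemma~\ref{l:psi} gives $\vphi\ge\psi_\gamma$, whence $\liminf_{\|x\|\to\infty}\vphi(x)/\|x\|\ge\gamma$. Letting $\gamma\uparrow\infty$ forces that $\liminf$ to be $+\infty$, i.e.\ $\vphi$ is coercive. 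The only even mildly delicate step is the affine/norm-minorant characterization of (level-)coercivity invoked in \ref{i:coercive1} and \ref{i:coercive2}; it is routine but worth stating carefully, since it is exactly what converts the growth hypotheses into convex minorants usable in Lemma~\ref{l:psi}. As a cross-check, \ref{i:coercive2} can also be read off Proposition~\ref{p:coercive} together with the monotonicity $\mu\mapsto\vphi$ of Theorem~\ref{t:go:infinity}\ref{i:mono:phi}, but the route through Lemma~\ref{l:psi} is uniform across all three parts and shorter.
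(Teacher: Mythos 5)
Your proposal is correct and follows essentially the same route as the paper: all three parts are reduced to Lemma~\ref{l:psi} by exhibiting a common convex minorant of $f$ and $g$ of the appropriate growth type (constant, $\gamma\|\cdot\|+\beta$ for one $\gamma>0$, or $\gamma\|\cdot\|+\beta_\gamma$ for every $\gamma>0$). The only difference is that the paper simply cites \cite[Theorem 3.26]{rockwets} for the norm-minorant characterizations of level-coercivity and coercivity, whereas you sketch their proofs inline; both are fine.
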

\begin{proof}
\ref{i:coercive0}: Put $\psi=\min\{\inf f, \inf g\}$ and apply Lemma~\ref{l:psi}.

\noindent\ref{i:coercive1}:
By \cite[Theorem 3.26(a)]{rockwets}, there exist $\gamma\in (0,\infty)$, and $\beta\in\R$
such that $f\geq \psi, g\geq \psi$ with $\psi=\gamma\|\cdot\|+\beta$.
Apply Lemma~\ref{l:psi}.

\noindent\ref{i:coercive2}: By \cite[Theorem 3.26(b)]{rockwets}, for
every $\gamma\in (0,\infty)$, there exists $\beta\in\R$
such that $f\geq \psi, g\geq \psi$ with $\psi=\gamma\|\cdot\|+\beta$.
Apply Lemma~\ref{l:psi}.
\end{proof}
\begin{thm} Suppose that the proper, lsc functions $f, g$ are level-coercive. Then
for every $\balpha\in [0,1]$, we have
\begin{align*}
\lim_{\alpha\rightarrow\balpha}\inf\vphi & =\inf \vphiba \text{ (finite)}, \text{ and }
\\
\limsup_{\alpha\rightarrow\balpha}\argmin\vphi & \subseteq\argmin \vphiba.
\end{align*}
Moreover, $(\argmin\vphi)_{\alpha\in[0,1]}$ lies in a bounded set.
Consequently,
$$\lim_{\alpha\downarrow 0}\inf\vphi=\inf g, \text{ and } \limsup_{\alpha\downarrow 0}\argmin\vphi\subseteq\argmin g;$$
$$\lim_{\alpha\uparrow 1}\inf\vphi=\inf f, \text{ and }
\limsup_{\alpha\uparrow 1}\argmin\vphi\subseteq\argmin f.$$
\end{thm}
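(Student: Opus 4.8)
The plan is to reduce the statement to the epi-continuity of $\alpha\mapsto\vphi$ established in Corollary~\ref{t:alpha:epi}, combined with the standard convergence-in-minimization result \cite[Theorem 7.33]{rockwets}; the only real work is to verify the hypotheses of the latter \emph{uniformly} in the averaging parameter $\alpha$. Since $f$ and $g$ are level-coercive, \cite[Theorem 3.26(a)]{rockwets} provides $\gamma\in(0,\infty)$ and $\beta\in\R$ with $f\geq\psi$ and $g\geq\psi$, where $\psi=\gamma\|\cdot\|+\beta$ is convex and \emph{level-bounded} (this also makes $f,g$ bounded below by an affine function, so $\bl=+\infty$ and every $\mu>0$ lies in the admissible range $]0,\bl[$). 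By Lemma~\ref{l:psi}, $\vphi\geq\psi$ for \emph{every} $\alpha\in[0,1]$; hence each $\vphi$ is proper and lsc (Theorem~\ref{t:prox}\ref{i:lowers}), is bounded below by $\beta$, and satisfies $\{\vphi\leq c\}\subseteq\{\psi\leq c\}$ for all $c$, a bounded set that does not depend on $\alpha$. In particular, for any sequence $(\alpha_k)_{k\in\N}$ in $[0,1]$ the functions $\vphik$ are proper, lsc and (uniformly, hence eventually) level-bounded.

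Next I would record a uniform upper bound on the infima. Because $0<\mu<\bl$, the envelopes $e_\mu f$ and $e_\mu g$ are real-valued, so Proposition~\ref{p:minimizer}\ref{e:env:conv} gives, for every $\alpha\in[0,1]$, that $\inf\vphi=\inf[\alpha e_\mu f+(1-\alpha)e_\mu g]\leq\alpha e_\mu f(0)+(1-\alpha)e_\mu g(0)\leq M:=\max\{e_\mu f(0),e_\mu g(0)\}$. Hence any $x\in\argmin\vphi$ satisfies $\gamma\|x\|+\beta\leq\vphi(x)=\inf\vphi\leq M$, i.e.\ $\|x\|\leq(M-\beta)/\gamma$; this already proves the assertion that $(\argmin\vphi)_{\alpha\in[0,1]}$ lies in a bounded set, and together with $\beta\leq\inf\vphi\leq M$ it shows that $\inf\vphiba$ is finite.

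With these uniform estimates in hand, fix $\balpha\in[0,1]$ and an arbitrary sequence $\alpha_k\to\balpha$ in $[0,1]$. By Corollary~\ref{t:alpha:epi}, $\vphik\epi\vphiba$, and by the first paragraph $(\vphik)_{k\in\N}$ is an eventually level-bounded sequence of proper lsc functions; applying \cite[Theorem 7.33]{rockwets} yields $\inf\vphik\to\inf\vphiba$ and $\limsup_{k}\argmin\vphik\subseteq\argmin\vphiba$. Since the sequence $\alpha_k\to\balpha$ was arbitrary, the sequential characterization of limits gives $\lim_{\alpha\to\balpha}\inf\vphi=\inf\vphiba$, and unwinding the definition of the outer limit $\limsup_{\alpha\to\balpha}\argmin\vphi$ along sequences $\alpha_k\to\balpha$ gives $\limsup_{\alpha\to\balpha}\argmin\vphi\subseteq\argmin\vphiba$. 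The two displayed consequences are the special cases $\balpha=0$ and $\balpha=1$: by Theorem~\ref{t:prox}\ref{i:alpha}, $\varphi^0_\mu=h_\mu g$ and $\varphi^1_\mu=h_\mu f$, and since $e_\mu g\leq h_\mu g\leq g$ with $\inf e_\mu g=\inf g$, $\argmin e_\mu g=\argmin g$ and $e_\mu(h_\mu g)=e_\mu g$, one reads off $\inf h_\mu g=\inf g$ and $\argmin h_\mu g=\argmin e_\mu g=\argmin g$, and symmetrically for $f$.

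The main obstacle — the only step that is not an immediate citation — is the \emph{uniform} verification of the hypotheses of \cite[Theorem 7.33]{rockwets}. The useful point is that Lemma~\ref{l:psi} already supplies one single level-bounded minorant $\psi$ valid for all $\alpha$ simultaneously, so there is no need to wrestle with the sublevel sets of $\vphi$ through the inf-convolution representation of Theorem~\ref{t:prox}\ref{i:epi:sum}; once this is recognized, the eventual level-boundedness of every admissible sequence $(\vphik)$ is automatic and the remaining steps are routine.
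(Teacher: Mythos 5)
Your proposal is correct and follows essentially the same route as the paper: a single level-bounded minorant $\psi=\gamma\|\cdot\|+\beta$ from \cite[Theorem 3.26(a)]{rockwets} combined with Lemma~\ref{l:psi} gives uniform level-boundedness, Corollary~\ref{t:alpha:epi} gives epi-continuity in $\alpha$, and \cite[Theorem 7.33]{rockwets} finishes the argument. Your explicit uniform upper bound $\inf\vphi\leq\max\{e_\mu f(0),e_\mu g(0)\}$ and the identification $\inf h_\mu g=\inf g$, $\argmin h_\mu g=\argmin g$ for the $\balpha=0,1$ cases are details the paper leaves implicit, but they do not change the approach.
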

\begin{proof} By assumption, there exist $\gamma>0$ and $\beta\in\R$ such that
$f\geq\gamma\|\cdot\|+\beta, g\geq\gamma\|\cdot\|+\beta$. Lemma~\ref{l:psi}
shows that $\vphi\geq \gamma\|\cdot\|+\beta$ for every $\alpha\in [0,1]$.
Since $\gamma\|\cdot\|+\beta$ is level-bounded,
 $(\vphi)_{\alpha\in [0,1]}$ is uniformly level-bounded (so
eventually level-bounded). Corollary~\ref{t:alpha:epi} says that
$\alpha\mapsto \vphi$ is epi-continuous on $[0,1]$.
As $\lambda_{f}=\lambda_{g}=\infty$,
$\vphi$ and $\vphiba$ are proper and lsc for every $\mu>0$. Hence
\cite[Theorem 7.33]{rockwets} applies.
\end{proof}
\begin{thm} Suppose that the proper, lsc functions $f, g$ are level-coercive and
$\dom f\cap\dom g\neq\varnothing$. Then
\begin{align*}
\lim_{\mu\downarrow 0}\inf\vphi & =\inf (\alpha f+(1-\alpha)g), \text{ and }
\\
\limsup_{\mu\downarrow 0}\argmin\vphi & \subseteq\argmin (\alpha f+(1-\alpha)g).
\end{align*}
Moreover, $(\argmin\vphi)_{\mu>0}$ lies in a bounded set.
\end{thm}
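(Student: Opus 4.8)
The plan is to reduce the statement to the standard stability result \cite[Theorem 7.33]{rockwets} for epi-convergent, eventually level-bounded sequences, using the monotone epi-convergence $\vphi\epi\alpha f+(1-\alpha)g$ as $\mu\downarrow 0$ recorded in Theorem~\ref{t:u:0}, together with a single uniform minorant furnished by Lemma~\ref{l:psi}. This mirrors the proof of the preceding theorem, with the roles of $\mu$ and $\alpha$ interchanged.

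First I would exploit level-coercivity. By \cite[Theorem 3.26(a)]{rockwets} there are $\gamma>0$ and $\beta\in\R$ with $f\geq\gamma\|\cdot\|+\beta$ and $g\geq\gamma\|\cdot\|+\beta$; in particular $f$ and $g$ are bounded below, so $\lambda_{f}=\lambda_{g}=\infty$, $\bl=\infty$, and $\vphi$ is proper and lsc for every $\mu>0$. Writing $\psi:=\gamma\|\cdot\|+\beta$, which is convex, Lemma~\ref{l:psi} gives $\vphi\geq\psi$ for all $\mu>0$; since $\vphi\to\alpha f+(1-\alpha)g$ pointwise by Theorem~\ref{t:u:0}\ref{i:three:c}, also $\alpha f+(1-\alpha)g\geq\psi$. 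Because $\dom f\cap\dom g\neq\varnothing$, the function $\alpha f+(1-\alpha)g$ is proper, lsc and minorized by the level-bounded $\psi$, hence its infimum is finite and attained.

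Next I would fix an arbitrary sequence $\mu_{k}\downarrow 0$. By Theorem~\ref{t:u:0}\ref{i:three:c} the family $(\vphi)_{\mu>0}$ increases pointwise to $\alpha f+(1-\alpha)g$ as $\mu\downarrow 0$, and there the pointwise and epigraphical limits coincide; monotonicity then forces the same epigraphical limit along the sequence, so $\varphi^{\alpha}_{\mu_{k}}\epi\alpha f+(1-\alpha)g$. Since $\varphi^{\alpha}_{\mu_{k}}\geq\psi$ for all $k$ with $\psi$ level-bounded, the sequence is uniformly, hence eventually, level-bounded. Then \cite[Theorem 7.33]{rockwets} gives $\inf\varphi^{\alpha}_{\mu_{k}}\to\inf(\alpha f+(1-\alpha)g)$, that $\argmin\varphi^{\alpha}_{\mu_{k}}$ is nonempty for large $k$, and $\limsup_{k}\argmin\varphi^{\alpha}_{\mu_{k}}\subseteq\argmin(\alpha f+(1-\alpha)g)$. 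As $(\mu_{k})$ was arbitrary, the two displayed relations for $\mu\downarrow 0$ follow. For the boundedness claim, Theorem~\ref{t:u:0}\ref{i:three:c} also gives $\vphi\leq\alpha f+(1-\alpha)g$, so $m:=\sup_{\mu>0}\inf\vphi\leq\inf(\alpha f+(1-\alpha)g)<\infty$, and any $x\in\argmin\vphi$ satisfies $\gamma\|x\|+\beta\leq\vphi(x)=\inf\vphi\leq m$, i.e.\ $\|x\|\leq(m-\beta)/\gamma$; hence all the sets $\argmin\vphi$, $\mu>0$, lie in one fixed ball.

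The only point requiring care is the passage from the sequential statement of \cite[Theorem 7.33]{rockwets} to the continuum limit $\mu\downarrow 0$: one must verify that \emph{every} sequence $\mu_{k}\downarrow 0$ yields an epi-convergent, eventually level-bounded family, and this is exactly what the monotonicity built into Theorem~\ref{t:u:0} supplies, while the properness of $\alpha f+(1-\alpha)g$ needed to apply that theorem is precisely where $\dom f\cap\dom g\neq\varnothing$ enters. Everything else is a routine bookkeeping of the hypotheses.
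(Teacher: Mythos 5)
Your proposal is correct and follows essentially the same route as the paper: epi-convergence of $\vphi$ to $\alpha f+(1-\alpha)g$ as $\mu\downarrow 0$ from Theorem~\ref{t:u:0}, the uniform minorant $\gamma\|\cdot\|+\beta$ from level-coercivity plus Lemma~\ref{l:psi} to get eventual level-boundedness, and then \cite[Theorem 7.33]{rockwets}. Your write-up is in fact somewhat more careful than the paper's (you make explicit where $\dom f\cap\dom g\neq\varnothing$ is used for properness of the limit and you give the explicit ball containing all the $\argmin\vphi$), but the underlying argument is the same.
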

\begin{proof}
Note that each $\vphi$ is proper and lsc, and $f+g$ is proper and lsc.
By Theorem~\ref{t:u:0}, when $\mu\downarrow 0$, $\vphi$ epi-converges to
$f+g$.
By assumption, there exist $\gamma>0$ and $\beta\in\R$ such that
$f\geq\gamma\|\cdot\|+\beta, g\geq\gamma\|\cdot\|+\beta$. Lemma~\ref{l:psi}
shows that $\vphi\geq \gamma\|\cdot\|+\beta$ for every $\mu\in ]0,\infty[$.
Since $\gamma\|\cdot\|+\beta$ is level-bounded,
 $(\vphi)_{\mu\in ]0,\infty[}$ is uniformly level-bounded (so
eventually level-bounded).
It remains to
apply \cite[Theorem 7.33]{rockwets}.
\end{proof}

\begin{thm} Suppose that the proper and lsc functions $f, g$ are coercive.
Then
for every $\bmu\in ]0,\infty]$, we have
\begin{align}\label{e:bcoercive}
\lim_{\mu\uparrow\bmu}\inf\vphi & =\inf \vphibmu \text{ (finite)}, \text{ and }
\nonumber\\
\limsup_{\mu\uparrow \bmu}\argmin\vphi & \subseteq\argmin \vphibmu.
\end{align}
Moreover, $(\argmin\vphi)_{\mu>0}$ lies in a bounded set.
Consequently,
\begin{align}\label{e:coercive:inf}
\lim_{\mu\uparrow \infty}\inf\vphi & =\alpha\min f+(1-\alpha)\min g, \text{ and }
\nonumber \\
\limsup_{\mu\uparrow \infty}\argmin\vphi & \subseteq
(\alpha\conv\argmin f+(1-\alpha)\conv\argmin g).
\end{align}
\end{thm}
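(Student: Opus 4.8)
First I would set the stage: since $f,g$ are proper, lsc and coercive, they are minorized by affine functions, so $\bl=\min\{\lambda_f,\lambda_g\}=+\infty$ and, by Theorem~\ref{t:prox}\ref{i:lowers}, each $\vphi$ with $\mu\in\,]0,\infty[$ is proper and lsc. By \cite[Theorem~3.26(b)]{rockwets}, for every $\gamma>0$ there is $\beta_\gamma\in\R$ with $f\ge\gamma\|\cdot\|+\beta_\gamma$ and $g\ge\gamma\|\cdot\|+\beta_\gamma$, so Lemma~\ref{l:psi} gives $\vphi\ge\gamma\|\cdot\|+\beta_\gamma$ for \emph{every} $\mu>0$; hence $(\vphi)_{\mu>0}$ is uniformly level-bounded and $\inf\vphi\ge\beta_\gamma>-\infty$. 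By Theorem~\ref{t:go:infinity}\ref{i:mono:phi}, $\mu\mapsto\vphi(x)$ is decreasing and left-continuous, so $\mu\mapsto\inf\vphi$ is decreasing and $\vphibmu(x)=\inf_{0<\mu<\bmu}\vphi(x)$ pointwise for $\bmu\in\,]0,\infty[$; by Theorem~\ref{t:go:infinity}\ref{i:inf:phi2} together with Proposition~\ref{p:coercive}\ref{i:coercive:1}, as $\mu\uparrow\infty$ the $\vphi$ decrease pointwise and epi-converge to the proper, lsc, convex function $\Phi:=\alpha\conv f(\tfrac{\cdot}{\alpha})\Box(1-\alpha)\conv g(\tfrac{\cdot}{1-\alpha})=\inf_{\mu>0}\vphi$; below I read $\vphibmu$ as $\Phi$ when $\bmu=+\infty$.

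For every $\bmu\in\,]0,\infty]$ the first line of \eqref{e:bcoercive} is then immediate by swapping infima: $\inf\vphibmu=\inf_x\inf_{0<\mu<\bmu}\vphi(x)=\inf_{0<\mu<\bmu}\inf\vphi=\lim_{\mu\uparrow\bmu}\inf\vphi$, and this value is finite, being $\ge\beta_\gamma$ and (since $\vphibmu$ is proper) $<+\infty$. For the second line, the pointwise decrease $\vphi\downarrow\vphibmu$ of lsc functions to the lsc function $\vphibmu$ forces $\vphi\epi\vphibmu$ as $\mu\uparrow\bmu$: for finite $\bmu$ this is \cite[Proposition~7.4(c)]{rockwets} (or the full epi-continuity theorem with constant data $f_k\equiv f$, $g_k\equiv g$, $\alpha_k\equiv\alpha$, $\mu_k\uparrow\bmu$), and for $\bmu=+\infty$ it is Theorem~\ref{t:go:infinity}\ref{i:inf:phi2}. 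Since $(\vphi)$ is uniformly level-bounded and $\vphibmu$ is proper and lsc, \cite[Theorem~7.33]{rockwets} applies and yields $\limsup_{\mu\uparrow\bmu}\argmin\vphi\subseteq\argmin\vphibmu$ (and that $\argmin\vphi$ is nonempty and stays in a fixed ball for $\mu$ near $\bmu$). When $\bmu=+\infty$, Proposition~\ref{p:coercive}\ref{i:coercive:2}--\ref{i:coercive:3} identify $\inf\Phi=\alpha\min f+(1-\alpha)\min g$ and $\argmin\Phi=\alpha\conv\argmin f+(1-\alpha)\conv\argmin g\neq\varnothing$, which is exactly \eqref{e:coercive:inf}.

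There remains the ``moreover'' clause that $(\argmin\vphi)_{\mu>0}$ lies in a single bounded set, and this is the step I expect to require the most care. By the full epi-continuity theorem $\mu\mapsto\vphi$ is epi-continuous on $]0,\infty[$, and by Theorem~\ref{t:go:infinity}\ref{i:inf:phi2} it extends epi-continuously to $\mu=+\infty$; together with the uniform level-boundedness this makes $\mu\mapsto\argmin\vphi$ a locally bounded, outer semicontinuous nonempty-valued mapping on $]0,\infty]$ (via \cite[Theorem~7.33]{rockwets}), so $\bigcup_{\mu\ge\varepsilon}\argmin\vphi$ is bounded for each $\varepsilon>0$ by compactness of $[\varepsilon,\infty]$. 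For $\mu\downarrow0$ I would use Theorem~\ref{t:u:0}: $\vphi\uparrow\alpha f+(1-\alpha)g$ with epi-convergence, so when $\dom f\cap\dom g\neq\varnothing$ the limit is proper, $\inf\vphi$ increases to the finite number $c_0:=\inf(\alpha f+(1-\alpha)g)$, whence $\inf\vphi\le c_0$ for all $\mu>0$ and $\argmin\vphi\subseteq\{x:\vphi(x)\le c_0\}\subseteq\{x:\gamma\|x\|+\beta_\gamma\le c_0\}$, a fixed ball; combining with the previous bound proves the claim. (In the residual case $\dom f\cap\dom g=\varnothing$ one argues instead from $\argmin\vphi=\argmin[\alpha e_\mu f+(1-\alpha)e_\mu g]$ in Proposition~\ref{p:minimizer} and the exactness of the inf-convolution in Theorem~\ref{t:prox}\ref{i:epi:sum}: a minimizer $\bar x$ is necessarily $\alpha w_1+(1-\alpha)w_2$ with $w_1\in\Prox_\mu f(\bar x)$, $w_2\in\Prox_\mu g(\bar x)$, and comparing its value with that at $\alpha z_f+(1-\alpha)z_g$, for fixed $z_f\in\argmin f$, $z_g\in\argmin g$, forces $\|w_i-\bar x\|\le\|z_f-z_g\|$, confining $\bar x$ uniformly in $\mu$.) Everything else is routine bookkeeping with \cite[Theorem~7.33]{rockwets}, Theorem~\ref{t:go:infinity}, and Proposition~\ref{p:coercive}.
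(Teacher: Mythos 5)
Your argument follows the paper's proof essentially verbatim: uniform level-boundedness from Lemma~\ref{l:psi} together with \cite[Theorem 3.26(b)]{rockwets}, epi-convergence of $\vphi$ to $\vphibmu$ obtained from the monotone decrease in Theorem~\ref{t:go:infinity}\ref{i:mono:phi} plus lower semicontinuity of the limit, then \cite[Theorem 7.33]{rockwets} for \eqref{e:bcoercive}, and Proposition~\ref{p:coercive} to identify the limit when $\bmu=\infty$, so the proposal is correct on all the points the paper itself establishes. Your extra care on the ``moreover'' clause actually goes beyond the paper (which only invokes \cite[Theorem 7.33]{rockwets} and hence really only controls $\argmin\vphi$ for $\mu$ near $\bmu$); just be aware that in your residual sub-case ($\dom f\cap\dom g=\varnothing$) the estimate $\|w_i-\bar x\|\le\|z_f-z_g\|$ does not by itself confine $\bar x$, because $w_i$ is only known to lie in a level set of $f$ or $g$ whose height grows like $1/\mu$ as $\mu\downarrow 0$, so that step still needs an argument.
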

\begin{proof} Note that each $\vphi$ is proper and lsc for $\mu\in]0,\infty[$.
When $\mu=\infty$, Proposition~\ref{p:coercive} gives that
the epi-limit is proper, lsc and convex. By Theorem~\ref{t:go:infinity}\ref{i:mono:phi},
when $\mu\uparrow\bmu$, $\vphi$ monotonically decrease to $\vphibmu$.
Since $\vphibmu$ is lsc, so $\vphi$ epi-converges to $\vphibmu$.
By assumption, for every $\gamma>0$ there exists $\beta\in\R$ such that
$f\geq\gamma\|\cdot\|+\beta, g\geq\gamma\|\cdot\|+\beta$. Lemma~\ref{l:psi}
shows that $\vphi\geq \gamma\|\cdot\|+\beta$ for every $\mu\in ]0,\infty[$.
Since $\gamma\|\cdot\|+\beta$ is level-bounded,
 $(\vphi)_{\mu\in ]0,\infty[}$ is uniformly level-bounded (so
eventually level-bounded). Hence \eqref{e:bcoercive} follows from
\cite[Theorem 7.33]{rockwets}.
Combining \eqref{e:bcoercive}, Theorem~\ref{t:go:infinity}
and Proposition~\ref{p:coercive} yields \eqref{e:coercive:inf}.
\end{proof}

\section{Subdifferentiability of the proximal average}\label{s:subd}
In this section, we focus on the subdifferentiability and differentiability of proximal average.

Following Benoist and Hiriart-Urruty \cite{benoist},
we say that a family of points $\{x_{1},\ldots, x_{m}\}$ in $\dom f$
is
called by $x\in\dom\conv f$ if
$$x=\sum_{i=1}^{m}\alpha_{i}x_{i}, \text{ and } \conv f(x)=\sum_{i=1}^{m}\alpha_{i}f(x_{i}),$$
where $\sum_{i=1}^{m}\alpha_{i}=1$ and $(\forall i)\ \alpha_{i}>0$.
The following result is the central one of this section.
\begin{thm}[subdifferentiability of the proximal average]\label{t:vphi:sub}
Let
$0<\mu<\bl$,
let $x\in\dom\vphi$ and $x=y+z$.
Suppose the following conditions hold:
\begin{enumerate}[label=\rm(\alph*)]
\item \label{i:function1}
\begin{align*}
&\left[\alpha\conv\bigg(f+\frac{1}{2\mu}\|\cdot\|^2\bigg)\left(\frac{\cdot}{\alpha}\right)\Box (1-\alpha)\conv
\bigg(g+\frac{1}{2\mu}\|\cdot\|^2\bigg)\left(\frac{\cdot}{1-\alpha}\right)\right](x)\\
&=\alpha\conv\bigg(f+\frac{1}{2\mu}\|\cdot\|^2\bigg)\left(\frac{y}{\alpha}\right)+(1-\alpha)\conv
\bigg(g+\frac{1}{2\mu}\|\cdot\|^2\bigg)\left(\frac{z}{1-\alpha}\right),
\end{align*}
\item  \label{i:function2}$\{y_{1},\ldots,y_{l}\}$ are called by
$y/\alpha$ in $\conv(f+1/2\mu\|\cdot\|^2)$, and
\item  \label{i:function3}
$\{z_{1},\ldots,z_{m}\}$ are called by
$z/(1-\alpha)$ in $\conv(g+1/2\mu\|\cdot\|^2)$.
\end{enumerate}
Then
\begin{align*}
\hat{\partial}\vphi(x) &=
\partial_{L}\vphi(x)
=\partial_{C}\vphi(x)
\\
&=\left[\cap_{i=1}^{l}\partial\left(f+\frac{1}{2\mu}\|\cdot\|^2\right)
\left(y_{i}\right)\right]\cap
\left[\cap_{j=1}^{m}\partial\left(g+\frac{1}{2\mu}\|\cdot\|^2\right)\left(z_{j}\right)\right]-
\frac{x}{\mu}.
\end{align*}
\end{thm}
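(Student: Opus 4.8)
The plan is to pass to the convex function $\vphi+\frac{1}{2\mu}\|\cdot\|^2$, on which all three subdifferentials collapse to the ordinary convex subdifferential, and then to compute that subdifferential in two stages, using the exact infimal-convolution representation of Theorem~\ref{t:prox}\ref{i:epi:sum}.

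\emph{Step 1 (reduction to the convex case).} By Theorem~\ref{t:prox}\ref{i:phi:mu}, $\vphi+\frac{1}{2\mu}\|\cdot\|^2$ is convex, so $\vphi=\big(\vphi+\frac{1}{2\mu}\|\cdot\|^2\big)-\frac{1}{2\mu}\|\cdot\|^2$ is the sum of a convex function and a $\mathcal{C}^2$ function. The calculus rules for $\hat{\partial}$, $\partial_{L}$ and $\partial_{C}$ under addition of a $\mathcal{C}^1$ (here $\mathcal{C}^2$) summand (see, e.g., \cite[Exercise 8.8, Proposition 8.12]{rockwets}), together with the fact that all three subdifferentials agree with the convex subdifferential $\partial$ on a convex function, give
$$\hat{\partial}\vphi(x)=\partial_{L}\vphi(x)=\partial_{C}\vphi(x)=\partial\Big(\vphi+\tfrac{1}{2\mu}\|\cdot\|^2\Big)(x)-\frac{x}{\mu},$$
so it remains to identify $\partial\big(\vphi+\tfrac{1}{2\mu}\|\cdot\|^2\big)(x)$ with the claimed intersection.

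\emph{Step 2 (subdifferential of the exact inf-convolution).} Put $F=\conv\big(f+\tfrac{1}{2\mu}\|\cdot\|^2\big)$ and $G=\conv\big(g+\tfrac{1}{2\mu}\|\cdot\|^2\big)$. Since $0<\mu<\bl$, both $f+\tfrac{1}{2\mu}\|\cdot\|^2$ and $g+\tfrac{1}{2\mu}\|\cdot\|^2$ are coercive, so $F,G$ are proper, lsc and convex by \cite[Corollary 3.47]{rockwets}, and by Theorem~\ref{t:prox}\ref{i:epi:sum} we have $\vphi+\tfrac{1}{2\mu}\|\cdot\|^2=\big[\alpha F(\tfrac{\cdot}{\alpha})\big]\Box\big[(1-\alpha)G(\tfrac{\cdot}{1-\alpha})\big]$ with the $\Box$ exact (here $\alpha\in\,]0,1[$, as the hypotheses divide by $\alpha$ and $1-\alpha$). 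Hypothesis \ref{i:function1} says exactly that the infimum defining this inf-convolution at $x$ is attained at the splitting $x=y+z$. For an exact infimal convolution of proper convex functions, the Fenchel--Young equality argument based on $(h_1\Box h_2)^*=h_1^*+h_2^*$ (cf.\ \cite[Theorem 16.4]{rockconv}) gives $\partial(h_1\Box h_2)(x)=\partial h_1(y)\cap\partial h_2(z)$; combined with the elementary identity $\partial\big(\beta H(\tfrac{\cdot}{\beta})\big)(w)=\partial H\big(\tfrac{w}{\beta}\big)$ for $\beta>0$, this yields
$$\partial\Big(\vphi+\tfrac{1}{2\mu}\|\cdot\|^2\Big)(x)=\partial F\Big(\frac{y}{\alpha}\Big)\cap\partial G\Big(\frac{z}{1-\alpha}\Big).$$

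\emph{Step 3 (subdifferential of a convex hull along a calling family) and conclusion.} Because $F=\conv\big(f+\tfrac{1}{2\mu}\|\cdot\|^2\big)$ is lsc it is the supremum of the affine minorants of $f+\tfrac{1}{2\mu}\|\cdot\|^2$; hence whenever a point $u$ calls a finite family $\{u_1,\dots,u_p\}$ (with strictly positive weights) one has $F(u_i)=\big(f+\tfrac{1}{2\mu}\|\cdot\|^2\big)(u_i)$ and $\partial F(u)=\partial F(u_i)=\partial\big(f+\tfrac{1}{2\mu}\|\cdot\|^2\big)(u_i)$ for every $i$, so $\partial F(u)=\bigcap_{i=1}^{p}\partial\big(f+\tfrac{1}{2\mu}\|\cdot\|^2\big)(u_i)$ --- this is the Benoist--Hiriart-Urruty description of the subdifferential of a convex hull, \cite{benoist}. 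Applying it with $u=y/\alpha$ and the family from \ref{i:function2}, and with $u=z/(1-\alpha)$ and the family from \ref{i:function3}, and substituting into the display of Step 2 and then into that of Step 1, gives the assertion. The main obstacle is Step 3: one must have available the precise Benoist--Hiriart-Urruty formula and check that its hypotheses (finiteness of the calling family, strict positivity of the weights, and lower semicontinuity of the hull --- here furnished by coercivity) match the data supplied by \ref{i:function1}--\ref{i:function3}; Steps 1 and 2 are then routine once the convexity from Theorem~\ref{t:prox}\ref{i:phi:mu} and the exactness from Theorem~\ref{t:prox}\ref{i:epi:sum} are invoked.
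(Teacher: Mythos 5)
Your proposal is correct and follows essentially the same route as the paper's proof: reduce to the convex subdifferential of $\vphi+\tfrac{1}{2\mu}\|\cdot\|^2$ via the $\mu$-proximality/Clarke regularity of $\vphi$, apply the exact inf-convolution subdifferential formula from Theorem~\ref{t:prox}\ref{i:epi:sum}, and then invoke the Benoist--Hiriart-Urruty formula $\partial(\conv h)(u)=\bigcap_i\partial h(u_i)$ over the calling family, exactly as the paper does (citing \cite{benoist}). The only nitpick is that your intermediate claim $\partial F(u)=\partial F(u_i)$ for each single $i$ is stronger than what holds (only $\partial F(u)\subseteq\partial F(u_i)$ is immediate); the correct statement is the intersection formula you end up using, so the conclusion is unaffected.
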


\begin{proof} By Theorem~\ref{t:prox}\ref{i:epi:sum}, the Clarke regularity of
$\vphi$ and sum rule of limiting subdifferentials, we have
$\hat{\partial}\vphi(x)=\partial_{C}\vphi(x)=\partial_{L}\vphi(x)=$
\begin{align}\label{e:diff:conv}
&\partial_{L}\left[\alpha\conv\bigg(f+\frac{1}{2\mu}\|\cdot\|^2\bigg)\left(\frac{\cdot}{\alpha}\right)\Box (1-\alpha)\conv
\bigg(g+\frac{1}{2\mu}\|\cdot\|^2\bigg)\left(\frac{\cdot}{1-\alpha}\right)\right](x)
-\frac{x}{\mu}\\
&=\partial \left[\alpha\conv\bigg(f+\frac{1}{2\mu}\|\cdot\|^2\bigg)\left(\frac{\cdot}{\alpha}\right)\Box (1-\alpha)\conv
\bigg(g+\frac{1}{2\mu}\|\cdot\|^2\bigg)\left(\frac{\cdot}{1-\alpha}\right)\right](x)
-\frac{x}{\mu}.\nonumber
\end{align}
Using the subdifferential formula for infimal convolution \cite[Proposition 16.61]{convmono} or \cite[Corollary 2.4.7]{zalinescu2002convex}, we obtain
\begin{align*}\label{e:infsub}
& \partial\left[\alpha\conv\bigg(f+\frac{1}{2\mu}\|\cdot\|^2\bigg)\left(\frac{\cdot}{\alpha}\right)\Box (1-\alpha)\conv
\bigg(g+\frac{1}{2\mu}\|\cdot\|^2\bigg)\left(\frac{\cdot}{1-\alpha}\right)\right](x)\\
&=
\partial \left[\alpha\conv\bigg(f+\frac{1}{2\mu}\|\cdot\|^2\bigg)\left(\frac{y}{\alpha}\right)\right]\cap
\partial \left[(1-\alpha)\conv
\bigg(g+\frac{1}{2\mu}\|\cdot\|^2\bigg)\left(\frac{z}{1-\alpha}\right)\right]\\
&=
\partial\conv\bigg(f+\frac{1}{2\mu}\|\cdot\|^2\bigg)(\bar{y})\cap
\partial\conv
\bigg(g+\frac{1}{2\mu}\|\cdot\|^2\bigg)(\bar{z})
\end{align*}
where $\bar{y}=\frac{y}{\alpha}$, $\bar{z}=\frac{z}{1-\alpha}$.
The subdifferential formula for the convex hull of a coercive function
\cite[Corollary 4.9]{benoist} or \cite[Theorem 3.2]{rabier} gives
\begin{equation*}\label{e:convf}
\partial\conv\bigg(f+\frac{1}{2\mu}\|\cdot\|^2\bigg)\big(\bar{y}\big)=
\cap_{i=1}^{l}\partial\bigg(f+\frac{1}{2\mu}\|\cdot\|^2\bigg)\big(y_{i}),
\end{equation*}
\begin{equation}\label{e:convg}
\partial\conv\bigg(g+\frac{1}{2\mu}\|\cdot\|^2\bigg)\big(\bar{z}\big)=
\cap_{j=1}^{m}\partial\bigg(g+\frac{1}{2\mu}\|\cdot\|^2\bigg)\big(z_{j}).
\end{equation}
Therefore, the result follows by combining \eqref{e:diff:conv} and \eqref{e:convg}.
\end{proof}

\begin{cor}
Let $0<\mu<\bl$,
let $\alpha_{i}>0, \beta_{j}>0$ with
$\sum_{i=1}^{l}\alpha_{i}=1, \sum_{j=1}^{m}\beta=1$ and let
$\alpha\in ]0,1[$.
Suppose that
\begin{equation*}\label{e:x=yz}
x=\alpha\sum_{i=1}^{l}\alpha_{i}y_{i}+(1-\alpha)\sum_{j=1}^{m}\beta_{j}z_{j},
\end{equation*}
and
\begin{equation}\label{e:commonsub}
\left[\cap_{i=1}^{l}\partial\left(f+\frac{1}{2\mu}\|\cdot\|^2\right)
\left(y_{i}\right)\right]\cap
\left[\cap_{j=1}^{m}\partial\left(g+\frac{1}{2\mu}\|\cdot\|^2\right)\left(z_{j}\right)\right]
\neq \varnothing.
\end{equation}
Then
\begin{align*}
\hat{\partial}\vphi(x) &=
\partial_{L}\vphi(x)
=\partial_{C}\vphi(x)\nonumber\\
&=\left[\cap_{i=1}^{l}\partial\left(f+\frac{1}{2\mu}\|\cdot\|^2\right)
\left(y_{i}\right)\right]\cap
\left[\cap_{j=1}^{m}\partial\left(g+\frac{1}{2\mu}\|\cdot\|^2\right)\left(z_{j}\right)\right]-
\frac{x}{\mu}.
\end{align*}
\end{cor}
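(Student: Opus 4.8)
The plan is to derive this corollary directly from Theorem~\ref{t:vphi:sub} by verifying that the hypotheses \ref{i:function1}--\ref{i:function3} of that theorem are met under the present assumptions. Set $y=\alpha\sum_{i=1}^{l}\alpha_{i}y_{i}$ and $z=(1-\alpha)\sum_{j=1}^{m}\beta_{j}z_{j}$, so that $x=y+z$, $\bar y:=y/\alpha=\sum_{i}\alpha_{i}y_{i}$ and $\bar z:=z/(1-\alpha)=\sum_{j}\beta_{j}z_{j}$. I would first argue that \eqref{e:commonsub} forces $\{y_{1},\dots,y_{l}\}$ to be called by $\bar y$ in $\conv(f+\frac{1}{2\mu}\|\cdot\|^2)$: pick $s$ in the (nonempty) intersection in \eqref{e:commonsub}; then $s\in\partial(f+\frac{1}{2\mu}\|\cdot\|^2)(y_{i})$ for each $i$ means $(f+\frac{1}{2\mu}\|\cdot\|^2)(y_{i})=\langle s,y_{i}\rangle-(f+\frac{1}{2\mu}\|\cdot\|^2)^{*}(s)$. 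Taking the convex combination with weights $\alpha_{i}$ gives $\sum_{i}\alpha_{i}(f+\frac{1}{2\mu}\|\cdot\|^2)(y_{i})=\langle s,\bar y\rangle-(f+\frac{1}{2\mu}\|\cdot\|^2)^{*}(s)\le\conv(f+\frac{1}{2\mu}\|\cdot\|^2)(\bar y)$; since the reverse inequality is immediate from the definition of the convex hull, equality holds, which is exactly the "called by" condition. The same argument with the weights $\beta_{j}$ shows $\{z_{1},\dots,z_{m}\}$ is called by $\bar z$ in $\conv(g+\frac{1}{2\mu}\|\cdot\|^2)$. This establishes \ref{i:function2} and \ref{i:function3}.

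Next I would verify the exactness/additivity condition \ref{i:function1}, i.e. that the infimal convolution $\alpha\conv(f+\frac{1}{2\mu}\|\cdot\|^2)(\cdot/\alpha)\Box(1-\alpha)\conv(g+\frac{1}{2\mu}\|\cdot\|^2)(\cdot/(1-\alpha))$ is exact at $x$ with the particular splitting $x=y+z$. The $\le$ direction is just the definition of $\Box$. For the $\ge$ direction, use the common subgradient $s$ again: since $s\in\partial\conv(f+\frac{1}{2\mu}\|\cdot\|^2)(\bar y)\cap\partial\conv(g+\frac{1}{2\mu}\|\cdot\|^2)(\bar z)$, the chain rule for the rescalings gives $s\in\partial[\alpha\conv(f+\frac{1}{2\mu}\|\cdot\|^2)(\cdot/\alpha)](y)\cap\partial[(1-\alpha)\conv(g+\frac{1}{2\mu}\|\cdot\|^2)(\cdot/(1-\alpha))](z)$, and a common subgradient of the two summands at $y$ and $z$ with $x=y+z$ certifies exactness of the infimal convolution at $x$ — this is the standard Fenchel duality fact $\phi_{1}\Box\phi_{2}(x)=\phi_{1}(y)+\phi_{2}(z)$ whenever $s\in\partial\phi_{1}(y)\cap\partial\phi_{2}(z)$, since then $\phi_{1}(y)+\phi_{2}(z)=\langle s,x\rangle-\phi_{1}^{*}(s)-\phi_{2}^{*}(s)=\langle s,x\rangle-(\phi_{1}\Box\phi_{2})^{*}(s)\le\phi_{1}\Box\phi_{2}(x)$. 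With \ref{i:function1}--\ref{i:function3} all in hand, Theorem~\ref{t:vphi:sub} applies verbatim with this $y$, $z$, $\{y_{i}\}$, $\{z_{j}\}$ and yields precisely the claimed formula for $\hat{\partial}\vphi(x)=\partial_{L}\vphi(x)=\partial_{C}\vphi(x)$.

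The main obstacle I anticipate is being careful about the rescaling $\phi\mapsto\beta\phi(\cdot/\beta)$ and how it transports subdifferentials: one needs $\partial[\beta\phi(\cdot/\beta)](w)=\partial\phi(w/\beta)$, which is elementary for convex $\phi$ and positive $\beta$ but must be invoked cleanly so that the "common subgradient" $s$ survives the passage between the rescaled and unrescaled pictures. A secondary, purely bookkeeping point is confirming $x\in\dom\vphi$, which follows from Theorem~\ref{t:prox}\ref{i:dom:convhull} since $x$ is exhibited as $\alpha$ times a convex combination of points in $\dom(f+\frac{1}{2\mu}\|\cdot\|^2)=\dom f$ plus $(1-\alpha)$ times one in $\dom g$; the nonemptiness of \eqref{e:commonsub} guarantees the relevant convex-hull values are finite. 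Beyond that the argument is a direct specialization, so no genuinely new analytic input is required.
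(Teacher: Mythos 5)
Your proposal is correct and follows essentially the same route as the paper: use the common subgradient from \eqref{e:commonsub} to establish the ``called by'' conditions and the exactness of the infimal convolution at the splitting $x=y+z$, then invoke Theorem~\ref{t:vphi:sub}. The only cosmetic difference is that you verify these via Fenchel--Young equalities (and spell out the exactness certificate explicitly), whereas the paper sums the subgradient inequalities directly; both are equivalent.
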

\begin{proof} We will show that
\begin{equation}\label{e:called:y0}
\conv\left(f+\frac{1}{2\mu}\|\cdot\|^2\right)\sum_{i=1}^{l}\alpha_{i}y_{i}
=\sum_{i}^{l}\alpha_{i}
\left(f+\frac{1}{2\mu}\|\cdot\|^2\right)(y_{i}).
\end{equation}
By \eqref{e:commonsub}, there exists
$$y^*\in \left[\cap_{i=1}^{l}\partial\left(f+\frac{1}{2\mu}\|\cdot\|^2\right)
\left(y_{i}\right)\right]\cap
\left[\cap_{j=1}^{m}\partial\left(g+\frac{1}{2\mu}\|\cdot\|^2\right)\left(z_{j}\right)\right].$$
For every $y_{i}$, we have
$$(\forall u\in\R^n)\ \left(f+\frac{1}{2\mu}\|\cdot\|^2\right)(u)\geq \left(f+\frac{1}{2\mu}\|\cdot\|^2\right)(y_{i})
+\scal{y^*}{u-y_{i}}.$$ Multiplying each inequality by $\alpha_i$, followed by summing
them up, gives
$$(\forall u\in\R^n)\ \left(f+\frac{1}{2\mu}\|\cdot\|^2\right)(u)\geq \sum_{i=1}^{l}\alpha_{i}\left(f+\frac{1}{2\mu}\|\cdot\|^2\right)(y_{i})
+\scal{y^*}{u-\sum_{i=1}^{l}\alpha_{i}y_{i}}.$$
Then
$$(\forall u\in\R^n)\ \conv\left(f+\frac{1}{2\mu}\|\cdot\|^2\right)(u)\geq \sum_{i=1}^{l}\alpha_{i}\left(f+\frac{1}{2\mu}\|\cdot\|^2\right)(y_{i})
+\scal{y^*}{u-\sum_{i=1}^{l}\alpha_{i}y_{i}},$$
from which
\begin{equation}\label{e:called:y1}
\conv\left(f+\frac{1}{2\mu}\|\cdot\|^2\right)\sum_{i=1}^{l}\alpha_{i}y_{i}
\geq \sum_{i=1}^{l}\alpha_{i}\left(f+\frac{1}{2\mu}\|\cdot\|^2\right)(y_{i}).
\end{equation}
Since $\conv\left(f+\frac{1}{2\mu}\|\cdot\|^2\right)(\sum_{i=1}^{l}\alpha_{i}y_{i})
\leq \sum_{i=1}^{l}\alpha_{i}\left(f+\frac{1}{2\mu}\|\cdot\|^2\right)(y_{i})$
always holds, \eqref{e:called:y0} is established.
Moreover, \eqref{e:called:y0}
and \eqref{e:called:y1} implies
\begin{equation}\label{e:called:y2}
y^*\in \partial \conv\left(f+\frac{1}{2\mu}\|\cdot\|^2\right)
\sum_{i=1}^{l}\alpha_{i}y_{i}.
\end{equation}
Similar arguments give
\begin{equation}\label{e:called:z0}
\conv\left(g+\frac{1}{2\mu}\|\cdot\|^2\right)(\sum_{j=1}^{m}\beta_{j}z_{j})
=\sum_{j}^{m}\beta_{j}
\left(g+\frac{1}{2\mu}\|\cdot\|^2\right)(z_{j}),
\end{equation}
and
\begin{equation}\label{e:called:z1}
y^*\in \partial \conv\left(g+\frac{1}{2\mu}\|\cdot\|^2\right)\sum_{j=1}^{m}\beta_{j}y_{j}.
\end{equation}
Put
$x=y+z$ with $y=\alpha\sum_{i=1}^{l}\alpha_{i}y_{i}$ and $z=(1-\alpha)
\sum_{j=1}^{m}\beta_{j}z_{j}.$
Equations \eqref{e:called:y2} and \eqref{e:called:z1} guarantee
the assumption \ref{i:function1} of Theorem~\ref{t:vphi:sub}; \eqref{e:called:y0} and \eqref{e:called:z0} guarantee the assumptions \ref{i:function2} and \ref{i:function3}
of Theorem~\ref{t:vphi:sub} respectively.
Hence, Theorem~\ref{t:vphi:sub} applies.
\end{proof}

\begin{cor}
Let $0<\mu<\bl$.
Suppose that
$$\partial\left(f+\frac{1}{2\mu}\|\cdot\|^2\right)(x)\cap \partial\left(g+\frac{1}{2\mu}\|\cdot\|^2\right)(x)\neq\varnothing.$$
Then
\begin{align*}
\hat{\partial}\vphi(x) &=
\partial_{L}\vphi(x)
=\partial_{C}\vphi(x)\\
&=\partial\left(f+\frac{1}{2\mu}\|\cdot\|^2\right)
(x)\cap
\partial\left(g+\frac{1}{2\mu}\|\cdot\|^2\right)(x)-
\frac{x}{\mu}.
\end{align*}
\end{cor}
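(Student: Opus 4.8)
The plan is to recognize this statement as the degenerate case $l=m=1$ of the preceding Corollary (equivalently, of Theorem~\ref{t:vphi:sub}). Concretely, I would apply the preceding Corollary with $\alpha_1=\beta_1=1$ and $y_1=z_1=x$, and with $\alpha\in\,]0,1[$; then $x=\alpha\cdot 1\cdot x+(1-\alpha)\cdot 1\cdot x=x$, so the affine-combination hypothesis is automatic, and the common-subdifferential hypothesis \eqref{e:commonsub} reduces to exactly the assumed condition $\partial(f+\tfrac{1}{2\mu}\|\cdot\|^2)(x)\cap\partial(g+\tfrac{1}{2\mu}\|\cdot\|^2)(x)\neq\varnothing$. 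The conclusion of the preceding Corollary then specializes verbatim to the displayed identity, each intersection $\cap_{i=1}^{l}$ and $\cap_{j=1}^{m}$ now running over a single index, so that $\partial(f+\tfrac1{2\mu}\|\cdot\|^2)(x)\cap\partial(g+\tfrac1{2\mu}\|\cdot\|^2)(x)-x/\mu$ is the common value of $\hat{\partial}\vphi(x)$, $\partial_L\vphi(x)$ and $\partial_C\vphi(x)$.

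If one instead argues straight from Theorem~\ref{t:vphi:sub}, one checks its three hypotheses for the split $x=y+z$ with $y=\alpha x$, $z=(1-\alpha)x$ (so $y/\alpha=z/(1-\alpha)=x$). Hypotheses (b) and (c) assert that $\{x\}$ is called by $x$ in $\conv(f+\tfrac{1}{2\mu}\|\cdot\|^2)$ and in $\conv(g+\tfrac{1}{2\mu}\|\cdot\|^2)$; this holds because a nonempty Fenchel subdifferential at $x$ furnishes a global affine minorant of $\phi$ touching the graph at $x$, which forces $\conv\phi(x)=\phi(x)$ whenever $\partial\phi(x)\neq\varnothing$. For hypothesis (a), fix $s$ in the common subdifferential. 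On one hand, using the particular split $(\alpha x,(1-\alpha)x)$ in the infimal convolution together with $\conv\phi\le\phi$, the value of the inf-convolution at $x$ is at most $\alpha(f+\tfrac{1}{2\mu}\|\cdot\|^2)(x)+(1-\alpha)(g+\tfrac{1}{2\mu}\|\cdot\|^2)(x)$. On the other hand, bounding $\conv(f+\tfrac{1}{2\mu}\|\cdot\|^2)$ and $\conv(g+\tfrac{1}{2\mu}\|\cdot\|^2)$ below by $u\mapsto\phi(x)+\langle s,u-x\rangle$ and minimizing over the split shows the value at $x$ is at least that same quantity; hence equality, which (together with the called-by property) is exactly hypothesis (a). Theorem~\ref{t:vphi:sub} then delivers the claim.

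There is essentially no obstacle here: the result is a specialization of machinery already assembled. The only point meriting a moment's care is the elementary observation that $\partial\phi(x)\neq\varnothing$ in the Fenchel sense makes $x$ a point at which $\conv\phi$ coincides with $\phi$, which is what collapses the general intersection-of-subdifferentials formula to the single-point form stated here; everything else — Clarke regularity of $\vphi$, the sum and inf-convolution subdifferential rules, and the $x/\mu$ shift — is inherited unchanged from Theorem~\ref{t:vphi:sub}. (We take $\alpha\in\,]0,1[$, which is the case of interest.)
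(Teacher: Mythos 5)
Your proposal is correct and matches the paper's intent exactly: the paper states this corollary without proof, as the immediate specialization of the preceding corollary to $l=m=1$, $\alpha_1=\beta_1=1$, $y_1=z_1=x$, which is precisely your first argument (and your hypothesis check reduces \eqref{e:commonsub} to the assumed condition just as intended). Your second, direct verification of the hypotheses of Theorem~\ref{t:vphi:sub} is a sound elaboration of the same route rather than a different one.
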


Armed with Theorem~\ref{t:vphi:sub}, we now turn to the differentiability of $\vphi$.
\begin{df}
A function $f_{1}:\R^n\rightarrow\RX$ is almost differentiable
if $\hat{\partial} f_{1}(x)$ is a singleton
for every $x\in \intt(\dom f_{1})$, and $\hat{\partial} f_{1}(x)=\varnothing$
for every $x\in\dom f_{1}\setminus\intt(\dom f_{1})$, if any.
\end{df}

\begin{lem}\label{l:sumrule}
Let $f_{1},f_{2}:\R^n\rightarrow\RX$ be proper, lsc functions
and let $x\in\dom f_{1}\cap\dom f_{2}$. If $f_{2}$ is
 continuously differentiable at $x$, then
$$\partial (f_{1}+f_{2})(x)
\subset\hat{\partial}(f_{1}+f_{2})(x)=
\hat{\partial}f_{1}(x)+\triangledown f_{2}(x).$$
\end{lem}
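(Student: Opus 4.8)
The plan is to dispatch the inclusion first and then prove the equality by a direct first-order estimate. For the inclusion $\partial(f_{1}+f_{2})(x)\subseteq\hat{\partial}(f_{1}+f_{2})(x)$, note that this holds for \emph{any} proper function: if $s\in\partial(f_{1}+f_{2})(x)$, then $(f_{1}+f_{2})(y)\geq(f_{1}+f_{2})(x)+\scal{s}{y-x}$ for all $y\in\R^n$, so in particular $(f_{1}+f_{2})(y)\geq(f_{1}+f_{2})(x)+\scal{s}{y-x}+o(\|y-x\|)$, which is exactly the defining condition for $s\in\hat{\partial}(f_{1}+f_{2})(x)$.

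For the equality I would use the limiting characterization of the Fr\'echet subdifferential: for a proper function $h$ and $u\in\dom h$, one has $s\in\hat{\partial}h(u)$ if and only if
$$\liminf_{y\to u,\ y\neq u}\frac{h(y)-h(u)-\scal{s}{y-u}}{\|y-u\|}\geq 0.$$
Since $f_{2}$ is continuously differentiable at $x$, it is in particular Fr\'echet differentiable there, so we may write $f_{2}(y)=f_{2}(x)+\scal{\triangledown f_{2}(x)}{y-x}+r(y)$ with $r(y)/\|y-x\|\to 0$ as $y\to x$. For every $s\in\R^n$ and every $y\neq x$ we have the elementary identity
$$\frac{f_{1}(y)-f_{1}(x)-\scal{s-\triangledown f_{2}(x)}{y-x}}{\|y-x\|}=\frac{(f_{1}+f_{2})(y)-(f_{1}+f_{2})(x)-\scal{s}{y-x}}{\|y-x\|}-\frac{r(y)}{\|y-x\|}.$$
Because the last term vanishes in the limit $y\to x$, taking $\liminf$ on both sides shows that the left-hand quotient has nonnegative $\liminf$ if and only if the first quotient on the right-hand side does. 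By the characterization above, this says precisely that $s-\triangledown f_{2}(x)\in\hat{\partial}f_{1}(x)$ if and only if $s\in\hat{\partial}(f_{1}+f_{2})(x)$, i.e.\ $\hat{\partial}(f_{1}+f_{2})(x)=\hat{\partial}f_{1}(x)+\triangledown f_{2}(x)$. (In particular, when $\hat{\partial}f_{1}(x)=\varnothing$ both sides are empty, so nothing extra need be said.)

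I do not expect a real obstacle here; the only point demanding a little care is confirming that the $o(\|y-x\|)$ remainder coming from the differentiability of $f_{2}$ does not perturb the $\liminf$, and the displayed identity isolates exactly this. As an alternative to the self-contained computation, the sum-rule portion can also be cited from the corresponding statement in \cite{rockwets}, after recording that $\hat{\partial}f_{2}(x)=\{\triangledown f_{2}(x)\}$ whenever $f_{2}$ is $\mathcal{C}^{1}$ at $x$.
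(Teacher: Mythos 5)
Your proof is correct. Note, however, that the paper itself offers no proof of this lemma --- it is stated as a known fact (it is the standard Fr\'echet subdifferential sum rule for adding a differentiable function, cf.\ Exercise~8.8(c) of Rockafellar--Wets) --- so there is no in-paper argument to compare against; your write-up simply supplies the omitted justification. The two halves are both sound: the inclusion $\partial(f_1+f_2)(x)\subseteq\hat{\partial}(f_1+f_2)(x)$ is immediate from the definitions, and the displayed identity correctly isolates the remainder $r(y)/\|y-x\|\to 0$, which leaves the $\liminf$ unchanged and yields the equivalence $s\in\hat{\partial}(f_1+f_2)(x)\iff s-\triangledown f_2(x)\in\hat{\partial}f_1(x)$; the finiteness of $f_1(x)$ and $(f_1+f_2)(x)$ needed for the $\liminf$ characterization is guaranteed by $x\in\dom f_1\cap\dom f_2$. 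One small observation worth recording: your argument uses only Fr\'echet differentiability of $f_2$ at the single point $x$, so the hypothesis of continuous differentiability is stronger than what your proof (and the conclusion as stated) actually requires.
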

\begin{lem}\label{l:diff:hypo}
 Let $f_{1}:\R^n\rightarrow \RX$ be proper, lsc and $\mu$-proximal, and
let $x\in\intt\dom f_{1}$. If $\hat{\partial}f_{1}(x)$ is a singleton, then
$f_{1}$ is differentiable at $x$.
\end{lem}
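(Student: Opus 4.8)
The plan is to reduce everything to the convex function hiding inside $f_1$. First I would set $\phi := f_1+\frac{1}{2\mu}\|\cdot\|^2$. By $\mu$-proximality, $\phi$ is convex, and it is clearly proper and lsc since $f_1$ is; moreover $\dom\phi=\dom f_1$, so the hypothesis $x\in\intt\dom f_1$ says exactly that $x\in\intt\dom\phi$. Consequently $\phi$ is finite and continuous on a neighbourhood of $x$, hence locally Lipschitz there, and in particular $\partial\phi(x)\neq\varnothing$.

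Next I would compute $\hat\partial f_1(x)$ via the sum rule. Writing $f_1=\phi+\big(-\tfrac{1}{2\mu}\|\cdot\|^2\big)$ and noting that $-\tfrac{1}{2\mu}\|\cdot\|^2$ is $\mathcal C^\infty$ (in particular continuously differentiable at $x$) with gradient $-x/\mu$, Lemma~\ref{l:sumrule} applied with $f_1\leftarrow\phi$ and $f_2\leftarrow-\tfrac{1}{2\mu}\|\cdot\|^2$ yields
\[
\hat\partial f_1(x)=\hat\partial\phi(x)+\triangledown\Big(-\tfrac{1}{2\mu}\|\cdot\|^2\Big)(x)=\hat\partial\phi(x)-\frac{x}{\mu}.
\]
Since $\phi$ is convex, its Fr\'echet subdifferential coincides with its Fenchel subdifferential, $\hat\partial\phi(x)=\partial\phi(x)$. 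Therefore $\partial\phi(x)=\hat\partial f_1(x)+x/\mu$, and the hypothesis that $\hat\partial f_1(x)$ is a singleton forces $\partial\phi(x)$ to be a singleton as well.

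Finally I would invoke the classical differentiability criterion for convex functions: a convex function that is finite on a neighbourhood of $x$ and whose subdifferential at $x$ is a singleton is differentiable at $x$ (see, e.g., \cite[Theorem 9.18]{rockwets} or \cite[Theorem 25.1]{rockconv}); this applies here because $x\in\intt\dom\phi$ and $\partial\phi(x)$ is a singleton. Hence $\phi$ is differentiable at $x$, and consequently $f_1=\phi-\frac{1}{2\mu}\|\cdot\|^2$, being the difference of two functions differentiable at $x$, is differentiable at $x$. There is no real obstacle beyond lining up these standard facts; the only point requiring a little care is confirming that $\hat\partial\phi=\partial\phi$ for the convex $\phi$ and that the singleton-subdifferential criterion is being applied at an interior point of $\dom\phi$, which is guaranteed by $x\in\intt\dom f_1$.
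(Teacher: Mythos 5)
Your proposal is correct and follows essentially the same route as the paper: both pass to the convex function $f_1+\frac{1}{2\mu}\|\cdot\|^2$, use the smooth-perturbation sum rule to show its (Fenchel = Fr\'echet) subdifferential at $x$ is a singleton, invoke the classical fact that a convex function with singleton subdifferential at an interior point of its domain is differentiable there, and subtract the quadratic back off. The only cosmetic difference is the direction in which you apply the sum rule, which is immaterial.
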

\begin{proof} Observe that
$f_{2}=f_{1}+\frac{1}{2\mu}\|\cdot\|^2$ is convex, and
$$\partial f_{2}(x)=\hat{\partial}f_{2}(x)
=\hat{\partial}f_{1}(x)+\frac{x}{\mu}.$$
When $\hat{\partial}f_{1}(x)$ is a singleton,  $\partial f_{2}(x)$
is a singleton. This implies that $f_{2}$ is differentiable at $x$
because $f_{2}$ is convex and $x\in\intt\dom f_{2}$.
Hence, $f_{1}$ is differentiable at $x$.
\end{proof}

\begin{cor}[differentiability of the proximal average] \label{c:interior:diff}
Let $0<\mu<\bl$.
Suppose that either $f$ or $g$ is almost differentiable (in particular, if $f$ or $g$ is differentiable
at every point of its domain). Then
$\vphi$ is almost differentiable. In particular, $\vphi$ is differentiable on the interior of
its domain
$\intt\dom\vphi$.
\end{cor}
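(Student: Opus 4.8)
The plan is to read the result off from the subdifferential formula in Theorem~\ref{t:vphi:sub}, and then promote a singleton Fr\'echet subdifferential to genuine differentiability by Lemma~\ref{l:diff:hypo}. Assume without loss of generality that $f$ is almost differentiable; the case of $g$ follows from the symmetry $\varphi_{f,g,\alpha,\mu}=\varphi_{g,f,1-\alpha,\mu}$ built into \eqref{e:prox:def}, and the parenthetical hypothesis (differentiability on an open domain) is a special case. Take $\alpha\in\,]0,1[$ first and let $x\in\dom\vphi$. By Theorem~\ref{t:prox}\ref{i:epi:sum} the infimal convolution representing $\vphi+\tfrac{1}{2\mu}\|\cdot\|^2$ is exact, so fix $y,z$ with $x=y+z$ realizing it; since $\mu<\bl$, both $f+\tfrac{1}{2\mu}\|\cdot\|^2$ and $g+\tfrac{1}{2\mu}\|\cdot\|^2$ are coercive and lsc, so by Carath\'eodory (as used in Lemma~\ref{l:convexhull:f}) their convex hulls attain their values at $y/\alpha$, resp.\ $z/(1-\alpha)$, along finite convex combinations; hence calling families $\{y_1,\dots,y_l\}$ and $\{z_1,\dots,z_m\}$ with $l,m\ge1$ exist, conditions (a)--(c) of Theorem~\ref{t:vphi:sub} hold at $x$, and
$$\hat{\partial}\vphi(x)=\partial_{L}\vphi(x)=\partial_{C}\vphi(x)=\left[\bigcap_{i=1}^{l}\partial\Big(f+\tfrac{1}{2\mu}\|\cdot\|^2\Big)(y_i)\right]\cap\left[\bigcap_{j=1}^{m}\partial\Big(g+\tfrac{1}{2\mu}\|\cdot\|^2\Big)(z_j)\right]-\frac{x}{\mu}.$$

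Because $\tfrac{1}{2\mu}\|\cdot\|^2$ is continuously differentiable, Lemma~\ref{l:sumrule} gives $\partial(f+\tfrac{1}{2\mu}\|\cdot\|^2)(y_i)\subseteq\hat{\partial}(f+\tfrac{1}{2\mu}\|\cdot\|^2)(y_i)=\hat{\partial}f(y_i)+y_i/\mu$ for each $i$, and almost differentiability of $f$ makes the right-hand side a singleton if $y_i\in\intt\dom f$ and empty if $y_i\in\dom f\setminus\intt\dom f$. If $x\in\intt\dom\vphi$, then $\vphi+\tfrac{1}{2\mu}\|\cdot\|^2$ is finite-valued and convex near $x$ (Theorem~\ref{t:prox}\ref{i:phi:mu}), so its convex subdifferential at $x$ is nonempty, and the sum rule gives $\hat{\partial}\vphi(x)=\partial(\vphi+\tfrac{1}{2\mu}\|\cdot\|^2)(x)-x/\mu\neq\varnothing$. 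Substituting into the display forces every factor $\partial(f+\tfrac{1}{2\mu}\|\cdot\|^2)(y_i)$ to be nonempty, hence a singleton, so $\bigcap_{i=1}^{l}\partial(f+\tfrac{1}{2\mu}\|\cdot\|^2)(y_i)\subseteq\partial(f+\tfrac{1}{2\mu}\|\cdot\|^2)(y_1)$ is a singleton, and thus $\hat{\partial}\vphi(x)$ is a nonempty subset of a translated singleton, i.e.\ a singleton.

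If instead $x\in\dom\vphi\setminus\intt\dom\vphi$, I would show $\hat{\partial}\vphi(x)=\varnothing$. By Theorem~\ref{t:prox}\ref{i:dom:convhull}, $\dom\vphi=\alpha\conv\dom f+(1-\alpha)\conv\dom g$; since $p\in\intt A$ forces $p+q\in\intt(A+B)$, if $y/\alpha$ lay in $\intt\conv\dom f$ then $y\in\intt(\alpha\conv\dom f)$ and hence $x=y+z\in\intt\dom\vphi$, a contradiction, so $y/\alpha\notin\intt\conv\dom f$. Since $y/\alpha$ is a convex combination with positive weights of the $y_i\in\dom f\subseteq\conv\dom f$, not all $y_i$ can lie in the convex set $\intt\conv\dom f$, so some $y_{i_0}\in\dom f\setminus\intt\dom f$; then $\partial(f+\tfrac{1}{2\mu}\|\cdot\|^2)(y_{i_0})\subseteq\hat{\partial}f(y_{i_0})+y_{i_0}/\mu=\varnothing$, a factor in the intersection above is empty, and $\hat{\partial}\vphi(x)=\varnothing$. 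The two cases together say $\vphi$ is almost differentiable; Lemma~\ref{l:diff:hypo}, applied to the proper, lsc, $\mu$-proximal function $\vphi$, then converts the singleton $\hat{\partial}\vphi(x)$ at each $x\in\intt\dom\vphi$ into differentiability there. For $\alpha\in\{0,1\}$ one has $\vphi=h_{\mu}g$ or $h_{\mu}f$ (Theorem~\ref{t:prox}\ref{i:alpha}), the degenerate instance of the same argument with one calling family absent. The step I expect to be genuinely delicate is this boundary case: pinning down, via $\dom\vphi=\alpha\conv\dom f+(1-\alpha)\conv\dom g$ and the Minkowski-sum interior property, that a point of $\bdry\dom\vphi$ must push some calling point onto $\bdry\dom f$; the interior case and the differentiability upgrade are routine once Theorem~\ref{t:vphi:sub} and Lemma~\ref{l:diff:hypo} are available.
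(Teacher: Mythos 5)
Your proof is correct and follows essentially the same route as the paper's: the same case split between $\intt\dom\vphi$ and $\bdry\dom\vphi$, the same use of Theorem~\ref{t:vphi:sub}, Lemma~\ref{l:sumrule} and Lemma~\ref{l:diff:hypo}, and the same Minkowski-sum interior argument forcing a calling point onto the boundary of $\dom f$. The only differences are cosmetic --- you additionally justify the existence of the calling families and observe that a single boundary calling point (rather than all of them, as the paper asserts) already empties the intersection.
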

\begin{proof} Without loss of generality, assume that
$f$ is almost differentiable.
By Lemma~\ref{l:sumrule},
\begin{equation}\label{e:frechet}
\partial\left(f+\frac{1}{2\mu}\|\cdot\|^2\right)
\left(y_{i}\right)\subset \hat{\partial} f(y_{i})+\frac{y_{i}}{\mu}.
\end{equation}
It follows that
$\partial\left(f+\frac{1}{2\mu}\|\cdot\|^2\right)(y_{i})$ is
at most single-valued whenever
$\hat{\partial}f(y_{i})$ is single-valued.

With the same notation as in Theorem~\ref{t:vphi:sub}, we consider two cases.

{\sl Case 1:} $x\in\bdry\dom\vphi$. As $x=\alpha(y/\alpha)+(1-\alpha)(z/(1-\alpha))$,
we must have
$y/\alpha\in(\bdry\conv \dom f)$ and $z/(1-\alpha)\in\bdry(\conv \dom g)$;
otherwise $x\in\intt(\alpha\conv\dom f+(1-\alpha)\conv\dom g)
=\intt\dom\vphi$,
which is a contradiction. Then the family of $\{y_{1},\ldots, y_{m}\}$ called
by $y/\alpha$ must be from $\bdry\dom f$. As $f$ is almost differentiable,
$\hat{\partial}f(y_{i})=\varnothing$, then $\hat{\partial}\vphi(x)=\varnothing$ by
Theorem~\ref{t:vphi:sub} and \eqref{e:frechet}.

{\sl Case 2:} $x\in\intt(\dom\vphi)$. As $\vphi$ is $\mu$-proximal,
$\hat{\partial}\vphi(x)\neq\varnothing$. We claim that
the family of $\{y_{1},\ldots, y_{m}\}$ called by
$y/\alpha$ in Theorem~\ref{t:vphi:sub} are necessarily from $\intt\dom f$.
If not, then
$\partial\left(f+\frac{1}{2\mu}\|\cdot\|^2\right)(y_{i})=\varnothing$
because of \eqref{e:frechet} and
$\hat{\partial}f(y_{i})=\varnothing$ for $y_{i}\in\bdry(\dom f)$.
Then Theorem~\ref{t:vphi:sub} implies
$\hat{\partial}\vphi(x)=\varnothing$, which is a contradiction.
Now $\{y_{1},\ldots, y_{m}\}$ are from $\intt\dom f$ and $f$ is almost
differentiable, so $(\forall i)\ \hat{\partial}f(y_{i})$
is a singleton. Using \eqref{e:frechet} again and
$\hat{\partial}\vphi(x)\neq\varnothing$,
we see that $(\forall i)\ \partial\left(f+\frac{1}{2\mu}\|\cdot\|^2\right)(y_{i})$
is a singleton. Hence, $\hat{\partial}\vphi(x)$ is a singleton by
Theorem~\ref{t:vphi:sub}.

Case 1 and Case 2 together show that $\vphi$ is almost differentiable.
Finally, $\vphi$ is differentiable on
$\intt\dom\vphi$ by Lemma~\ref{l:diff:hypo}.
\end{proof}

\begin{cor} Let $0<\mu<\bl$.
Suppose that either $f$ or $g$ is almost differentiable and that either $\conv\dom f=\R^n$
or $\conv\dom g=\R^n$. Then
$\vphi$ is differentiable on $\R^n$.
\end{cor}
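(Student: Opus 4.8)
The plan is to combine the preceding corollary (Corollary~\ref{c:interior:diff}) with the domain formula of Theorem~\ref{t:prox}\ref{i:dom:convhull}; there is essentially nothing new to prove beyond removing the word "interior''. First I would invoke Corollary~\ref{c:interior:diff}: under the standing hypothesis that at least one of $f,g$ is almost differentiable, $\vphi$ is almost differentiable, and in particular $\vphi$ is differentiable at every point of $\intt\dom\vphi$. Thus the statement reduces to a purely set-theoretic fact about the domain, namely $\intt\dom\vphi=\R^n$.

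Next I would compute $\dom\vphi$. By Theorem~\ref{t:prox}\ref{i:dom:convhull}, $\dom\vphi=\alpha\conv\dom f+(1-\alpha)\conv\dom g$. Suppose, say, $\conv\dom f=\R^n$ (the case $\conv\dom g=\R^n$ is symmetric), and assume first $0<\alpha<1$. Since $g$ is proper, $\conv\dom g\neq\varnothing$; picking any $w\in\conv\dom g$, we get $\dom\vphi\supseteq\alpha\conv\dom f+(1-\alpha)\{w\}=\alpha\R^n+(1-\alpha)w=\R^n$. Hence $\dom\vphi=\R^n$, so $\intt\dom\vphi=\R^n$, and therefore $\vphi$ is differentiable on all of $\R^n$ by Corollary~\ref{c:interior:diff}.

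The endpoint values $\alpha\in\{0,1\}$ need a separate word, since there $\vphi$ collapses to a proximal hull (Theorem~\ref{t:prox}\ref{i:alpha}) and no longer depends on both functions: if $\alpha=0$ then $\vphi=h_{\mu}g$, whose domain is $\conv\dom g$, so one must read the hypotheses as referring to the surviving function $g$ (almost differentiable, with $\conv\dom g=\R^n$), and then apply Corollary~\ref{c:interior:diff} with $f=g$; the case $\alpha=1$ is symmetric. With this understanding, the conclusion holds for all $\alpha\in[0,1]$. I do not expect any genuine obstacle here: the analytic content --- that $\mu$-proximality together with a single-valued Fréchet subdifferential forces true differentiability (Lemma~\ref{l:diff:hypo}), and that almost differentiability of one input propagates to $\vphi$ (Corollary~\ref{c:interior:diff}) --- is already in hand, and the only point to state carefully is the Minkowski-sum computation $\alpha\conv\dom f+(1-\alpha)\conv\dom g=\R^n$ (plus the $\alpha\in\{0,1\}$ bookkeeping above).
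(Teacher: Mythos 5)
Your proposal is correct and follows exactly the paper's (two-line) proof: Theorem~\ref{t:prox}\ref{i:dom:convhull} gives $\dom\vphi=\R^n$, and then Corollary~\ref{c:interior:diff} applies. Your extra care with the endpoint cases $\alpha\in\{0,1\}$ (where $\vphi$ collapses to a single proximal hull and the hypotheses must be read as attaching to the surviving function) is a legitimate refinement that the paper silently glosses over, but it does not change the argument.
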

\begin{proof}
By Theorem~\ref{t:prox}\ref{i:dom:convhull}, $\dom\vphi=\R^n$.
It suffices to apply Corollary~\ref{c:interior:diff}.
\end{proof}

We end this section with a result on Lipschitz continuity of the gradient of $\vphi$.
\begin{prop} Suppose that $f$ (or $g$) is differentiable with
a Lipschtiz continuous gradient and $\mu$-proximal.
Then, for every $\alpha\in ]0,1[$, the function $\vphi$ is differentiable
with a Lipschitz continuous gradient.
\end{prop}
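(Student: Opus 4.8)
The plan is to reduce everything to the conjugate representation of $\vphi$ already obtained in Theorem~\ref{t:prox}\ref{i:epi:sum} together with a two-fold application of \cite[Proposition 12.60]{rockwets}. Assume without loss of generality that $f$ is differentiable with $\nabla f$ globally Lipschitz of modulus $L\ge 0$ and that $f$ is $\mu$-proximal; write $\phi_f:=f+\frac{1}{2\mu}\|\cdot\|^2$ and $\phi_g:=g+\frac{1}{2\mu}\|\cdot\|^2$ (and recall $0<\mu<\bl\le\lambda_f,\lambda_g$). As established in the proof of Theorem~\ref{t:prox}\ref{i:epi:sum}, one has the identity $\vphi+\frac{1}{2\mu}\|\cdot\|^2=\bigl(\alpha\phi_f^{*}+(1-\alpha)\phi_g^{*}\bigr)^{*}$, so it suffices to show that $\alpha\phi_f^{*}+(1-\alpha)\phi_g^{*}$ is a finite-valued, proper, lsc, strongly convex function, then transport the conclusion through the conjugate and subtract $\frac{1}{2\mu}\|\cdot\|^2$.

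First I would analyse the ``good'' factor $\phi_f$. Since $f$ is $\mu$-proximal, $\phi_f$ is convex; since $\nabla f$ is $L$-Lipschitz, $\nabla\phi_f=\nabla f+\tfrac{1}{\mu}\Id$ is Lipschitz of modulus $L+\tfrac{1}{\mu}$; and since $\mu<\bl\le\lambda_f$, $\phi_f$ is coercive, exactly as noted in the proof of Theorem~\ref{t:prox}\ref{i:epi:sum}. By \cite[Proposition 12.60]{rockwets} in the forward direction — just as in the Lipschitz proximal mapping characterization above — $\phi_f^{*}$ is finite-valued on $\R^n$ and $\tfrac{1}{L+1/\mu}$-strongly convex. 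For $g$ I would use only that $\mu<\bl\le\lambda_g$ makes $\phi_g$ coercive (cf. \cite[Example 11.26]{rockwets}), so that $\phi_g^{*}$ is a finite-valued, continuous, convex function; note that no convexity of $\phi_g$ itself is available, and none is needed.

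Then $\alpha\phi_f^{*}+(1-\alpha)\phi_g^{*}$ is the sum of the $\tfrac{\alpha}{L+1/\mu}$-strongly convex function $\alpha\phi_f^{*}$ — this is precisely where $\alpha>0$ is used — and the finite-valued convex function $(1-\alpha)\phi_g^{*}$, hence is finite-valued, proper, lsc and $\tfrac{\alpha}{L+1/\mu}$-strongly convex. Applying \cite[Proposition 12.60]{rockwets} once more, this time in the reverse direction, its conjugate $\vphi+\tfrac{1}{2\mu}\|\cdot\|^2$ is differentiable on all of $\R^n$ with gradient Lipschitz of modulus $\tfrac{L+1/\mu}{\alpha}$. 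Subtracting the $\mathcal{C}^{1+}$ function $\tfrac{1}{2\mu}\|\cdot\|^2$ shows that $\vphi$ is differentiable on $\R^n$ with $\nabla\vphi$ Lipschitz of modulus $\tfrac{L+1/\mu}{\alpha}+\tfrac{1}{\mu}$, as claimed; the case where $g$ rather than $f$ is the good function is identical after swapping $\alpha\leftrightarrow 1-\alpha$.

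The main point to watch — the chief obstacle — is that $g$ is merely prox-bounded, so $\phi_g$ need not be convex and the $\Prox$-type or Moreau-type descriptions of $\vphi$ give no handle on smoothness here; the argument must therefore draw all of its strong convexity from the single well-behaved factor $\phi_f^{*}$, relying on $\phi_g^{*}$ only through its automatic convexity and its finiteness (which comes from $\mu<\bl$). Verifying that the strong convexity of $\alpha\phi_f^{*}$ genuinely survives into the sum $\alpha\phi_f^{*}+(1-\alpha)\phi_g^{*}$ — which it does exactly because $\alpha>0$ and $(1-\alpha)\phi_g^{*}$ is an honest finite convex function — is the only delicate step.
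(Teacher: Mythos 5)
Your proposal is correct and follows essentially the same route as the paper: both pass to the conjugate representation $\vphi+\frac{1}{2\mu}\|\cdot\|^2=\bigl(\alpha\phi_f^{*}+(1-\alpha)\phi_g^{*}\bigr)^{*}$ and apply \cite[Proposition 12.60]{rockwets} twice, first to get strong convexity of $\phi_f^{*}$ from the Lipschitz gradient of the convex function $\phi_f$, then to convert strong convexity of the weighted sum back into Lipschitz differentiability of its conjugate. Your version merely adds explicit moduli and spells out the points the paper leaves implicit (finiteness of $\phi_g^{*}$ via coercivity, and the role of $\alpha>0$ in preserving strong convexity), which is a faithful elaboration rather than a different argument.
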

\begin{proof}
As $f$ is $\mu$-proximal and differentiable with a Lipschtiz continuous
gradient,
the function $f+\frac{1}{2\mu}\|\cdot\|^2$ is convex and differentiable with a Lipschitz
continuous gradient. By \cite[Proposition 12.60]{rockwets},
$\big(f+\frac{1}{2\mu}\|\cdot\|^2\big)^*$ is strongly convex,
so $$\alpha\bigg(f+\frac{1}{2\mu}\|\cdot\|^2\bigg)^*+
(1-\alpha)\bigg(g+\frac{1}{2\mu}\|\cdot\|^2\bigg)^*$$ is strongly convex.
By \cite[Proposition 12.60]{rockwets} again,
$$\left[\alpha\bigg(f+\frac{1}{2\mu}\|\cdot\|^2\bigg)^*+
(1-\alpha)\bigg(g+\frac{1}{2\mu}\|\cdot\|^2\bigg)^*\right]^*$$
is convex and differentiable with a Lipschitz continuous gradient.
Since
$\vphi=$
$$\left[\alpha\bigg(f+\frac{1}{2\mu}\|\cdot\|^2\bigg)^*+
(1-\alpha)\bigg(g+\frac{1}{2\mu}\|\cdot\|^2\bigg)^*\right]^*
-\frac{1}{2\mu}\|\cdot\|^2,$$
we see that $\vphi$ is differentiable with a Lipschitz continuous gradient.
\end{proof}

\section{The proximal average for quadratic functions}\label{s:quad}
In this section, we illustrate the above results for quadratic functions.
For an $n\times n $ symmetric matrix $A$, define the quadratic function
$\jj_{A}:\R^n\rightarrow\R$ by $x\mapsto \frac{1}{2}\scal{x}{Ax}.$
We use $\lmin A$ to denote the smallest eigenvalue of $A$.
\begin{lem}\label{l:quad} For an $n\times n$ symmetric matrix $A$, one has
\begin{enumerate}[label=\rm(\alph*)]
\item\label{i:quad1}
 $\jj_{A}$ is prox-bounded with threshold
\begin{equation}\label{e:boundp}
\lambda_{\jj_{A}}=\frac{1}{\max\{0,-\lmin A\}}>0
\end{equation}
and $\mu$-proximal for every $0<\mu\leq \lambda_{\jj_{A}}$;
\item \label{e:boundp1}
the prox-bound $\lambda_{\jj_{A}}=+\infty$ if and only if $A$ is positive semidefinite;
\item\label{i:quad2} if $0<\mu<\lambda_{\jj_{A}}$, then
\begin{equation}\label{e:quadenv}
e_{\mu}\jj_{A}=\jj_{\mu^{-1}[\Id-(\mu A+\Id)^{-1}]};\text { and }
\end{equation}
\begin{equation*}\label{e:quadprox}
\Prox_{\mu}\jj_{A}=(\mu A+\Id)^{-1}.
\end{equation*}
\end{enumerate}
\end{lem}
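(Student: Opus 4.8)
The plan is to reduce the whole lemma to the identity $\jj_{A}+\frac{1}{2\mu}\|\cdot\|^{2}=\jj_{A+\mu^{-1}\Id}$, together with elementary facts about positive (semi)definite quadratic forms and their Fenchel conjugates.

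For \ref{i:quad1} and \ref{e:boundp1}: the symmetric matrix $A+\mu^{-1}\Id$ has smallest eigenvalue $\lmin A+\mu^{-1}$, which is nonnegative exactly when $\mu^{-1}\geq-\lmin A$, i.e.\ when $\mu\leq 1/\max\{0,-\lmin A\}$ (with the convention $1/0=+\infty$). Hence $\jj_{A}+\frac{1}{2\mu}\|\cdot\|^{2}$ is convex precisely for those $\mu$, which is exactly the assertion that $\jj_{A}$ is $\mu$-proximal for every $0<\mu\leq\lambda_{\jj_{A}}:=1/\max\{0,-\lmin A\}$; moreover $\lambda_{\jj_{A}}=+\infty$ iff $\lmin A\geq 0$ iff $A$ is positive semidefinite, which is \ref{e:boundp1}. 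To see that the prox-bound equals this value I would argue two-sidedly: if $0<\mu<\lambda_{\jj_{A}}$ then $A+\mu^{-1}\Id\succ 0$, so for each $x$ the map $w\mapsto\jj_{A}(w)+\frac{1}{2\mu}\|w-x\|^{2}$ is a coercive strictly convex quadratic and $e_{\mu}\jj_{A}(x)\in\R$; if instead $\mu>\lambda_{\jj_{A}}$ then $A+\mu^{-1}\Id$ has a negative eigenvalue, and running to infinity along a corresponding eigenvector shows that quadratic is unbounded below, so $e_{\mu}\jj_{A}\equiv-\infty$. Therefore the supremum defining the threshold is exactly $1/\max\{0,-\lmin A\}$, which is finite and strictly positive.

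For \ref{i:quad2}, fix $0<\mu<\lambda_{\jj_{A}}$ and set $B:=(\mu A+\Id)^{-1}$, which exists and is symmetric positive definite because $\mu A+\Id=\mu(A+\mu^{-1}\Id)\succ 0$. The strictly convex quadratic $w\mapsto\jj_{A}(w)+\frac{1}{2\mu}\|w-x\|^{2}$ has a unique critical point, determined by $Aw+\mu^{-1}(w-x)=0$, i.e.\ $(\mu A+\Id)w=x$, so $\Prox_{\mu}\jj_{A}(x)=Bx$ for every $x$ and hence $\Prox_{\mu}\jj_{A}=(\mu A+\Id)^{-1}$. For the envelope I would invoke Fact~\ref{l:dcform}\ref{i:mor:l} (legitimate since $\mu<\lambda_{\jj_{A}}$): as $\jj_{A}+\frac{1}{2\mu}\|\cdot\|^{2}=\jj_{A+\mu^{-1}\Id}$ is positive definite, its conjugate is $\jj_{(A+\mu^{-1}\Id)^{-1}}=\jj_{\mu B}$, so $\big(\jj_{A}+\frac{1}{2\mu}\|\cdot\|^{2}\big)^{*}(x/\mu)=\frac{1}{2\mu}\scal{x}{Bx}$ and therefore
\[
e_{\mu}\jj_{A}(x)=\frac{1}{2\mu}\|x\|^{2}-\frac{1}{2\mu}\scal{x}{Bx}=\tfrac{1}{2}\scal{x}{\mu^{-1}(\Id-B)x}=\jj_{\mu^{-1}[\Id-(\mu A+\Id)^{-1}]}(x),
\]
which is the claimed formula. (Alternatively, substitute $w=Bx$ directly into the quadratic and simplify $BAB+\mu^{-1}(B-\Id)^{2}$ to $\mu^{-1}(\Id-B)$ using $A=\mu^{-1}(B^{-1}-\Id)$.)

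The only step that is not immediate, and the one I would take care over, is the exact determination of the threshold in \ref{i:quad1}: one must verify not merely that $e_{\mu}\jj_{A}$ is finite for $\mu$ below the stated value but also that it collapses to $-\infty$ for $\mu$ above it, which is precisely why a genuine eigenvector attaining $\lmin A$ must be produced. Everything else is routine linear algebra and conjugacy of quadratic forms.
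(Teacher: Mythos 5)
Your proof is correct and follows the same overall structure as the paper's: eigenvalue analysis of $A+\mu^{-1}\Id$ for parts (a) and (b), and the critical-point computation for the proximal mapping in (c). The two places where you genuinely diverge are both fine and arguably cleaner. For the threshold, the paper bounds $\jj_{A}\geq \lmin A\,\jj_{\Id}$ and cites a textbook exercise from Rockafellar--Wets, whereas you prove the exact value two-sidedly by exhibiting finiteness of $e_{\mu}\jj_{A}$ for $\mu$ below the claimed threshold and collapse to $-\infty$ along a minimal eigenvector for $\mu$ above it; your version is self-contained and correctly identifies that the tightness of the bound (not just the one-sided inequality) is the nontrivial point. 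For the envelope formula, the paper substitutes $\Prox_{\mu}\jj_{A}(x)=(\mu A+\Id)^{-1}x$ back into the infimum and simplifies, while you route through Fact~\ref{l:dcform} and the conjugacy identity $(\jj_{M})^{*}=\jj_{M^{-1}}$ for $M\succ 0$; both give the same answer, and you note the substitution alternative anyway. No gaps.
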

\begin{proof}
\ref{i:quad1}: As $A$ can be diagonalized,
$\jj_{A}\geq \lmin A \jj_{\Id}$. Apply \cite[Exercise 1.24]{rockwets} to
obtain \eqref{e:boundp}. When $0<\mu\leq \lambda_{\jj_{A}}$,
$A+\frac{1}{\mu}\Id$ has nonnegative  eigenvalues,
so $\jj_{A}+\frac{1}{\mu}\jj_{\Id}$ is convex.

\noindent\ref{e:boundp1}: This follows from \ref{i:quad1}.

\noindent\ref{i:quad2}: When $0<\mu<\lambda_{\jj_{A}}$, the function
$\jj_{A}+\frac{1}{\mu}\jj_{\Id}$ is strictly convex. To find
\begin{equation}\label{e:quade}
e_{\mu}\jj_{A}(x)=\inf_{w}\left(\jj_{A}(w)+\frac{1}{\mu}\jj_{\Id}(x-w)\right),
\end{equation}
one directly takes derivative with repect to $w$ to find
\begin{equation}\label{e:quadp}
\Prox_{\mu}\jj_{A}(x)=(\mu A+\Id)^{-1}(x).
\end{equation}
Substitute \eqref{e:quadp} into \eqref{e:quade} to get \eqref{e:quadenv}.
\end{proof}

\begin{ex} Let $A_{1}, A_{2}$ be two $n\times n$ symmetric matrices and
let $0<\mu<\bl=\min\{\lambda_{\jj_{A_{1}}},\lambda_{\jj_{A_{2}}}\}$.
Then the following hold:
\begin{enumerate}[label=\rm(\alph*)]
\item\label{e:proxq1}
$\vphi=\jj_{\mu^{-1}A_{3}}$ with
$$A_{3}=[\alpha(\mu A_{1}+\Id)^{-1}+(1-\alpha)(\mu A_{2}+\Id)^{-1}]^{-1}-\Id,$$
and
$$\Prox_{\mu}\vphi=\alpha(\mu A_{1}+\Id)^{-1}+(1-\alpha)(\mu A_{2}+\Id)^{-1};$$
\item \label{e:proxq2}
$\lim_{\alpha\downarrow 0}\vphi=\jj_{A_{2}}$ and $\lim_{\alpha\uparrow 1}\vphi=\jj_{A_{1}};$

\item\label{e:proxq3}
$\lim_{\mu\downarrow 0}\vphi=\alpha\jj_{A_{1}}+(1-\alpha)\jj_{A_{2}};$

\item \label{e:proxq3.5} when $\bl<\infty$,
$$\lim_{\mu\uparrow \bl}\vphi=
\jj_{\alpha^{-1}(A_{1}+\bl^{-1}\Id)}\Box \jj_{(1-\alpha)^{-1}(A_{2}+\bl^{-1}\Id)}-
\jj_{\bl^{-1}\Id};$$
\item \label{e:proxq4}
when both $A_{1}, A_{2}$ are positive definite, $\bl=+\infty$,
$$\lim_{\mu\uparrow \infty}\vphi=\jj_{(\alpha A_{1}^{-1}+(1-\alpha)A_{2}^{-1})^{-1}}.$$
\end{enumerate}
\end{ex}
\begin{proof}
\ref{e:proxq1}: By Lemma~\ref{l:quad},
\begin{align*}
& -\alpha e_{\mu}\jj_{A_{1}}-(1-\alpha)e_{\mu}\jj_{A_{2}}\\
&=-\alpha \jj_{\mu^{-1}[\Id-(\mu A_{1}+\Id)^{-1}]}-(1-\alpha)\jj_{\mu^{-1}[\Id-(\mu A_{2}+\Id)^{-1}]}\\
&=\jj_{\mu^{-1}[(\alpha(\mu A_{1}+\Id)^{-1}+(1-\alpha)(\mu A_{2}+\Id)^{-1})-\Id]}.
\end{align*}
Thus, applying Lemma~\ref{l:quad} again,
\begin{align*}
\vphi &=-e_{\mu}(\jj_{\mu^{-1}[\alpha(\mu A_{1}+\Id)^{-1}+(1-\alpha)(\mu A_{2}+\Id)^{-1}-\Id]})
\\
&=-\jj_{\mu^{-1}[\Id-(\alpha(\mu A_{1}+\Id)^{-1}+(1-\alpha)(\mu A_{2}+\Id)^{-1})^{-1}]}\\
&=\jj_{\mu^{-1}[(\alpha(\mu A_{1}+\Id)^{-1}+(1-\alpha)(\mu A_{2}+\Id)^{-1})^{-1}-\Id]}.
\end{align*}
Again, using  Lemma~\ref{l:quad},
\begin{align*}
e_{\mu}\vphi &= e_{\mu}\jj_{\mu^{-1}[(\alpha(\mu A_{1}+\Id)^{-1}+(1-\alpha)(\mu A_{2}+\Id)^{-1})^{-1}-\Id]}\\
&=\jj_{\mu^{-1}[\Id-(\alpha(\mu A_{1}+\Id)^{-1}+(1-\alpha)(\mu A_{2}+\Id)^{-1})]},
\end{align*}
so
$$\Prox_{\mu}\vphi=\alpha(\mu A_{1}+\Id)^{-1}+(1-\alpha)(\mu A_{2}+\Id)^{-1}.$$

\noindent\ref{e:proxq2}: Note that the matrix function $A\mapsto A^{-1}$ is continuous whenever
$A$ is invertible. Then \ref{e:proxq2} is immediate because
$$\lim_{\alpha\downarrow 0}(\alpha(\mu A_{1}+\Id)^{-1}+(1-\alpha)(\mu A_{2}+\Id)^{-1})^{-1}
=((\mu A_{2}+\Id)^{-1})^{-1}=\mu A_{2}+\Id, \text{ and }
$$
$$\lim_{\alpha\uparrow 1}(\alpha(\mu A_{1}+\Id)^{-1}+(1-\alpha)(\mu A_{2}+\Id)^{-1})^{-1}
=((\mu A_{1}+\Id)^{-1})^{-1}=\mu A_{1}+\Id.
$$

\noindent\ref{e:proxq3}: It suffices to show
$$\lim_{\mu\downarrow 0}\frac{[\alpha(\mu A_{1}+\Id)^{-1}+(1-\alpha)(\mu A_{2}+\Id)^{-1}]^{-1}-\Id}{\mu}=\alpha A_{1}+(1-\alpha)A_{2},$$
equivalently,
\begin{equation}\label{e:matrix}
\small\lim_{\mu\downarrow 0}\frac{[\alpha(\mu A_{1}+\Id)^{-1}+(1-\alpha)(\mu A_{2}+\Id)^{-1}]^{-1}-[\alpha(\mu A_{1}+\Id)+(1-\alpha)(\mu A_{2}+\Id)]}{\mu}=0.
\end{equation}
Since $\lim_{\mu\downarrow 0}[\alpha(\mu A_{1}+\Id)^{-1}+(1-\alpha)(\mu A_{2}+\Id)^{-1}]
=\Id$, \eqref{e:matrix} follows from the following calculation:
\begin{align*}
&[\alpha(\mu A_{1}+\Id)^{-1}+(1-\alpha)(\mu A_{2}+\Id)^{-1}]\cdot
\nonumber\\
&\frac{[\alpha(\mu A_{1}+\Id)^{-1}+(1-\alpha)(\mu A_{2}+\Id)^{-1}]^{-1}-[\alpha(\mu A_{1}+\Id)+(1-\alpha)(\mu A_{2}+\Id)]}{\mu}\\
&= \frac{\Id-[\alpha(\mu A_{1}+\Id)^{-1}+(1-\alpha)(\mu A_{2}+\Id)^{-1}]
[\alpha(\mu A_{1}+\Id)+(1-\alpha)(\mu A_{2}+\Id)]}{\mu}\\
&=-\alpha(1-\alpha)[(\mu A_{1}+\Id)^{-1}(A_{2}-A_{1})
+(\mu A_{2}+\Id)^{-1}(A_{1}-A_{2})]\\
& \rightarrow -\alpha(1-\alpha)[(A_{2}-A_{1})
+(A_{1}-A_{2})]= 0.
\end{align*}

\noindent\ref{e:proxq3.5}: The matrices $A_{1}+\bl^{-1}\Id$ and $A_{2}+\bl^{-1}\Id$
are positive semidefinite, so the convex hulls are superfluous.

\noindent\ref{e:proxq4}: As $\mu\rightarrow\infty$, we have
\begin{align*}
&\frac{[\alpha(\mu A_{1}+\Id)^{-1}+(1-\alpha)(\mu A_{2}+\Id)^{-1}]^{-1}-\Id}{\mu}\\
&=\left[\alpha\left(A_{1}+\frac{\Id}{\mu}\right)^{-1}
+(1-\alpha)\left(A_{2}+\frac{\Id}{\mu}\right)^{-1}\right]-\frac{\Id}{\mu}\\
&\rightarrow (\alpha A_{1}^{-1}+(1-\alpha)A_{2}^{-1})^{-1}.
\end{align*}
\end{proof}

\begin{rem} When both $A_{1}, A_{2}$ are positive semidefinite matrices,
we refer the reader to \emph{\cite{respos}}.
\end{rem}

\section{The general question is still unanswered}\label{s:theg}
According to Theorem~\ref{prop:convcomb}, suppose that $0<\mu<\bl$,
$0<\alpha<1$
and $\Prox_{\mu}f$ and $\Prox_{\mu}g$ are convex-valued. Then
there exists a proper, lsc function
$\vphi$ such that
$\Prox_{\mu}\vphi=\alpha\Prox_{\mu}f+(1-\alpha)\Prox_{\mu}g$.
When the proximal mapping is not convex-valued, the situation is subtle.
We illustrate this by revisiting Example~\ref{e:proximal:fk}.
Recall that for $\varepsilon_{k}>0$, the function
$$f_{k}(x)=\max\{0,(1+\varepsilon_{k})(1-x^2)\}$$
has
$$\Prox_{1/2}f_{k}(x)=\begin{cases}
x &\text{ if $x\geq 1$,}\\
1 &\text{ if $0<x<1$,}\\
\{-1,1\} &\text{ if $x=0$,}\\
-1 &\text{ if $-1<x<0$,}\\
x &\text{ if $x\leq -1$.}
\end{cases}
$$
With $\alpha=1/2$, we have
\begin{equation}\label{e:prox:half}
(\alpha \Prox_{1/2}f_{1}+(1-\alpha)\Prox_{1/2}f_{2})(x)
=\begin{cases}
x &\text{ if $x\geq 1$,}\\
1 &\text{ if $0<x<1$,}\\
\{-1,0,1\} &\text{ if $x=0$,}\\
-1 &\text{ if $-1<x<0$,}\\
x &\text{ if $x\leq -1$.}
\end{cases}
\end{equation}
Because $\Prox_{1/2}f_{i}(0)$ is not convex-valued, $(\alpha \Prox_{1/2}f_{1}+(1-\alpha)\Prox_{1/2}f_{2})(0)$ is neither
$\Prox_{1/2}f_{1}(0)$ nor $\Prox_{1/2}f_{2}(0)$, although
$\Prox_{1/2}f_{1}(0)=\Prox_{1/2}f_{2}(0)$.

One can verify that \eqref{e:prox:half} is indeed
$\Prox_{1/2}g(x)$ where
$$g(x)=\begin{cases}
0 &\text{ if $x>1$,}\\
-x(x-1)-x^2+1 &\text{ if $0<x\leq 1$,}\\
-x(x+1)-x^2+1 &\text{ if $-1<x\leq 0$,}\\
0 &\text{ if $x\leq -1$.}
\end{cases}
$$
Regretfully, we do not have a systematic way to find $g$
when $\Prox_{\mu}g$ is not convex-valued.
The challenging question is still open:

\emph{Is a convex combination of proximal mappings of possibly nonconvex functions
always a proximal mapping?}

\section*{Acknowledgment}
Xianfu Wang was partially supported by the Natural Sciences and
Engineering Research Council of Canada.


\bibliographystyle{plain}
\bibliography{Bibliography}{}
\end{document}